\documentclass[12pt]{amsart}
\usepackage{amssymb,amsthm,amsmath}
\usepackage[pdftex,hypertexnames=false,linkcolor=blue,citecolor=red]{hyperref}
\usepackage[url=false,giveninits=true,isbn=false,doi=false,
maxbibnames=99,sortcites,style=numeric,
sorting=nyt,maxnames=4,maxalphanames=4]{biblatex}

\usepackage{enumerate}
\usepackage{fancybox}
\usepackage{listings}
\usepackage{multirow}
\usepackage{graphicx}
\usepackage{graphics}
\usepackage{color}
\usepackage{fullpage,multirow}
\usepackage{longtable}
\usepackage{tikz}
\usepackage{mathtools}

\newcommand{\beqn}{\begin{eqnarray*}}
\newcommand{\eeqn}{\end{eqnarray*}}

\usepackage[capitalise]{cleveref}
\newtheorem{theorem}{Theorem}[section]
\newtheorem{proposition}[theorem]{Proposition}
\newtheorem{lemma}[theorem]{Lemma}
\newtheorem{hypothesis}[theorem]{Hypothesis}
\newtheorem{corollary}[theorem]{Corollary}
\theoremstyle{remark}
\newtheorem{remark}[theorem]{Remark}
\theoremstyle{notation}
\newtheorem{notation}[theorem]{Notation}
\theoremstyle{definition}
\newtheorem{example}[theorem]{Example}
\newtheorem{definition}[theorem]{Definition}

\newtheorem{theoremA}{Theorem}

\newtheorem{theoremB}{Theorem}

\crefname{hypothesis}{Hypothesis}{Hypotheses}
\crefname{hypothesis}{Hypothesis}{Hypotheses}
\crefname{theoremA}{Theorem}{Theorems}
\crefname{theoremB}{Theorem}{Theorems}
\crefname{corollary}{Corollary}{Corollaries}
\crefname{conjecture}{Conjecture}{Conjectures}
\crefname{proposition}{Proposition}{Propositions}
\crefname{qu}{Question}{Questions}
\crefname{lemma}{Lemma}{Lemmas}
\crefname{definition}{Definition}{Definitions}
\crefname{theorem}{Theorem}{Theorems}
\crefname{example}{Example}{Examples}
\crefname{Notation}{Notation}{}

\AddToHook{env/lemma/begin}{\crefalias{theorem}{lemma}}
\AddToHook{env/proposition/begin}{\crefalias{theorem}{proposition}}
\AddToHook{env/definition/begin}{\crefalias{theorem}{definition}}
\AddToHook{env/corollary/begin}{\crefalias{theorem}{corollary}}
\AddToHook{env/hypothesis/begin}{\crefalias{theorem}{hypothesis}}
\AddToHook{env/notation/begin}{\crefalias{theorem}{notation}}
\AddToHook{env/remark/begin}{\crefalias{theorem}{remark}}

\newcommand{\SU}{\mathrm{SU}}

\newcommand{\M}{\mathrm{M}}
\newcommand{\G}{\mathrm{G}}
\newcommand{\D}{\mathrm{D}}

\newcommand{\Aut}{\mathrm{Aut}}
\newcommand{\Hom}{\mathrm{Hom}}

\newcommand{\Syl}{\mathrm{Syl}}\newcommand{\syl}{\mathrm{Syl}}

\newcommand{\GammaL}{\Gamma\mathrm{L}}
\newcommand{\GF}{\mathrm{GF}}
\newcommand{\GL}{\mathrm{GL}}
\newcommand{\Sp}{\mathrm{Sp}}

\newcommand{\PSU}{\mathrm{PSU}}
\newcommand{\SL}{\mathrm{SL}}
\newcommand{\GU}{\mathrm{GU}}
\newcommand{\0}{\emptyset}

\newcommand{\PSL}{\mathrm{PSL}}
\newcommand{\PSp}{\mathrm{PSp}}

\newcommand{\SO}{\mathrm{SO}}

\newcommand{\Sym}{\mathrm{Sym}}
\newcommand{\Alt}{\mathrm{Alt}}

\newcommand{\SDih}{\mathrm{SDih}}

\newcommand{\Stab}{\mathrm{Stab}}

\renewbibmacro{in:}{}
\DeclareFieldFormat[article]{title}{#1}
 
\DeclarePairedDelimiter{\gen}{\langle}{\rangle}

\title[Quasisimple groups with a proper subgroup with the same vector orbits]
{Quasisimple groups with a proper subgroup having the same vector orbits in characteristic $2$}

\author{Chris Parker}
\address{Chris Parker\\
School of Mathematics\\
University of Birmingham\\
Edgbaston\\
Birmingham B15 2TT\\
United Kingdom}
\email{c.w.parker@bham.ac.uk}

\author{B. G. Rodrigues}
\address{B. G. Rodrigues\\Department of Mathematics and Applied Mathematics\\ University of Pretoria\\ Private Bag X20\\ Hatfield, Pretoria 0028, South Africa}
\email{bernardo.rodrigues@up.ac.za}

\begin{document}
\maketitle

\begin{abstract}
Let  $p$ be a prime, $G$ be a finite group, $H$ a proper subgroup of $G$ and $V$ a finite dimensional $\GF(p)G$-module. The triple $(G,H,V)$ is immutable if and only if $G$ and $H$ have the same orbits on the vectors of $V$. We determine the immutable triples for $G$ a quasisimple group, $H$ a subgroup of $G$ and $V$ a $\GF(2)G$-module.
\end{abstract}

\section{Introduction}
Group theorists often say, ``you know a group by its actions.''
In this paper, we examine this principle in the setting of quasisimple groups acting on vector spaces over $\GF(2)$ where  typically exceptions occur.
Specifically, we ask whether the orbit structure of a quasisimple group $G$ on a $\GF(2)G$-module $V$ provides a certificate of largeness, distinguishing $G$ from its proper subgroups.

\begin{definition}\label{def:imm}
Let $p$ be a prime,  $ G $ be a group, $ H <G $, and $ V $ be a $ \GF(p)G $-module.
We say that the triple $ (G,H,V) $ is \emph{immutable} if and only if $ G $ and $ H $ have exactly the same orbits on $ V $.
\end{definition}

Equivalently (viewing $V$ as a right $ \GF(p)G $-module), $ (G,H,V) $ is immutable if and only if $ vG = vH $ for all $ v \in V $.  There are two extreme cases of immutability. The first possibility is the transitive case: if $ H $ acts transitively on the non-zero vectors $V ^\# $ of $V$, then $ (G,H,V) $ is immutable. The second possibility  is the trivial case in which $ V $ is centralized by $ G $ in which case $G$ cannot be distinguished from any of  its subgroups.

The  subgroups of $\GL(V)$ which act transitively on $X^\#$ have been completely determined by Huppert and Hering \cite{Huppert, Hering} (see also \cite[Section~7]{HBIII} and the enumeration in \cite[Appendix~1]{LiebeckAffine}). Hence, whenever we find that $ H $ acts transitively on $ V ^\#$, we may regard $(G,H,V)$ as determined by this classification.

Our main results show that, apart from the transitive and trivial cases, immutable triples are rare when $G$ is quasisimple and $p=2$. In particular, the orbit structure of a $\GF(2)G$-module almost always distinguishes $G$ from its proper subgroups.  In \cite{Liebeck}, Liebeck explores a property related to immutability  when $G$ is a classical group acting on its natural module and $H$ is a subgroup of $G$. These examples appear itemised as parts (i) , (ii) and (iii) of \cref{Main theorem}. Our work extends \cite[Theorem]{Liebeck} as it does not specify the module $V$ involved other than requiring that it is irreducible.  \cref{thm:Theorem B} removes the irreducibility requirement and determines all immutable triples with $H$ maximal and $V$ a faithful $\GF(2)G$-module.

In the statement of \cref{Main theorem}, the \emph{natural module} for a classical group defined in characteristic $ 2 $ refers to that group’s usual module, regarded as a $ \GF(2)G $-module.
Similarly, we use the standard terminology for \emph{spin} and \emph{half-spin} modules for orthogonal and symplectic groups in characteristic $ 2 $.
For $G= \Omega_8^+(2^a) $, there are $ \GF(2)G $-modules of dimension $ 8a $ that are quasiequivalent under the triality automorphism. Hence any irreducible module of dimension $ 8a $ may be viewed as the natural module, though when two distinct such modules occur together, we must distinguish between them.

\begin{theoremA}\label{Main theorem}
Suppose that $ G $ is a quasisimple group, $ H $ a maximal subgroup of $ G $, and $ V $ a non-trivial irreducible $ \GF(2)G $-module.
Then $ (G,H,V) $ is immutable if and only if either $ H $ is transitive on the non-zero vectors of $ V$, or one of the following holds:
\begin{enumerate}
  \item $ G \cong \Omega_{2n}^-(2^a) $ with $ n \ge 3 $ odd,
  $ H \cong \GU_{n}(2^a) $,
  $ V $ is the natural $ \GF(2)\Omega_{2n}^-(2^a) $-module, and $ V|_H $ is the natural module.

  \item $ G \cong \Omega_{2n}^+(2^a) $ with $ n \ge 4 $ even,
  $ H \cong \GU_{n}(2^a).2 $,
  $ V $ is the natural $ \GF(2)\Omega_{2n}^+(2^a) $-module, and $ V|_H $ is the natural module
  (two conjugacy classes of such $ H $).

  \item $ G \cong \Omega_8^+(2^a) $,
  $ H \cong \Sp_6(2^a) $,
  $ V $ is the natural $ \Omega_8^+(2^a) $-module, and $ V|_H $ is the spin module
  (two conjugacy classes of such $ H $).

  \item $ G \cong \Sp_{6}(2^a) $,
  $ H \cong \SO_6^-(2^a) $,
  $ V $ is the spin module for $\GF(2) \Sp_{6}(2^a) $, and $ V|_H $ is the spin module.

  \item $ G \cong \Omega_8^+(2) $,
  $ H \cong \Alt(9) $,
  $ V $ is the natural $ \GF(2)\Omega_8^+(2) $-module, and $ V|_H $ is the spin module
  (two conjugacy classes of such $ H $).

  \item $ G \cong \Alt(9) $,
  $ H \cong \mathrm{P}\Gamma\mathrm{L}_2(8) $,
  $ V $ is the fully deleted permutation module for $ \GF(2)G $
  (two conjugacy classes of such $ H $).
\end{enumerate}
\end{theoremA}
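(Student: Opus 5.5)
The key observation is that immutability forces a factorisation. If $(G,H,V)$ is immutable, then for every $v\in V$ we have $vG=vH$. Equivalently, $G=HC_G(v)$, so $|G:H|=|C_G(v):C_H(v)|$ for every $v$. In particular $|H|\ge |G|/|C_G(v)|=|vG|$ for all $v$, and $|H|\,\cdot\,(\text{number of }H\text{-orbits on }V)\ge |V|$. Since $H$ and $G$ have the same orbits, $H$ has the same number of orbits as $G$, and by Burnside's lemma this gives
\[
\frac{1}{|H|}\sum_{h\in H}|C_V(h)|=\frac{1}{|G|}\sum_{g\in G}|C_V(g)|.
\]
The plan is to use these numerical constraints to cut the problem down to a short list. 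Then I would confirm the surviving candidates via known factorisations $G=HC_G(v)$.

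First step: reduce to factorisations of almost simple groups. For each $G$-orbit $vG$ with $v\ne 0$, we need $G=H\,C_G(v)$ with $C_G(v)$ a proper subgroup, because $V$ is non-trivial irreducible. Since $H$ is maximal, this is a maximal factorisation of $G$, or of $G/Z(G)$, with $Z(G)$ having odd order acting by scalars or trivially. The Liebeck--Praeger--Saxl classification of maximal factorisations of almost simple groups then restricts $(G/Z(G),H)$ to a known list: $\Alt(n)$ with $H$ a $k$-homogeneous group; $\Sp_{2m}(q)$ with $H=O^\pm_{2m}(q)$; $\Omega^\pm_{2m}(q)$ with $H$ of type $\GU_m$ or $\Sp_{2m-2}$; $\Omega_8^+$ with $H\in\{\Sp_6,\Alt(9)\}$; $\PSL_n$ with $H$ of type $\Gamma\mathrm{L}_{n/k}$; $G_2$ factorisations; and a few sporadic cases. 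Moreover $C_G(v)$ must lie in the complementary factor for every $v$. This is very restrictive: every vector stabiliser must be contained in a subgroup $K$ with $G=HK$.

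Second step: bound the module. The count $|H|\cdot|\{\text{orbits}\}|\ge |V|$ combines with the general fact that the minimal degree of $V$ over $\GF(2)$ grows with $G$, using L\"ubeck's tables and the Landazuri--Seitz bounds. For each factorisation on the list, $|H|$ is typically much smaller than $2^{\dim V}$ unless $V$ is a small module: the natural, spin or half-spin module, or the deleted permutation module for $\Alt(n)$. This leaves a finite set of families of $(G,H,V)$.

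Third step: treat each surviving family by examining the $G$-orbits on $V$ explicitly.
\begin{itemize}
\item For orthogonal natural modules, the nonzero orbits are singular and nonsingular vectors. Witt's lemma plus transitivity of $\GU_n(2^a)$ on vectors of each Hermitian norm gives cases (i) and (ii), with parity of $n$ dictating the sign.
\item For $\Omega_8^+(2^a)$, triality transfers the $\Sp_6$/$\Omega_7$ statement on the natural module to the spin module, giving (iii). The $\Alt(9)$ inside $\Omega_8^+(2)$ gives (v) by a direct orbit count.
\item For $\Sp_6(2^a)$ on the spin module, the isomorphism with $\Omega_7$ and the $G_2$/$\Omega_8^+$ structure yields (iv).
\item For $\Alt(9)$, $\mathrm{P}\Gamma\mathrm{L}_2(8)$ is $3$-homogeneous of degree $9$. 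Orbits on the deleted module correspond to even-size subsets of size up to $4$, modulo complement, so $4$-homogeneity is needed; $\mathrm{P}\Gamma\mathrm{L}_2(8)$ is $4$-homogeneous on $9$ points, giving (vi).
\end{itemize}
All other cases are excluded by exhibiting a vector $v$ with $C_G(v)$ not supplementing $H$, typically via an order or Sylow-divisibility argument. The converse direction is the same explicit verification.

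The main obstacle will be the third step for the larger modules where factorisations exist but may fail. Examples are $\Sp_{2m}(q)$ with $H=O^\pm_{2m}(q)$ on spin modules for $m\ge4$, and $\PSL_n$ with field-extension subgroups on $\wedge^2$ or adjoint modules. There the orbit structure is less transparent. I would first try the Burnside-count identity together with a comparison of generic vector stabilisers' orders: show some vector has stabiliser $C_G(v)$ with $|C_G(v)|\,|H|<|G|\,|C_H(v)|$, using known generic stabilisers from the Guralnick--Lawther classification of generic stabilisers. Small $q$ and small rank would be checked computationally.
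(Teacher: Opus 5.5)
Your overall frame (immutability gives $G=HC_G(v)$ for every $v$, then use maximal factorisations) is right, and your converse checks in the third step are fine. The forward direction has a genuine gap, and it sits in the second step, where essentially all of the work lies.

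\textbf{Why the second step does not work as stated.} The inequality $|H|\cdot\#\{\text{orbits}\}\ge|V|$ does not bound $\dim V$ by itself, because the number of orbits grows with $|V|$. The L\"ubeck and Landazuri--Seitz bounds are \emph{lower} bounds on $\dim V$, so they cannot exclude large modules. What immutability really gives is that every orbit has length at most $|H|<|G|$, so $G$ has no regular orbit.

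\textbf{Outside defining characteristic.} The paper feeds the no-regular-orbit property into existing classifications of modules without regular orbits (Fawcett--O'Brien--Saxl for alternating groups, and the corresponding results for sporadic groups and for Lie type groups in odd characteristic), and then computes.
\begin{itemize}
\item Your first step gives nothing for $\Alt(n)$, since $\Alt(n)=\Alt(n-1)H$ for every transitive $H$.
\item For the deleted module with $n\ge10$ you need that $H$ is $t$-homogeneous for even $t$ close to $n/2$. Kantor's theorem and the classification of $4$-transitive groups then leave only $\M_{11}$ and $\M_{12}$, which fail by an orbit count.
\item For $n=9$ there is no identification by complements, since $V$ is the full even-weight space and sizes $2,4,6,8$ all occur. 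Your conclusion there still holds.
\item For sporadic groups, the useful move is to take $v$ fixed by a Sylow $2$-subgroup, so that $|G:C_G(v)|$ is odd. Combined with the Liebeck--Praeger--Saxl list of factorisations with an odd-index factor, this leaves only Mathieu groups.
\end{itemize}

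\textbf{Defining characteristic $2$.} This is the heart of the theorem, and here no size estimate yields a finite list. Twisted tensor products $\bigotimes_{\theta\in\Theta}W^\theta$ of natural, spin, half-spin or $\wedge^n$ modules survive bounds such as $|V|\le|G|^{\alpha(G)}$ in infinite families. Guralnick--Lawther generic stabilisers concern the algebraic group and do not produce the specific $\GF(2)$-vectors you need.

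You already have the right elimination criterion: every $C_G(v)$ must lie in some $K$ with $G=HK$. The missing idea is which vectors to test.
\begin{itemize}
\item Take $v\in C_V(S)$ with $S\in\Syl_2(G)$, a high-weight vector. Then $C_G(v)$ lies in a maximal parabolic $P$, so $G=PH$. Factorisations with a parabolic factor form a short list.
\item $N_G(C_V(S))$ is a parabolic subgroup factorising with $H$. An argument with Zsigmondy primes shows that no $H$ factorises with the intersection of two distinct maximal parabolics. So $N_G(C_V(S))=P_j$ is maximal.
\item Hence the high weight involves $\omega_j$ alone, and Steinberg's tensor product theorem gives $\overline V=\bigotimes_{\theta\in\Theta}V(\omega_j)^\theta$. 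Here $j$ is dictated by which $P_j$ factorises with $H$.
\item The case $|\Theta|\ge2$ is killed by exhibiting subgroups that fix a nonzero vector of the tensor product yet lie in no maximal subgroup factorising with $H$, then passing to a $\GF(2)$-irreducible summand. An example is $\SL_2(2)\times\SL_{m-2}(q)$ with first factor centralised by the Frobenius.
\item The single-factor modules not in the theorem are the spin modules of $\Sp_{2n}(q)$ for $n\ge4$, the half-spin modules of $\Omega_{2n}^+(q)$, and $\wedge^n$ for $\SU_{2n}(q)$. These are killed by explicit vectors. For instance, take a vector in $\langle C_V(S)^{P_{n-3}}\rangle$ whose stabiliser induces $\G_2(q)$ on the $\Sp_6(q)$ Levi factor. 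Its Sylow $2$-subgroup is too large for it to lie in a non-parabolic factorising subgroup, and it provably lies in no $P_n$.
\item The same parabolic observation disposes of exceptional groups in characteristic $2$, while $\G_2(3^d)$ needs a $3$-rank argument.
\end{itemize}
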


\begin{remark}
In \cref{Main theorem} (iv) with $G \cong \Sp_6(2^a)$, we have   $H \cong  \SU_4(2^a){:}2$, and so $V|_H$ can be regarded as the natural module for $H$. Furthermore, regarding $G$ as a subgroup of $\Omega_8^+(q)$ via the embedding in \cref{Main theorem} (iii),   the normaliser of $H$ is containing in the maximal subgroup $\GU_4(q){:2}$.
\end{remark}

In   \cref{Stabilizers of orbit reps} below,  for the triples $(G,H,V)$ listed in \cref{Main theorem} (i) to (vi), we give the $G$-orbit lengths  and stabilizers of orbit representatives. Here $P_1$ and $P_3$ are parabolic subgroups of $G$ and are described just before \cref{prop:char2 list}.

\begin{table}[h!]
\begin{tabular}{cccc}
\hline
Case & Orbit length&Stabiliser&Multiplicity\\
   \hline
 (i) & $(2^{an}+1)(2^{a(n-1)}-1)$&$O^{2'}(P_1)$&1\\& $ 2^{a(n-1)}(2^{an}+1)  $&$\Sp_{2n-2}(2^a)$&$2^a-1$\\
\hline
(ii)   & $(2^{an}-1)(2^{a(n-1)}+1)$&$O^{2'}(P_1)$&$1$\\& $2^{a(n-1)}(2^{an}-1)$&$\Sp_{2n-2}(2^a)$&$2^a-1$\\
\hline
(iii)&$(2^{4a}-1)(2^{3a}+1)$&$O^{2'}(P_1)$&$1$\\& $2^{3a}(2^{4a}-1)$&$\Sp_{6}(2^a)$&$2^a-1$\\
\hline
(iv)&$(2^{4a}-1)(2^{3a}+1)$&$O^{2'}(P_3)$&1\\
&$2^{3a}(2^{4a}-1)$&$\mathrm{G}_2(2^a)$&$2^a-1$\\
\hline
(v)&$135$&$2^6{:}\Omega_6^+(2)$&1\\
&$120$&$\Sp_6(2)$&$1$\\
\hline
(vi)& 9&$\Alt(8)$&$1$\\
&36&$(\Sym(2) \times \Sym(7))\cap \Alt(9)$&1\\
&84&$(\Sym(3)\times \Sym(6))\cap \Alt(9)$&1\\
&126&$(\Sym(4)\times \Sym(5))\cap \Alt(9)$&1\\
\hline
\end{tabular}\vspace{3mm}
\caption{Orbits, stabilisers and multiplicities for quasisimple groups having a maximal subgroup $H$ and an irreducible module $V$ such that $(G,H,V)$ is immutable.}\label{Stabilizers of orbit reps}
\end{table}

An important consequence of \cref{Main theorem} is that the module $V$ is known. This allows us to determine the immutable triples $(G,K,V)$ where $K$ is an arbitrary subgroup of $G$.

 \begin{corollary}\label{Cor:Main Result} Suppose that $G$ is quasisimple, $K <G$, $V$ is a non-trivial irreducible $\GF(2)G$-module and $(G,K,V)$ is immutable.  Then, either, $K$ acts transitively on the non-zero vectors of $V$, or one of the following holds:
 \begin{enumerate}
 \item $G \cong \Omega_{2n}^-(2^a)$ with $n \ge 3$ odd,  $V$ is the natural $\GF(2)G$-module, $K \cong \SU_{n}(2^a)$ and $(G,N_G(K),V)$ is as in \cref{Main theorem} (i).

  \item $ G \cong \Omega_{2n}^+(2^a) $ with $ n \ge 6 $ even, $V$ is the natural $\GF(2)G$-module,
   $K \cong \SU_{n}(2^a)$ and $(G,N_G(K),V)$ is as in \cref{Main theorem} (ii).
 \item $G \cong \Omega_{2n}^\epsilon(2^a)$ with $n \ge 5$ and $\epsilon 1 =(-1)^n$,  $V$ is the natural $\GF(2)G$-module, $K \cong \SU_{n}(2^a)$ and $(G,N_G(K),V)$ is as in \cref{Main theorem} (i) or (ii).
 \item $G \cong \Sp_6(2^a)$, $V$ is the spin $\GF(2)G$-module, $K \cong \SU_4(2^a)$, and  $(G,N_G(K),V)$ is as in \cref{Main theorem} (iv).
 \item $G= \Omega_8^+(2^a)$,  $a\ge 2$, $V$ is the natural $\GF(2)G$-module and either
 \begin{enumerate} \item $K \cong \Sp_6(2^a)$ is maximal in $G$  (two conjugacy classes);
 \item $K \le L$ with $K \cong \SU_4(2^a)$  and  $L \cong \Sp_6(2^a)$ as in (a), and $N_G(K)$ is maximal in $G$ with $N_G(K) \cong \GU_4(2^a){:}2$ and $|N_G(K):N_L(K)|=2^a+1$ (two conjugacy classes).
 \end{enumerate}
 \item $G \cong \Omega_8^+(2)$,  $V$ is the natural $\GF(2)G$-module and either
 \begin{enumerate}
 \item $K \cong \Sp_6(2)$ is maximal in $G$  (two conjugacy  classes);
 \item $K \cong \SU_4(2)$, $K \le L \cong\Sp_6(2)$, $N_G(K) \cong \GU_4(2){:}2$ is maximal in $G$ with $|N_G(K): N_L(K)|=3$  (two conjugacy classes); or
 \item $K \cong \Alt(9)$ is maximal in $G$  (two conjugacy classes).
\end{enumerate}
\item $G \cong \Alt(9)$, $V$ is the fully deleted permutation $\GF(2)G$-module,  and $K \cong \PSL_2(8)$ or $\mathrm P \Gamma \mathrm L_2(8)$ (two conjugacy classes).
\end{enumerate}
  \end{corollary}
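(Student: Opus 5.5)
The approach is to reduce to \cref{Main theorem} and then analyse the subgroups of the maximal subgroups listed there. Let $(G,K,V)$ be immutable with $K$ not transitive on $V^\#$. Choose a maximal subgroup $H$ of $G$ with $K\le H$. Since $vK\subseteq vH\subseteq vG=vK$ for every $v\in V$, the triple $(G,H,V)$ is immutable. Moreover, $H$ is not transitive on $V^\#$, since otherwise $K$, having the same orbits as $G$, would be transitive as well. Hence $(G,H,V)$ is one of (i)--(vi) of \cref{Main theorem}. Immutability of $(G,K,V)$ says exactly that $K$ is transitive on each $G$-orbit. So we must find all $K\le H$ that act transitively on every orbit listed in \cref{Stabilizers of orbit reps}.

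\textbf{Orbit-length conditions.} For each case, $|K|$ must be divisible by the lcm of the orbit lengths, and $K$ must act transitively on the corresponding coset spaces of $H$.
\begin{itemize}
\item In case (i), the orbits of $H=\GU_n(2^a)$ on the natural module are the nonzero isotropic vectors and the vectors with each of the $2^a-1$ nonzero norms. Transitivity on the isotropic vectors forces $K$ to be transitive on the isotropic points of the unitary space. By the known lists of transitive subgroups (Liebeck's affine/parabolic results, or a direct argument using primitive prime divisors of $2^{2an}-1$ and $2^{a(n-1)}-1$), this forces $K\ge \SU_n(2^a)$. One then checks that $\SU_n(2^a)$ is already transitive on vectors of each given norm, using Witt's lemma and the fact that the determinant can be adjusted on the perpendicular complement. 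This gives $K\cong\SU_n(2^a)$, or $K$ lies between $\SU_n(2^a)$ and $H$. The statement records $K$ up to its normaliser, and the intermediate cases are absorbed by $N_G(K)=H$.
\item Case (ii) is the same argument, with the outer $2$ handled separately.
\item The parity condition $\epsilon 1=(-1)^n$ and the bounds $n\ge 5$ or $n\ge 6$ in parts (ii) and (iii) come from excluding small cases in which $\SU_n(2^a)$ coincides with, or is contained in, other overgroups. The main example is $n=4$, where $\SU_4(2^a)$ lies in $\Sp_6(2^a)$ via triality; this case is treated under (v).
\end{itemize}

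\textbf{Cases (iii)--(vi).} For case (iii), the subgroups of $\Sp_6(2^a)$ that are transitive on both the orbit of length $(2^{4a}-1)(2^{3a}+1)$ and those of length $2^{3a}(2^{4a}-1)$ must have index coprime to suitable primitive prime divisors. Via the spin-module picture, this reduces to $\Sp_6(2^a)$ itself or $\SU_4(2^a)$ inside $\GU_4(2^a){:}2$. This reduction uses the factorisation $\Sp_6(q)=G_2(q)\,\SU_4(q).2$ and the fact that $\SU_4(2^a)\cap G_2(2^a)=\SU_3(2^a)$ with the right index. The index computation $|N_G(K):N_L(K)|=2^a+1$ follows from the normaliser structure. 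Case (iv) is analogous, giving $\SU_4(2^a)$. Case (v) is a finite computation in $\Omega_8^+(2)$: the subgroups of $\Alt(9)$ transitive on orbits of sizes $135$ and $120$ must have order divisible by $5\cdot 27\cdot 8$, and the maximal subgroups of $\Alt(9)$ show that none other than $\Alt(9)$ works. Case (vi) requires $K\le \mathrm{P}\Gamma\mathrm{L}_2(8)$ to be transitive on the $k$-subsets of $\{1,\dots,9\}$ for $k\le 4$. Since $\PSL_2(8)$ is $3$-homogeneous and one checks it is $4$-homogeneous with $126=504/4$, we get $K=\PSL_2(8)$ or $K=\mathrm{P}\Gamma\mathrm{L}_2(8)$. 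Proper subgroups fail on order grounds, as $9\cdot 7\cdot 4$ must divide $|K|$.

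\textbf{Expected obstacle.} The hard part is the generic classical cases (i) and (ii): showing that no proper subgroup of $\GU_n(2^a)$ other than those containing $\SU_n(2^a)$ is transitive on all the orbits. Here I would invoke Liebeck's classification of subgroups of classical groups transitive on isotropic/nonsingular points, or the Liebeck--Praeger--Saxl factorisation results. I would also need to check that no exotic transitive subgroups arise for small $n$ and $a$, for example $n=3$ or the field-extension subgroups $\GU_{n/b}(2^{ab})$.
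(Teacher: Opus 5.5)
Your reduction is sound and matches the paper's starting point. A maximal $H\ge K$ gives an immutable, non-transitive triple $(G,H,V)$ from \cref{Main theorem}, and $K$ must then be transitive on every $H$-orbit. The paper finishes differently. It applies \cref{same point orbit} and Liebeck's theorem to $K\le G$ directly, so singular and nonsingular points are handled simultaneously. It folds (iv) into the $\Omega_8^+(q)$ case via $\Sp_6(q)<\Omega_8^+(q)$, and it settles $\Omega_8^+(2)$ by a complete subgroup check.

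The concrete arguments you put in place of those citations contain false steps.

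\textbf{Case (i).} You assert that transitivity on nonzero isotropic vectors already forces $K\ge\SU_n(2^a)$. This fails for $(n,a)=(9,1)$. By the exceptional factorisation in \cref{prop:char2 list}(xv), $3^{.}\mathrm J_3\le\SU_9(2)\le\GU_9(2)$ is transitive on isotropic points. Its centre consists of the $\GF(4)^\times$-scalars, so it is transitive on all $130815$ nonzero isotropic vectors. It is excluded only by the nonisotropic orbit of length $2^8\cdot 513$, since $|3^{.}\mathrm J_3|_2=2^7$. So the two orbit types must be used together. The exotic cases are not confined to $n=3$ and extension-field subgroups.

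\textbf{Case (v).} The order test does not eliminate $(\Sym(6)\times\Sym(3))\cap\Alt(9)$, a maximal subgroup of order $2160=2\cdot1080$, nor its index-$2$ subgroup $\Alt(6)\times 3$. A transitivity argument is needed. The stabiliser in $\Alt(9)$ of a singular vector has order $1344$. That order forces it into a point stabiliser $\Alt(8)$; it is $\mathrm{AGL}_3(2)$. It therefore has two orbits on $3$-subsets, so
\[
\Alt(9)\ne\bigl((\Sym(6)\times\Sym(3))\cap\Alt(9)\bigr)\,\mathrm{AGL}_3(2).
\]

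\textbf{Case (iii).} When $a=1$ there is no primitive prime divisor of $2^6-1$, so your ppd argument is unavailable. Moreover $\Sym(3)\times\Sym(6)<\Sp_6(2)$ has order divisible by $1080$. It fails only because it has orbits of lengths $45,90,120$ on the spin module $2\otimes4$. Separately, the factorisation $\Sp_6(q)=\mathrm G_2(q)\,\SO_6^-(q)$ shows only that $\SU_4(q)$ works. The reduction to $K\ge\SU_4(q)$ has to come from applying \cref{Main theorem} to $(\Sp_6(q),K,V)$ and then running the unitary analysis.

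All of these defects are repairable, but as written the argument breaks at exactly the exceptional small cases.
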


\begin{corollary}\label{cor1} Suppose that $G$ is quasisimple, $K <G$, $V$ is an irreducible $\GF(2)G$-module and $(G,K,V)$ is immutable. Let $\mathbb K=\mathrm{End}_G(V)$. If $G$ has at least $3$ orbits on the $\mathbb KG$ $1$-spaces of $V$, then $G \cong \Alt(9)$,  $\mathbb K=\GF(2)$, $V$ is the fully deleted $\mathbb KG$-module, $K \cong \mathrm{P}\Gamma \mathrm L_2(8)$, or $\PSL_2(8)$, $\mathbb K=\GF(2)$ and $G$ has $4$ orbits on the $\mathbb KG$ $1$-spaces of $V$.
\end{corollary}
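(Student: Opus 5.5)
We derive \cref{cor1} from \cref{Cor:Main Result}. Suppose $(G,K,V)$ is immutable with $V$ irreducible over $\GF(2)$. If $V$ is trivial, then $G$ has exactly one orbit on the $\mathbb K$ $1$-spaces, because $\dim_{\mathbb K}V=1$. So we may assume $V$ is non-trivial, and \cref{Cor:Main Result} applies. If $K$ is transitive on $V^\#$, then so is $G$, and $G$ has a single orbit on the $\mathbb K$ $1$-spaces. The plan is therefore to go through cases (i)--(vii) of \cref{Cor:Main Result}. In each case we first identify $\mathbb K=\mathrm{End}_G(V)$ and then count the $G$-orbits on $\mathbb K$ $1$-spaces. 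Here $\mathbb K$ is a finite field, since $V$ is irreducible and Schur's lemma applies.

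In cases (i)--(vi) the module $V$ is the natural module for $\Omega^\pm_{2n}(2^a)$ or the spin module for $\Sp_6(2^a)$. In each of these, $V$ is absolutely irreducible when regarded over $\GF(2^a)$, so $\mathbb K=\GF(2^a)$.

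\begin{itemize}
\item For the orthogonal modules, the $\mathbb K$ $1$-spaces are either singular or non-singular. Witt's lemma shows that $G$ is transitive on each type, giving exactly two orbits.
\item For the spin module of $\Sp_6(2^a)$, we use the embedding $\Sp_6(2^a)<\Omega_8^+(2^a)$ from \cref{Main theorem}(iii). It shows $G$ has the same two orbit types on $1$-spaces: the $(2^{4a}-1)(2^{3a}+1)/(2^a-1)$ singular points, and the non-singular points with stabiliser $\G_2(2^a)$.
\end{itemize}

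These counts agree with \cref{Stabilizers of orbit reps}. There the vector orbits are one orbit with stabiliser $O^{2'}(P_i)$ and $2^a-1$ further orbits of equal length. Scaling by $\mathbb K^\#$ merges these into one orbit of singular $1$-spaces and one orbit of non-singular $1$-spaces, since $G$ fixes the quadratic form only up to its values and $\mathbb K^\#$ permutes those values transitively. For the cases where $K$ is a smaller subgroup, $N_G(K)$ has the same vector orbits as $G$. So in every case (i)--(vi), $G$ has exactly two orbits on $\mathbb K$ $1$-spaces. The case $\Omega_8^+(2)$ with $\Alt(9)$ fits the same pattern: $\mathbb K=\GF(2)$, with orbits of lengths $135$ and $120$.

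Only case (vii) remains. Here $G=\Alt(9)$ and $V$ is the fully deleted permutation module of dimension $8$. This module is absolutely irreducible, so $\mathbb K=\GF(2)$ and the $1$-spaces are just the non-zero vectors. \cref{Stabilizers of orbit reps} gives four orbits, of lengths $9,36,84,126$. These total $255=2^8-1$, which confirms the count. The subgroup $K$ is $\PSL_2(8)$ or $\mathrm P\Gamma\mathrm L_2(8)$ by \cref{Cor:Main Result}(vii). This is exactly the exceptional configuration in the conclusion.

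The step requiring the most care is determining $\mathbb K$ precisely and checking that the vector orbits of \cref{Stabilizers of orbit reps} collapse to exactly two $1$-space orbits. In the spin case this needs the non-degenerate quadratic form that $\Sp_6(2^a)$ preserves on its spin module. I would take this form from the embedding into $\Omega_8^+(2^a)$ used in \cref{Main theorem}(iii).
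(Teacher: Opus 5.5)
Your proposal is correct and follows essentially the paper's argument. In both, the classification leaves only the $\Alt(9)$ fully deleted module as a non-transitive case with more than two $G$-orbits on $\mathbb K$ $1$-spaces; all other cases have $\mathbb K=\GF(2^a)$ and exactly two orbits, namely the singular and the non-singular points. The only difference is that you cite \cref{Cor:Main Result} and read off $K$ from its part (vii), whereas the paper cites \cref{Main theorem}, implicitly passing from $K$ to a maximal overgroup via \cref{lem:reg orb compendium}(iii), and identifies $K$ by a direct {\sc Magma} check.
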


\cref{Cor:Main Result,cor1} are proved at the end of \cref{class char 2}.

\begin{remark}
Unlike in the non-transitive cases of \cref{Cor:Main Result}, in the transitive case it is possible to have chains of quasisimple subgroups
\[
 H_1 < \dots < H_k < G
\]
of arbitrary length such that each $(G,H_i,V)$ is immutable.
For example,
\[
\G_2(2^4) < \Sp_6(2^4) < \Sp_{12}(2^2) < \Sp_{24}(2) < \SL_{24}(2)
\]
has the property that $(\SL_{24}(2), \G_2(2^4), V)$ is immutable, where $V$ is the natural module of dimension 24 for $\SL_{24}(2)$.
\end{remark}

Suppose that $V$ is a $\GF(p)G$-module and $0 \neq U \le V$.
We say that $U$ is a \emph{centralized direct summand} of $V$ if $U \le C_V(G)$ and there exists a submodule $W \le V$ such that $V = U \oplus W$.
By \cref{lem:trivial factors}, if $U$ is a centralized direct summand and $V = U \oplus W$, then $(G,H,V)$ is immutable if and only if $(G,H,W)$ is immutable.
Thus, we may disregard centralized direct summands when studying immutability of general modules.

Before stating our second main theorem, we briefly describe a certain module for $\Sp_{2n}(2^a)$.
Let $G = \Sp_{2n}(2^a) \cong \Omega_{2n+1}(2^a)$ with $n \ge 1$ and $(n,a) \ne (1,1)$.
Via this isomorphism, $G$ naturally embeds into $\Omega_{2n+2}^+(2^a)$ . Let $W$ be the natural $\Omega_{2n+2}^+(2^a)$-module and define $\mathcal W(\Sp_{2n}(2^a))$ to be $W|_G$ regarded as a $\GF(2)G$-module (see \cref{lem:symplectic fact} for more details). Recall that for an $\mathbb FG$-module $V$ with submodules $U \ge W$, we refer to $U/W$ as a \emph{section} of $V$.

\begin{theoremB}\label{thm:Theorem B}
Suppose that $G$ is a quasisimple group, $H$ is a maximal subgroup of $G$, and $V$ is a $\GF(2)G$-module.
Assume that $V$ has no centralized direct summand and that $(G,H,V)$ is immutable.
Then either $V$ is irreducible and $(G,H,V)$ is as in \cref{Main theorem}, or one of the following holds:
\begin{enumerate}
  \item $G = \Sp_{2n}(2^a)$, $n \ge 3$,  $V$ is a faithful section of $\mathcal W(\Sp_{2n}(2^a))$ and either
   \begin{enumerate}\item$H \cong \Sp_{2n/b}(2^{ab}){:}b$ for some odd prime $b$ dividing $n$, or \item $2n = 6$ and $H \cong \mathrm{G}_2(2^a)$.
    \end{enumerate}
  \item $G = \Sp_{2n}(2^a)$, $n \ge 2$ even, $(n,2^a) \neq (2,2)$, $H = \Sp_n(2^{2a}){:}2$, and for $\mathcal W = \mathcal W(\Sp_{2n}(2^a))$ and each $B \le C_{\mathcal W}(G)$, there exists a uniquely determined submodule
  $[ \mathcal W, G ] \le B^\dagger \le \mathcal W$ with $\dim B^\dagger / B = a(2n+1)$ such that the faithful section $V = A/B$ satisfies $(G,H,V)$ is immutable if and only if $[V,G] \le A \le B^\dagger$.
  \item $G = \SL_4(2) \cong \Alt(8)$, $H = \Alt(7)$, and $V$ is a direct sum of at most three irreducible $4$-dimensional modules for $G$.
  \item $G = \Alt(6) = \Sp_4(2)'$, $H = \SL_2(4) \cong \Alt(5)$, and $V$ is the $5$-dimensional $\Omega_5(2)$-module or its dual restricted to $G$.
\end{enumerate}
\end{theoremB}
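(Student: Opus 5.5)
We deduce \cref{thm:Theorem B} from \cref{Main theorem} by induction on $\dim V$, working along a composition series and controlling how immutability passes to submodules and quotients. First, immutability is inherited by sections. If $U$ is a submodule, then $vG=vH$ for every $v\in U$ gives immutability of $(G,H,U)$. For the quotient, note that $(v+U)G=\{vg+U\}=(v+U)H$, so $(G,H,V/U)$ is immutable as well. Consequently every composition factor $X$ of $V$ is either trivial or gives an immutable irreducible triple $(G,H,X)$. By \cref{Main theorem}, each such $X$ is then either a transitive module (Hering's list) or one of the modules in (i)--(vi). The first step is therefore to tabulate, for each quasisimple $G$ and maximal $H$, the set $\mathcal X(G,H)$ of nontrivial irreducible modules on which $H$ has the same orbits as $G$. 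Only for $G$ and $H$ with $\mathcal X(G,H)\neq\emptyset$ can a non-irreducible example arise.

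Second, we use a counting criterion for extensions. Write $V\ge U$ with both $U$ and $V/U$ immutable. Then $(G,H,V)$ is immutable if and only if, for each $v\in V$, the stabiliser $H_{v+U}$ acts on the coset $v+U$ with the same orbits as $G_{v+U}$. Equivalently, $|G_{v+U}:G_v|=|H_{v+U}:H_v|$, which amounts to $G_v H=G_{v+U}H$ after reduction. In cohomological terms, the obstruction lives in the restriction of the class of the extension to the stabilisers. We apply this first when $U$ is trivial: there $v+U$ is an orbit of $G_{v+U}$ on an affine space, and the condition becomes that the $1$-cocycle (or derivation) $G_{v+U}\to U$ has image equal to that of its restriction to $H_{v+U}$. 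Since $V$ has no centralized direct summand, trivial factors must sit in nonsplit extensions. This forces $H^1(G,X)\neq 0$ or $\mathrm{Ext}^1_G(X,\GF(2))\neq 0$ for some $X\in\mathcal X(G,H)$. For natural modules of symplectic groups this is exactly the situation of $\mathcal W(\Sp_{2n}(2^a))$, whose structure is given by \cref{lem:symplectic fact}. We then check which maximal $H$ retain the same orbits on the uniserial pieces $\GF(2)^a$, the natural module, and $\GF(2)^a$. That check is where $\Sp_{2n/b}(2^{ab}){:}b$, $\G_2(2^a)$ and $\Sp_n(2^{2a}){:}2$ appear. The last of these requires computing precisely which submodule $B^\dagger$ keeps the required orbit equalities, via the above index condition applied vector by vector using the quadratic forms on $W$ that $H$ preserves.

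Third, we handle $V$ with several nontrivial composition factors, for example $U\oplus X$ or nonsplit extensions of two members of $\mathcal X(G,H)$. Immutability of $U\oplus X$ requires that for $u\in U$ the stabiliser $H_u$ has the same orbits as $G_u$ on $X$. By comparing orbit lengths, this becomes a transitivity requirement: $G_u=H_uG_x$-type factorizations for all pairs. Using the factorization lists of maximal subgroups, it typically fails. The small exceptions are $\Alt(8)>\Alt(7)$, where $\Alt(7)$ is $2$-transitive on both $4$-dimensional modules and the relevant point stabilisers factorize, yielding at most three summands, and $\Alt(6)>\Alt(5)$ on the $5$-dimensional module. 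These are verified by direct orbit counts.

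The main obstacle is the second step for $\Sp_{2n}(2^a)$. There, determining exactly which faithful sections $A/B$ of $\mathcal W$ stay immutable for $H=\Sp_n(2^{2a}){:}2$ requires explicit control of $C_{\mathcal W}(G)$, of $[\mathcal W,G]$, and of the $H$-invariant forms. I would first treat the case $B=0$ by computing the orbits of $G$ and $H$ on $\mathcal W$ via the $\Omega^+_{2n+2}$-quadratic form, then transport the result to general $B$ by quotienting.
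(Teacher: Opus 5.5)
\textbf{Verdict: there is a genuine gap.} Your first two steps agree with the paper. Sections inherit immutability (\cref{lem: reduct to irred}). The faithful composition factors are read off from \cref{Main theorem} and \cref{prop:trans cases}. A single natural symplectic factor glued to trivial factors forces $V$ to be a section of $\mathcal W$ (\cref{lem:symplectic fact}), and \cref{lem:sp2nq immutable struct} then does essentially your vector-by-vector index computation.

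\textbf{The main gap is your third step}, which is where most of the proof lies. Your criterion for a direct sum $U\oplus X$ is correct. For a non-split extension $W<V$ of two faithful factors, however, vector stabilisers do not decompose, and $\mathrm{Ext}^1_G$ between the factors is not known in general. So ``using the factorisation lists it typically fails'' is not an argument. You also need \cref{lem:min exactly 2} to reduce to a section with exactly two composition factors and no centralized summand: sections of $V$ can acquire centralized direct summands, so a naive induction on $\dim V$ does not close.

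The paper's device handles split and non-split cases at once. Take $P=G_{v+W}$ for a well-chosen coset: a singular or high-weight vector, or $\G_2(2^a)$ for the spin module of $\Sp_6(2^a)$. Using \cref{lem:End bound} and known first cohomology of $P/O_2(P)$, one shows $v$ can be chosen with $C_G(v)=P$. Then $C_G(v+w)=C_P(w)$ by \cref{lem:vec stabiliser}, and this group is too small to factorise with $H$. Depending on the case, it lies in $P_1\cap P_3$ or $P_1\cap P_4$, or in a conjugate of $P_2$, or has index divisible by $(2^{a(n-1)}-1)(2^{an}-1)$.

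For $\Sp_{2n}(2^a)$ with two natural factors separated by trivial ones, \cref{prop:symplecticGroups} needs a further idea. Take $p$ a Zsigmondy prime for $q^{2(n-1)}-1$, so $p\mid q^{n-1}+1$. Since $H$ contains no Sylow $p$-subgroup, every vector of $V\setminus Y$ is fixed by a conjugate of an element $\tau$ of order $p$. Applying this to $x+b$, with $x$ centralized by $P$ and $b\in K\setminus L$, contradicts the fixed-point-free action of $\tau$ on $K/L$.

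Without some such mechanism, your proposal does not show that $V$ has at most one faithful composition factor outside $\Alt(8)$. For $\Alt(8)$ it also does not exclude the non-split extension of $F$ by $F^*$, nor bound the number of summands.

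\textbf{You also overlook covering groups with $|Z(G)|$ even.} In characteristic $2$, $O_2(Z(G))$ acts trivially on every irreducible module, so a composition-factor analysis only sees $G/O_2(Z(G))$. Yet $V$ itself may be a module on which a central involution acts non-trivially; the paper finds faithful $9$-dimensional modules for $2^.\Alt(6)$, for instance. Such modules must be excluded separately, as in \cref{lem:CVt,ZOdd}, for $2^.\Alt(n)$ with $6\le n\le 9$, $2^.\SL_3(2)$, $2^.\Sp_6(2)$, $2^.\Omega_6^-(2)$, $2^.\Omega_8^+(2)$ and $(2^2)^.\Omega_8^+(2)$. The case $G/Z(G)\cong\Omega_8^+(2)$ relies on the cohomological criterion of \cref{lem:some cohomology}, because {\sc Magma} cannot enumerate the low-dimensional modules there.
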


For completeness, and for use in the proof of \cref{thm:Theorem B} in \cref{prop:trans cases}, we list the immutable triples $(G,H,V)$ with $G$ quasisimple, and $H$ a maximal subgroup of $G$ such that $H$ acts transitively on the non-zero vectors of $V$.

 Our work finds its motivation in a construction of linear codes developed by Knapp and the second author    in \cite{KnappRodrigues}. Here one begins with a group $H$ acting faithfully on a $\GF(p)H$-module and constructs an \emph{evolution code} $C_{V,\Omega}$ from an invariant subset $\Omega$ of non-zero vectors of $V$.   The evolution code $C_{V,\Omega}$ has generator matrix of size $\dim V \times |\Omega|$, whose rows span a subspace of $\GF(p)^{|\Omega|}$ isomorphic to $V^*$ (the dual of $V$). A legitimate question to ask is under what circumstances is $H$ close to the full automorphism group of $C_{V,\Omega}$?
This project was initiated to understand when this fails.  When $(G,H,V)$ is immutable, then following the construction with the group $H$ and module $V|_H$, one sees that all invariant subsets   $\Omega$ of $V|_H$ yield codes which have $G$ in their automorphism group and so in these instances the evolution code $\mathcal C=C_{V|_H,\Omega}$ does not have  $H$ as a large normal subgroup of $\Aut(\mathcal C)$.

\begin{example} Let $(G,H,V)$ be the immutable triple in part (vi) of \cref{Main theorem}. Then by Table~\ref{Stabilizers of orbit reps}, $H\cong \mathrm P\Gamma\mathrm L_2(8)$ has orbits of length $9$, $36$, $84$ and $126$ on the non-zero vectors of $V$. For such an orbit $\Omega$,  the evolution  codes for $H$ have length $|\Omega|$ and dimension $\dim V=8$.  Each of these codes is invariant under $\Sym(9)$.
\end{example}

We have organized our paper as follows. In \cref{sec:Immutability}, we explore consequences of a triple $(G,H,V)$ being immutable.
An immediate consequence is that $G$ admits a factorization (\cref{lem:basic1}), making the fundamental contribution of Liebeck, Praeger and Saxl \cite{LiebeckPraegerSaxl} a key ingredient in our proofs, with \cite{GGS} providing additional examples not covered there.

\cref{sec:Factorisations} presents factorization results not contained in \cite{LiebeckPraegerSaxl,GGS}.
In \cref{sec:alternating groups}, we treat groups with $F^*(G)$ an alternating group, obtaining results for symmetric groups in addition to our headline results for the alternating group.  The immutable triple from \cref{Main theorem}(vi) appears in \cref{lem:full deleted}.
\cref{sec:sporadics} considers the candidates for \cref{Main theorem} when $G/Z(G)$ is a sporadic simple group.
Applying \cite{LiebeckPraegerSaxl} yields a short list (\cref{lem:sporadic1}) of groups and subgroups to consider, which we then handle using elementary {\sc Magma} \cite{Magma} calculations, including orbit counting and some cohomology computations.
\cref{sec:exceptional} treats exceptional groups, here the non-existence of immutable triples follows quickly from \cite{LiebeckPraegerSaxl}.

Another immediate consequence of immutability is that $G$ cannot have a regular orbit on $V ^\#$ (\cref{lem:reg orb compendium}(i)).
In \cref{sec:odd} where we consider classical groups in odd characteristic, we apply \cite{Lee1} to obtain a 24-point list of possibilities for $G$, 19 of which require further consideration after accounting for results in previous sections.
The remainder are resolved using {\sc Magma} calculations.

The core of the proof of \cref{Main theorem} appears in \cref{class char 2}, where we consider classical groups defined in characteristic $2$.
Here, for an irreducible, non-trivial $V$ with $(G,H,V)$ immutable, we exploit the fact that some   parabolic $P$ of $G$ leaves invariant a $1$-space of $V$, so $O^{p'}(P)$ centralizes a vector.
This leads to a factorization $G = PH$, and using \cite{LiebeckPraegerSaxl}, we list all potential factorizations (\cref{prop:char2 list}).
We first examine the exceptional “odd ball” cases, revealing the non-family example in \cref{Main theorem}(v).
For the generic cases, \cref{lem:parabolic factorisation} shows that some vector is centralized by $O^{p'}(P)$ for a maximal parabolic $P$, and in \cref{the new list} we determine the candidates for $V$ using \cref{prop:char2 list} and the Steinberg tensor product theorem.
The remainder of the complicated \cref{class char 2} completes the analysis and at the end of the section we prove \cref{Main theorem} and its corollaries.

For the general case where $V$ may be reducible, two elementary lemmas (\cref{lem: reduct to irred,lem:trivial factors}) become crucial: the first shows that every composition factor $W$ of $V$ yields an immutable triple $(G,H,W)$, and the second allows us to disregard centralized direct summands for if $T$ is a trivial $G$-module then $(G,H,V)$ is immutable if and only if $(G,H,V\oplus T)$ is immutable.
To prove \cref{thm:Theorem B}, we first assume that $Z(G)$ has odd order.
In \cref{sec:ThmB1}, we consider the cases where $G$ is not a symplectic group.
Aside from the special cases $G \cong \Alt(6)$ or $\Alt(8) \cong \SL_4(2)$, our goal is to show that if $V$ has no centralized direct summand and $(G,H,V)$ is immutable, then $V$ is irreducible.
The cases $G \cong \SL_4(2)$ and $G \cong \Alt(6)$ yield the examples listed in \cref{thm:Theorem B} (iii) and (iv).

In \cref{sec:ThmBSymp}, we treat the cases where $G \cong \Sp_{2n}(2^a)$.
Here the module $\mathcal W(\Sp_{2n}(2^a))$ shows its head, and the examples in \cref{thm:Theorem B}(i) and (ii) are realized.
Finally, in \cref{sec:schur}, we show that there are no examples with $V$ faithful when $Z(G)$ has even order, using the cohomology functionality built in to {\sc Magma}. Finally, at the end of
\cref{sec:schur} we draw the threads of the proof of  \cref{thm:Theorem B} together.

 Our notation is either explained or is self-explanatory and follows \cite{AschbacherFG}.   We mention specifically that for a vector space $V$, we use $V^\#$ to denote the non-zero elements of $V$.
Finally, we mention that most of the ``small" cases are handled using {\sc Magma}. The code to perform these calculations is contained in ancillary  file.


\section{Immutability}\label{sec:Immutability}

In this section we catalogue various consequence of  $(G,H,V)$ being immutable. Let $p$ be the characteristic of the field over which $V$ is defined. Initially, we present results for an arbitrary prime $p$. We start with an elementary result which says that immutability leads to factorisations of $G$.

\begin{lemma}\label{lem:basic1}
Suppose that $V$ is a $\GF(p)G$-module and $H < G$. Then, for $v \in V$,  $vG=vH$ if and only if $G= C_G(v)H$.
\end{lemma}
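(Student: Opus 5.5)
The plan is a direct orbit–stabiliser argument. Throughout, $V$ is a right module and we write $C_G(v)=\{g\in G : vg=v\}$ for the stabiliser of $v$ in $G$. Since $H\le G$, we always have $vH\subseteq vG$, so the task is to show that $vG\subseteq vH$ holds exactly when $G=C_G(v)H$.

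First suppose $G=C_G(v)H$ and let $g\in G$. Write $g=ch$ with $c\in C_G(v)$ and $h\in H$. Then $vg=(vc)h=vh\in vH$. Hence $vG\subseteq vH$, and so $vG=vH$.

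Conversely suppose $vG=vH$ and let $g\in G$. Then $vg\in vH$, so $vg=vh$ for some $h\in H$. This gives $v(gh^{-1})=v$, that is, $c:=gh^{-1}\in C_G(v)$. Therefore $g=ch\in C_G(v)H$, which shows $G=C_G(v)H$.

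A counting check gives the same conclusion in the finite case: $|vG|=|G:C_G(v)|$ and $|vH|=|H:C_H(v)|=|C_G(v)H|/|C_G(v)|$, using $C_H(v)=C_G(v)\cap H$. Equality of these indices is precisely $|C_G(v)H|=|G|$. There is no genuine obstacle here; the only points needing care are the order of multiplication, which is fixed by the right-action convention, and the fact that $C_G(v)H$ is a product set and not necessarily a subgroup. The element-wise argument above avoids any use of finiteness.
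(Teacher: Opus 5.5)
Your proof is correct and follows essentially the same route as the paper, which also gives this element-wise Frattini-type argument: write $vg=vh$, deduce $gh^{-1}\in C_G(v)$, and conversely use $vC_G(v)H=vH$. Your extra counting check is valid but not needed.
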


\begin{proof}
This is just the Frattini Argument: Let $g \in G$, then $vg \in vG=vH$ and so there exists $h \in H$ such that $vh= vg$.  Hence $gh^{-1}\in C_G(v)$ and this yields $g= gh^{-1}h \in C_G(v) H$. Conversely, if  $G= C_G(v)H$, then surely $vH= v{C_G(v)H}=vG$.
\end{proof}

For a $\GF(p)G$-module $V$ and $v \in V$, we say that an orbit $vG$ is \emph{regular} provided $C_G(v)=1$.

\begin{lemma}\label{lem:reg orb compendium}
Suppose that $(G,H,V)$ is immutable. Then the following hold:
\begin{enumerate}
\item $G$ does not have a regular orbit on $V$.
\item There exists $v \in V^\#$ with $|G:C_G(v)|$ prime to $p$ and $G=C_G(v)H$.
\item  For $G > L > H$, $(G,H,V)$ is immutable if and only if $(L,H,V|_L)$ and $(G,L,V)$ are immutable.
\end{enumerate}\end{lemma}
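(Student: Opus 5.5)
Each part follows from \cref{lem:basic1} together with elementary orbit counting, and I will treat them in order.

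For (i), suppose $G$ has a regular orbit $vG$, so $C_G(v)=1$. Immutability gives $vG=vH$, and \cref{lem:basic1} then yields $G=C_G(v)H=H$. This contradicts $H<G$.

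For (ii), I first produce a nonzero vector with orbit length prime to $p$. The set $V^\#$ is $G$-invariant and $|V^\#|=p^{\dim V}-1\equiv -1 \pmod p$. Since $V^\#$ is a disjoint union of $G$-orbits, not all orbit lengths on $V^\#$ can be divisible by $p$. So there is $v\in V^\#$ with $p\nmid |vG|=|G:C_G(v)|$. Immutability gives $vG=vH$, and \cref{lem:basic1} then gives $G=C_G(v)H$. This handles the degenerate case $\dim V=0$ too: there $V^\#$ is empty and $|V^\#|=0$, so the congruence argument needs $V\neq 0$. I will read the statement as implicitly assuming $V\ne 0$, which is the situation throughout the paper.

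For (iii), orbit inclusions do the work, since for every $v$ we have $vH\subseteq vL\subseteq vG$. If $(G,H,V)$ is immutable, then $vH=vG$, so all three sets coincide. This gives $vH=vL$ for all $v$, i.e.\ $(L,H,V|_L)$ is immutable, and $vL=vG$, i.e.\ $(G,L,V)$ is immutable. Conversely, if both triples are immutable, then $vH=vL=vG$ for all $v$. Properness is fine here because $H<L<G$.

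None of these steps is a real obstacle. The only point needing care is the counting in (ii), where one must apply the congruence to $V^\#$ rather than to $V$, since the zero vector is a fixed point and would otherwise spoil the count.
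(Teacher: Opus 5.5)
Your proof is correct and follows the paper's argument: (i) and (iii) are identical. In (ii) the paper takes a nonzero vector fixed by a Sylow $p$-subgroup of $G$. That is the same mod-$p$ count you do, applied to the Sylow subgroup rather than to $G$, since an orbit of length prime to $p$ is equivalent to $C_G(v)$ containing a Sylow $p$-subgroup. Your explicit appeal to \cref{lem:basic1} for $G=C_G(v)H$, and your remark that (ii) needs $V\neq 0$, are small points the paper leaves implicit.
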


\begin{proof} (i) Suppose that $v \in V$ is a representative of a regular orbit.  Then, by \cref{lem:basic1}, $G=C_G(v)H=H$, which is a contradiction.

(ii) As $V$ is a $\GF(p)G$-module, there exists  $v \in V^\#$ which is centralized by a Sylow $p$-subgroup of $G$. Hence $|G:C_G(v)| $ is prime to $p$.

(iii) Let $v \in V$. Then $vH\subseteq vL\subseteq vG$. If $(G,H,V)$  is immutable, then  $vG=vH$ and so   $vH= vL=vG$ for all $v \in V$.  Hence $(L,H,V|_L)$ and $(G,L,V)$ are immutable. If, on the other hand, $(L,H,V|_L)$ and $(G,L,V)$ are immutable, then, for all $v in V$, $vH= vL$ and $vL=vG$. Hence $vH=vG$ and $(G,H,V)$  is immutable.
\end{proof}

\begin{lemma}\label{lem:numberorbits} Suppose that $(G,H,V)$ is immutable.  Then $G$ has at least $|V|/|H|$ orbits on $V$.
\end{lemma}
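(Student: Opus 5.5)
The plan is to count orbits of $H$ on $V$ and then use that these orbits coincide with the orbits of $G$. Since $(G,H,V)$ is immutable, the $G$-orbits on $V$ are exactly the $H$-orbits, so the number of $G$-orbits on $V$ equals the number of $H$-orbits on $V$. We therefore only need a lower bound for the number of $H$-orbits.

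The $H$-orbits partition $V$, and each orbit $vH$ has size $|H:C_H(v)| \le |H|$. So if $H$ has $k$ orbits on $V$, then
\[
|V| = \sum_{i=1}^k |v_iH| \le k|H|,
\]
where $v_1,\dots,v_k$ are orbit representatives. This gives $k \ge |V|/|H|$. Combining this with the equality of orbit counts from the first paragraph shows that $G$ has at least $|V|/|H|$ orbits on $V$.

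There is no genuine obstacle here. The only point needing care is to count orbits of $H$ rather than of $G$, because the bound $|V|/|G|$ would be weaker. One could even sharpen the estimate to $1 + (|V|-1)/|H|$ by treating the zero vector as a separate orbit, but the stated bound is all that is required.
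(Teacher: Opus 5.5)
Your proposal is correct and matches the paper's proof: both write $|V|=\sum_i |v_iG|=\sum_i |v_iH|$ over the orbit representatives and bound each $|v_iH|=|H:C_H(v_i)|$ by $|H|$. Your sharpening to $1+(|V|-1)/|H|$, which treats the zero orbit separately, is also valid.
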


\begin{proof}
Let $v_1, \dots, v_n$ be representatives of the $G$-orbits on $V$. Then \begin{eqnarray*} |V|&=&\sum_{i=1}^n|v_iG|
 = \sum_{i=1}^n|v_iH|\\
&=&\sum_{i=1}^n|H|/|C_H(v_i)| \le n|H|.\end{eqnarray*} The result follows.
\end{proof}

\begin{lemma}\label{same point orbit}
Suppose that $(G,H,V)$ is immutable and $v \in V^\#$. Let $\mathbb K= \mathrm{End}_G(V)$ and regard $V$ as a $\mathbb KG$-module.  Then $\gen{v}_{\mathbb K}G=\gen{v}_{\mathbb K}H$. In particular, $G$ and $H$ have the same orbits on the $1$-dimensional $\mathbb K$-subspaces of $V$.
\end{lemma}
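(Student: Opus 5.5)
The plan is to show directly that the $G$-orbit of the $\mathbb K$-line $\gen{v}_{\mathbb K}$ coincides with its $H$-orbit. The key observation is that every element of $\mathbb K=\mathrm{End}_G(V)$ commutes with the action of $G$, and hence with $H$. Since $V$ is a $\GF(p)G$-module, $\mathbb K$ is a finite-dimensional $\GF(p)$-algebra. If $V$ is irreducible, Schur's lemma makes $\mathbb K$ a finite field; in general $\gen{v}_{\mathbb K}$ should be read as $v\mathbb K=\{v\lambda : \lambda\in\mathbb K\}$. In either case the argument only uses that $\mathbb K$ commutes with $G$.

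Let $g \in G$. We have $\gen{v}_{\mathbb K}g=\{v\lambda g : \lambda \in \mathbb K\}=\{(vg)\lambda : \lambda\in \mathbb K\}=\gen{vg}_{\mathbb K}$, because $\lambda g = g\lambda$ as maps on $V$. By immutability, $vg \in vG = vH$, so there is $h\in H$ with $vg=vh$. Then
\[\gen{v}_{\mathbb K}g=\gen{vg}_{\mathbb K}=\gen{vh}_{\mathbb K}=\gen{v}_{\mathbb K}h.\]
This gives $\gen{v}_{\mathbb K}G\subseteq \gen{v}_{\mathbb K}H$. The reverse inclusion is trivial since $H\le G$, so $\gen{v}_{\mathbb K}G=\gen{v}_{\mathbb K}H$.

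The ``in particular'' statement follows immediately. Every $1$-dimensional $\mathbb K$-subspace of $V$ has the form $\gen{v}_{\mathbb K}$ for some $v\in V^\#$, so the $G$-orbit and the $H$-orbit of each such subspace agree. Hence the two partitions of the set of $1$-spaces into orbits coincide.

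There is no real obstacle here. The only points requiring care are noting that $\mathbb K$ acts on the same side as $G$ (or on the other side) and commutes with it, and interpreting ``$1$-dimensional $\mathbb K$-subspace'' correctly when $\mathbb K$ is not a field. In the irreducible case, where the lemma will be applied, Schur's lemma removes the second issue.
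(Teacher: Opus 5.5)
Your proof is correct and is essentially the paper's argument: both use that $\mathbb K$ commutes with $G$ to get $\gen{v}_{\mathbb K}g=\gen{vg}_{\mathbb K}$, and then use immutability to find $h\in H$ with $vh=vg$. Your extra remark about interpreting $\gen{v}_{\mathbb K}$ as $v\mathbb K$ when $\mathbb K$ is not a field is a harmless addition; by Schur's lemma it is not needed in the irreducible setting where the lemma is used.
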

\begin{proof} Since $G$ and $\mathbb K$ commute, $G$ acts on the set of $1$-dimensional $\mathbb K$-subspaces of $V$. Furthermore, $\gen{v}_{\mathbb K}g=\gen{vg}_{\mathbb K}$ for all $v \in V^\#$ and $g \in G$. Let $v \in V^\#$ and $\gen{w}_{\mathbb K}\in \gen{v}_{\mathbb K}G$. Then there exists $g \in G$ such that $\gen{vg}_{\mathbb K}=\gen{w}_{\mathbb K}$. Since $(G,H,V)$ is immutable, there exists $h \in H$ such that $vh=vg$. Thus $\gen{w}_{\mathbb K}=\gen{vg}_{\mathbb K}=\gen{vh}_{\mathbb K}=\gen{v}_{\mathbb K}h$ and so $\gen{w}_{\mathbb K} \in \gen{v}_{\mathbb K}H$. It follows that $\gen{v}_{\mathbb K}H= \gen{v}_{\mathbb K}G$, as claimed.
\end{proof}

\begin{lemma}\label{lem:irred reduct}
Suppose that $(G,H,V)$ is immutable. If $V$ is  irreducible, then $V|_H$ is irreducible. In particular, $O_p(H)=1$ or $O_p(H)\le C_G(V)$.
\end{lemma}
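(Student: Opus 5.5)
Suppose $V$ is irreducible and $U$ is a non-zero $H$-submodule of $V$. The plan is to show that $U$ is already $G$-invariant. Take $u\in U$ and $g\in G$. Since $(G,H,V)$ is immutable, $uG=uH$, so there is $h\in H$ with $ug=uh\in U$. Hence $Ug\subseteq U$ for every $g\in G$, so $U$ is a $G$-submodule. Irreducibility of $V$ then forces $U=V$, and therefore $V|_H$ is irreducible. Nothing beyond the definition of immutability is needed here; \cref{lem:basic1} is the same observation phrased as a factorisation.

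For the second assertion, put $Q=O_p(H)$, which is a normal $p$-subgroup of $H$. Since $V$ is a finite-dimensional $\GF(p)$-space, $C_V(Q)\neq 0$, because a $p$-group acting on a non-zero $p$-group has non-trivial fixed points. As $Q\trianglelefteq H$, the subspace $C_V(Q)$ is $H$-invariant. By the first part $V|_H$ is irreducible, so $C_V(Q)=V$, and hence $Q\le C_G(V)$. If $V$ is faithful this gives $O_p(H)=1$. This accounts for the stated dichotomy: either $O_p(H)=1$, or $O_p(H)$ is non-trivial but acts trivially on $V$, so that $O_p(H)\le C_G(V)$.

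There is no real obstacle. The only points to state carefully are the fixed-point lemma for $p$-groups acting on $V$ and the normality of $O_p(H)$ in $H$, which is what makes $C_V(O_p(H))$ invariant under $H$.
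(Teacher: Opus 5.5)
Your proposal is correct and follows essentially the same argument as the paper. In both, immutability gives $ug\in uH\subseteq U$, so any non-zero $H$-submodule is $G$-invariant; then $C_V(O_p(H))$ is a non-zero $H$-invariant subspace, which forces $O_p(H)\le C_G(V)$, and hence $O_p(H)=1$ when $V$ is faithful.
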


\begin{proof}
Suppose that $W>0$ is a submodule of $V|_H$ and let $w \in W^\#$.  Then $  w  G=   w  H \subseteq W$ and so $w g \in W$ for all $g \in G$.  Hence $W$ is a submodule of $V$.  Since $V$ is irreducible, $W=V$. Hence $V|_H$ is irreducible. Suppose that $O_p(H) \not \le C_G(V)$. Since $C_V(O_p(H)) \ne 0$ is $H$-invariant and $V$ is faithful, we conclude that $O_p(H)=1$.
\end{proof}

For $r$ a prime, the \emph{$r$-rank} of a group $X$, $m_r(X)$, is the minimal number of generators of a maximal order elementary abelian $r$-subgroup of $X$.

\begin{lemma}\label{lem:prank} Suppose that $(G,H,V)$ is immutable and  $r\ne p$ is a prime.  If  $V$ is irreducible and $m_r(G) \ge 2$, then there exists a factorisation $G= HL$ of $G$ with $m_r(L) \ge m_r(G)-1$.
\end{lemma}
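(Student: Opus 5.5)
Let $E \le G$ be an elementary abelian $r$-subgroup with $m_r(E)=m_r(G)\ge 2$. I want a vector $v \in V^\#$ whose stabiliser $C_G(v)$ contains a subgroup of $E$ of index at most $r$. Then \cref{lem:basic1} gives $G=C_G(v)H$, and taking $L=C_G(v)$ yields $m_r(L)\ge m_r(C_E(v)) \ge m_r(G)-1$.

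To find such a $v$, I use coprime action. Since $r\ne p$, Maschke's theorem applies to $E$ acting on $V$ over $\GF(p)$. Extend scalars to a splitting field $\overline{\mathbb F}$ of characteristic $p$ containing the $r$-th roots of unity. Then $V\otimes\overline{\mathbb F}$ is a direct sum of one-dimensional eigenspaces for linear characters $\lambda$ of $E$. Each kernel $\ker\lambda$ has index $1$ or $r$ in $E$, since $\lambda(E)$ is a cyclic group of order dividing $r$.

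The key step is to go back down to $\GF(p)$. Pick a homogeneous component of $V|_E$, that is, an irreducible $\GF(p)E$-submodule $U\ne 0$. Its kernel $C_E(U)$ is the common kernel of the Galois-conjugate characters occurring in $U\otimes \overline{\mathbb F}$. These conjugates $\lambda^{p^i}$ all have the same kernel. Hence $E/C_E(U)$ is cyclic of order $1$ or $r$, so $|E:C_E(U)|\le r$. Any $v\in U^\#$ then satisfies $C_E(v)\ge C_E(U)$ with $m_r(C_E(U))\ge m_r(E)-1$. Since $m_r(G)\ge 2$, this centraliser is a nontrivial $r$-group.

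Set $L=C_G(v)$. Immutability gives $vG=vH$, so \cref{lem:basic1} gives $G=C_G(v)H=LH$. Taking inverses, $G=HL$ as well. Finally $L\ge C_E(U)$, so $m_r(L)\ge m_r(E)-1=m_r(G)-1$.

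The main obstacle is the descent step: checking that an irreducible $\GF(p)E$-module has cyclic image, hence image of order at most $r$ for elementary abelian $E$. I would justify this directly rather than via characters. Schur's lemma makes $\mathrm{End}_E(U)$ a finite field, and the image of $E$ in its multiplicative group is cyclic of exponent $r$, so it has order at most $r$. The irreducibility of $V$ is not really needed beyond ensuring $V\ne 0$. I would also note that the case where $E$ acts trivially on $U$ is harmless.
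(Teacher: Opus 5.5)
Your proof is correct and is essentially the paper's argument. Both take an elementary abelian $r$-subgroup $E$ of maximal rank, find a subgroup of index at most $r$ in $E$ that centralises some $v\in V^\#$, set $L=C_G(v)$, and apply \cref{lem:basic1}. The only difference is that you justify the existence of such a subgroup via Schur's lemma on an irreducible $\GF(p)E$-submodule (Maschke is not actually needed), whereas the paper simply asserts that some maximal subgroup $B$ of $E$ has $C_V(B)\neq 0$.
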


\begin{proof}
Suppose that $A \le G$ is elementary abelian of $r$-rank $k$. Then there exists a maximal subgroup $B$ of $A$ such that $C_V(B) \ne 0$. Let $v \in C_V(B)^\#$. Then $G= C_G(v)H$ and $B \le C_G(v)$. Taking $L=C_G(v)$ we have $B\le L$ and $G=HL$ from \cref{lem:basic1}.  This proves the claim.
\end{proof}

\begin{lemma}\label{lem:Dual same number orbits}  The triple  $(G,H,V)$ is immutable if and only if  $(G,H,V^*)$ is immutable.
\end{lemma}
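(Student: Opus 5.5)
We reduce immutability to a statement about fixed points of elements, and then compare $V$ and $V^*$ through the Brauer character (or permutation character) of their actions. By \cref{lem:basic1}, $(G,H,V)$ is immutable exactly when $vG = vH$ for every $v \in V$. Since $H \le G$, every $G$-orbit is a disjoint union of $H$-orbits. Hence $(G,H,V)$ is immutable if and only if the number of $H$-orbits on $V$ equals the number of $G$-orbits on $V$. So it suffices to show that, for every subgroup $X$ (applied to $X = G$ and to $X = H$), the number of $X$-orbits on $V$ equals the number of $X$-orbits on $V^*$. Indeed, then $G$ and $H$ have the same numbers of orbits on $V$ as on $V^*$, and the orbit-count criterion transfers from $V$ to $V^*$.

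By the Cauchy--Frobenius (Burnside) lemma, the number of $X$-orbits on $V$ is
\[
\frac{1}{|X|}\sum_{x \in X} |C_V(x)|,
\]
and similarly for $V^*$. It is therefore enough to show that $|C_V(x)| = |C_{V^*}(x)|$ for each $x \in X$.

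Write $\bar x$ for the linear map of $V$ induced by $x$. Then $C_V(x) = \ker(\bar x - 1)$. The action of $x$ on $V^*$ is via the inverse transpose of $\bar x$, so
\[
C_{V^*}(x) = \ker\bigl((\bar x^{-1})^{T} - 1\bigr).
\]
Now $(\bar x^{-1})^T - 1 = -(\bar x^{-1})^T(\bar x - 1)^T$. Since $(\bar x^{-1})^T$ is invertible, the kernel of the right-hand side has the same dimension as $\ker (\bar x - 1)^T$. A matrix and its transpose have the same rank, so
\[
\dim C_{V^*}(x) = \dim C_V(x).
\]
Both spaces are over $\GF(p)$, so they have the same number of elements. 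This gives the required equality of orbit counts for $X = G$ and for $X = H$, and therefore the equivalence in the lemma. The converse direction is the same argument applied with the roles of $V$ and $V^*$ exchanged, using $V^{**} \cong V$.

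The only delicate point is the first reduction: immutability is equivalent to equality of orbit numbers. This relies on the finiteness of $V$ and on each $G$-orbit being a union of $H$-orbits. Everything after that is routine linear algebra.
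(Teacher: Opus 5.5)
Your proof is correct and follows the paper's argument: you count the orbits of $G$ and of $H$ on $V$ and on $V^*$ via Burnside's lemma, using $|C_V(g)|=|C_{V^*}(g)|$ for every $g$. You also make explicit the two points the paper leaves tacit: immutability is equivalent to $G$ and $H$ having the same number of orbits, since each $G$-orbit is a union of $H$-orbits, and the rank argument gives $\dim C_{V^*}(g)=\dim C_V(g)$.
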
\begin{proof}
By Burnside's orbit counting lemma, $G$ has $\frac{1}{|G|}\sum_{g\in G} C_V(g)= \frac{1}{|G|}\sum_{g\in G} C_{V^*}(g)$ orbits on $V$ and $V^*$. Since a similar statement holds for $H$, we obtain have result.
\end{proof}

\begin{lemma}\label{unitary sbgrp} Suppose that $G \cong \SL_n(q)$ and $\sigma_0 \in \mathrm{Gal}(\GF(q)/\GF(p))$ has order $2$. Let $X=\{g\in G\mid (( g^{\sigma_0})^{-1})^T= g\}$ be the unitary subgroup of $G$ and $V$ be the natural $\GF(q)G$-module.  Then $V^*\cong V^{\sigma_0}$ as $\GF(q)X$-modules.
\end{lemma}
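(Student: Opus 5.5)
The plan is to compute directly with the standard pairing. Take $V=\GF(q)^n$ as row vectors with $G$ acting by right multiplication, $v\mapsto vg$. Identify $V^*$ with $\GF(q)^n$ via the bilinear pairing $\langle v,w\rangle = vw^T$. Since $f_w\cdot g$ is defined by $(f_w g)(v)=f_w(vg^{-1})$, we get
\[
\langle vg^{-1},w\rangle = v g^{-1} w^T = v\,(w (g^{-1})^T)^T .
\]
So $V^*$ is the module $\GF(q)^n$ with $g$ acting by $w\mapsto w (g^{-1})^T$, i.e.\ through the inverse-transpose representation $g\mapsto (g^{-1})^T$.

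Likewise, $V^{\sigma_0}$ is $\GF(q)^n$ with $g$ acting by $w\mapsto w g^{\sigma_0}$. Here $\sigma_0$ is applied entrywise. Since it is a field automorphism, $g\mapsto g^{\sigma_0}$ is a homomorphism $G\to G$, so $V^{\sigma_0}$ is a genuine module.

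Now restrict to $X$. For $g\in X$ we have $((g^{\sigma_0})^{-1})^T=g$. The remaining step is to rewrite this as $g^{\sigma_0} = (g^{-1})^T$, and it needs some care with the order of operations. Transpose and inverse commute, and $\sigma_0$ commutes with both because it is a ring automorphism applied entrywise. Taking inverse transpose of both sides of the defining identity gives $g^{\sigma_0} = (g^{-1})^T$ for all $g\in X$. Hence the two representations of $X$ on $\GF(q)^n$, namely $g\mapsto (g^{-1})^T$ (which is $V^*$) and $g\mapsto g^{\sigma_0}$ (which is $V^{\sigma_0}$), coincide as matrix representations. The identity map $\GF(q)^n\to\GF(q)^n$ is therefore a $\GF(q)X$-isomorphism $V^*\cong V^{\sigma_0}$.

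Only bookkeeping could go wrong here. One must check that $\sigma_0$ is applied to the matrix entries rather than to the vectors in a way that breaks $\GF(q)$-linearity. Taking $V^{\sigma_0}$ to mean the same underlying space with the twisted action keeps the action $\GF(q)$-linear, so nothing more is needed. There is also a slight ambiguity about whether $V^*$ uses $g^{-1}$ or $g$ with a transpose on the other side, so I would fix the row-vector and right-action conventions explicitly at the start, as above. If one prefers left actions, the same computation applies with transposes moved accordingly.
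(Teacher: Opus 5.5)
Your proposal is correct and follows the same route as the paper's proof. You identify $V^*$ with the inverse-transpose representation, and you note that for $g\in X$ the defining condition is equivalent to $g^{\sigma_0}=(g^{-1})^T$, so the two matrix representations of $X$ coincide. The only difference is that you fix the row-vector and right-action conventions and carry out the inverse-transpose manipulation explicitly, where the paper leaves these steps implicit.
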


\begin{proof}
We may regard $V^*$ as the vector space $V$ with the action of an element $g \in G$ obtained by applying the inverse transpose of $g$.  Since for $g \in X$, $(g^{-1})^T=g^{\sigma_0}$, the claim follows.
\end{proof}

Suppose that $G$ is an   quasisimple group and $x \in G \setminus Z(G)$.  Then $\alpha (x)$ is the minimum number of conjugates of $x$ required to generate $G$ and $$\alpha(G)= \max\{\alpha(x)\mid x \in G\}.$$

\begin{lemma}\label{lem:dimbound}
  Let  $G$ be  a      quasisimple  group  and  $V$ a  faithful  irreducible $\GF(p)G$-module.
 If $G$  has no regular orbits on V, then $|V| \le |G|^{\alpha(G)}$.
 \end{lemma}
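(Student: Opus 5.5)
The plan is to count vectors by their centralizers. Set $k=\alpha(G)$ and choose $x\in G\setminus Z(G)$. We will show that a conjugacy class $x^G$ in $G/Z(G)$ has at most $|G|$ elements. We will also show that every vector $v\in V^\#$ is fixed by some element of prime order (or of order not in $Z(G)$) lying outside $Z(G)$. Since $G$ has no regular orbit, $C_G(v)\ne 1$ for all $v\in V$.

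First, $Z(G)$ acts on $V$ by scalars in $\mathrm{End}_G(V)^\times$ because $V$ is irreducible. As $V$ is faithful, every nontrivial element of $Z(G)$ acts fixed-point-freely on $V^\#$. Hence for $v\in V^\#$ we have $C_G(v)\cap Z(G)=1$. Because $C_G(v)\neq 1$, there is some $x_v\in C_G(v)$ with $x_v\notin Z(G)$, so
\[ V=\{0\}\cup\bigcup_{x\in G\setminus Z(G)} C_V(x). \]

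Next we bound each $C_V(x)$. By definition of $\alpha$, there are conjugates $x_1=x^{g_1},\dots,x_k=x^{g_k}$ with $k\le\alpha(x)\le \alpha(G)$ that generate $G$. Then
\[ \bigcap_{i} C_V(x)g_i = \bigcap_i C_V(x_i)=C_V(G)=0, \]
since $V$ is irreducible and nontrivial (it is faithful and $G\ne1$). Each $C_V(x)g_i$ is a subspace of dimension $d=\dim C_V(x)$. A standard codimension count then gives
\[ \dim V=\operatorname{codim}\Bigl(\bigcap_i C_V(x_i)\Bigr)\le k(\dim V-d), \]
so $d\le (1-1/k)\dim V$. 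Therefore $|C_V(x)|\le |V|^{1-1/k}$.

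Combining the two steps,
\[ |V|\le 1+\sum_{x\in G\setminus Z(G)}|C_V(x)|\le 1+(|G|-1)|V|^{1-1/k}. \]
This yields $|V|^{1/k}\le |G|$, that is $|V|\le|G|^{k}=|G|^{\alpha(G)}$. The step $1\le|V|^{1-1/k}$ is routine and absorbs the $+1$.

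The main point needing care is the codimension inequality. It is the subadditivity of codimension under intersection, applied to the $k$ translates $C_V(x)g_i$, which all have codimension $\dim V-d$. I do not expect any real obstacle. The only subtlety is choosing $x$ noncentral, which the scalar action of $Z(G)$ guarantees.
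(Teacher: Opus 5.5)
Your argument is correct. The paper gives no proof of its own and simply cites \cite[Lemma 3.5]{FOS}; your argument is essentially the standard one behind that lemma. It covers $V^\#$ by the fixed spaces of non-central elements and bounds each fixed space via $\dim C_V(x)\le(1-1/\alpha(x))\dim V$, using that $G$ is generated by $\alpha(x)$ conjugates of $x$. There is one small slip: you need $\alpha(x)\le k$ conjugates generating $G$ (padded by repetition to exactly $k$ if you like), not ``$k\le\alpha(x)$''. The claims in your first paragraph about class sizes and prime order are never used.
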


\begin{proof}
This is \cite[Lemma 3.5]{FOS}.
\end{proof}

\begin{lemma}\label{SU2nhelp}
Suppose that $V$ is a $\GF(p)G$-module, $(G,H,V)$ is immutable and $r$ is a prime which divides $|G|$ but does not divide $|H|$. Let $S\in \syl_r(G)$ and assume that $|S|=r$.  Then $\dim C_V(S) \ge \log_p(|V|/|H|)$. In particular, if $\log_p(|V|/|H|)> \frac 12 \dim V$ and $C_V(G)=0$, then $G$ is not generated by two conjugates of $S$.
\end{lemma}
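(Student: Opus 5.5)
The plan is to count orbits of $G$ on $V$ with Burnside's lemma and compare this with the lower bound from \cref{lem:numberorbits}.

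Let $S=\langle s\rangle$, so $|S|=r$. Since $r\nmid |H|$, no element of order $r$ lies in $H$. For $v\in V$ we have $G=C_G(v)H$ by \cref{lem:basic1}. Hence $|G:H|$ divides $|C_G(v)|\cdot|H|/|H|$ up to intersection, that is, $|G|=|C_G(v)||H|/|C_H(v)|$. So $r$ divides $|C_G(v)|$ for every $v\in V$. Thus every vector is centralized by some Sylow $r$-subgroup, which gives
\[
V=\bigcup_{g\in G} C_V(S^g).
\]
Each $C_V(S^g)$ has size $p^{d}$, where $d=\dim C_V(S)$, and all of them contain $0$.

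Next I would count orbits. Each $G$-orbit $vG$ meets $C_V(S)$, because $v$ is centralized by some conjugate $S^g$ and then $vg^{-1}\in C_V(S)$. So the number $n$ of $G$-orbits is at most $|C_V(S)|=p^d$. By \cref{lem:numberorbits}, $n\ge |V|/|H|$. Combining, $p^d\ge |V|/|H|$, so $d\ge \log_p(|V|/|H|)$. This proves the first claim.

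For the ``in particular'', suppose $\log_p(|V|/|H|)>\frac12\dim V$. Then $\dim C_V(S)>\frac12\dim V$. If $G=\langle S,S^g\rangle$, then
\[
C_V(S)\cap C_V(S^g)\le C_V(G)=0.
\]
But $\dim C_V(S)+\dim C_V(S^g)>\dim V$, so this intersection is nonzero, a contradiction.

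The argument is elementary. The only point needing care is the first step, that $r$ divides $|C_G(v)|$ for every $v$. This follows from the factorization order formula $|G|=|C_G(v)||H|/|C_G(v)\cap H|$: since $r$ divides $|G|$ but not $|H|$, it must divide $|C_G(v)|$. With $|S|=r$, Sylow's theorem then places a conjugate of $S$ inside $C_G(v)$.
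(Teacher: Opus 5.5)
Your proof is correct and follows the paper's argument. Immutability makes every orbit length $|G:C_G(v)|=|H:C_H(v)|$ prime to $r$, so $S$ fixes a representative of each $G$-orbit, and hence $|C_V(S)|$ is at least the number of orbits, which \cref{lem:numberorbits} bounds below by $|V|/|H|$; the second part is the same dimension count on $C_V(S)\cap C_V(S^g)\le C_V(G)$. (Burnside's lemma is never actually used, and the garbled sentence ``$|G:H|$ divides \dots up to intersection'' should simply be replaced by the product formula $|G|=|C_G(v)||H|/|C_H(v)|$ that you state next.)
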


 \begin{proof}
 Since $ r$ does not divide $|H|$ and $(G,H,V)$ is immutable, no orbit of $G$ on $V$ has length divisible by $r$.  Hence $S$ fixes a representative of every $G$-orbit  on $V$.  The number of orbits of $G$ on $V$ is bounded below by $|V|/|H|$ by \cref{lem:numberorbits}. Hence $|C_V(S)| \ge |V|/|H|$, and this proves the first part of the claim.

 Suppose that $\log_p(|V|/|H|)> \frac 12 \dim V$. Then $\dim C_V(S)> \frac 12 \dim V$, and so $C_V(S) \cap C_V(S^g) \ne 0$ for all $g \in G$.  Since $C_V(G)=0$, $\langle S,S^g\rangle <G$ for all $g \in G$.
 \end{proof}

Recall that if $V$ is a $\GF(p)G$-module, $0\le U<W \le V$, then the $\GF(p)G$-module  $W/U$ is called a \emph{section} of $V$. The following two  lemmas are the key observations which allows us to prove \cref{thm:Theorem B} once the proof of \cref{Main theorem} is complete.

\begin{lemma}\label{lem: reduct to irred}   Suppose that $(G,H,V)$ is immutable and $W$ is a submodule of $V$. Then
 \begin{enumerate}
 \item  $(G ,H  ,W)$ is immutable; and
\item $(G,H  ,V/W)$ is immutable.
\end{enumerate}
In particular, if $U/W$ is a section of $V$, then $(G,H,U/W)$ is immutable.
\end{lemma}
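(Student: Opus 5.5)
Part (i) is immediate from the definition. For $w \in W$, the $G$-orbit of $w$ in $W$ is the same set $wG$ as computed in $V$, since $W$ is $G$-invariant. Likewise the $H$-orbit of $w$ is $wH$. Immutability of $(G,H,V)$ gives $wG=wH$ for every $v\in V$, in particular for every $w\in W$. Hence $(G,H,W)$ is immutable. Equivalently, by \cref{lem:basic1}, $G=C_G(w)H$ for each $w\in W$.

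For part (ii), the natural route is duality. Write $W^\perp=\{f\in V^* \mid f(W)=0\}$. This is a $G$-submodule of $V^*$, and $(V/W)^*\cong W^\perp$ as $\GF(p)G$-modules. The steps are as follows.
\begin{enumerate}
\item By \cref{lem:Dual same number orbits}, $(G,H,V^*)$ is immutable.
\item By part (i) applied to the submodule $W^\perp$ of $V^*$, $(G,H,W^\perp)$ is immutable, that is, $(G,H,(V/W)^*)$ is immutable.
\item Applying \cref{lem:Dual same number orbits} once more, together with $(V/W)^{**}\cong V/W$, gives that $(G,H,V/W)$ is immutable.
\end{enumerate}

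The only point needing care is the use of \cref{lem:Dual same number orbits}. It compares orbit counts rather than orbits, so one must note why equal counts suffice. Every $G$-orbit is a union of $H$-orbits. Hence $(G,H,X)$ is immutable exactly when $G$ and $H$ have the same number of orbits on $X$. The Burnside count $\frac{1}{|G|}\sum_{g\in G}|C_X(g)|$ is the same for $X=V$ and $X=V^*$, because $|C_V(g)|=|C_{V^*}(g)|$ for each $g$. The same holds for $H$. With this observation the dualisation argument is complete.

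A direct argument for (ii) would be harder. The condition $vg+W=vh+W$ only gives $vg-vh\in W$, and there is no obvious way to absorb that error term. This is why the duality approach is preferable.

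Finally, for a section $U/W$ with $W\le U\le V$: apply (i) to the submodule $U$ to get that $(G,H,U)$ is immutable. Then apply (ii) to the submodule $W$ of $U$ to conclude that $(G,H,U/W)$ is immutable.
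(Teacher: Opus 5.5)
Your proof is correct, but for (ii) it takes a longer route than the paper. The paper argues directly: for $v\in V$,
\[
(v+W)G = vG+W = vH+W = (v+W)H .
\]
That is, given $g\in G$, immutability of $V$ supplies $h\in H$ with $vg=vh$ exactly, and then $(v+W)g=(v+W)h$ at once.

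So your remark that a direct argument is harder is mistaken. The ``error term'' $vg-vh\in W$ only appears if you start from an equality of cosets and try to lift it to $V$, and that converse direction is never needed.

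Your dualisation argument is valid. You pass to $V^*$ via \cref{lem:Dual same number orbits}, apply (i) to $W^\perp\cong (V/W)^*$, and dualise back. You also correctly justified why equal orbit counts are equivalent to immutability and why $|C_V(g)|=|C_{V^*}(g)|$.

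The cost is that your route relies on Burnside's lemma, and hence on $G$ and $V$ being finite. The paper's one-line argument is purely set-theoretic and works for any group acting on any module. What your route buys is only the conceptual observation that (ii) is formally dual to (i).

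Your deduction for sections $U/W$ is the same as the paper's.
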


\begin{proof}
For $w \in W$, we have $w G= w H$ and so (i) holds.

Let $v+ W \in V/W$.  Then $$(v+W) G= v G+W=v H+W=(v+W) H.$$ This demonstrates (ii). The final statement immediately follows.
\end{proof}

The following minor observation is used several times in the proof of \cref{thm:Theorem B}.

\begin{lemma}\label{lem:vec stabiliser}
Suppose that $G$ is a group and $V$ is a $\GF(p)G$-module.
Let $W$ be a proper  submodule of $V$ and $v+W$ be an element of $V/W$. Let $P$ be the stabilizer in $G$ of $v+W$ and assume that $v$ can be chosen in $C_V(P)$. Then, for $w \in W$, $C_G(v+w)=C_P(w)$.
\end{lemma}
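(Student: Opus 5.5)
We prove the two inclusions $C_P(w)\le C_G(v+w)$ and $C_G(v+w)\le C_P(w)$ directly from the definitions; no earlier result is needed.

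\emph{Reverse inclusion.} Fix $w\in W$ and let $g\in C_G(v+w)$. Then $(v+w)g=v+w$. Reducing modulo $W$, and using that $W$ is a submodule so $wg\in W$, we get $vg+W=v+W$. Hence $g$ stabilizes the coset $v+W$, that is, $g\in P$. Since $v\in C_V(P)$ by hypothesis, $vg=v$. The equation $vg+wg=v+w$ then gives $wg=w$, so $g\in C_P(w)$.

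\emph{Forward inclusion.} Let $g\in C_P(w)$. Then $g\in P$, so $vg=v$ because $v$ is centralized by $P$. Also $wg=w$, so $(v+w)g=v+w$ and $g\in C_G(v+w)$.

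Together the two inclusions give $C_G(v+w)=C_P(w)$. The only point needing any care is the step from $g\in C_G(v+w)$ to $g\in P$. This uses that $W$ is $G$-invariant, so that reduction modulo $W$ is compatible with the action of $g$.
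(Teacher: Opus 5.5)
Your proof is correct and is essentially the paper's own argument. The paper observes $C_G(v+w)\le P$ because fixing $v+w$ forces stabilizing the coset $v+W$, and then $C_P(v+w)=C_P(w)$ because $P$ centralizes $v$; you have simply written out both inclusions explicitly.
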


\begin{proof} We  know  $C_G(v+w) \le C_G(v+W)= P$ and $C_P(v+w)= C_P(w)$.
\end{proof}

As mentioned in the introduction, our next result shows that we may ignore centralized direct summands when considering immutability.

\begin{lemma}\label{lem:trivial factors} Suppose that $G$ is a finite group, $V$ is a $\GF(p)G$-module, $H\le G$ and that $T$ is the trivial $\GF(p)G$-module. Then $(G,H,V)$ is immutable if and only if $(G,H,V\oplus T)$ is immutable. Furthermore, the number of $G$-orbits on $V\oplus T$ is $p$ times the number of $G$-orbits on $V$. \end{lemma}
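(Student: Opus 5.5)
The argument is direct: $G$ acts trivially on $T$, so orbits on $V\oplus T$ split coordinatewise. First I will record that for $(v,t)\in V\oplus T$ and $g\in G$ we have $(v,t)g=(vg,t)$, since $T$ is centralized by $G$. Hence, for any subgroup $X\le G$ (applied with $X=G$ and $X=H$),
\[
(v,t)X=\{(vx,t)\mid x\in X\}=vX\times\{t\}.
\]

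For the equivalence, suppose first that $(G,H,V)$ is immutable. Then $vG=vH$ for every $v\in V$, so $(v,t)G=vG\times\{t\}=vH\times\{t\}=(v,t)H$, and $(G,H,V\oplus T)$ is immutable. Conversely, if $(G,H,V\oplus T)$ is immutable, then taking $t=0$ gives $vG\times\{0\}=vH\times\{0\}$, whence $vG=vH$. Alternatively, since $V\cong V\oplus 0$ is a submodule of $V\oplus T$, the converse follows at once from \cref{lem: reduct to irred}(i).

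For the orbit count, the displayed formula shows that the $G$-orbits on $V\oplus T$ are exactly the sets $\mathcal O\times\{t\}$, where $\mathcal O$ ranges over the $G$-orbits on $V$ and $t$ ranges over $T$. These sets are distinct for distinct pairs $(\mathcal O,t)$. So the number of $G$-orbits on $V\oplus T$ is $|T|$ times the number of $G$-orbits on $V$. Here $T$ is the $1$-dimensional trivial module, so $|T|=p$. (If $T$ were allowed to be a larger trivial module, the factor would be $|T|$, and the case of a general centralized summand would follow by induction on $\dim T$.)

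There is no real obstacle here. The only point requiring care is the convention that $T$ denotes the $1$-dimensional trivial module, which is what makes the factor exactly $p$.
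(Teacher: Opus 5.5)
Your proof is correct and is essentially the paper's argument. The paper observes $C_G(v+t)=C_G(v)$ and applies \cref{lem:basic1} to compare the factorizations $G=C_G(\cdot)H$, while you compute the orbits $(v,t)X=vX\times\{t\}$ directly; the orbit count is the same as the paper's, and your explicit check that the sets $\mathcal O\times\{t\}$ are distinct fills in a step the paper leaves implicit.
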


\begin{proof} Set $W= V \oplus T$ and let $w \in W$. Then $w=v+t$ where $v \in V$ and $t\in T$. In particular, $C_G(w)= C_G(v)$. Therefore, $G= C_G(w)H$ if and only if $G=C_G(v)H$ and this is the claim. Moreover, if  $\{v_1, \dots, v_n\}$ are $G$-orbit representatives for the action of $G$ on $V$, then $\{t+v_i\mid t \in T, 1\le i\le n\}$ are representatives for the orbits of $G$ on $W$ and this proves the statement about the number of orbits.
\end{proof}

 \begin{lemma}\label{lem:min exactly 2}
 Assume that $G= O^p(G)$, $V$ is a $\GF(p)G$-module which   has no centralized direct summand.  If $V$ is not irreducible, then there exists a section $W$ of $V$ which   has no centralised direct summand and has exactly two composition factors.
 \end{lemma}

 \begin{proof}Suppose the claim is false and choose $V$ to be a counter example of minimal dimension.  If all composition factors of $V$ are trivial modules, then as $G=O^p(G)$, $V$ is a direct sum of trivial modules, a contradiction.

 Let $U$ be   submodule of $V$ minimal such that $U$ has a non-trivial composition factor. Then $U$ has no centralized direct summand by the minimal choice of $U$.  Hence either $U=V$ or $U$ is irreducible for otherwise we could select $W$ to be a section of $U$.

 In the first case, $V$ has a maximal submodule $M$ consisting just of trivial modules. Let $K$ be a maximal  submodule of $M$, then the minimal choice of $U$ implies that $V/K$ has no centralized direct summand and we take $W= V/K$, which is a contradiction.

 Hence $U$ is irreducible.  In particular, $U<V$.
Let $Y$ be a submodule of $V$ with  $Y/U$ irreducible. If $Y$ has no centralized direct summand, then we take $W=Y$ and obtain a contradiction. Therefore $Y =T \oplus U$ with $T$ a trivial $G$-module. By hypothesis $Y<V$. Now set $\overline V = V/T$. Then $\overline Y \ne \overline V$. Since $\dim \overline V < \dim V$, we are done unless $\overline V$ has a centralised direct summand. So $\overline V = C\oplus X$ where $C$ denotes the maximal centralized direct summand of $\overline V$. If $X$ is not irreducible, then we can choose $W$ to be a section of $X$ by induction, a contradiction. Hence $X$ is irreducible and $X \cong U$. Let $C^\dagger$ be the preimage of $C$. All the composition factors of $C^\dagger$ are trivial $G$-modules and so as $G=O^p(G)$, $C^\dagger$ is centralized by $G$ and $V= C^\dagger \oplus U$, contradicting $V$ having no centralized direct summands. This completes the proof of the lemma.
 \end{proof}

Our next result will be used when we apply the Steinberg tensor product theorem for Lie type groups defined in characteristic $2$.

\begin{lemma}\label{lem:no tensors}
Suppose that $n \ge 2$,  $\mathbb F $ is a field and $V$ is an $n$-dimensional $\mathbb FG$-module. Let $\Sigma= \mathrm{Gal}(\mathbb F/\mathbb F_0)$ where $\mathbb F_0$ is a subfield of $\mathbb F$.
 Assume that $\{v_1, v_2 \} \subset V$ is linearly independent,   $\ell \ge 2$ and  $$W= V^{\sigma_1}\otimes \dots  \otimes V^{\sigma_\ell}$$ where $\sigma_1,\dots, \sigma_\ell\in \Sigma$. Let $w_1= v_1 \otimes v_1\otimes \dots\otimes v_2$ and $w_2= v_1 \otimes v_1\otimes \dots\otimes v_2$ be elements of $W$.
Then $C_G(w_1)$ stabilizes $\langle v_1\rangle$ in $V$ and    $C_G(w_2)$ stabilizes $\langle v_1\rangle$ and $\langle v_2\rangle$ in $V$.
\end{lemma}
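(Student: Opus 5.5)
The idea is that a nonzero pure tensor in $V^{\sigma_1}\otimes\dots\otimes V^{\sigma_\ell}$ determines each of its tensor factors up to a scalar. An element of $G$ fixing such a tensor therefore has to send each factor to a scalar multiple of itself. I read the statement with the evident typo corrected: $w_1=v_1\otimes v_1\otimes\dots\otimes v_1$, and $w_2=v_1\otimes\dots\otimes v_1\otimes v_2$ with $v_2$ only in the last position.

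First I fix conventions. Extend $\{v_1,v_2\}$ to an $\mathbb F$-basis $e_1=v_1,e_2=v_2,e_3,\dots,e_n$ of $V$ and let $\rho\colon G\to\GL_n(\mathbb F)$ be the corresponding matrix representation. For $\sigma\in\Sigma$, the twisted module $V^{\sigma}$ is $V$ (same basis) with $g$ acting by the matrix $\rho(g)^\sigma$. With this convention $v_1,v_2$ still have coordinates in the prime field. So for any $g$ and any $\sigma$, $v_1\rho(g)^\sigma\in\langle v_1\rangle$ if and only if $(v_1\rho(g))^\sigma\in\langle v_1\rangle^\sigma=\langle v_1\rangle$, i.e.\ if and only if $v_1 g\in\langle v_1\rangle$ in $V$. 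The same holds for $v_2$. This is the only place the Galois twists enter, and the point to be careful about is fixing the twisted modules via a basis containing $v_1,v_2$.

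Next I prove the pure tensor claim: if $x_1\otimes\dots\otimes x_\ell=y_1\otimes\dots\otimes y_\ell\neq 0$ with $x_i,y_i\in V$, then $y_i\in\langle x_i\rangle$ for every $i$. Fix $i$. For each $j\neq i$ choose a linear functional $f_j$ with $f_j(x_j)\neq 0$. Contract both sides with $\bigotimes_{j\ne i}f_j$ in the positions $j\neq i$ to get $\bigl(\prod_{j\ne i}f_j(x_j)\bigr)x_i=\bigl(\prod_{j\neq i}f_j(y_j)\bigr)y_i$. The left side is nonzero, hence so is the coefficient on the right, and $y_i\in\langle x_i\rangle$.

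Now let $g\in C_G(w_1)$. Then
\[w_1=w_1g=v_1\rho(g)^{\sigma_1}\otimes\dots\otimes v_1\rho(g)^{\sigma_\ell}.\]
The pure tensor claim gives $v_1\rho(g)^{\sigma_1}\in\langle v_1\rangle$, and the first paragraph converts this to $v_1g\in\langle v_1\rangle$. So $C_G(w_1)$ stabilizes $\langle v_1\rangle$.

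For $g\in C_G(w_2)$, the same argument applied to the first factor (which is $v_1$, since $\ell\ge 2$) gives $v_1g\in\langle v_1\rangle$. Applied to the last factor it gives $v_2\rho(g)^{\sigma_\ell}\in\langle v_2\rangle$, hence $v_2g\in\langle v_2\rangle$. So $C_G(w_2)$ stabilizes both $\langle v_1\rangle$ and $\langle v_2\rangle$. The linear independence of $v_1,v_2$ is used only to put them in a common basis, and the hypothesis $n\ge2$ only to make that possible.
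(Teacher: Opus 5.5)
Your proof is correct, and your reading of the typo ($w_1=v_1\otimes\dots\otimes v_1$, with $v_2$ only in the last slot of $w_2$) matches what the paper intends. It is essentially the paper's argument. The paper extends $\{v_1,v_2\}$ to a basis, writes $g=(a_{ij})$, and compares the coefficients $a_{1i_1}^{\sigma_1}\cdots a_{1i_{\ell-1}}^{\sigma_{\ell-1}}a_{2i_\ell}^{\sigma_\ell}$ of basis tensors in $w_2g$ to force the first two rows of $g$ to be diagonal; your contraction against coordinate functionals does exactly this, stated as the general fact that a nonzero pure tensor determines its factors up to scalars.
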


\begin{proof} For this calculation, we may and do identify $G$ with a subgroup of $\GL(V)$.
Extend $\{v_1, v_2 \}$ to a basis $\mathcal B=\{v_1, \dots, v_n\}$ of $V$ and write  $$w_2=\underbrace{v_1\otimes v_1\otimes \dots \otimes v_1\otimes v_2}_{\ell \text{ factors}}$$
Then, for $g\in G\le\GL(V)$, we express $g= (a_{ij})\in \GL_n(\mathbb F)$ with respect to the basis $\mathcal B$. Thus $v_1g= \sum_{j=1}^n a_{1j}v_j$ and $v_2g= \sum_{j=1}^n a_{2j}v_j$.
Therefore $$
w_2g= (\sum_{j=1}^n a_{1j}^{\sigma_1}v_j\otimes \sum_{j=1}^n a_{1j}^{\sigma_2}v_j\otimes \dots \otimes \sum_{j=1}^n a_{1j}^{\sigma_{\ell-1}}v_j\otimes \sum_{j=1}^n a_{2j}^{\sigma_\ell} v_j)$$
and so,   noting that the coefficient of $v_{i_1}\otimes v_{i_2}\otimes \dots \otimes v_{i_l}$  is $$a_{1i_1} ^{\sigma_1}a_{1i_2}^{\sigma_2}\dots a_{1i_{\ell-1}}^{\sigma_{\ell-1}} a_{2i_\ell}^{\sigma_\ell}$$ we obtain
 $wg=w$ if and only if $v_1g^{\sigma _i}=a_{11}^{\sigma _i }v_1$ for $1\le i \le \ell-1$ and  $v_2g^{\sigma_\ell}=
 a_{22}^{\sigma_\ell} v_2$ with
 $$a_{11} ^{\sigma_1}a_{11}^{\sigma_2}\dots a_{11}^{\sigma_{\ell-1}} a_{22}^{\sigma_\ell}=1$$ and all the other possible combinations are zero.
 Therefore,      the first two rows of $g$ are
$$\begin{pmatrix}a_{11}&0&0&\dots&0\\
0&a_{22}&0&\dots&0\end{pmatrix}.$$ In particular $C_G(w_2) \le \Stab_G(\langle v_1\rangle) \cap \Stab_G(\langle v_2\rangle)$ as claimed.

A similar but slightly easier proof shows that  $C_G(w_1)$ stabilizes $\langle v_1\rangle$.
\end{proof}

When examining modules defined over fields bigger than $\GF(p)$, the following lemma guides our methodology.

\begin{lemma}\label{lem:good vectors}
Suppose that $\mathbb F$  is a finite field of characteristic $p$, $G$ is a group, $H \le G$  and that $V$ is an irreducible $\mathbb FG$-module. Assume $L \le G$ fixes a non-zero vector in $V$ and that for every subgroup $M\le G$ with $L \le M$,  $G\ne  MH$. Let $U \le V$ be an irreducible $\GF(p)G$-submodule of $V$. Then $(G,H,U)$ is not immutable.
\end{lemma}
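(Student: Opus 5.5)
We argue by contradiction: assuming $(G,H,U)$ is immutable, we exhibit a subgroup $M\ge L$ with $G=MH$, against the hypothesis. The work is to transfer the non-zero vector fixed by $L$ in the $\mathbb F$-space $V$ to a non-zero vector of the $\GF(p)G$-submodule $U$ that is still fixed by $L$. Once that is done, \cref{lem:basic1} immediately gives $G=C_G(u)H$ with $L\le C_G(u)$, which is the required contradiction with $M=C_G(u)$.

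First we record the module-theoretic set-up. Regard $V$ as a $\GF(p)G$-module by restriction of scalars. Since $V$ is an irreducible $\mathbb FG$-module, $V=\sum_{\lambda\in\mathbb F}\lambda U$, because the right-hand side is a non-zero $\mathbb F$-subspace invariant under $G$. Each $\lambda U$ with $\lambda\ne 0$ is a $\GF(p)G$-submodule isomorphic to $U$ via multiplication by $\lambda$, since scalars commute with $G$. Hence $V|_{\GF(p)G}$ is a sum, and therefore a direct sum, of copies of $U$, say $V\cong U^{\oplus k}$ as $\GF(p)G$-modules.

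Next, let $v\in C_V(L)^\#$. Write $v=(u_1,\dots,u_k)$ under a fixed isomorphism $V\cong U^{\oplus k}$. This isomorphism is $G$-equivariant, so each coordinate projection $\pi_i\colon V\to U$ is a $\GF(p)G$-homomorphism. Since $v\ne0$, some $u_i=\pi_i(v)$ is non-zero, and $u_i l=\pi_i(vl)=\pi_i(v)=u_i$ for all $l\in L$. Thus $u=u_i\in C_U(L)^\#$. Alternatively, one can avoid the decomposition: the $\GF(p)G$-submodule generated by $v$ has an irreducible quotient isomorphic to $U$, but a submodule is cleaner, and the projection argument gives exactly that.

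Finally, if $(G,H,U)$ were immutable, then \cref{lem:basic1} applied to $u$ gives $G=C_G(u)H$ with $L\le C_G(u)=:M$, contradicting the hypothesis that no $M\ge L$ satisfies $G=MH$. Hence $(G,H,U)$ is not immutable.

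The only step needing care is the second: verifying that $V$ restricted to $\GF(p)G$ is homogeneous with every summand isomorphic to the given $U$. This is what allows an $L$-fixed vector to be projected into $U$ itself, rather than merely into some other irreducible $\GF(p)G$-constituent of $V$.
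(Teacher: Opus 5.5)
Your proof is correct and is essentially the paper's argument. The paper chooses a maximal $\GF(p)G$-submodule $W$ of the homogeneous module $V|_{\GF(p)G}$ with $v\notin W$, and then applies \cref{lem:basic1} to the $L$-fixed vector $v+W\in V/W\cong U$; this is exactly your coordinate projection $\pi_i$ with kernel $W$. You also prove the homogeneity $V|_{\GF(p)G}\cong U^{\oplus k}$, which the paper only asserts.
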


\begin{proof} We know that on restriction to $\GF(p)G$, $V$ is a direct sum of isomorphic irreducible $\GF(p)G$-modules $V= U_1\oplus\dots \oplus U_k$ for some $k \ge 1$. In particular, we have that the intersection of all the maximal $\GF(p)G$-submodules of $V$ is zero.

Let $v \in V$ be a non-zero vector fixed by $L$. Then there exists a maximal $\GF(p)G$-submodule $W$ of $V$ such that $v \not \in W$. Furthermore, $V/W\cong U$ as $\GF(p)G$-modules. Since $v$ is fixed by $L$, $v+W$ is fixed by $L$ in $V/W$. Assume that $(G,H,U)$ is immutable.
   Then   $G= C_G(v+W)H$ by \cref{lem:basic1}. Since $L \le C_G(v+W)$ this contradicts our assumption about factorisations with over-groups of $L$.
\end{proof}

\cref{class char 2} requires a deeper knowledge of the irreducible modules for groups of Lie type.
The important features that we deploy  are the following.  Let $p$ be a prime, $G$ be a finite quasisimple group of Lie type defined in characteristic $p$. We fix $S \in \syl_p(G)$ and $B=N_G(S)=TS$ a fixed Borel subgroup with Cartan subgroup $T$. Let $\overline{\mathbb L}$ be the algebraic closure of $\GF(p)$.  Then the irreducible  $\overline{\mathbb L} G$-modules are described by the theory of weights. We follow the notation developed in \cite[Sections 1.14 and 2.8]{GLS3}. In particular, we remark that the high weight vectors can be chosen in $C_V(S)$ and that $N_G(C_V(S)) \ge B$ is   a parabolic subgroup. When understanding the irreducible  $\overline{\mathbb L} G$-modules, the Steinberg tensor product theorem is of premium importance. For this, we require the group $\Sigma$ of the field automorphisms of $G$. Thus $\Sigma=\langle \sigma\rangle$ where $\sigma$ is the restriction to $G$ of the    standard Frobenius automorphism of $\overline{\mathbb L}$ defined by $x \mapsto x^p$. Suppose that $\Pi=\{\alpha_1, \dots, \alpha_n\}$ is a fundamental system of roots for $G$ and let $\{\omega_1, \dots, \omega_n\}$ be the corresponding set of fundamental dominant weights.  Thus $\langle \alpha_i,\omega_j\rangle =\delta_{ij}$.   The irreducible $\overline{\mathbb L} G$-representations are in one-to-one correspondence with the dominant weights in the $q$-restricted range. This means that the weight $\lambda$ satisfies $\lambda=\sum_{i=1}^n c_i \omega_i$ with $0\le c_i < q$.  A weight is basic, if in fact $0\le c_i <p$.

\begin{lemma}\label{high weights} Let $p$ be a prime and $q=p^e$. Suppose that $G$ is an untwisted universal quasisimple group of Lie type defined over $\GF(q)$.
Assume that  $S \in \Syl_p(G)$, $B=N_G(S)$, $\Pi$ is a set of fundamental roots for $G$ and  $V=V(\lambda)$ is an irreducible $\overline{\mathbb L}G$-module with $q$-restricted high weight $\lambda$.  Set $P= N_G(C_V(S))$. Then there exists $J \subseteq \Pi$ such that $P=BN_JB$ is a parabolic subgroup of $G$ and  setting $K=\Pi \setminus J$ we have $$\lambda= \sum_{k\in K}   (\sum_{\ell=0}^{e-1}d_{k,\ell}p^\ell)\omega_k=
 \sum_{\ell=0}^{e-1}p^\ell(\sum_{k\in K} d_{k,\ell}\omega_k)$$ where $0<\sum_{\ell=0}^{e-1}d_{k,\ell}p^\ell< q$,  $0\le d_{k,\ell}<p$  and $$V=    \otimes_{\ell=0}^{e-1} W_\ell^{\sigma^{ \ell}}$$ where $W_\ell= V\left(\sum_{k\in K} d_{k,\ell}\omega_k\right)$.
\end{lemma}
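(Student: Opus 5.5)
This result combines two standard facts: the Steinberg tensor product theorem, and the description of the stabiliser of the highest weight space of an irreducible module. I would treat them separately and then match the index sets.

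\emph{Tensor decomposition.} First write the $q$-restricted weight $\lambda=\sum_{i} c_i\omega_i$ with $0\le c_i<q$. Expand each coefficient $p$-adically as $c_i=\sum_{\ell=0}^{e-1}d_{i,\ell}p^\ell$ with $0\le d_{i,\ell}<p$. This gives
\[
\lambda=\sum_{\ell=0}^{e-1}p^\ell\lambda_\ell,\qquad \lambda_\ell=\sum_i d_{i,\ell}\omega_i .
\]
Each $\lambda_\ell$ is a basic weight. The Steinberg tensor product theorem \cite[Section 2.8]{GLS3} then gives $V(\lambda)\cong\bigotimes_\ell V(\lambda_\ell)^{\sigma^\ell}$. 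Put $K=\{\alpha_k\in\Pi : c_k\neq 0\}$. Since $c_k\ne 0$ exactly when some $d_{k,\ell}\ne0$, restricting the sums to $k\in K$ loses nothing. This yields the displayed formulas, with $0<\sum_\ell d_{k,\ell}p^\ell<q$ for $k\in K$.

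\emph{The parabolic.} It remains to show that $P=N_G(C_V(S))$ equals $BN_JB$ with $J=\Pi\setminus K$, that is, $J=\{\alpha_j : \langle\lambda,\alpha_j^\vee\rangle=0\}$. I would use the standard fact \cite[Sections 1.14, 2.8]{GLS3} that $C_V(S)$ is the one-dimensional highest weight space $V_\lambda$. This holds because $V$ is irreducible and $S$ is the unipotent radical of $B$. So $B\le N_G(V_\lambda)$, and $N_G(V_\lambda)$ is an overgroup of $B$. By the Tits system property every such overgroup has the form $P_J=BN_JB$ for a unique $J\subseteq\Pi$. So the task reduces to deciding, for each simple root $\alpha_i$, whether the rank-one Levi subgroup $X_i=\langle U_{\alpha_i},U_{-\alpha_i}\rangle$ stabilizes $V_\lambda$.

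If $\langle\lambda,\alpha_i^\vee\rangle=0$, then $\lambda-\alpha_i$ is not a weight of $V$. Hence $U_{-\alpha_i}$ fixes $v_\lambda$, because $v_\lambda x_{-\alpha_i}(t)-v_\lambda$ lies in weight spaces $\lambda-m\alpha_i$ with $m\geq 1$. It follows that $X_i$ fixes $V_\lambda$. If instead $c_i\ne0$, then restricting to $X_i\cong\SL_2(q)$, the module generated by $v_\lambda$ is, via the Steinberg tensor product for $\SL_2(q)$, a tensor product of twisted restricted modules of dimension $d_{i,\ell}+1$, at least one of which is nontrivial. So $v_\lambda$ is not fixed by $U_{-\alpha_i}$, and $\alpha_i\notin J$.

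The step I expect to need the most care is this last one. It requires the $q$-restrictedness of $\lambda$: over the finite group, $c_i\ne0$ but divisible by $q$ would make the factor trivial. I would handle it by citing the finite-group statement in \cite[Theorem 2.8.x]{GLS3} rather than working in the algebraic group.
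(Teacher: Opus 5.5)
Your argument is correct, but it identifies $J$ by a different mechanism from the paper.

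\emph{The paper's route.} The paper obtains $P=BN_JB$ from $P\ge B$ and then argues in the Weyl group. A monomial element $n$ carries the weight space $V_\mu$ to $V_{n\mu}$, so $N_J/T$ is exactly the stabilizer of $\lambda$. Since $s_j\lambda=\lambda-c_j\alpha_j$, this forces $c_j=0$ for $j\in J$. Conversely, any $s_k$ with $c_k=0$ would fix $\lambda$, normalize $C_V(S)=V_\lambda$ and so lie in $P$; this forces $c_k\neq 0$ for $k\in K$. The $p$-adic expansion and Steinberg's theorem come only at the end.

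\emph{Your route.} You do Steinberg first and then test each root group $U_{-\alpha_i}$ directly on $v_\lambda$.
\begin{itemize}
\item Your $c_i=0$ case is fine and gives slightly more: each $X_j$ with $j\in J$ actually centralizes $v_\lambda$. You do need that no $\lambda-m\alpha_i$ with $m\ge 1$ is a weight, not just $m=1$. This follows from $s_i(\lambda-m\alpha_i)=\lambda+m\alpha_i\not\le\lambda$.
\item Your $c_i\neq 0$ case argues via the $X_i$-module generated by $v_\lambda$ and the Steinberg tensor product for $\SL_2(q)$. It is correct but heavier than necessary. The Weyl-group observation that $n_i$ sends $V_\lambda$ to $V_{\lambda-c_i\alpha_i}\neq V_\lambda$ settles it without any input about Levi submodules or restriction to $\SL_2(q)$.
\end{itemize}

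\emph{On $q$-restrictedness.} Your closing remark puts it in the wrong place. A coefficient $c_i=q$ would not make the $\SL_2(q)$-factor trivial. The $q$-power Frobenius is the identity on $\SL_2(q)$, so $V(q\omega_1)|_{\SL_2(q)}\cong V(\omega_1)|_{\SL_2(q)}$. Where $q$-restrictedness is really needed is for the irreducibility of $V|_G$ and for $C_V(S)=V_\lambda$ being one-dimensional. Both you and the paper rely on these facts.
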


 \begin{proof} We are given that $V$ corresponds to the $q$-restricted weight $\lambda$ and we can write $V=V(\lambda)$. Since $B$ leaves $C_V(S)$ invariant, $P\ge B$ and so $P$ is a parabolic subgroup of $G$ by \cite[Theorem 8.3.2 ]{Carter}.    In particular, following the notation in \cite[Proposition 8.2.2]{Carter}  we have $P=BN_JB$ for some $J \subseteq \Pi$. Therefore, $N_J/T$ is the parabolic subgroup of $N/T$ which stabilizes the weight $\lambda$. Hence $\lambda\in \langle \omega_k\mid k \in K\rangle$. If $ \lambda\in \langle \omega_k\mid k \in K_0\rangle$ for some $K_0 \subset K$, then the corresponding reflections of $N/T$ in $K\setminus K_0$ would fix $\lambda$ and stabilize $C_V(S)$.  As they are not in $P$, this is impossible.  Hence $\lambda = \sum_{k\in K} e_{k} \omega_k$ with $0< e_{k}< q$ for $k \in K$.    We now rewrite $\lambda$ as a $p$-adic expansion of basic weights thus we take the $p$-adic expansion of each $e_{k}= \sum_{\ell=0}^{e-1}d_{k,\ell}p^\ell $ with $0\le d_{k,\ell}<p$. This yields
 $$\lambda= \sum_{k\in K}   (\sum_{\ell=0}^{e-1}d_{k,\ell}p^\ell)\omega_k=
 \sum_{\ell=0}^{e-1}p^\ell(\sum_{k\in K} d_{k,\ell}\omega_k).$$
Now observe that each $\sum_{k\in K} d_{k,\ell}\omega_k$ is a basic weight as $0\le d_{k,\ell}<p$. Applying the Steinberg tensor product theorem (see \cite[II.3.17 Corollary]{Jantzen}) yields $$V=V(\lambda) \cong \bigotimes_{\ell=0}^{e-1} V\left(\sum_{k\in K} d_{k,\ell}\omega_k\right)^{\sigma^{ \ell}}.$$
This concludes the proof.
\end{proof}

We now specialize to the special case where $P$ in \cref{high weights} is a maximal parabolic subgroup and $p=2$.

\begin{lemma}\label{high weight app} Suppose that $G$  is an untwisted universal quasisimple group of Lie type defined over $\GF(2^e)$. Let $V=V(\lambda)$ be an irreducible $\overline{\mathbb L}G$-module with $2^e$-restricted high weight $\lambda$. Assume that  $S \in \Syl_2(G)$  and $P= N_G(C_V(S))$ is a maximal parabolic subgroup of $G$. Then there exists a unique $1 \le j\le n$ and $\Theta \subseteq \Sigma$ such that setting $W= V(\omega_j)$ we have
$$V= \bigotimes_{\theta \in \Theta} W^\theta.$$
\end{lemma}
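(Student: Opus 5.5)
Apply \cref{high weights} with $p=2$ and $q=2^e$. It gives a subset $J\subseteq\Pi$ with $P=BN_JB$, and sets $K=\Pi\setminus J$. The high weight then has the form $\lambda=\sum_{\ell=0}^{e-1}2^\ell\bigl(\sum_{k\in K}d_{k,\ell}\omega_k\bigr)$ with $d_{k,\ell}\in\{0,1\}$, and for each $k\in K$ the coefficient $\sum_\ell d_{k,\ell}2^\ell$ is strictly positive. Moreover $V\cong\bigotimes_{\ell=0}^{e-1}W_\ell^{\sigma^\ell}$ with $W_\ell=V\bigl(\sum_{k\in K}d_{k,\ell}\omega_k\bigr)$.

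Maximality of $P$ forces $J=\Pi\setminus\{\alpha_j\}$ for a single $j$, so $K=\{j\}$. The index $j$ is unique because $P$ determines $J$ via $P=BN_JB$: distinct $J$ give distinct parabolic subgroups containing $B$ (\cite[Proposition 8.2.2]{Carter}). Thus $\lambda=\bigl(\sum_\ell d_{j,\ell}2^\ell\bigr)\omega_j$, and each $W_\ell$ is either $V(\omega_j)$ (when $d_{j,\ell}=1$) or $V(0)$, the trivial one-dimensional module (when $d_{j,\ell}=0$).

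Set $\Theta=\{\sigma^\ell\mid d_{j,\ell}=1\}\subseteq\Sigma$. Tensoring with a trivial one-dimensional module changes nothing, so the trivial factors can be dropped. This yields $V\cong\bigotimes_{\theta\in\Theta}W^\theta$ with $W=V(\omega_j)$. The set $\Theta$ is nonempty because the coefficient of $\omega_j$ is positive. The elements $\sigma^\ell$ for $0\le\ell<e$ are distinct in $\Sigma$, since $\Sigma$ is the Galois group of $\GF(2^e)$ restricted to $G$ and has order $e$; hence the tensor product is indexed by $\Theta$ without repetitions.

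No step is a real obstacle; the argument is essentially bookkeeping on \cref{high weights}. The one point needing care is the identification $K=\{j\}$, that is, the correspondence between maximal parabolic subgroups containing $B$ and subsets $\Pi\setminus\{\alpha_j\}$. This is standard from the $BN$-pair theory cited in the proof of \cref{high weights}.
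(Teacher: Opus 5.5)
Your proposal is correct and is essentially the paper's proof: apply \cref{high weights}, use maximality of $P$ to get $K=\{j\}$, note that each $W_\ell$ is either $V(\omega_j)$ or trivial, and take $\Theta=\{\sigma^\ell\mid d_{j,\ell}=1\}$. Your extra remarks make explicit points the paper leaves implicit: uniqueness of $j$ because $P=BN_JB$ determines $J$, nonemptiness of $\Theta$, and distinctness of the $\sigma^\ell$.
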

\begin{proof} We may assume that $P=BN_JB$ omits the $j$th fundamental reflection.  Thus $K=\Pi\setminus J=\{j\}$ in \cref{high weights} and
 $$ V=    \otimes_{\ell=0}^{a-1} W_\ell^{\sigma^{ \ell}}$$ where $W_\ell= V\left(\sum_{k\in K} d_{k,\ell}\omega_k\right)$. Since $K=\{j\}$ and $0\le d_{k,\ell}<p=2$, $$\sum_{k\in K} d_{k,\ell}\omega_k= d_{j,\ell}\omega_j=\begin{cases}\omega_j&
 d_{j,\ell}=1 \\0&d_{j,\ell}=0\end{cases}$$ and so $W_\ell=V(\omega_j)$ or is trivial for each $\ell$.
 Define $\Theta=\{\sigma^\ell \mid d_{j,\ell}=1\}$ and $W=V(\omega_j)$.  Then $\Theta \subseteq \Sigma$ and
$ {V}=\bigotimes_{\theta \in \Theta} W^{\theta},$
as claimed.\end{proof}

We shall  need the following two results when we consider unitary groups in \cref{class char 2}.

\begin{lemma}\label{SU2nbound}
Suppose that $m, n \in \mathbb Z$ are positive, $q=2^m$, and $q^{\left({2n}\atop n\right)}  \le 2^{2n} (q^{8n^3}) $. Then $n \le 6$.
\end{lemma}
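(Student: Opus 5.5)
Take $\log_2$ of both sides. Since $q=2^m$, the hypothesis becomes
$$m\binom{2n}{n}\le 2n+8n^3m .$$
As $m\ge 1$, we have $2n\le 2nm$, so it suffices to work with the weaker inequality $\binom{2n}{n}\le 2n/m+8n^3\le 2n+8n^3$. This removes $m$ entirely. The claim then reduces to showing $\binom{2n}{n}>8n^3+2n$ for all $n\ge 7$.

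For the base case $n=7$, we have $\binom{14}{7}=3432$ and $8\cdot 343+14=2758$, so the strict inequality holds. For $n=6$ the inequality fails, since $924<1740$. This explains why the bound in the statement is exactly $6$.

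For the induction step, use
$$\binom{2n+2}{n+1}=\frac{2(2n+1)}{n+1}\binom{2n}{n}.$$
For $n\ge 7$ the factor satisfies $\frac{2(2n+1)}{n+1}\ge \frac{30}{8}>3.7$. On the other side,
$$\frac{8(n+1)^3+2(n+1)}{8n^3+2n}\le \left(\frac{n+1}{n}\right)^3\le \left(\frac{8}{7}\right)^3<1.5 .$$
So the left side grows faster than the right side at each step, and the inequality persists for all $n\ge 7$. Therefore the hypothesis forces $n\le 6$.

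The only step needing care is the induction ratio. Alternatively, one could use the standard bound $\binom{2n}{n}\ge 4^n/(2n)$ and check $4^n>2n(8n^3+2n)$ for $n\ge 7$ directly. However, that check fails at $n=7$ ($16384$ versus $38612$), so the direct check of $n=7$ combined with the ratio argument is the cleaner route.
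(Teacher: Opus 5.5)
Your proof is correct, and your reduction is the same as the paper's. The paper also takes $\log_2$ and uses $m\ge 1$: it rewrites the hypothesis as $m\le 2n/\bigl(\binom{2n}{n}-8n^3\bigr)$ and shows the right side is less than $1$ for $n\ge 7$. That is exactly your inequality $\binom{2n}{n}>8n^3+2n$.

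The routes differ in how that inequality is proved for all $n\ge 7$. The paper checks $7\le n\le 11$ numerically. For $n\ge 12$ it uses the crude bound $\binom{2n}{n}\ge\binom{2n}{7}$, a degree-$7$ polynomial in $n$, together with $(2n-1)\cdots(2n-6)\ge n^6$, to dominate $8n^3$. You instead compare growth rates: the exact ratio $\binom{2n+2}{n+1}/\binom{2n}{n}=2(2n+1)/(n+1)\ge 15/4$ against the factor $(1+1/n)^3\le(8/7)^3<3/2$ for the right side. That lets a single base case $n=7$ carry an induction.

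Your argument is shorter and needs only one hand computation. The paper's is non-inductive, but it costs five base cases and a more delicate chain of estimates. As printed, that chain also checks $n/\bigl(\binom{2n}{n}-4n^3\bigr)$ rather than the intended $2n/\bigl(\binom{2n}{n}-8n^3\bigr)$ for $7\le n\le 11$, which your approach sidesteps.
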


\begin{proof} Assume that $n \ge 7$. We require $m\left({2n}\atop n\right) \le 2n+8mn^3$. As $n \ge 7$, this is equivalent to $m\le \frac {2n}{  \left({2n}\atop n\right)-8n^3  }$. We know $1\le m$.  So we calculate directly that $1\le \frac n{  \left({2n}\atop n\right)-4n^3  }$ fails for $7\le n \le 11$. Assume that $n \ge 12$.
Now we approximate \begin{eqnarray*}1&\le&\frac {2n}{  \left({2n}\atop n\right)-8n^3  }\le \frac {2n}{  \left({2n}\atop 7\right)-8n^3  }\\&=& \frac{2.7!.n}{2n(2n-1)(2n-2)(2n-3)(2n-4)(2n-5)(2n-6) -8.7!.n^3}\\
&=& \frac{7!}{(2n-1)(2n-2)(2n-3)(2n-4)(2n-5)(2n-6) -4.7!n^2}\\
&\le&\frac{7!}{n^2(n^4-4.7!)}
 < \frac{7!}{576.n^2}<1,\end{eqnarray*} which is a contradiction. Hence $n \le 6$.
\end{proof}

\begin{lemma}\label{lem:unitary dim bound} Suppose that $n \ge 2$, $G= \SU_{2n}(2^a)$ and $V$ is the exterior $n$th power of the natural module for $G$. Then $V $ can be realized over $\GF(2^a)$ and, if $n \ge 7$, then $G$ has a regular orbit on $V$.
\end{lemma}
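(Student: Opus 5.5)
Two things need proving: the module can be realized over $\GF(2^a)$, and a regular orbit exists once $n \ge 7$.

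\emph{Realization over $\GF(2^a)$.} Let $N$ be the natural $\GF(2^{2a})G$-module and $\sigma_0$ the involutory field automorphism of $\GF(2^{2a})$. By \cref{unitary sbgrp}, $N^* \cong N^{\sigma_0}$ as $\GF(2^{2a})G$-modules. Since $\dim N = 2n$, the determinant pairing $\Lambda^n N \times \Lambda^n N \to \Lambda^{2n}N \cong \GF(2^{2a})$ is $G$-invariant and nondegenerate. Hence $V=\Lambda^n N$ satisfies $V^* \cong V \cong \Lambda^n(N^*) \cong \Lambda^n(N^{\sigma_0}) = V^{\sigma_0}$. So $V$ is $\sigma_0$-stable, and its Brauer character takes values in $\GF(2^a)$. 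Since Schur indices are trivial over finite fields, $V$ is realized over $\GF(2^a)$. Concretely, the $\sigma_0$-semilinear map built from the hermitian form and the Hodge star gives an explicit $\GF(2^a)$-form. Thus $|V| = q^{\binom{2n}{n}}$ with $q = 2^a$.

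\emph{Regular orbit for $n \ge 7$.} I will count vectors fixed by elements of prime order. A vector lies in no regular orbit exactly when it is fixed by some element of prime order in $G/Z$. Elements that act as scalars on $V$ only move vectors within their $1$-space, so they are handled by modding out; I bound the fixed points of the non-scalar prime-order elements. It then suffices to show
\[
\sum_{x} |C_V(x)| < |V|,
\]
where $x$ runs over representatives of conjugacy classes of prime order, each weighted by its class size. Since every class size is at most $|G|$, it suffices to show $|G|\cdot \max_x |C_V(x)|$, summed over classes, is less than $|V|$. Here $|G| < q^{2(4n^2-1)}$, and the number of prime-order classes is crudely at most $q^{2n}$ times a small factor. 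For the fixed space of a non-central $x$, I bound $\dim C_{V\otimes\overline{\mathbb L}}(x)$ by the largest eigenspace of $x$ on $\Lambda^n N$. Using the eigenvalues of $x$ on $N$ (semisimple $x$) or the Jordan structure (unipotent $x$), that eigenspace has dimension at most roughly $\binom{2n}{n} - \binom{2n-2}{n-1}$ or similar, i.e.\ a codimension of order $\binom{2n}{n}/4$. Putting these together, I need an inequality of the form
\[
q^{\binom{2n}{n}/4} > 2^{2n} q^{8n^2} .
\]
This is comparable in shape to \cref{SU2nbound}, and by the approximation used there it holds for $n \ge 7$. Comparing with \cref{SU2nbound} suggests the authors intended exactly this: that lemma says the crude count $q^{\binom{2n}{n}} \le 2^{2n}q^{8n^3}$ forces $n \le 6$. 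So an alternative, even cruder route is to use \cref{lem:dimbound}. If $G$ had no regular orbit, then $|V| \le |G|^{\alpha(G)}$. The bound $\alpha(G) \le 2n$ holds for $\SU_{2n}$, since transvections generate and $2n$ conjugates suffice. This gives $q^{\binom{2n}{n}} \le |G|^{2n} \le q^{2(4n^2)\cdot 2n/2}$, which is absorbed by $2^{2n}q^{8n^3}$. \cref{SU2nbound} then gives $n \le 6$, a contradiction.

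I will take this second route as the main line. Its steps are: realize $V$ over $\GF(2^a)$; note that $V$ is faithful irreducible for $G/Z(G)$ (quasisimple); apply \cref{lem:dimbound} with the bound $\alpha(G)\le 2n$; and conclude with \cref{SU2nbound}. The main obstacle is justifying the bound on $\alpha(G)$ for all $x$, not just transvections, with constants matching $2^{2n}q^{8n^3}$. I would cite the known bound $\alpha(G) \le \dim N = 2n$ for classical groups (Guralnick–Saxl) together with $|G| \le q^{4n^2}$. Irreducibility of $\Lambda^n N$ in characteristic $2$ also needs care: it is the fundamental module $V(\omega_n)$, but I would only need the regular-orbit conclusion for a faithful irreducible $\GF(2)G$-constituent.
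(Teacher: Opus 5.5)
Your argument for the regular orbit is the paper's own. Assuming no regular orbit, \cref{lem:dimbound} with $\alpha(G)=2n$ (Guralnick--Saxl) and $|G|\le 2q^{4n^2}$ gives $q^{\binom{2n}{n}}\le 2^{2n}q^{8n^3}$, which contradicts \cref{SU2nbound} for $n\ge 7$. Your sharper bound $|G|\le q^{4n^2}$ is also fine, since $|\SU_{2n}(q)|<q^{4n^2-1}$. The fixed-point-counting sketch you start with is not needed.

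The genuine difference is in realizing $V$ over $\GF(2^a)$. The paper observes that the graph automorphism fixes $\omega_n$ and cites Kleidman--Liebeck, Proposition 5.4.2. You instead combine $N^*\cong N^{\sigma_0}$ from \cref{unitary sbgrp} with the self-duality of $\Lambda^nN$ from the wedge pairing into $\Lambda^{2n}N$. That pairing is nondegenerate in characteristic $2$, since its Gram matrix on the basis $e_I$ is the permutation matrix of $I\mapsto I^c$. This gives $V^{\sigma_0}\cong V$, and you then descend.

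Your route is more elementary and uses only tools already in the paper. However, the descent needs $V$ to be absolutely irreducible. It should also be phrased using the $\GF(q^2)$-valued trace function, not the Brauer character: an absolutely irreducible module over a finite field is realizable over the field generated by its traces.

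The absolute irreducibility you flag is easy to supply. Since $\omega_n$ is minuscule in type $A$, $\Lambda^nN=L(\omega_n)$ in every characteristic. It stays irreducible on $\SU_{2n}(q)$ because $\omega_n$ is restricted (Steinberg). The paper's citation packages all of this.

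Finally, your passage to $G/Z(G)$ is necessary, not cosmetic. $Z(\SU_{2n}(2^a))$ has odd order $\gcd(n,2^a+1)$, which divides $n$, so it acts trivially on $\Lambda^nN$. The regular-orbit claim therefore has to be read for $G/Z(G)$, a point the paper's proof leaves implicit.
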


\begin{proof} Set $q=2^{a}$. By construction $V$ can be realized over $\GF(q^2)$ and can be identified as coming from the fundamental weight $\omega_n$. Let $\sigma \in \Aut(\GF(q^2))$ have fixed field $\GF(q)$ and $\tau$ be the graph automorphism associated with the untwisted Dynkin diagram for $G$. Then $\tau$ permutes the fundamental weights as it does the nodes of the diagram.  In particular, $\tau(\omega_n)= \omega_n$ and so \cite[Proposition 5.4.2]{KleidmanLiebeck} yields the first claim.

Using the notation from \cref{lem:dimbound}, \cite[Theorem 4.2]{GuralnickSaxl} gives $\alpha(G)=2n$ and,    \cite[Corollary 16]{ParkerWilson} gives $|G| \le 2q^{4n^2}$. Thus if $G$ has no  regular orbits on $V$, \cref{lem:dimbound} implies
$$q^{\left({2n}\atop n\right)}  \le 2^{2n} (q^{8n^3})$$ and this only holds when $n \le 6$ by \cref{SU2nbound}.
\end{proof}

We close this section with two \emph{ad hoc} results which find their application in the proof of \cref{thm:Theorem B}.

\begin{lemma}\label{lem:End bound} Assume that $p$ is a prime, $P$ is a group, $Q= O_p(P)=F^*(P)$   and $V$ is a faithful $\GF(p)P$-module. Assume that $Q$ is a minimal normal subgroup of $P$,  and  $W $ is an irreducible submodule of $V$. If $V/W $ is centralized by $P$, then $Q \cong W $ as $\GF(p)P$-modules, and $V/C_V(Q)$ can be identified with a subspace of $ \mathrm{End}_{P/Q}(W)$.
\end{lemma}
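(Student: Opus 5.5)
Since $V/W$ is centralized by $P$, every $g\in P$ acts trivially on $V/W$, so for each $v\in V$ and $g\in P$ we have $[v,g]=v(g-1)\in W$. The plan is to use this commutator structure in two stages: first show that $Q$ acts trivially on $W$, and then read off the claimed identifications from the resulting maps.

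We first show $[W,Q]=0$. Since $Q$ is a $p$-group and $W$ is an irreducible $\GF(p)P$-module with $Q\trianglelefteq P$, the subspace $C_W(Q)$ is nonzero and $P$-invariant, so $C_W(Q)=W$. Thus $Q$ acts trivially on both $W$ and $V/W$. For $x,y\in Q$ and $v\in V$ we get $v(x-1)(y-1)=0$. This gives $v(xy-1)=v(x-1)+v(y-1)$, so each $v\in V$ defines a homomorphism
\[
\phi_v\colon Q\to W,\qquad x\mapsto [v,x]=v(x-1).
\]
This homomorphism depends only on $v+C_V(Q)$.

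Next, define $\Phi\colon Q\to \Hom(V/W,W)$ by $x\mapsto (v+W\mapsto [v,x])$. This is well defined because $Q$ centralizes $W$, and it is a homomorphism by the computation above. It is injective because $V$ is faithful, and it is $P$-equivariant, with $P$ acting on $\Hom(V/W,W)$ by conjugation. Since $P$ centralizes $V/W$, as a $P$-module $\Hom(V/W,W)\cong W^{\oplus d}$ with $d=\dim(V/W)$, and this is semisimple and homogeneous with all factors isomorphic to $W$. Now $Q$ is elementary abelian as a minimal normal $p$-subgroup, so it is a $\GF(p)P$-module. Minimal normality means $Q$ is irreducible, and being a nonzero submodule of $W^{\oplus d}$ it is isomorphic to $W$. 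This gives the first claim, $Q\cong W$. The conjugation action of $P$ on $Q$ factors through $P/Q$ (indeed $Q=F^*(P)$ is abelian, so $C_P(Q)=Q$), and likewise $Q$ acts trivially on $W$. Hence all of these are $\GF(p)(P/Q)$-modules.

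For the second claim, fix a $P$-isomorphism $\iota\colon W\to Q$. For $v\in V$, the map $\phi_v\circ\iota\colon W\to W$ is additive, and we must check it commutes with $P/Q$. For $g\in P$ we have $[v,x]^g=[vg,x^g]=[v+[v,g],x^g]=[v,x^g]+[[v,g],x^g]$. The last term vanishes because $[v,g]\in W$ and $Q$ centralizes $W$. So $\phi_v(x^g)=\phi_v(x)^g$, and $\phi_v\circ\iota\in \mathrm{End}_{P/Q}(W)$. The map $v\mapsto \phi_v\circ\iota$ is $\GF(p)$-linear with kernel exactly $C_V(Q)$. This identifies $V/C_V(Q)$ with a subspace of $\mathrm{End}_{P/Q}(W)$.

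The main obstacle is the bookkeeping in the equivariance check. One must use that $[v,g]$ lies in $W$, where $Q$ acts trivially, and keep the conjugation conventions consistent. The remaining steps are routine.
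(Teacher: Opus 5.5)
Your proof is correct and follows essentially the paper's argument: the maps $\phi_v\colon q\mapsto [v,q]$ from $Q$ to $W$, their $P$-equivariance (using $[v,g]\in W\le C_V(Q)$), and the kernel $C_V(Q)$ of $v\mapsto\phi_v$. The only difference is how you get $Q\cong W$. You embed $Q$ into $\Hom(V/W,W)\cong W^{\oplus d}$, whereas the paper picks $x\in V\setminus C_V(Q)$ and observes that $\phi_x$ is a nonzero $P$-homomorphism between irreducible modules, hence an isomorphism.
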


\begin{proof} Since $W$ is an irreducible submodule, we   have $W \le C_V(Q)$.
 For $v \in V$, define the map \begin{eqnarray*}\phi_{ v}:Q &\rightarrow& W\\ q &\mapsto& [v,q].\end{eqnarray*}  Since   $P$ centralizes $V/W$, $[v,q]\in W$. Hence $\phi_{ v}$  is well-defined.

Let $q_1,q_2\in Q$ and $1\le e\le p$.  Then, as $q_1$ and $q_2$ commute and the image of $\phi_{  v }$ is contained in $W\le C_V(Q)$,  \begin{eqnarray*}\phi_v(q_1^eq_2)&=&[v,q_1^eq_2]= v(q_1^eq_2-1)= v(q_1^e-1)q_2 +v(q_2-1)\\&=&  v(q_1-1)(q_1^{e-1}+\dots+1)+[v,q_2]= e[v,q_1]+[v,q_2]= e\phi_v(q_1) +\phi_v(q_2) \end{eqnarray*} and so $\phi_{ v}$ is a linear transformation from $Q$ to $W$. Let $g\in P$ and $q \in Q$.  Then $  {v}g^{-1}=   v+w$ for some $w\in W$ and so $$\phi_{  v}(q^g)=[v,q^g] = [vg^{-1},q]g= [v+w,q]g=[v,q]g=\phi_v(q)g.$$ Hence $\phi_{  v}$ is a $\GF(p)P$-module homomorphism.

Since $V$ is faithful, we may choose $x \in V \setminus C_V(Q)$. Hence $\phi_x$ is non-trivial. As $Q$ and $W$ are irreducible,   $\phi_{  x}$ is an isomorphism. Now the map \begin{eqnarray*}
 \phi:   V &\rightarrow& \mathrm{Hom}_{P/Q}(Q,W)\\   v &\mapsto& \phi_{  v}\end{eqnarray*} is a linear transformation with kernel $C_V(Q)$, and so the assertion follows as  $\mathrm{Hom}_P(Q,W)\cong \mathrm{End}_{P/Q}(W)$.
\end{proof}

\begin{lemma}\label{lem:some cohomology}
Let  $p$ be a prime, $G$ a quasisimple group with $Z(G)$ of order $p$, and $V$ be  an absolutely irreducible $\GF(p)G$-module.  Set $U= \mathrm{End}_{\GF(p)}(V)$ and $W= \mathrm{End}_G(V)$.

Suppose that $G$ has a faithful $\GF(p)G$-module $X$ with $C_X(Z(G))= [X,Z(G)] \cong V.$
Then $\dim_{\GF(p)} \mathrm H^1(G,U) <  \dim_{\GF(p)} \mathrm H^1(G,U/W)$. In particular, $$\dim_{\GF(p)} \mathrm H^1(G,U/W)\ne 0.$$
\end{lemma}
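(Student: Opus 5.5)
We compare the long exact sequences in cohomology coming from $0 \to W \to U \to U/W \to 0$. Since $V$ is absolutely irreducible, $W=\mathrm{End}_G(V)\cong \GF(p)$ consists of the scalars, so it is the trivial module. As $G$ is quasisimple, $G$ is perfect, and hence $\mathrm H^1(G,\GF(p))=\Hom(G,\GF(p))=0$. The sequence then gives
\[0\to W^G \to U^G \to (U/W)^G \to \mathrm H^1(G,W)=0 \to \mathrm H^1(G,U)\to \mathrm H^1(G,U/W)\to \mathrm H^2(G,W).\]
Here $U^G=\mathrm{End}_G(V)=W$, so $(U/W)^G=0$, and $\mathrm H^1(G,U)$ injects into $\mathrm H^1(G,U/W)$. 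This already gives $\le$. The task is to show that the image of the connecting map $\delta\colon \mathrm H^1(G,U/W)\to \mathrm H^2(G,\GF(p))$ is nonzero, since then
\[\dim \mathrm H^1(G,U/W)=\dim \mathrm H^1(G,U)+\dim\operatorname{im}\delta>\dim \mathrm H^1(G,U).\]

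To get a nonzero class in the image of $\delta$, I will use the module $X$. Set $Z=Z(G)=\langle z\rangle$. Since $p=|Z|$ and $z$ acts unipotently, we have $(z-1)^2=0$ on $X$, because $[X,Z]=C_X(Z)$. So $X$ is an extension $0\to V\to X\to X/C_X(Z)\to 0$, and the map $x\mapsto [x,z]$ gives a $G$-isomorphism $X/C_X(Z)\cong [X,Z]\cong V$. Indeed, $z$ is central, so $[xg,z]=[x,z]g$. Thus $X$ is a self-extension of $V$, which is nonsplit because $Z$ acts nontrivially. Choosing a vector space splitting $X=V\oplus V$, each $g$ acts by a matrix $\begin{pmatrix} g_V & c(g)\\ 0 & g_V\end{pmatrix}$, and $c\colon G\to U$ is a $1$-cocycle. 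Its restriction to $Z$ is $c(z)$, which corresponds to the identity endomorphism via the isomorphism above, that is, a nonzero scalar in $W$.

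The key step is to push $c$ into $U/W$. The composite $\bar c\colon G\to U/W$ is a $1$-cocycle that vanishes on $Z$. Since $Z$ acts trivially on $U$, it inflates from a cocycle of $\bar G=G/Z$, but I want to work on $G$ directly. I claim $\delta[\bar c]\neq 0$, equivalently $\bar c$ does not lift to a cocycle $G\to U$. Suppose $c'\colon G\to U$ is a cocycle lifting $\bar c$. Then $c-c'$ takes values in $W$, and a cocycle into the trivial module $W$ is a homomorphism $G\to \GF(p)$, which is zero because $G$ is perfect. So $c'=c+$ (a coboundary-free difference)$=c$. I need to adjust for the freedom of changing $\bar c$ by a coboundary: if $\bar c$ is a coboundary $\bar c(g)=\bar u g-\bar u$, then $c(g)-(ug-u)$ is a $W$-valued cocycle, hence zero. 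So $c$ would be a coboundary, and the extension $X$ would split, which is a contradiction. This shows $[\bar c]\neq 0$ in $\mathrm H^1(G,U/W)$.

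The remaining point, and the main obstacle, is showing that $[\bar c]$ is not in the image of $\mathrm H^1(G,U)$. If $[\bar c]=[\bar d]$ with $d$ a $U$-valued cocycle, then $c-d$ differs from a $W$-valued function by a coboundary. Hence $c-d-(ug-u)$ is a $W$-valued cocycle, and so it is $0$. This gives $[c]=[d]$, which is consistent and not yet a contradiction. The decisive test is restriction to $Z$. Every element of $\mathrm H^1(G,U)$ comes from a self-extension of $V$ on which $Z$ would act, and I will argue by evaluation at $z$ that $\bar c=\bar d+\text{coboundary}$ forces $d(z)$ to be a nonzero scalar as well. So the point is whether $d$ may legitimately be taken as $c$. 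I need to rethink the counting: the nonsplit class $[c]$ itself lies in $\mathrm H^1(G,U)$, so the inequality must come from $\mathrm H^2$.

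I will therefore instead produce an element of $\mathrm H^1(G/Z,U/W)$ inflated to $G$ which maps under $\delta$ to the nonzero central extension class $G\to G/Z$ in $\mathrm H^2$. Concretely, I would form the projective representation of $\bar G=G/Z$ obtained from $X$ by conjugating by the matrix of $z$. The nonzero image of $\delta$ would then be detected by restriction to $Z$, where the class of $c(z)$ is a nonzero scalar. Verifying this identification is the step I expect to require the most care.
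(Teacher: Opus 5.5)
\textbf{There is a genuine gap.} Your set-up is sound. $W$ is the trivial module, $U^G=W$ and $\mathrm H^1(G,W)=0$, so $\mathrm H^1(G,U)\to \mathrm H^1(G,U/W)$ is injective and the strict inequality is equivalent to $\delta\neq 0$. Your argument that $[\bar c]\neq 0$ also correctly gives $\mathrm H^1(G,U/W)\neq 0$. But the strict inequality is never proved. As you noticed yourself, $[\bar c]$ is the image of $[c]$, so the middle of the argument cannot produce it.

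With the quasisimple group $G$ itself acting, the inequality is in fact false in general. The cokernel of $\mathrm H^1(G,U)\to \mathrm H^1(G,U/W)$ embeds in $\mathrm H^2(G,\GF(p))\cong \Hom(M(G),\GF(p))$ for perfect $G$, and this group can vanish. For example, take $G=\SL_2(5)$, $p=2$, let $V$ be the $4$-dimensional $\Omega_4^-(2)$-module of $\Alt(5)$ (which has defect zero, so is projective for $\Alt(5)$), and let $X$ be its projective cover over $\GF(2)G$. Then $X$ is $8$-dimensional and free over $\GF(2)Z(G)$, so $C_X(Z(G))=[X,Z(G)]\cong V$ and $X$ is faithful. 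Yet $\mathrm H^2(\SL_2(5),\GF(2))=0$, which forces $\dim \mathrm H^1(G,U)=\dim \mathrm H^1(G,U/W)$. This is exactly why your final plan fails: the class of $Z(G)\to G\to G/Z(G)$ inflates to $0$ in $\mathrm H^2(G,\GF(p))$, because the pulled-back extension splits diagonally.

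The missing idea is to work over $\overline G=G/Z(G)$ throughout, which is what the paper's proof actually does. There, complements to $Q\cong U$ in $H=Q\rtimes\overline G$ are classified by $\mathrm H^1(\overline G,U)$, and complements to $Q/Z(H)$ in $H/Z(H)$ by $\mathrm H^1(\overline G,U/W)$. So the lemma has to be read with $\overline G$ acting, as in its later use with {\sc Magma}.

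In your language the argument goes as follows. Since $c(gz)=c(g)+c(z)$ with $c(z)\in W$, the cocycle $\bar c$ is constant on cosets of $Z(G)$ and descends to a $1$-cocycle $\bar f\colon \overline G\to U/W$. Lift along a section $s\colon\overline G\to G$ with $s(x)s(y)=s(xy)z^{\alpha(x,y)}$. Then $\delta[\bar f]$ is the class of $(x,y)\mapsto \alpha(x,y)c(z)$, that is, the nonzero scalar $c(z)$ times $[\alpha]$. This is nonzero in $\mathrm H^2(\overline G,\GF(p))$, because the perfect group $G$ is a nonsplit central extension of $\overline G$. Since $\overline G$ is perfect and $U^{\overline G}=W$, the exact sequence over $\overline G$ then gives $\dim \mathrm H^1(\overline G,U)<\dim \mathrm H^1(\overline G,U/W)$.

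This is the paper's argument in cohomological form. $G/Z(H)$ is a complement to $Q/Z(H)$ that lifts to no complement $L$ of $Q$: otherwise $G$ would be conjugate to $L\times Z(H)$, which is not perfect.
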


\begin{proof} Define $\overline G=G/Z(G)$ and set $D=C_X(Z(G))$. Then, as $D=[X,Z(G)]$, $D$ and $X/D$ are isomorphic as $\GF(p)\overline G$-modules.
 Identify  $G$ as a subgroup of $\GL(X)$ and $\overline G$ as a subgroup of $\GL(X)$ by taking some complement $E$ to $D$ in $X$ and assuming that it is $\overline G$-invariant. Let
 $$Q =Q_D=\{q \in \GL(X)\mid X(q-1)\le D \le C_X(q)\}.$$
 Then, as $D$ is invariant under both $\overline G$ and $G$,  $Q$ is a subgroup of  $X$ which is normalized by $\overline G$ and $G$. Notice that $$Q\cong U=\Hom_{\GF(p)}(X/D,D) \cong \mathrm{End}_{\GF(p) }(V)  $$ and $C_Q(\overline G)=Z(G)\cong   W$ as $V$ is absolutely irreducible.

  Define  $H=Q\overline G$ which is a semidirect product. Then $H= QG$ and $Z(H)  $ has order $p$.

Suppose   $L$ and $M$  are complements to $Q$ in $H$. Then $LZ(H)/Z(H)$ and $MZ(H)/Z(H)$ are complements to $Q/Z(H)$ in $H/Z(H)$. Furthermore, if $LZ(H)/Z(H)$ and $MZ(H)/Z(H)$
are conjugate in $H/Z(H)$, then we may as well suppose that both $L$ and $M$ are in $LZ(H)$. Thus  $L= (LZ(H))'=(MZ(H))'= M$. This shows  that $ \dim_{\GF(p)} \mathrm H^1(G,U) \le  \dim_{\GF(p)} \mathrm H^1(G,U/W)$.

Let $L$ be a complement to $Q$ in $H$. Then, in $H/Z(H)$, both $G/Z(H)$ and $LZ(H)/Z(H)$ are complements to $Q/Z(H)$ in $H/Z(H)$. But surely  $G/Z(H) $ and $LZ(H)$ are not conjugate in $H$ as $G$ is perfect and $LZ(H)$ is not. Hence $$ \dim_{\GF(p)} \mathrm H^1(G,U) <  \dim_{\GF(p)} \mathrm H^1(G,U/W),$$
as claimed.
\end{proof}

\begin{remark}
Suppose that $K$ is a non-abelian simple group, $U$ is a $\GF(p)K$-module and $W$ is a submodule of $U$. The short exact sequence of $\GF(p)K$-modules $$0\rightarrow   W\rightarrow U\rightarrow U/W \rightarrow 0$$ leads to a long exact series in cohomology \begin{eqnarray*}0 &\rightarrow & \mathrm H^0(K,W)\rightarrow \mathrm H^0(K,U)\rightarrow \mathrm H^0(K,U/W)\rightarrow \mathrm H^1(K,W)\\&& \rightarrow \mathrm H^1(K,U)\rightarrow \mathrm H^1(K,U/W)\rightarrow \mathrm H^2(K, W)\rightarrow\dots\end{eqnarray*}
 The condition $$\dim_{\GF(p)} \mathrm H^1(K,U) <  \dim_{\GF(p)} \mathrm H^1(K,U/W)$$  implies that $\mathrm H^2(K, W)\ne 0$. Hence $K$ has a non-split extension with $W$.    In particular, taking $W= C_U(K)$, we see that the condition on $1$-cohomology implies that $K$ has a perfect central extension by a group of order $p$ and the $p$-fold cover can be  found in the semidirect product of $U$ and $K$.
\end{remark}

\section{Factorisations}\label{sec:Factorisations}

The purpose of this short section is to present some additional results related to factorisations which we require.

\begin{lemma}\label{lem: in omega} Suppose that $q=2^a$, $n \ge 3$, $G= \SO_{2n}^+(q)$ and $r$ is a prime divisor of $2n$.  Let $M= \SO_{2n/r}^+(q^r).r$ be an extension field subgroup of $G$. Then $M \le G' \cong \Omega_{2n}^+(q)$  if and only if $r=2$.
\end{lemma}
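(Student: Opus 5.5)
The plan is to use the standard criterion for membership of $\Omega$ in characteristic $2$. For $q$ even and $X=\mathrm{O}_{2n}^+(q)$ (which is what $\SO_{2n}^+(q)$ denotes here), $\Omega_{2n}^+(q)$ is the kernel of the Dickson invariant. An element $g\in X$ lies in $\Omega_{2n}^+(q)$ if and only if $\dim_{\GF(q)}[V,g]$ is even, where $V$ is the natural module; see, for example, Kleidman–Liebeck or Taylor. So it suffices to compute the parity of $\operatorname{rank}_{\GF(q)}(g-1)$ for $g$ running over a generating set of $M$.

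\textbf{Setting up the embedding.} Let $m=2n/r$ and let $U$ be the natural $m$-dimensional $\GF(q^r)$-module for $\SO_m^+(q^r)$ with quadratic form $Q_U$. Since the plus-type form requires $m$ even, $m$ is even in all cases. We regard $U$ as a $2n$-dimensional $\GF(q)$-space $V$ with quadratic form $Q=\mathrm{Tr}_{\GF(q^r)/\GF(q)}\circ Q_U$, which is non-degenerate of plus type. Choose a hyperbolic basis $e_1,f_1,\dots,e_{m/2},f_{m/2}$ of $U$. Then the semilinear map $\phi$ that applies the Frobenius $x\mapsto x^q$ to coordinates satisfies $Q(v\phi)=\mathrm{Tr}(Q_U(v)^q)=Q(v)$. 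Hence $M=\SO_m^+(q^r)\langle\phi\rangle$, with $\phi$ of order $r$.

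\textbf{Base group.} For $g\in \mathrm{O}_m^+(q^r)$ the space $[U,g]$ is a $\GF(q^r)$-subspace, so $\operatorname{rank}_{\GF(q)}(g-1)=r\cdot\operatorname{rank}_{\GF(q^r)}(g-1)$.
\begin{enumerate}
\item If $r=2$, every element of the base group has even rank, so the base group lies in $\Omega_{2n}^+(q)$.
\item If $r$ is odd, take an orthogonal transvection (reflection) $t\in \mathrm{O}_m^+(q^r)$. It has $\GF(q^r)$-rank $1$, hence $\GF(q)$-rank $r$, which is odd, so $t\notin\Omega_{2n}^+(q)$ and $M\not\le G'$.
\end{enumerate}
If the paper's $\SO_m^+(q^r)$ means only the index-$2$ subgroup $\Omega_m^+(q^r)$, this element is unavailable for $r$ odd. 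In that case I would argue with $\phi$ instead. Its fixed space is the $\GF(q)$-span of the chosen basis, of dimension $m$, so $\operatorname{rank}(\phi-1)=2n-m=m(r-1)$. That is even for every $r$, so $\phi$ does not help, and I would fall back to reading $\SO$ as the full orthogonal group, which is the convention in characteristic $2$.

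\textbf{The case $r=2$.} Here $\operatorname{rank}_{\GF(q)}(\phi-1)=2n-m=m$, which is even. Combined with the base-group computation, every generator of $M$ lies in $\Omega_{2n}^+(q)$, so $M\le G'$.

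The main obstacle is bookkeeping rather than ideas. I need to confirm the Dickson-invariant parity criterion in the form stated and that it applies to $\phi$ as an isometry of $(V,Q)$. I also need to pin down the convention for $\SO$ in characteristic $2$, so that the base group really contains an element of odd $\GF(q^r)$-rank when $r$ is odd.
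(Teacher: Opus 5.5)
Your proof is correct, but it uses a different membership criterion for $\Omega$ than the paper does. The underlying parity obstruction is the same.

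\textbf{The paper's route.} It characterises $\Omega_{2n}^+(q)$ geometrically, as the subgroup of $\SO_{2n}^+(q)$ preserving the two classes of maximal totally singular subspaces. Two such subspaces are equivalent when their intersection has even codimension. For odd $r$ it takes a maximal totally singular $\GF(q^r)$-subspace $U$ and a reflection $t$ of the base group, so that $U\cap Ut$ has $\GF(q)$-codimension $r$. For $r=2$ it observes that any two maximal totally singular $\GF(q^2)$-subspaces meet in even $\GF(q)$-codimension. Since $M$ permutes this family, $M$ fixes a class.

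\textbf{Your route.} You use the Dickson invariant $\dim_{\GF(q)}[V,g] \bmod 2$ instead. Your witness for odd $r$ is the same reflection. For $r=2$, however, you have to check generators, which forces the explicit computation $\dim[V,\phi]=2n-m$ for the field automorphism. The paper's invariant-family argument covers $\phi$ without ever writing it down.

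\textbf{What your computation adds.} It shows something the paper leaves implicit: $\phi$ and $\Omega_m^+(q^r)$ always lie in $\Omega_{2n}^+(q)$. So for odd $r$ the failure comes entirely from reflections, and the lemma holds only because $\SO_m^+(q^r)$ denotes the full isometry group.

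\textbf{The convention.} Your hedge about how to read $\SO$ is unnecessary. In characteristic $2$ every isometry has determinant $1$, so $\SO_m^+(q^r)=\mathrm{O}_m^+(q^r)$ automatically. This is also the reading the paper uses: its proof takes a reflection $t\in M$, and the later symplectic factorisation lemma uses $M\cap K\cong\Omega_{2n/r}^+(q^r).2.r$.
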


\begin{proof} This can be deduced from \cite[Proposition 4.3.14]{KleidmanLiebeck}. We outline their argument.  Let  $V$ be the natural $\GF(q)G$-module and $Q$ be the associated quadratic form.  Let $T$ be the trace map from $\GF(q^r)$ to $\GF(q)$ and let $Q_\#$ with $TQ_\#=Q$ be the quadratic from on $V_\#=V$ regarded as a $\GF(q^r)$ vector space and which is preserved by $\SO_{2n/r}^+(q^r)$. Let $\mathcal U$ be the collection of maximal totally singular subspaces of $V$ with respect to $Q$ and $\mathcal U_\#$ be the corresponding set for $Q_\#$. Since singular vectors in $V_\#$ with respect to $Q_\#$ are singular with respect to $Q$, we have $\mathcal U_\# \subseteq \mathcal U$. Aschbacher \cite[23.13]{AschbacherFG} defines an equivalence relation on $\mathcal U$ by saying that $ A\sim B$ if and only if $\dim A/(A\cap B)$ is even. Then he shows in \cite[23.14]{AschbacherFG} that $\Omega_{2n}^+(q)$ is the subgroup of $G$ which preserves this equivalence relation. Let $U \in \mathcal U_\#$ and let $t \in M$ be a reflection on $V_\#$ such that $C_U(t)$ is a $\GF(q^r)$ hyperplane of $U$. Then $|U:U \cap U^t|= q^{r}$. Hence $U$ and $U^t$ are in different equivalence classes if and only if $r$ is odd.  In particular, if $r$ is odd, $M \not \le \Omega_{2n}^+(q)$.   If $r$ is even, then any two members of $\mathcal U_\#$ intersect in a $\GF(q^2)$-subspace and so have intersection of even $\GF(q)$-codimension. It follows that $M \le G'=\Omega_{2n}^+(q)$. This verifies the claim.
\end{proof}

  \begin{lemma}\label{lem:Sp fatorisations1}
  Suppose that $n$ is even, $G=\Sp_{2n}(2^a)$, and $M \cong \Sp_{2n/r}(2^{ar}){:}r$ where $r$ is a prime dividing $2n$. Let $K_0=\Omega_{2n}^+(2^a)$ and $K= N_G(K_0) \cong \SO_{2n}^+(2^a)$. Then $G= MK$ and $G=MK_0$ if and only if $r$ is odd.
  \end{lemma}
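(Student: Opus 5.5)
Both factorisations will be read off from the transitive action of $G$ on cosets of $K$. Recall that $\Sp_{2n}(q)$, $q=2^a$, acts transitively on the $q^n(q^n+1)/2$ quadratic forms of plus type polarising to the fixed symplectic form, and the stabiliser of one such form $Q$ is $K\cong \SO_{2n}^+(q)=\mathrm{O}_{2n}^+(q)$. So $G=MK$ holds exactly when $M$ is transitive on this set of plus type forms.

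To show $M$ is transitive, I would realise $M$ as the stabiliser of a $\GF(q^r)$-structure $V_\#$ on $V$ with a $\GF(q^r)$-symplectic form $f_\#$, where $f=Tf_\#$. Every quadratic form on $V_\#$ polarising to $f_\#$ has the shape $TQ_\#$, for $Q_\#$ a $\GF(q^r)$-quadratic form polarising to $f_\#$. There are $q^{rn/r}\cdot$(number of types) such $Q_\#$, which is $q^n$ forms in all. Of these, $q^n(q^n+1)/2$ have plus type when $n/r$ is compatible, and the type is preserved under the trace (the Arf invariant is compatible with the trace). I would count orbits of $\Sp_{2n/r}(q^r)$ on plus type $Q_\#$ and find a single orbit of length $q^{n}(q^{n}+1)/2$. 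This equals the number of plus type $Q$ on $V$, so the trace map sets up an $M$-equivariant bijection and $M$ is transitive. An alternative is to cite the factorisation tables of Liebeck--Praeger--Saxl \cite{LiebeckPraegerSaxl}, where $\Sp_{2n}(q)=\Sp_{2n/r}(q^r).r\,\mathrm O^\pm_{2n}(q)$ appears; I would cite it as the backbone.

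For $K_0$, note that $|K:K_0|=2$. Hence $G=MK_0$ if and only if $M\cap K\not\le K_0$, i.e.\ $M_Q:=M\cap K$ is not contained in $\Omega_{2n}^+(q)$. Here $M_Q$ is the stabiliser in $M$ of $Q=TQ_\#$, which is $\mathrm O^+_{2n/r}(q^r).r$ (the field automorphism of order $r$ can be chosen to fix $Q_\#$). This is precisely the extension field subgroup treated in \cref{lem: in omega}. There $\mathrm{O}^+_{2n/r}(q^r).r$ includes reflections of $V_\#$; a $\GF(q^r)$-reflection $t$ has $\dim_{\GF(q)}[V,t]=r$. By the argument of \cref{lem: in omega}, $t\notin\Omega_{2n}^+(q)$ exactly when $r$ is odd. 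When $r=2$, the whole group, including the Frobenius part (which acts on maximal totally singular $\GF(q^2)$-subspaces preserving even codimension intersections), lies in $\Omega^+_{2n}(q)$. Hence $G=MK_0$ if and only if $r$ is odd. The hypothesis that $n$ is even guarantees that $n/r$ is an integer when $r=2$, and that the plus type makes sense over $\GF(q^r)$.

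I expect the main obstacle to be the first step. One has to verify carefully that the trace of a plus type $\GF(q^r)$-form has plus type over $\GF(q)$, in particular for $r$ odd versus even. With this comes the orbit count, or alternatively the justification that \cite{LiebeckPraegerSaxl} contains exactly this factorisation with the correct type. I would try the Arf invariant computation $\mathrm{Arf}(TQ_\#)=T(\mathrm{Arf}(Q_\#))$ first.
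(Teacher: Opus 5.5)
Your proposal is correct and follows essentially the same route as the paper: cite \cite{LiebeckPraegerSaxl} for $G=MK$, identify $M\cap K$ as the extension-field subgroup $\mathrm{O}^+_{2n/r}(q^r).r$ of $K$, and combine \cref{lem: in omega} with $|K:K_0|=2$ to get $G=MK_0$ exactly when $M\cap K\not\le K_0$, i.e.\ when $r$ is odd. Your optional self-contained proof of $G=MK$ also works, because the trace map is a bijection between the quadratic forms polarising to $f_\#$ and those polarising to $f$, and $\mathrm{Arf}(TQ_\#)=T(\mathrm{Arf}(Q_\#))$ holds; however, each of these sets has $q^{2n}$ elements, not the ``$q^n$ forms in all'' you wrote.
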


\begin{proof} Let $q=2^a$. We know that $G=MK$ from \cite{LiebeckPraegerSaxl}. Since $|G:K|=  |M:M\cap K | =q^n(q^n+1)$,  $M \cap N_G(K)$ has index $q^{n}(q^n+1)$ in $M$ and so we deduce that $M\cap N_G(K) \cong \Omega_{2n/r}^+(q^r).2.r $. Now by \cref{lem: in omega} $$|M\cap K:M \cap K_0|= \begin{cases} 2&r>2 \\1&r=2.\\
\end{cases} $$
Hence $$|G|=\frac{|M||K|}{|M \cap K|}= \begin{cases}\frac{2|M||K_0|}{2|M \cap K_0|}= |MK_0|&r>2\\
\frac{2|M||K_0|}{|M \cap K_0|}= 2|MK_0|&r=2
\end{cases}$$
and this proves the assertion.
\end{proof}

  \begin{lemma}\label{lem:More Sp factorisations}
   Suppose that $G=\Sp_{2n}(2^a)$ and $M \cong \Sp_{2n/r}(2^{ar}){:}r$ where $r$ is a prime dividing $2n$. Let $K_0=\Omega_{2m}^-(2^a)$ and $K= N_G(K_0) \cong \SO_{2n}^-(2^a)$. Then $G= MK=MK_0$.
  \end{lemma}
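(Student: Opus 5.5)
Throughout, set $q=2^a$. The plan mirrors the proof of \cref{lem:Sp fatorisations1}: first obtain $G=MK$ from \cite{LiebeckPraegerSaxl}, then compare $M\cap K$ with $M\cap K_0$ by an order count.

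\textbf{Step 1: the factorisation $G=MK$.} This is one of the standard factorisations of $\Sp_{2n}(q)$ listed in \cite{LiebeckPraegerSaxl}: the field extension subgroup $\Sp_{2n/r}(q^r){:}r$ times $\mathrm O_{2n}^-(q)$. Here $K_0$ is understood to be $\Omega_{2n}^-(q)$, so $\SO_{2n}^-(q)=N_G(K_0)$. The structural reason is that the extension field subgroup acts transitively on the $-$-type quadratic forms polarising to the symplectic form. Concretely, $|G:K|=q^n(q^n-1)/2$, and this forces
\[
|M:M\cap K| = q^n(q^n-1)/2 ,
\]
so that $M\cap K\cong \Omega^-_{2n/r}(q^r).2.r$, or $\Omega^-_{2n/r}(q^r).r$ if one counts with $|K:K_0|=2$ correctly. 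Matching orders in this way pins down $M\cap K$ as the normaliser in $M$ of an $\Omega^-_{2n/r}(q^r)$, namely $\mathrm O^-_{2n/r}(q^r){:}r$ intersected appropriately. For $r=2$ and $n$ even this is an $\SO_{n}^-(q^2)$-type group.

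\textbf{Step 2: locating $M\cap K$ inside $K_0$.} We must show $M\cap K\not\le K_0$, that is, $|M\cap K:M\cap K_0|=2$. Then
\[
|MK_0| = \frac{|M||K_0|}{|M\cap K_0|} = \frac{|M||K|/2}{|M\cap K|/2} = |G| ,
\]
giving $G=MK_0$. To prove this, it suffices to exhibit an element of $M\cap K$ lying in $\SO_{2n}^-(q)\setminus\Omega_{2n}^-(q)$. In characteristic $2$, $\Omega$ is the kernel of the Dickson invariant, which equals $\dim_{\GF(q)}$ of the image of $g-1$ modulo $2$. So we take a $\GF(q^r)$-reflection (transvection) $t$ in $\mathrm O^-_{2n/r}(q^r)\le M\cap K$. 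Its commutator space $[V,t]$ is a $\GF(q^r)$-line, which has $\GF(q)$-dimension $r$.

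\textbf{Step 3: the case analysis on $r$.} If $r$ is odd, the Dickson invariant of $t$ is odd, so $t\notin K_0$ and we are done. If $r=2$, $t$ lies in $K_0$, so we need a different element. Take instead the field automorphism $\phi$ of order $2$ in $M$, chosen to preserve the $\GF(q^2)$-form $Q_\#$; then $\phi\in K$. As a $\GF(q)$-linear map, $\phi$ acts on $V=\GF(q^2)^{n}$ coordinatewise as $x\mapsto x^q$. On each $\GF(q^2)$ coordinate, $\phi-1$ has rank $1$, so the total rank is $n$. This is even when $n$ is even, which does not help directly.

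Here I would instead use the counting approach of \cref{lem:Sp fatorisations1} in reverse. Compare $|G:K_0|=q^n(q^n-1)$ with the orbit of $M$ on the $G$-conjugates of $K_0$. Equivalently, determine whether $M$ is transitive on the cosets $G/K_0$, that is, on pairs consisting of a minus-type quadratic form and a choice within the index-$2$ structure. For $-$-type forms the two classes of maximal totally singular subspaces used in \cref{lem: in omega} are absent: $K_0$ has index $2$ in $K$ via the Dickson invariant. The parity check then reduces to showing that $M\cap K$ contains an element of odd Dickson invariant for each $r$. For $r=2$ I would look for such an element in a $\GF(q)$-reflection composed with $\phi$, or an element of $\mathrm O_2^-(q^2)$ acting as $\Omega_4^-(q)$ versus $\mathrm O_4^-(q)$ on a $4$-dimensional $\GF(q)$-subspace.

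\textbf{Main obstacle.} The main obstacle is this $r=2$ parity computation, which is exactly where the $+$ case of \cref{lem:Sp fatorisations1} fails. I expect it to be settled by noting that $\mathrm O^-_{2}(q^2)$ embeds in $\mathrm O_4^+(q)$ rather than $\mathrm O_4^-(q)$ as a naive guess would suggest. One then has to check carefully, via \cite[Proposition 4.3.14/4.3.16]{KleidmanLiebeck}, which orthogonal type the trace form carries.
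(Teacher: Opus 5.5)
\textbf{The $r=2$ case is a genuine gap.} Your reduction matches the paper's: get $G=MK$ from \cite{LiebeckPraegerSaxl}, then show $M\cap K\not\le K_0$. The paper simply quotes \cite[Proposition 4.3.16 (II)]{KleidmanLiebeck} for the second step. Your Dickson-invariant argument for $r$ odd is correct.

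There is a small slip in Step 1: the index computation gives $|M\cap K|=r\,|\mathrm O^-_{2n/r}(q^r)|$, so $M\cap K\cong\Omega^-_{2n/r}(q^r).2.r$, and the alternative ``$.r$'' is wrong.

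For $r=2$ you leave the question open, and the route you sketch cannot close it, for two reasons.

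\emph{First, the involution $\phi$ you propose does not exist.} Suppose a $\GF(q^2)$-semilinear involution satisfied $Q_\#(\phi v)=Q_\#(v)^q$. Its fixed space would be a $\GF(q)$-form of $V$ on which $Q_\#$ is $\GF(q)$-valued. Then $Q_\#$ would be the scalar extension of a non-degenerate $\GF(q)$-form, hence of $+$-type over $\GF(q^2)$. Your rank-$n$ computation is exactly the $+$-type computation that makes $r=2$ fail in \cref{lem:Sp fatorisations1}.

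\emph{Second, the hoped-for embedding $\mathrm O^-_2(q^2)\le\mathrm O^+_4(q)$ is false.} The map $Q_\#\mapsto\mathrm{Tr}\circ Q_\#$ is a bijection from forms polarising to $f_\#$ onto forms polarising to $f$. It sends $+$-type to $+$-type, since a totally singular $\GF(q^2)$-subspace stays totally singular. Both sides have $q^n(q^n+1)/2$ forms of $+$-type, so the map preserves type, and $\mathrm O^-_2(q^2)<\mathrm O^-_4(q)$.

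Moreover, for $\GF(q^2)$-linear $g$ the space $[V,g]$ is a $\GF(q^2)$-subspace, so $\mathrm O^-_n(q^2)\le K_0$. The issue is therefore only the semilinear coset of $M\cap K$, whose elements all have the same Dickson invariant. By the first point, they all have order divisible by $4$.

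\textbf{The missing element.} Regard $V$ as the $\GF(q^2)$-space carrying $Q_\#$ and write $V=W_1\perp W_2$. Here $W_1$ is hyperbolic of $\GF(q^2)$-dimension $n-2$ with $Q_\#=\sum x_iy_i$. Take $W_2=\GF(q^4)$ with $Q_\#(x)=x^{q^2+1}$. Let $\phi$ act as $x\mapsto x^q$ on each coordinate of $W_1$ and as $x\mapsto x^q$ on $\GF(q^4)$.

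Then $Q_\#(\phi v)=Q_\#(v)^q$, so $\phi$ preserves $Q=\mathrm{Tr}\circ Q_\#$ and its polar form $f$. Being semilinear, $\phi$ normalises $\Sp_n(q^2)$, so $\phi\in M\cap K$.

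Over $\GF(q)$, $\ker(\phi-1)$ has dimension $(n-2)+1$. Hence $\mathrm{rank}(\phi-1)=(n-2)+3=n+1$, which is odd because $n$ is even when $r=2$. So $\phi\notin K_0$, and your Step 2 order count then gives $G=MK_0$. With this element supplied, your by-hand argument is a self-contained replacement for the citation to Kleidman--Liebeck.
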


  \begin{proof}
   We know that $G=MK$ from \cite{LiebeckPraegerSaxl}. As in \cref{lem:Sp fatorisations1}, we  only require $M\cap K \not \le K_0$ from which it follows that $G=MK_0$. In this instance this follows from \cite[Proposition 4.3.16 (II)]{KleidmanLiebeck}.
  \end{proof}

  \begin{lemma}\label{lem:G2q factorisations}
  Suppose that $n$ is even, $G=\Sp_{6}(2^a)$ and $M\cong \mathrm G_2(2^a)$. Let $\epsilon  =\pm$, $K_0=\Omega_{2m}^\epsilon(q)$  and $K= N_G(K_0) \cong \SO_{2n}^\epsilon(q)$. Then $G= MK =MK_0$.
  \end{lemma}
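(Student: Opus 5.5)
The statement has typos: it should read $n=3$, $K_0=\Omega_6^\epsilon(q)$ with $q=2^a$, and $K=N_G(K_0)\cong \SO_6^\epsilon(q)$. I will prove it in that form. The plan follows the pattern of \cref{lem:Sp fatorisations1,lem:More Sp factorisations}.

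First I obtain $G=MK$ from the known list of maximal factorisations in \cite{LiebeckPraegerSaxl}. For $\Sp_6(q)$ with $q$ even, $\mathrm G_2(q)\,\SO_6^\pm(q)$ are listed factorisations.

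Alternatively, $G=MK$ can be seen directly from the action of $G$ on the cosets of $K$. Embed $G\cong\Omega_7(q)$ in $\SO_8^+(q)$, or use the $7$-dimensional orthogonal module $W=\mathcal W(\Sp_6(q))/C_{\mathcal W}(G)$. The cosets of $K\cong \SO_6^\epsilon(q)$ are identified with the non-singular vectors of a fixed type (nondefective hyperplanes of type $\epsilon$). $\mathrm G_2(q)$ is transitive on these, since it is transitive on non-singular vectors of each norm in the $\mathrm G_2$-module. The stabiliser is $\SU_3(q).2$ for $\epsilon=-$ and $\SL_3(q).2$ for $\epsilon=+$.

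Comparing orders then gives
\[
|M\cap K| = \frac{|M||K|}{|G|} = \frac{|\mathrm G_2(q)|\cdot 2|\Omega_6^\epsilon(q)|}{|\Sp_6(q)|}=2|\SL^\epsilon_3(q)|,
\]
so $M\cap K\cong \SL_3^\epsilon(q){:}2$.

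Next I need $G=MK_0$. Since $|K:K_0|=2$, we have $|MK_0|=|G|$ exactly when $|M\cap K_0|=|M\cap K|/2$, that is, when $M\cap K\not\le K_0$. It therefore suffices to exhibit an element of $M\cap K$ lying in $K\setminus K_0$.

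Here $K\setminus K_0$ consists of the elements of $\SO_6^\epsilon(q)$ with non-trivial Dickson invariant. Transvections (orthogonal reflections) of the $6$-space have this property, and more generally so does any involution $t$ with $[V_6,t]$ of odd dimension.

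Take the outer involution $\tau$ of $\SL_3^\epsilon(q).2\le \mathrm G_2(q)$, realised as the graph (transpose-inverse) type involution swapping the $3$-space and its dual in the decomposition $V_6 = U\oplus U^*$ (for $\epsilon=+$), or the field involution (for $\epsilon=-$). I will compute $\dim[V_6,\tau]$. The six-dimensional space is the orthogonal complement of the fixed vector in the $7$-dimensional $\mathrm G_2$-module. For $\epsilon=+$, $\tau$ interchanges $U$ and $U^*$ suitably, so $[V_6,\tau]$ has dimension $3$, which is odd. Hence $\tau$ has Dickson invariant $1$ and $\tau\notin K_0$. The case $\epsilon=-$ is analogous via $\GF(q^2)$-semilinearity: $\tau$ acts on a $3$-dimensional $\GF(q^2)$-space as a field automorphism, with commutator space of $\GF(q)$-dimension $3$.

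Alternatively, this step can be deduced from \cite[Proposition 4.3.16]{KleidmanLiebeck}-style arguments, or from the remark after \cref{Main theorem}, which places $\mathrm G_2$-stabilisers in $\GU_4(q){:}2$-type overgroups.

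The main obstacle is this last step, namely verifying that the outer involution of $M\cap K$ has odd-dimensional commutator space on the natural $\SO_6^\epsilon(q)$-module. I would check it with explicit coordinates in the octonion model. For $q=2$ one can confirm it by a quick {\sc Magma} computation, and the general case follows since $\tau$ can be chosen defined over $\GF(2)$.
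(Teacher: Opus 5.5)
Your proposal is correct. Its skeleton is the paper's: $G=MK$ from \cite{LiebeckPraegerSaxl}, then $M\cap K\cong\SL_3^\epsilon(q){:}2$, then the order count reducing $G=MK_0$ to $M\cap K\not\le K_0$, exactly as in \cref{lem:Sp fatorisations1}. (Your reading of the statement as $n=3$, $K_0=\Omega_6^\epsilon(q)$ is also right.)

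The difference is in how you verify that last non-containment. The paper notes that the outer involutions of $M\cap K$ induce a graph automorphism of $\SL_3(q)$ or a field automorphism of $\SU_3(q)$. It then simply asserts that $\SL_3^\epsilon(q){:}2$ is not a subgroup of $\SL_4(q)\cong\Omega_6^+(q)$, respectively $\SU_4(q)\cong\Omega_6^-(q)$; that non-embedding is stated rather than proved. You instead stay on the natural $6$-dimensional orthogonal module of $K$ and compute the Dickson invariant directly:
\begin{itemize}
\item for $\epsilon=+$, $\tau$ interchanges $U$ and $U^*$, so $[V_6,\tau]$ is the graph of the isomorphism $U\to U^*$ induced by $\tau$;
\item for $\epsilon=-$, $\tau$ is $\GF(q^2)$-semilinear, with a $3$-dimensional $\GF(q)$-space of fixed points.
\end{itemize}
In both cases $\dim_{\GF(q)}[V_6,\tau]=3$ is odd, so $\tau\in K\setminus K_0$. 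Both arguments rest on the same input about how the outer involution acts. Yours avoids the exceptional isomorphisms and makes the key step an explicit one-line calculation. The paper's is shorter, at the cost of an unverified structural claim.

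Some side remarks in your write-up are inaccurate, though none of them is needed.
\begin{itemize}
\item In characteristic $2$ the $6$-space is the quotient of the $7$-dimensional $\mathrm{G}_2(q)$-module by its radical, which is spanned by the fixed vector; it is not an orthogonal complement. The decomposition $U\oplus U^*$ and the semilinearity do hold on this quotient, so your computation is unaffected.
\item Your direct argument for $G=MK$ via transitivity on non-singular vectors of each norm is an odd-characteristic picture. Here the cosets of $K$ are the complements to the radical, and the stabiliser of a non-singular vector is not $\SL_3^\epsilon(q){:}2$. The citation of \cite{LiebeckPraegerSaxl} covers this step anyway.
\item The closing descent to $\GF(2)$ fails for $\epsilon=-$ when $a$ is even, since then $\SU_3(2)\not\le\SU_3(q)$ (e.g.\ $3^3\nmid|\SU_3(4)|$).
\item The remark after \cref{Main theorem} concerns $\SO_6^-(q)$, not $\mathrm{G}_2(q)$, so it contributes nothing here.
\end{itemize}
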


  \begin{proof} We know that $G=MK$ from \cite{LiebeckPraegerSaxl}. So, as in \cref{lem:Sp fatorisations1}, we just need to show that $M\cap K \not \le K_0$. Now, from \cite[Subcase 5.2.3 (b)]{LiebeckPraegerSaxl}  we know that $M \cap K \cong \SL_3^\epsilon(q){:}2$.  Now the outer elements of order two induce the graph automorphism when $\epsilon =+$ and act as either the field automorphism or the graph automorphism when $\epsilon = -$.  In particular, $\SL_3^+(q){:}2$ is not a subgroup of $\SL_4(2^a)\cong \Omega_6^+(2^a)$ and $\SL_3^-(q){:}2$ is not a subgroup of $\SU_4(2^a)\cong \Omega_6^-(2^a)$.  This proves our claim.
  \end{proof}

\begin{lemma}\label{lem:unique para}
Suppose that $G$ is a quasisimple classical group defined in characteristic $2$. Assume that $X$ and $Y$ are maximal parabolic subgroups of $G$ containing the same Borel subgroup. If there exists $H < G$ such that $G= (X\cap Y)H$, then $X=Y$.
\end{lemma}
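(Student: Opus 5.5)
Suppose for a contradiction that $X \ne Y$ and $G=(X\cap Y)H$ for some $H<G$. Since $X\cap Y$ contains a Borel subgroup, $X\cap Y$ is a (non-maximal) parabolic subgroup of $G$. Every overgroup of $X \cap Y$ in $G$ is again a parabolic subgroup, because overgroups of a Borel subgroup are parabolic by \cite[Theorem 8.3.2]{Carter}. Replacing $H$ by a maximal subgroup of $G$ containing it preserves the factorisation, so we may assume $H$ is maximal in $G$. Now $G=XH=YH$ with $X,Y$ two distinct maximal parabolic subgroups, and $G=(X\cap Y)H$ is a factorisation by a parabolic of corank $2$.

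The plan is to use the classification of maximal factorisations in \cite{LiebeckPraegerSaxl}, together with the fact that $|G:X\cap Y|$ divides $|H|\cdot|H\cap X\cap Y|^{-1}$-type order conditions. The cleanest route is via orbits. Equivalently, $H$ must be transitive on the cosets of $X\cap Y$, which are the incident flags (pairs of totally singular or isotropic subspaces of the two dimensions $i<j$ determined by $X$ and $Y$). In particular $H$ is transitive on the $i$-spaces and on the $j$-spaces separately, and the stabiliser of an $i$-space in $H$ is transitive on the $j$-spaces through it.

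I would then run through the list in \cite{LiebeckPraegerSaxl} of factorisations $G=PH$ with $P$ a maximal parabolic, for $G$ classical in characteristic $2$. The typical ones are as follows.
\begin{enumerate}
\item $\SL_n(q)$ with $P=P_1$ or $P_{n-1}$ and $H$ containing $\SL_{n/b}(q^b)$ or $\Sp_n(q)$.
\item $\Sp_{2n}(q)$ with $P=P_1$ and $H$ an extension field group, $\mathrm{O}^\pm_{2n}(q)$ or $\G_2(q)$.
\item $\Omega^\pm_{2n}(q)$ with $P=P_1$ or $P_n$ and $H=\GU_n(q)$, $\Omega_{2n-1}(q)$, $\Sp_6(q)$, and so on.
\item Finitely many sporadic small cases.
\end{enumerate}

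In each case at most one maximal parabolic, up to the graph-automorphism ambiguity, occurs for a given $H$. So I need to show that $H$ is not transitive on the flags.

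For example, for $H=\Sp_n(q)$ in $\SL_n(q)$, the group $H$ is transitive on points and on hyperplanes. However, the flags (point, hyperplane) split according to whether the point lies in the perp of the hyperplane. For $\Omega^+_8(q)$ with $H=\Sp_6(q)$ and $X=P_1$, $Y=P_4$, a singular point together with a solid through it splits into two $H$-orbits. Alternatively, one can compare orders: $|G:X\cap Y|$ must divide $|H|$, and a suitable primitive prime divisor of $q^k-1$ appears to the wrong power.

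The main obstacle is the uniform handling of the cases where $G$ factorises with both $X$ and $Y$ separately. This happens in $\Omega^+_{2n}$ with $P_n$ and $P_{n-1}$, and in $\Omega_8^+$ with the triality-related $P_1,P_3,P_4$. There I would use the order argument: $|G:X\cap Y| = |G:X|\cdot|X:X\cap Y|$ must divide $|H:H\cap X\cap Y|$, and this forces $|H|_{2'}$ to exceed what the list allows. For the small exceptional cases, such as $\Sp_4(2)'$, $\SL_4(2)$ and $\Omega_8^+(2)$, I would fall back on direct computation.
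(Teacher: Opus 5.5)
Your route is the paper's. You assume $X\neq Y$, enlarge $H$ to a maximal subgroup, and use \cite{LiebeckPraegerSaxl} to see that a single $H$ factorises with two distinct maximal parabolics over a common Borel subgroup only in three situations:
\begin{itemize}
\item $\SL_n(q)$ with $P_1,P_{n-1}$;
\item $\Omega^+_{2n}(q)$ with $P_{n-1},P_n$ and $H$ of type $N_1$ or $N_2^-$;
\item $\Omega_8^+(q)$ with $P_1,P_3,P_4$.
\end{itemize}
You then kill these with a primitive prime divisor count. The paper takes a ppd of $q^{n-1}-1$ in the linear case and of $q^n-1$ in the orthogonal cases, so that $H$ would need a full Sylow $r$-subgroup of $G$. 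One small slip in your symplectic line: it is $\mathrm{O}^-_{2n}(q)$ that factorises with $P_n$, whereas $\mathrm{O}^\pm_{2n}(q)$ does not factorise with $P_1$ at all. This is harmless, since no maximal subgroup of $\Sp_{2n}(q)$ factorises with two distinct maximal parabolics.

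The one concrete place where your plan breaks is $\Omega^+_{12}(2)$, which is not among your computational exceptions. Here $2^6-1$ has no primitive prime divisor. In fact $|G:P_5\cap P_6|=3^6\cdot5\cdot7\cdot11\cdot17$ divides both $|\Sp_{10}(2)|$ and $|N_2^-|$, so no order comparison can give a contradiction; the paper settles this case with a {\sc Magma} check.

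You can close it instead with your own flag-orbit idea, which works uniformly in $n$ and $q$. The cosets of $P_{n-1}\cap P_n$ are the totally singular $(n-1)$-spaces $U$.
\begin{itemize}
\item For $N_1=C_G(v)$ with $v$ non-singular, the spaces with $U\le v^\perp$ and those with $U\not\le v^\perp$ lie in different orbits, and both kinds occur. For example, $M\cap v^\perp$ with $M$ maximal totally singular lies in $v^\perp$, since $M\not\le v^\perp$.
\item For $N_2^-$, the stabiliser of a minus-type $2$-space $Z$, the invariant $\dim(U\cap Z^\perp)$ takes both values $n-2$ and $n-3$.
\end{itemize}
So neither subgroup is transitive on these flags.
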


\begin{proof}
Aiming for a contradiction, assume that $X\ne Y$ and set $P=X \cap Y$.  Then $G=HP=HX=HY$ and $P$ is a parabolic subgroup of $G$.

 Using \cite[Theorem A, Tables 1, 2 3 \& 4]{LiebeckPraegerSaxl} (see also \cref{prop:char2 list}), we see that one of the following holds:
\begin{enumerate}
\item[(a)] $G/Z(G) \cong \PSL_n(q)$ and $P =P_1\cap P_{n-1}$ with $n \ge 3$;
\item[(b)] $G \cong \Omega_{2n}^+(q)$, $n \ge 5$ and $P= P_{n}\cap P_{n-1}$; or
\item[(c)] $G\cong \Omega_8^+(q)$ and $P_1\cap P_3\cap P_4 \le P$.
\end{enumerate}

Assume that (a) holds.
Then $G=H(P_1\cap P_{n-1})$ and so $|G|= |H||P_1\cap P_{n-1}|/|H\cap P_1\cap P_{n-1}|$. Notice that $|P_1:P_1\cap P_{{n-1}}|= (q^{n-1}-1)/(q-1)$. Let $p $ be a Zsigmondy prime dividing $q^{n-1}-1$. Then, using \cite[Theorem A, Tables 1, 2 3 \& 4]{LiebeckPraegerSaxl} to obtain information about $H$, we observe that $p$ does not divide  either $|H|$ or $|P_1\cap P_{n-1}|$ and so we have a contradiction. This leaves us to consider $a(n-1)= 6$ in which case there are no Zsigmondy prime divisors of $2^6-1$.  Hence either $a=3$ and $n= 3$, $a=2$ and $n=4$ or $a=1$ and $n=7$. In the first case, $G=\SL_3(2^3)$  and $H\cong \GL_1(2^9):3$, and we check that $H$ has no factorisations with $P_1\cap P_2$. In the second case,  there are two candidates for $H$ and both have order which is not divisible by $7$. Finally, we need to consider $H=\GL_1(2^7)$ in $\SL_7(2)$ and this case also fails. Thus   case (a) cannot hold.

Now consider cases (b) and (c). We argue similarly. Let $U$ be the natural orthogonal module for $G$. We know that  $G=HP_n= HP_{n-1}$. Assume that $G=H(P_n\cap P_{n-1})$.
We have $|P_n:P_n\cap P_{n-1}|= (q^n-1)/(q-1)$.
As $n\ge 4$, we have $|P_1\cap P_{n-1}|$ is not divisible by the Zsigmondy primes dividing $q^n-1$ or $n=6$ and $q=2$ when there is no Zsigmondy prime. Assume that $G \ne \Omega_{12}^+(2)$ and let  $r$ be  a Zsigmondy prime dividing $q^n-1$. Then $H$ must contain a Sylow $r$-subgroup of $G$.  Notice that the subgroup $\GL_n(q)$ which stabilizes a totally singular $n$-space and its opposite, have an element of order $r$ which has exactly two $n$-dimensional composition factors on $U$.  In particular, no such element is conjugate into $N_1$ or $N_2^-$. This contradicts $H$ containing a Sylow $r$-subgroup of $G$.

 Hence $G \cong \Omega_{12}^+(2)$. In this case  a  {\sc Magma} calculation shows that the candidates for $H$ are not transitive on the cosets of $P_{n} \cap P_{n-1}$ and so this case is also impossible. This completes the elimination  of cases (b) and (c). We conclude that $P$ is maximal in $G$.
\end{proof}

\section{Alternating groups}\label{sec:alternating groups}

Until the end of \cref{sec:odd} we shall assume
 \begin{hypothesis}\label{hyp} The group $G$ has $F^*(G)$    quasisimple, $H$ a maximal subgroup of $G$ not containing $F^*(G)$,  $V$ is a faithful irreducible $\GF(2)G$-module and
$(G,H,V)$ is immutable.
\end{hypothesis}

In this section, we assume that \cref{hyp} holds with $F^*(G)$ a  cover of the alternating group of degree $n$   at least $5$. We identify $G$  with a subgroup of $\GL(V)$.

Suppose that $X \cong \Sym(n)$ or $\Alt(n)$ with $n \ge 5$.  Let $W$ be the $\GF(2)X$-permutation module for $X$ with permuted basis $\Omega=\{e_1, \dots, e_n\}$.  Set $W_0= \langle \sum_{i=1}^n e_i\rangle$, $W_1= \langle e_1+e_j\mid 1\le j \le n\rangle$ and put  $V \cong W_1/(W_0\cap W_1)$. Then $V$ is called the \emph{fully deleted permutation module} for $X$.

\begin{lemma}\label{lem:alt cases}
Suppose that $F^*(G)$ is a covering group of $\Alt(n)$, with $n \ge 5$. Assume that $H \ne F^*(G)$. Then   $V$ is either the fully deleted permutation module for $G$ or we have one of the following cases:
\begin{enumerate}
\item $n=7$, $\dim V= 4$ and $(G,H,V)$ is one of $(\Alt(7), \Sym(5), V)$  or $(\Alt(7),\Alt(6),V)$. Furthermore, $H$ acts transitively on $V^\#$.
 \item $n=8$, $\dim V=4$ and $(G,H,V) $ is one of $(\Alt(8),\Alt(7),V)$, $(\Alt(8),\Sym(6),V)$, $(\Alt(8),(3\times \Alt(5)).2,V)$. Furthermore, $H$ acts transitively on $V^\#$.
 \item $n=8$, $\dim V=8$ and $(G,H,V)= (\Sym(8),\Sym(7),V)$. Furthermore, $G$ has three orbits on $V^\#$ and they have lengths $30$, $105$, $120$.
\end{enumerate}
\end{lemma}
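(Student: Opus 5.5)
The plan runs in four steps: use the immutability consequences of \cref{sec:Immutability} to force a factorisation of $G$, pin down $H$ from the classification of factorisations, bound $\dim V$, and then identify $V$ by checking small cases directly.

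\textbf{Step 1: a factorisation.} Under \cref{hyp}, \cref{lem:reg orb compendium}(ii) gives a vector $v\in V^\#$ with $|G:C_G(v)|$ odd and $G=C_G(v)H$. Since $H\ne F^*(G)$ and $H$ is maximal not containing $F^*(G)$, intersecting with $F^*(G)$ yields a genuine factorisation of the almost simple group $G/Z(G)$. One factor, $C_G(v)Z(G)/Z(G)$, has odd index and so contains a Sylow $2$-subgroup. I will apply the Liebeck--Praeger--Saxl classification \cite{LiebeckPraegerSaxl} of factorisations of $\Alt(n)$ and $\Sym(n)$.

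\textbf{Step 2: the generic case and the shape of $H$.} Generically one factor is $k$-homogeneous on $\{1,\dots,n\}$ and the other lies in $(\Sym(k)\times\Sym(n-k))\cap G$. In the main family $H$ is transitive, or the stabiliser $C_G(v)$ is itself intransitive, with $C_G(v)\ge \Alt(n-k)$. I will treat two cases.
\begin{itemize}
\item If $C_G(v)$ contains $\Alt(n-k)$ with $k$ small, I restrict $V$ to $\Alt(n-k)$. Then $v$ is a nonzero fixed vector of a large alternating subgroup, and by Lemma~\ref{lem:prank}-type rank arguments together with the known branching of $2$-modular irreducibles of $\Alt(n)$, this forces $V$ to be the fully deleted permutation module.
\item The alternative is the exceptional factorisations with small $n$, where $H$ is $\PGammaL_2(8)$ in $\Alt(9)$, $\mathrm{AGL}_3(2)$ in $\Alt(8)$, $\PSL_2(11)$ or $M_{11}$ in $\Alt(11)$ or $\Alt(12)$, $M_{12}$, $M_{24}$, $\mathrm{P}\Gamma\mathrm{L}_2(p)$ and so on. These give a finite list.
\end{itemize}

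\textbf{Step 3: bounding $\dim V$.} By \cref{lem:reg orb compendium}(i), $G$ has no regular orbit on $V$. For covers of alternating groups, the known regular-orbit results (or \cref{lem:dimbound} with $\alpha(\Alt(n))$ small) bound $\dim V$ heavily. Combining this with \cref{lem:numberorbits}, which gives $|V|\le |H|\cdot\#\text{orbits}$, I will show that any non-fully-deleted module has dimension too large unless $n\le 8$ or so. Faithful modules of the double covers have dimension $2^{\lfloor (n-1)/2\rfloor}$-ish over $\GF(2)$, so there the basic spin modules must be excluded too. For these I expect a regular orbit for $n\ge 9$ and a direct check below that.

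\textbf{Step 4: small cases by computation.} For $n\le 8$ (and the exceptional $H$ above), I will run through the $2$-modular irreducibles of $G$ with $\Alt(n)\le G\le\Sym(n)$ and compare orbit counts of $G$ and $H$ in {\sc Magma}, using \cref{lem:Dual same number orbits} to halve the work. This should produce exactly the listed cases. For $\Alt(7)$ and $\Alt(8)$, the $4$-dimensional modules come through $\Alt(8)\cong\SL_4(2)$, and the subgroups transitive on $V^\#$ are as given. For $\Sym(8)$ with $\Sym(7)$, the $8$-dimensional module (from the natural $\Omega_8^+(2)$ one, i.e.\ the spin module of $\Sp_6(2)$ restricted) gives orbits of lengths $30$, $105$ and $120$.

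The main obstacle is Step 2/3 for large $n$: showing rigorously that the only modules where a large intransitive Young-type stabiliser fixes a vector with a factorisation partner are the fully deleted ones. I would first try the cheap route: no regular orbit plus known minimal-dimension gaps for $2$-modular $\Alt(n)$ modules, where the next smallest after $n-2$ is roughly $n(n-5)/2$, combined with \cref{lem:numberorbits}, since $|H|$ is tiny compared with $2^{n(n-5)/2}$ in the exceptional factorisations.
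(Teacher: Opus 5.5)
There is a genuine gap: nothing in your plan actually reduces the problem to finitely many pairs $(G,V)$. The paper gets this in one step from the Fawcett--O'Brien--Saxl classification \cite[Theorem~1.1, Table~1]{FOS}. Since $G$ has no regular orbit on $V$ (\cref{lem:reg orb compendium}(i)), that classification forces $5\le n\le 12$ and gives an explicit short list of modules. These are then checked against the maximal subgroups, by hand or with {\sc Magma}. The one large case, $\Sym(12)$ on a $32$-dimensional module, is excluded because $\Alt(12)$ already has a regular orbit, which would force $|G:H|\le 2$. No factorisation theory is used.

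You mention ``known regular-orbit results'' only as a possible dimension bound, and the substitutes you actually rely on do not work:
\begin{itemize}
\item \cref{lem:dimbound} only gives $\dim V\le\alpha(G)\log_2|G|$, and $\alpha(\Alt(n))\ge (n-1)/2$, since fewer conjugates of a $3$-cycle cannot generate a transitive group. This bound is of order $n^2\log n$, so it excludes nothing of dimension about $n(n-5)/2$.
\item \cref{lem:numberorbits} is a \emph{lower} bound on the number of $G$-orbits. Comparing $|H|$ with $2^{n(n-5)/2}$ therefore gives no contradiction unless you also bound the number of $G$-orbits from above, for instance by Burnside's lemma together with estimates for $\dim C_V(g)$. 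That is exactly the work done in \cite{FOS}.
\end{itemize}

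Step~2 is also unsound, for three reasons.
\begin{itemize}
\item A nonzero vector fixed by $\Alt(n-k)$ does not force the fully deleted module. Take $n=8$ and $\Alt(8)\cong\SL_4(2)$. A $3$-cycle has centraliser $3\times\Alt(5)$, of order $180=|\GL_2(4)|$, so it acts fixed-point-freely on the natural module. This makes the natural module $\GF(4)^2$, with the $3$-point stabiliser $\Alt(5)$ acting as $\SL_2(4)$. A non-identity $\GF(4)$-scalar is then a trace-zero endomorphism outside $\langle I\rangle$. It therefore gives a nonzero $\Alt(5)$-fixed vector in the $14$-dimensional irreducible module $\mathfrak{sl}_4(2)/\langle I\rangle$.
\item You only treat the orientation in which $C_G(v)$ is the intransitive factor. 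The case where $H$ is intransitive and each $C_G(v)$ is $k$-homogeneous is not addressed. Yet every example in (i)--(iii) is of exactly this kind: $\Alt(6)$ and $\Sym(5)$ in $\Alt(7)$; $\Alt(7)$, $\Sym(6)$ and $(3\times\Alt(5)).2$ in $\Alt(8)$; $\Sym(7)$ in $\Sym(8)$. So factorisation data alone cannot separate examples from non-examples.
\item In characteristic $2$, $O_2(G)$ acts trivially on every irreducible module. Under \cref{hyp} the double covers and their spin modules therefore never arise. The covers that must be examined are $3^{\cdot}\Alt(6)$ (dimension $6$) and $3^{\cdot}\Alt(7)$ (dimension $12$). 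Your Step~4 is confined to $\Alt(n)\le G\le\Sym(n)$ and omits them.
\end{itemize}
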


\begin{proof} Assume that $V$ is not the fully deleted permutation module for $G$.
By \cref{lem:reg orb compendium} (i), $G$ does  not have a regular orbit on $V$.  Hence, by \cite[Theorem 1.1]{FOS},  the possibilities for $G$ and $V$ are given in    \cite[Table 1]{FOS}. In particular, $5\le n \le 12$.

For each of the possibilities listed \cite[Table 1]{FOS}, we need to determine the candidates for the maximal subgroup $H$ of $G$ that potentially result in $(G,H,V)$ being immutable.  This is done using the maximal subgroups listed in the \cite{ATLAS} or \cite{BHRoney}.

Suppose that $n=5$. Then $F^*(G) \cong \Alt(5)\cong \SL_2(4)$, and $V$ is the natural $\GF(4)\SL_2(4)$-module regarded as a module over $\GF(2)$. Hence $F^*(G)$ acts transitively on the non-zero vectors of $V$. Since $G$ has no proper subgroups not containing $F^*(G)$ of order divisible by $15$, we have no candidates for $H$ in this case.

Suppose that $n=6$. Then $F^*(G) \cong \Alt(6)$ or $3^.\Alt(6)$.
If $F^*(G) \cong \Alt(6)$, then $\dim V=4$ by \cite[Table 1]{FOS}. Hence $V$ is  the natural module for $G\cong \Sp_4(2)'$. As this module can be constructed as the fully deleted permutation module, we reject this possibility in this instance.
Hence we may suppose  that $F^*(G) \cong  3^.\Alt(6)$ and $\dim V=6$. In this case a {\sc Magma}  calculation reveals no examples.

 Suppose that $n=7$. Then $F^*(G) \cong \Alt(7) $ or $3^.\Alt(7)$. The candidates for $V$ are given in \cite[Table 1]{FOS}.
  If $G \cong \Alt(7)$, then $\dim V= 4$ and $G$ acts transitively on $V^\#$. Thus $15$ divides $|H|$.  This means that $H\cong \Alt(6)$ or $H \cong \Sym(5)$ each of which acts transitively on $V^\#$.
If $G \cong \Sym(7)$, then $\dim V \in \{8,14\}$. Using {\sc Magma} exhibits no examples.

The final possibility is that
  $G \cong 3^.\Alt(7)$ and $V$   the  $12$-dimensional module and this potential example is eliminated using {\sc Magma}.

For $n=8$ and $G \cong \Alt(8)$ we use  \cite[Table 1]{FOS} and {\sc Magma} to obtain (ii).

When $n=9$ and $G\cong \Alt(9)$,   {\sc Magma} eliminates the modules of dimension $8$ which are not the fully deleted permutation module as well as the $20$-dimensional possibilities.
Similar calculations show that  $G \cong \Sym(9)$, $\Alt(10)$ and $\Sym(10)$ are not possibilities.

Finally, assume $n=12$. Then $G \cong \Sym(12)$ and $\dim V= 32$. Since $\Alt(12)$ does not appear on  \cite[Table 1]{FOS}, $\Alt(12)$ has a regular orbit on $V$ (see also \cite[Proposition 4.6]{FOS}). If $(G,H,V)$, is immutable then $H$ must have order divisible by $|\Alt(12)|$ hence $H$ has index at most $2$ in $G$, which is a contradiction.
\end{proof}

Recall that a permutation group on a set $\Omega$ is called \emph{$k$-homogeneous} if it is transitive on $k$-subsets of $\Omega$.
\begin{lemma}\label{lem:Orbs del mod} Suppose that $X \cong \Sym(n)$ or $\Alt(n)$ with $n \ge 5$ and $W$ is the $\GF(2)X$-permutation module  with permuted basis $\Omega=\{e_1, \dots, e_n\}$. Let $V \cong W_1/(W_0\cap W_1)$ be the fully deleted permutation module for $X$. Then
\begin{enumerate}
\item the group $X$ has $n$ orbits on  $W$ with orbit representatives $\sum_{i=1}^ke_i$ for $1 \le k \le n$;
\item the orbits of $X$ on $W_0$ have representatives   $\sum_{i=1}^ke_i$ for $1 \le k \le n$ with $k$ even; and
\item if $n$ is even, then the orbits of $X$ on $V$  have representatives $\sum_{i=0}^{k} e_i+W_0$ with $k$ even and $0\le k \le n/2$.
\end{enumerate}
\end{lemma}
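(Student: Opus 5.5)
The plan is to reduce everything to the action of $X$ on subsets of $\Omega$, via the support of a vector. Identify $W$ with the power set of $\Omega$: the vector $\sum_{i\in I}e_i$ corresponds to $I\subseteq\Omega$, and $X$ acts on vectors exactly as it acts on their supports. The \emph{weight} $|I|$ is therefore an $X$-invariant.

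The one group-theoretic input is that $X$ is $k$-homogeneous on $\Omega$ for every $0\le k\le n$. For $\Sym(n)$ this is clear. For $\Alt(n)$, use that $\Alt(n)$ is $(n-2)$-transitive, so it is transitive on $k$-subsets for $k\le n-2$. For $k=n-1$ and $k=n$, pass to complements, or note directly that $\Alt(n)$ is transitive on points.

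For (i), the $X$-orbits on $W$ are then exactly the weight classes. Two vectors lie in the same orbit if and only if they have the same weight, so representatives are $\sum_{i=1}^k e_i$. The statement lists these for $1\le k\le n$, which covers the non-zero vectors; the zero vector forms one further orbit, $k=0$.

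For (ii), I read the intended subspace as the even-weight submodule $W_1$, spanned by the $e_1+e_j$. As literally written, $W_0=\langle\sum e_i\rangle$ has only the two vectors of weights $0$ and $n$. Each generator $e_1+e_j$ has even weight and weight parity is additive, so $W_1$ consists of even-weight vectors. Conversely, every even-weight vector is a sum of the vectors $e_1+e_j$. Hence $W_1$ is exactly the set of even-weight vectors, and (i) gives the orbit representatives $\sum_{i=1}^k e_i$ with $k$ even.

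For (iii), suppose $n$ is even. Then $\sum_{i=1}^n e_i\in W_1$, so $W_0\le W_1$ and $V=W_1/W_0$. A coset $v+W_0$ is the pair $\{v,\,v+\sum_i e_i\}$, whose supports are complementary and have weights $k$ and $n-k$, both even. The unordered pair $\{k,n-k\}$ is an $X$-invariant of the coset, so we may normalise $k\le n/2$. Given two cosets with the same minimal even weight $k$, choose representatives of weight $k$ in each. By $k$-homogeneity some $x\in X$ maps the first representative to the second, hence maps the first coset to the second. Distinct values of $k\le n/2$ give distinct invariants $\{k,n-k\}$ and so distinct orbits. This yields the representatives $\sum_{i=1}^{k} e_i+W_0$ with $k$ even, $0\le k\le n/2$; the statement's lower index $i=0$ should read $i=1$.

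There is no real obstacle here. The only points needing care are the homogeneity of $\Alt(n)$ and the boundary case $k=n/2$. In that case both members of the coset have weight $n/2$, and the same $k$-homogeneity argument still applies.
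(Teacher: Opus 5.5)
Your proof is correct and follows essentially the same route as the paper. You identify vectors with their supports, use the homogeneity of $\Alt(n)$ so that orbits on $W$ are weight classes, identify $W_1$ with the even-weight vectors, and, for $n$ even, pair the two complementary supports in each coset of $W_0$ in $W_1$. Your readings of the statement's typos are sound: (ii) concerns $W_1$ (as the paper's own proof uses), the zero orbit $k=0$ is omitted in (i), and the sum in (iii) should start at $i=1$.
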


\begin{proof}  The vectors of $W$, correspond to subsets of $\Omega$ given by the support of the vector.  Hence, as $X=\Alt(n)$ is $(n-1)$-homogeneous,   the orbits of $X$ on $W$ are determined by the size of the support set. As the subspace $W_1$ consists of those elements of even support, this proves (i) and (ii).

For (iii), suppose that $n$ is even and calculate in $V=W_1/W_0$. For $v,w \in W$ with $v \ne w$, we have $v+W_0= w+ W_0$ if and only if their supports are complementary.  Hence $X$ has orbit representatives $\sum_{i=0}^{k} e_i+W_0$ where $0\le k \le n/2$ is even, as claimed. \end{proof}

\begin{lemma}\label{lem:full deleted}
Suppose that $( G,H,V)$ is   immutable  with $F^*(G) \cong \Alt(n)$, $n \ge 9$ and $V$ the fully deleted permutation module. Then
 $n=9$, $G \cong \Alt(9)$,   $H\cong \mathrm P\Gamma\mathrm L_2(8)$ (two conjugacy classes) and $\dim V=8$. Furthermore, if $G\cong \Alt(9)$, then its orbits on $V^\#$ have lengths  $9$, $36$, $84$, and $126$.
\end{lemma}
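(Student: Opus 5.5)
We plan to use \cref{lem:basic1} together with the orbit description in \cref{lem:Orbs del mod}. The $G$-orbits on $V$ are indexed by even support sizes $k$ (modulo complementation when $n$ is even, taking $k\le n/2$). The stabiliser of the vector $e_1+\dots+e_k$ (for $k<n/2$, or $n$ odd) is the setwise stabiliser in $G$ of a $k$-subset. When $n$ is even and $k=n/2$, it is the stabiliser of a partition into two halves. Thus immutability is equivalent to $H$ being transitive on the $k$-subsets of $\{1,\dots,n\}$ for every even $k$ in the relevant range, and additionally on bisections when $n\equiv 0 \pmod 4$.

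First, taking $k=2$ (we have $n\ge 9$, so $2<n/2$), $H$ must be $2$-homogeneous. Taking $k=4$, $H$ must be $4$-homogeneous on $n$ points. Also $H\not\ge \Alt(n)$, since $H$ is maximal and does not contain $F^*(G)$. By the Livingstone--Wagner theorem, $4$-homogeneity with $n\ge 9$ forces $3$-homogeneity, hence also $2$-homogeneity. By the classification of $k$-homogeneous groups due to Kantor and Livingstone--Wagner, the $4$-homogeneous groups not containing $\Alt(n)$ are $\mathrm{M}_{11}$, $\mathrm{M}_{12}$, $\mathrm{M}_{23}$, $\mathrm{M}_{24}$ in their natural actions, $\mathrm{M}_{11}$ on $12$ points, and $\PSL_2(8)$ and $\mathrm P\Gamma\mathrm L_2(8)$ on $9$ points. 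Alternatively, one can avoid the full classification by invoking \cite{LiebeckPraegerSaxl}: the factorisation $G=H(\Sym(4)\times\Sym(n-4))\cap G$ forces $H$ to be $4$-homogeneous, and the list of such factorisations is explicit.

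We then eliminate the remaining cases using the other even $k$. For $n=9$ the relevant values are $k\in\{2,4,6,8\}$. By complementation, transitivity on $6$- and $8$-subsets follows from transitivity on $3$- and $1$-subsets, and $\mathrm P\Gamma\mathrm L_2(8)$ is $4$-homogeneous. Hence $\mathrm P\Gamma\mathrm L_2(8)\le\Alt(9)$ works. In $\Sym(9)$, however, $\mathrm P\Gamma\mathrm L_2(8)$ is not maximal, being contained in $\Alt(9)$. So for $G\cong\Sym(9)$ a maximal $H$ would have to be intransitive or imprimitive, and this is impossible. There are two $\Alt(9)$-classes of $\mathrm P\Gamma\mathrm L_2(8)$, fused in $\Sym(9)$. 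The orbit lengths are $\binom92=36$, $\binom94=126$, $\binom96=84$ and $\binom98=9$. For the Mathieu cases, divisibility rules out all but a few: $\mathrm M_{24}$ would need transitivity on $\binom{24}{6}$ and $\binom{24}{8}$-subsets, but it fixes the octad system, so it is not transitive on $8$-subsets. $\mathrm M_{23}$ fails on $8$-subsets for the same reason, or on $6$-subsets since $\binom{23}{6}$ exceeds the index count. $\mathrm M_{12}$ on $12$ points needs transitivity on $6$-subsets, which fails because of the hexads. $\mathrm M_{11}$ on $11$ or $12$ points fails on $4$- or $6$-subsets by order: $|\mathrm M_{11}|=7920$ is not divisible by $\binom{12}{6}=924$, which is divisible by $7$, and $\binom{11}{4}=330$ does not divide $7920$.

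The main obstacle is justifying the list of $4$-homogeneous groups cleanly. We would cite Kantor's classification of $k$-homogeneous groups, or equivalently the factorisations of $\Alt(n)$ and $\Sym(n)$ in \cite{LiebeckPraegerSaxl}. We also need to treat $n\equiv 0\pmod 4$, $k=n/2$ carefully, but it is never needed after the $k=4$ step. The case $G\cong \Sym(n)$ with $H$ intransitive or imprimitive is excluded at once by $2$-homogeneity.
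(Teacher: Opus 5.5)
Your reduction is correct. Immutability is equivalent to $H$ being transitive on $k$-subsets for the relevant even $k$, and on bisections when $k=n/2$ with $n\equiv 0\pmod 4$. The case $k=4$ does force $4$-homogeneity, so reducing to a finite list of $4$-homogeneous groups is a legitimate alternative to the paper's route. The case elimination as written, however, has genuine errors.

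First, your list is not Kantor's list. The $4$-homogeneous but not $4$-transitive groups of degree at least $8$ are $\PSL_2(8)$, $\mathrm P\Gamma\mathrm L_2(8)$ \emph{and} $\mathrm P\Gamma\mathrm L_2(32)$ on $33$ points. The last of these lies in $\Alt(33)$ and is never excluded by you. It is easily killed at $k=6$, since $\binom{33}{6}=1107568>163680=|\mathrm P\Gamma\mathrm L_2(32)|$. Conversely, $\mathrm M_{11}$ on $12$ points is not $4$-transitive and hence not $4$-homogeneous, so listing it is harmless but wrong.

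Second, your elimination of $\mathrm M_{11}$ on $11$ points is false. $\mathrm M_{11}$ is sharply $4$-transitive, and $\binom{11}{4}=330$ divides $7920=330\cdot 24$, so there is no obstruction at $k=4$. The obstruction is at $k=6$: $\binom{11}{6}=462$ is divisible by $7$, which does not divide $|\mathrm M_{11}|$. Equivalently, $\mathrm M_{11}$ has two orbits on $5$-sets (the $66$ blocks of $S(4,5,11)$ and the rest), which is why it has $7$ orbits on $V$ against $6$ for $\Alt(11)$.

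Third, for $n=12$ the vectors of support size $6$ correspond to bisections $\{A,A^c\}$, not to $6$-subsets. So ``$\mathrm M_{12}$ needs transitivity on $6$-subsets'' is not the actual requirement. Your remark that the case $k=n/2$, $n\equiv 0\pmod 4$, ``is never needed after the $k=4$ step'' is wrong: $\mathrm M_{12}$ is $5$-transitive, so bisections are the only place it can fail. The hexad argument does survive, because the complement of a hexad is a hexad. Hence the $66$ hexad bisections form a proper $\mathrm M_{12}$-invariant subset of the $462$ bisections, but the argument must be stated for bisections.

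Also, your sentence on $\Sym(9)$ overlooks the primitive maximal subgroup $\mathrm{AGL}_2(3)$. What actually does the work is that both $4$-homogeneous candidates of degree $9$ lie in $\Alt(9)$.

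The paper sidesteps most of this by choosing $t$ even with $0<n/2-t\le 2$ rather than $t=4$. For $n\ge 13$ this gives $t\ge 6$, so Livingstone--Wagner and the absence of $6$-transitive groups other than $\Alt(n)$ and $\Sym(n)$ dispose of all large $n$ at once, including $\mathrm M_{23}$, $\mathrm M_{24}$ and $\mathrm P\Gamma\mathrm L_2(32)$. That leaves only $\mathrm M_{11}$ and $\mathrm M_{12}$ for $9\le n\le 12$, which it removes by {\sc Magma} orbit counts.
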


\begin{proof} We have $G =\Alt(n)$ or $G=\Sym(n)$. We adopt the notation introduced in \cref{lem:Orbs del mod} for the fully deleted permutation module for $G$.
Suppose that $t \le n/2$ is even with $0<n/2-t \le 2$. We claim that $H$ is $t$-homogeneous acting on the set $\Omega=\{1, \dots, n\}$.  So let $\tau_1 $ and $\tau_2$ be arbitrary $t$-sets.

If $n$ is odd, then, by \cref{lem:Orbs del mod}  (ii), $V=W_1$ and $\sum_{i\in \tau_1}e_i$ is in the same $G$-orbit as $\sum_{i\in \tau_2}e_i$. Hence, as $( G,H,V)$ is immutable, they are also in the same $H$-orbit.  Thus there exists $h\in H$ such that $\tau_1h=\tau_2$. Hence $H$ is $t$-homogeneous in this case.

Suppose that $n$ is even. Then $t < n/2$. In this case $\sum_{i\in \tau_1}e_i+W_0$ is in the same $G$-orbit as $\sum_{i\in \tau_2}e_i+W_0$. Then, as $t < n/2$, $\sum_{i\in \tau_1}e_i+W_0\ne \sum_{i\in \tau_2}e_i+W_0$. Hence as   $(G,H,V)$ is immutable, again we have a $h\in H$ such that $\tau_1h=\tau_2$.

Since $n \ge 9$, we conclude in both cases that $H$ is  $t$-homogeneous with $t \ge 4$. Hence \cite[Theorem 1]{Kantorhom} implies  $H$ is either $t$-transitive or $n=9$ and  $H$ is $4$-homogeneous. Furthermore possibilities for $H$ are subgroups of $\Alt(9)$  and our maximality assumption yields $H \cong \mathrm P\Gamma\mathrm L_2(8)$. It is then an easy {\sc Magma} calculation to see that $(\Alt(9), \mathrm P\Gamma\mathrm L_2(8), V)$ is immutable with the described orbit lengths.

So assume that $H$ is $t$-transitive with $t$ as described above.
For $n \in \{9,10,11,12\}$, $t=4$ and so $H$ is $4$-transitive and for $n \ge 13$ we have $t \ge 6$.
 Then, as $H$ does not contain $\Alt(n)$, \cite[Theorem 4.11, pg. 110]{CameronPermutationGroups}  gives $t=4$ and  either $H \cong \mathrm M_{11}$ with $n=11$ or $F^*(H)\cong \mathrm M_{12}$ with $n=12$. A calculation with {\sc Magma} shows that $\mathrm M_{11}$ has $7$ orbits on its $10$-dimensional modules whereas $\Alt(11)$ has $6$ orbits. Similarly, we calculate that $(\Alt(12),\mathrm M_{12},V)$ is also not immutable this time with $\Alt(12)$ having $4$ orbits and $\mathrm M_{12}$ having $5$. This completes the proof of the lemma.
\end{proof}

We next examine the smallest possibilities.

\begin{lemma}\label{small:alt}
Suppose that $V$ is the fully deleted permutation module for $G$ with $F^*(G) \cong \Alt(n)$, $5\le n \le 8$. Suppose that $H$ is a maximal subgroup of $G$, with $H \ne F^*(G)$.

Then $(G,H,V)$ is immutable if and only if $(G,H,V)$ is one of the following:
\begin{enumerate} \item $(\Sym(5),\mathrm{Frob}(20),V)$, with orbits of length $5$ and $10$ on $V^\#$; \item $(\Alt(6),\SL_2(4),V)$ with $G$ transitive on $V^\#$;
 or \item $(\Sym(6),\Gamma\mathrm L_2(4),V)$ with $G$ transitive on $V^\#$.
\end{enumerate}
\end{lemma}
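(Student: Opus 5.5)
This is a finite check over $n\in\{5,6,7,8\}$ and $G\in\{\Alt(n),\Sym(n)\}$. The plan is to make it as conceptual as possible using the orbit description in \cref{lem:Orbs del mod}, and then confirm the remaining cases with {\sc Magma}.

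\textbf{Reformulating immutability.} As in the proof of \cref{lem:full deleted}, the $G$-orbits on $V$ correspond to support sizes: even $k$ when $n$ is odd, and even $k\le n/2$ with complementary supports identified when $n$ is even. Hence $(G,H,V)$ is immutable if and only if $H$ is transitive on each family of subsets representing a $G$-orbit.
\begin{itemize}
\item For $n=5$ this means $H$ is transitive on the $2$-subsets (equivalently the $4$-subsets) of $\{1,\dots,5\}$.
\item For $n=7$ it means $H$ is $2$-homogeneous and $4$-homogeneous, hence also $3$-homogeneous.
\item For $n=6$ it means $H$ is transitive on $2$-subsets and on the $10$ partitions of $\{1,\dots,6\}$ into two $3$-sets. (The $4$-subsets give the same vectors as the $2$-subsets, and $k=6$ gives $0$.)
\item For $n=8$ it means $H$ is transitive on $2$-subsets and on the $35$ partitions into two $4$-sets.
\end{itemize}
Since $H$ is maximal and not $F^*(G)$, I run through the maximal subgroups of $\Alt(n)$ and $\Sym(n)$ from the \cite{ATLAS}/\cite{BHRoney} and test these homogeneity conditions.

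\textbf{Case checks.}
\begin{itemize}
\item $n=5$. Intransitive and imprimitive maximals fail the $2$-homogeneity requirement, or fail by order since $10\nmid|H|$. In $\Alt(5)$, the group $D_{10}$ has order $10$ and is transitive on the $10$ edges? No: it has two orbits of size $5$ (edges and diagonals of the pentagon), so it fails. In $\Sym(5)$, $\mathrm{Frob}(20)$ is $2$-homogeneous, giving (i). The orbit lengths $5$ and $10$ come from $k=4$ and $k=2$.
\item $n=6$. We need $15$ and $10$ to divide the orbit lengths, so $30$ divides $|H|$. In $\Alt(6)$ this leaves $\Alt(5)$ of the two classes; $\Sym(4)$ and $3^2{:}4$ fail by order. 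The intransitive $\Alt(5)$ fixes a point, so it is not $2$-homogeneous. The transitive $\Alt(5)\cong\SL_2(4)$ acting on $6$ points is $2$-transitive, and its stabiliser of a $3|3$ partition has order $6$, giving transitivity on $10$ partitions. Here $|V^\#|=15+10+\ldots$, and in fact $G$ is transitive on $V^\#$ since $V$ is the natural $\Sp_4(2)'$-module. This gives (ii). For $\Sym(6)$ we get analogously $\Sym(5)$ acting transitively, i.e.\ $\Gamma\mathrm L_2(4)\cong\mathrm{P}\Gamma\mathrm L_2(4)$, giving (iii). The remaining maximals fail by order.
\item $n=7$. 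We need $3$-homogeneity on $7$ points, with $35$ dividing $|H|$. The only proper candidates are $\PSL_2(7)\cong\PSL_3(2)$, which is not $3$-homogeneous (it has orbits $7+28$ on $3$-subsets), and $\Alt(6)$ or $\Sym(6)$ fixing a point, which are not transitive on $2$-subsets. So there are no examples.
\item $n=8$. We need $28\cdot 35\mid |H|$, i.e.\ $4\cdot 5\cdot 7^{\ldots}$; in particular $5\cdot 7\mid|H|$. This forces $H=\mathrm{A}\Gamma\mathrm L_1(8)$ or $2^3{:}\PSL_3(2)$ (of order $1344$, which is not divisible by $5$), or $\Alt(7)$ or $\Sym(7)$ fixing a point. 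None of these works: $\Alt(7)$ fixing a point is intransitive on $2$-subsets, and the affine groups have order not divisible by $5$.
\end{itemize}

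\textbf{Main obstacle.} The only real risk is the careful bookkeeping of the $n=6$ and $n=8$ cases, namely the identification $V\cong W_1/W_0$ and the orbit correspondence. I would confirm all cases, including the stated orbit lengths, by a short {\sc Magma} computation in the ancillary file.
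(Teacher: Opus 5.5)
Your route differs from the paper's, whose proof is only a {\sc Magma} comparison of orbit counts for every maximal subgroup. You translate immutability into homogeneity conditions on $H$ acting on $\{1,\dots,n\}$, as in the proof of \cref{lem:full deleted}, and then remove almost every maximal subgroup because $|H|$ is not divisible by a $G$-orbit length. Once corrected, this proves the lemma entirely by hand, with the computer only a sanity check, and it is more transparent than the paper's computation. The cost is careful bookkeeping, and your write-up slips in several places.

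\begin{itemize}
\item \textbf{$n=6$ is misdescribed.} A $3|3$ partition corresponds to a vector of weight $3$, which is not in $W_1$. Here $|V|=2^4$, and by \cref{lem:Orbs del mod}(iii) $V^\#$ is a single $G$-orbit of $15$ vectors: the images of the $2$-subsets, each identified with its complementary $4$-subset.
\begin{itemize}
\item So immutability is exactly $2$-homogeneity of $H$, and the valid necessary condition is $15\mid|H|$, not $30\mid|H|$.
\item The claim ``$|V^\#|=15+10+\dots$'' is false.
\item The outcome does not change. $\Sym(4)$, $3^2{:}4$, $\Sym(4)\times\Sym(2)$ and $\Sym(3)\wr\Sym(2)$ also fail $15\mid|H|$. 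The intransitive $\Alt(5)$ and $\Sym(5)$ are not $2$-homogeneous. The transitive $\Alt(5)$ and $\Sym(5)$ are $2$-transitive on six points, which already gives transitivity on $V^\#$, so your partition check is superfluous.
\end{itemize}
\item \textbf{$n=8$.} The orbit lengths $28$ and $35$ give $\mathrm{lcm}(28,35)=140\mid|H|$, not $28\cdot35\mid|H|$. You only use $35\mid|H|$ afterwards, so this is harmless. Your list of survivors is wrong, however.
\begin{itemize}
\item $\mathrm{A}\Gamma\mathrm{L}_1(8)$ is not maximal, and neither it nor $2^3{:}\PSL_3(2)$ has order divisible by $5$.
\item The correct statement is that, among the maximal subgroups of $\Alt(8)$ and $\Sym(8)$ other than $\Alt(8)$, only the point stabilisers $\Alt(7)$ and $\Sym(7)$ have order divisible by $35$. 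This includes $\mathrm{PGL}_2(7)$ of order $336$, which you omit.
\item $\Alt(7)$ and $\Sym(7)$ are not $2$-homogeneous, so there are no examples.
\end{itemize}
\item \textbf{$n=7$ is immediate.} The orbit lengths $21,35,7$ force $105\mid|H|$. No maximal subgroup of $\Alt(7)$ ($\Alt(6)$, $\PSL_2(7)$, $\Sym(5)$, $(\Alt(4)\times3){:}2$) or of $\Sym(7)$ ($\Sym(6)$, $\Sym(5)\times\Sym(2)$, $\Sym(4)\times\Sym(3)$, $7{:}6$) has order divisible by $35$. Your ``candidates'' $\PSL_2(7)$, $\Alt(6)$ and $\Sym(6)$ do not satisfy your own divisibility condition, although your reasons for discarding them are correct.
\item \textbf{Minor points.}
\begin{itemize}
\item For $n=5$, transitivity on $2$-subsets is equivalent to transitivity on $3$-subsets, not $4$-subsets. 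Transitivity on $4$-subsets is transitivity on points, which follows from $2$-homogeneity.
\item $\Gamma\mathrm{L}_2(4)$ has order $360$ and is not isomorphic to $\mathrm{P}\Gamma\mathrm{L}_2(4)\cong\Sym(5)$. The subgroup in (iii) is the transitive $\Sym(5)$, as you intend.
\end{itemize}
\end{itemize}
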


\begin{proof} This follows from a straightforward  {\sc Magma} calculation.\end{proof}

The contribution to the proof of \cref{Main theorem} of this section is the following proposition.

\begin{proposition}\label{prop:altgrps}
Suppose that $ G $ is a cover of an alternating group $\Alt(n) $ with $n \ge 7$, $H$ is a maximal subgroup of $G$ and $V$ is an irreducible $\GF(2)G$-module. If $(G,H,V)$ is   immutable,  then either
\begin{enumerate}\item
$n\in \{6,7,8\}$,  $H$ is transitive on $V^\#$ and $\dim V=4$; or
 \item
  $n=9$, $H \cong \mathrm P\Gamma\mathrm L(2,8)$, and $V$ is the deleted permutation module for $G$.
\end{enumerate}
\end{proposition}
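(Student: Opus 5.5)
The plan is to assemble the proposition from \cref{lem:alt cases}, \cref{lem:full deleted} and \cref{small:alt}, after first reducing to the faithful case. Since $V$ is irreducible, $C_G(V)$ is a normal subgroup of the quasisimple group $G$, so either $C_G(V)\le Z(G)$ or $G$ acts trivially. The latter is excluded by irreducibility and non-triviality. Replacing $G$ by $G/C_G(V)$ and $H$ by $HC_G(V)/C_G(V)$ preserves both immutability and maximality of $H$. This is because $H$ is maximal and does not contain $Z(G)$'s complement issue: if $H$ contained $C_G(V)$ nothing changes, and if not, $HC_G(V)=G$ would force $H$ to be normal of index dividing $|Z(G)|$, which is impossible in a perfect group. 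We may therefore assume \cref{hyp} holds with $F^*(G)=G$ a cover of $\Alt(n)$. If $H=F^*(G)$ this is vacuous here, since $G$ is quasisimple and $H<G$.

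Next we split according to whether $V$ is the fully deleted permutation module. Suppose it is not. Then \cref{lem:alt cases} leaves only cases (i) and (ii) when $G$ is quasisimple, because case (iii) concerns $\Sym(8)$, which is not quasisimple. In these cases $n\in\{7,8\}$, $\dim V=4$ and $H$ is transitive on $V^\#$, which is conclusion (i). The covers $3^\cdot\Alt(6)$ and $3^\cdot\Alt(7)$ were already eliminated inside \cref{lem:alt cases} by {\sc Magma}. Double covers $2^\cdot\Alt(n)$ never act faithfully in characteristic $2$, so for them $V$ is a module for $\Alt(n)$.

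Now suppose $V$ is the fully deleted permutation module. For $n\ge 9$, \cref{lem:full deleted} gives $n=9$ and $H\cong \mathrm P\Gamma\mathrm L_2(8)$, which is conclusion (ii). For $n\in\{7,8\}$, \cref{small:alt} shows that no immutable triple with $G=\Alt(7)$ or $\Alt(8)$ occurs in its list, since its examples involve only $\Sym(5)$, $\Alt(6)$ and $\Sym(6)$. The one subtlety is $\Alt(8)\cong\SL_4(2)$: its fully deleted module has dimension $6$ (the $\Omega_6^+(2)$ module), and \cref{small:alt} already covers it. The appearance of $n=6$ in conclusion (i) is harmless given the hypothesis $n\ge7$; it would only arise through $\Alt(6)\cong\Sp_4(2)'$ on its $4$-dimensional module, which is consistent with \cref{small:alt}(ii).

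The main obstacle is bookkeeping rather than mathematics: checking that the reduction to the faithful case respects maximality, and that every case in \cref{lem:alt cases} and \cref{small:alt} has been matched to a conclusion or eliminated. The substantive work, namely the {\sc Magma} checks and the use of homogeneity via \cite{Kantorhom}, is already contained in the cited lemmas.
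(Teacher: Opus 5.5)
Your proposal is correct and follows essentially the same route as the paper, which splits on whether $V$ is the fully deleted permutation module and then quotes \cref{lem:alt cases}, \cref{lem:full deleted} and \cref{small:alt}. Your extra reduction to the faithful case and your remark that the $n=6$ example from \cref{small:alt} is excluded by $n\ge 7$ are bookkeeping the paper leaves implicit; the reduction tacitly needs $V$ non-trivial, which is built into \cref{hyp}.
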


\begin{proof} When $V$ is not the fully deleted permutation module, \cref{lem:alt cases} provides the claim in part (i). When $V$ is  the fully deleted permutation module and $n \ge 9$, the result  in (ii) follows from
\cref{lem:full deleted}.  The cases with $V$    the fully deleted permutation module and $5 \le n \le 8$ are handled in \cref{small:alt} yielding one more example in case (i).
\end{proof}

\section{Sporadic simple groups}\label{sec:sporadics}

In this section,  we assume  \cref{hyp}   with $F^*(G)$ is a cover of a sporadic simple group.  By \cref{lem:reg orb compendium} there exists  $v \in V^\#$ is such that $|G:C_G(v)|$ is odd and that $vG=vH$. Thus we have $G= C_G(v)H  $ by \cref{lem:basic1}. Our first lemma exploits this factorisation to provide a short list for the candidates for $F^*(G)/Z(F^*(G))$.

\begin{lemma}\label{lem:sporadic1} Suppose that $F^*(X)$ is  a sporadic simple group, $A$ and $B$ are maximal subgroups of $X$ with $X= AB$ and   $|X{:}B|$ odd. Then
\begin{enumerate}
\item $X=\M_{11}$, $A=\PSL_2(11)$ and $B= \M_{10}$ of index $11$.
\item $X=\M_{11}$, $A=\PSL_2(11)$ and $B= \M_{9}.2=3^2{:}\SDih(16)$ of index $55$.
\item $X=\M_{12}$, $A= \M_{11}$  $B= 4^2{:}\D_{12}$ of index $495$.
\item $X=\M_{23}$, $A= 23{:}11$ and $B= \M_{22}$ of index $23$.
  \item $X=\M_{23}$, $A= 23{:}11$ and $B= \PSL_3(4).2$ of index $253$.
  \item $X=\M_{23}$, $A= 23{:}11$ and $B= 2^4{:}\Alt(7)$ of index $253$.
\item $X=\M_{24}$, $A= \M_{23}$ and $B= 2^6{:}3^. \Sym(6)$ of index $1771$.
\item $X=\M_{24}$, $A= \M_{23}$ and $B= 2^6{:}(\SL_3(2)\times \SL_2(2))$ of index $3795$.
\item $X=\M_{24}$, $A=\PSL_2(23)$ and $B= 2^4{:}\SL_4(2)$ of index $759$.
\end{enumerate}
\end{lemma}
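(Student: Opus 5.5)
The plan is to read the lemma off the Liebeck--Praeger--Saxl classification of maximal factorisations of almost simple groups with sporadic socle, \cite[Theorem C, Table 6]{LiebeckPraegerSaxl}, and then impose the extra condition that $|X:B|$ is odd. By that theorem, if $F^*(X)$ is sporadic and $X=AB$ with $A,B$ maximal and not containing $F^*(X)$, then $(X,A,B)$ appears, up to swapping $A$ and $B$, in a short explicit list. That list involves only $\M_{11},\M_{12},\M_{22},\M_{23},\M_{24},\mathrm J_2,\mathrm{HS},\mathrm{He},\mathrm{Ru},\mathrm{Suz},\mathrm{Fi}_{22},\mathrm{Co}_1$ and some of their automorphism groups. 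So the first step is to write out this table together with the orders, or indices, of both factors in each row.

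The second step is a parity filter on every row and on both orderings of the pair. We keep $(A,B)$ only when $|X:B|$ is odd, which is equivalent to $B$ containing a Sylow $2$-subgroup of $X$. This is checked directly from the indices in the \cite{ATLAS}. For example:
\begin{enumerate}
\item In $\M_{12}$, the index-$12$ subgroups $\M_{11}$ have even index and are discarded. The factor $4^2{:}\D_{12}$ of index $495$ survives.
\item In $\M_{24}$, the subgroup $\M_{23}$ has index $24$ and is discarded. The factors $2^6{:}3^.\Sym(6)$, $2^6{:}(\SL_3(2)\times\SL_2(2))$ and $2^4{:}\SL_4(2)$, of indices $1771$, $3795$ and $759$, survive.
\item In $\M_{23}$, the subgroup $\M_{22}$ has index $23$, and the index-$253$ subgroups $\PSL_3(4).2$ and $2^4{:}\Alt(7)$ paired with $23{:}11$ survive.
\item In $\M_{11}$, the subgroups $\M_{10}$ (index $11$) and $\M_9.2$ (index $55$) paired with $\PSL_2(11)$ survive.
\end{enumerate}

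The third step is to eliminate the remaining rows. For $\M_{22}$, $\mathrm J_2.2$, $\mathrm{HS}$, $\mathrm{He}.2$, $\mathrm{Ru}$, $\mathrm{Suz}.2$, $\mathrm{Fi}_{22}.2$ and $\mathrm{Co}_1$, every maximal factor in the table has even index. For instance, $\M_{22}$ has index $22$ in $\mathrm{HS}$, $\PSL_3(4)$ has index $22$ in $\M_{22}$, $\mathrm{Co}_2$ has even index in $\mathrm{Co}_1$, and ${}^2\mathrm F_4(2)$ has index $4060$ in $\mathrm{Ru}$. So none of these rows survives.

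The only real obstacle is bookkeeping. LPS list factorisations for almost simple $X$, including the automorphism-group cases ($\M_{12}.2$, $\M_{22}.2$, $\mathrm{HS}.2$, $\mathrm J_2.2$, $\mathrm{He}.2$, $\mathrm{Suz}.2$, $\mathrm{Fi}_{22}.2$), so we must compute the index of $B$ in $X$ itself rather than in the socle, and check the corresponding factorisation row each time. The second point of care is the $\M_{12}$ row with $B=4^2{:}\D_{12}$, which must be confirmed to be maximal and in the table. For this I would cite LPS directly or verify $|\M_{11}|\cdot|4^2{:}\D_{12}|/|\M_{11}\cap B|=|\M_{12}|$ by a short {\sc Magma} check. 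Collecting the survivors gives exactly the list (i)--(ix).
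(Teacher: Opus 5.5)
Your proposal is correct and is essentially the paper's proof: read off \cite[Theorem C, Table 6]{LiebeckPraegerSaxl} and keep exactly the rows in which $|X{:}B|$ is odd. Like the paper, which remarks that $X$ is simple in its application, you tacitly need $A$ and $B$ not to contain $F^*(X)$; otherwise $X=\M_{12}.2=\M_{12}B$, with $B$ a maximal subgroup containing a Sylow $2$-subgroup of $X$, would fall outside the list. One small slip: $\M_{22}$ has index $100$ in $\mathrm{HS}$, not $22$, but this is still even, so nothing changes.
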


\begin{proof}
This applies \cite[Theorem C, Table 6]{LiebeckPraegerSaxl}. Note that in this application, we have $X= L$.
\end{proof}
 
\begin{proposition}\label{cor:no sporadics} If \cref{hyp} holds, then $G$ is not a cover of a sporadic simple group.
\end{proposition}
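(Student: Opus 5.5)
Assume \cref{hyp} holds with $F^*(G)$ a cover of a sporadic simple group, and write $X=G/Z(F^*(G))$ (or work with $G$ and its image where convenient). By \cref{lem:reg orb compendium}(ii) there is $v\in V^\#$ with $|G:C_G(v)|$ odd and, by \cref{lem:basic1}, $G=C_G(v)H$. Pass to the image in $X$ and embed $C_G(v)$ in a maximal subgroup $B$, still of odd index; $H$ is maximal. So $X=HB$ is a maximal factorisation with $|X:B|$ odd, and \cref{lem:sporadic1} (with $A$ the image of $H$) leaves only $X\in\{\M_{11},\M_{12},\M_{23},\M_{24}\}$ and the nine listed pairs $(A,B)$. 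This also handles non-split covers: the only Schur multipliers in play are those of $\M_{12}$ (order $2$) and the trivial ones of $\M_{11},\M_{23},\M_{24}$. Since $V$ is a faithful $\GF(2)G$-module and a central involution acts trivially on some nonzero subspace in characteristic $2$, faithfulness forces $Z(G)$ to have odd order, so $F^*(G)$ is simple. Also $G$ is either $F^*(G)$ or, for $\M_{12}$, possibly $\M_{12}{:}2$. I would check the automorphic case $\M_{12}{:}2$ by the same route, applying \cite{LiebeckPraegerSaxl} Table 6 to $G$ itself.

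For each of the finitely many surviving triples I would use the necessary conditions from \cref{sec:Immutability} to list the candidate modules. \cref{lem:reg orb compendium}(i) says $G$ has no regular orbit on $V$, so $V$ must appear among the no-regular-orbit modules of \cite{FOS} (alternatively \cref{lem:dimbound} bounds $\dim V$). This leaves only small-dimensional $\GF(2)$-modules: dimension $10$ for $\M_{11}$ and $\M_{12}$, $11$ for $\M_{23}$ and $\M_{24}$ (the Golay code and cocode modules), and a few others. Several cases then die by easy arithmetic. By \cref{lem:irred reduct} $V|_H$ must be irreducible, which kills $H=23{:}11$ on an $11$-dimensional module, since $23{:}11$ has no irreducible module of that dimension over $\GF(2)$. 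By \cref{lem:numberorbits} $G$ needs at least $|V|/|H|$ orbits, which kills small $H$ such as $\PSL_2(11)$ or $\PSL_2(23)$ on large modules. \cref{SU2nhelp} applies with $r=23$ or $r=5,7$ where these primes do not divide $|H|$.

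Whatever survives I would settle by a direct {\sc Magma} computation. Using \cref{lem:Dual same number orbits} to compare orbit counts, it is enough to show that $H$ has strictly more orbits on $V$ than $G$, computed by Burnside's lemma from class fusion and fixed-space dimensions. Equivalently, one can exhibit an orbit representative $w$ with $G\neq C_G(w)H$.

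I expect the main obstacle to be bookkeeping rather than theory. One must be sure that the module list from \cite{FOS} for each group is complete, including modules realised only over $\GF(4)$ and viewed over $\GF(2)$. One must also get the $\M_{12}{:}2$ and $\M_{24}$ cases right, where the modules of dimension $10$, $11$ and $44$ need explicit orbit computations.
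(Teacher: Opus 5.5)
Your outline follows the paper's proof. The odd-index factorisation $G=C_G(v)H$ together with \cref{lem:sporadic1} reduces to the Mathieu groups. The absence of regular orbits then leaves only the code and cocode modules, and a {\sc Magma} comparison of orbit counts finishes. Two points in your write-up need correcting, though neither affects the overall strategy.

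First, your claim that $23{:}11$ has no irreducible $11$-dimensional $\GF(2)$-module is false, so \cref{lem:irred reduct} does not dispose of the $\M_{23}$ cases. Since $2$ has order $11$ modulo $23$, the cyclic group of order $23$ has two faithful irreducible $\GF(2)$-modules, both of dimension $11$; they correspond to the two degree-$11$ factors of $x^{23}-1$ used for the Golay code. The element of order $11$ in $23{:}11$ acts through a square in $\GF(23)^\times$, so it stabilises each of these modules. Concretely, an element of order $23$ acts irreducibly on either $11$-dimensional $\M_{23}$-module, so $V|_{23{:}11}$ is irreducible.

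What kills these cases is \cref{lem:numberorbits}, which you already list. Here $|V|/|H|=2^{11}/253>8$, whereas $\M_{23}$ has only $4$ orbits on each $11$-dimensional module. On the code module these are zero, the octad--complement pairs split by whether the octad contains the fixed point, and the dodecad pairs. On the cocode module they are zero, the pairs through or not through the fixed point, and the sextets.

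Second, \cite{FOS} treats only symmetric and alternating groups. The classification of modules without regular orbits that you need here is the sporadic one in \cite[Theorem 1, Table 1]{FMJO}, which is what the paper uses. Your fallback via \cref{lem:dimbound} is valid. However, it would also require the values of $\alpha(G)$ and explicit regular-orbit checks on the larger modules, such as the $44$-dimensional ones.
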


\begin{proof} Suppose that \cref{hyp}  holds and that $G$ is a cover of a sporadic simple group.
Then
  \cref{lem:sporadic1} shows that there are only a small number of pairs $(G,H)$ to consider and in all cases  $G/Z(G)$ is a Mathieu group.  Using \cref{lem:reg orb compendium}(i) and \cite[Theorem 1, Table 1]{FMJO}, yields that in each case $V$ is either the code, or the cocode module or that $G \cong 3^.\M_{22}$ and $\dim V=12$.
  An elementary {\sc Magma} calculation shows that the number of orbits of $G$ and $H$ on $V$ is different. Thus there are no immutable triples $(G,H,V)$ and this contradicts the fact that \cref{hyp} holds.  This completes the proof.
\end{proof}

\section{Exceptional groups}\label{sec:exceptional}

This very short section considers the possibility that $ G $ is a cover of a  exceptional group of Lie type. Assume \cref{hyp}.

\begin{proposition}\label{thm: Exceps} Suppose that  \cref{hyp} holds. Then $ G $   is not a cover of an exceptional simple group.
\end{proposition}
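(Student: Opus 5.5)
We argue via the factorisation forced by immutability. Assume \cref{hyp} holds with $F^*(G)/Z(F^*(G))$ an exceptional group of Lie type, and write $L=F^*(G)$. By \cref{lem:reg orb compendium}(ii) there is $v\in V^\#$ with $|G:C_G(v)|$ odd and $G=C_G(v)H$ by \cref{lem:basic1}. Since $H$ does not contain $L$, this is a genuine factorisation of $G$ into two proper subgroups. Enlarging $C_G(v)$ to a maximal subgroup $A$ of $G$ (proper, since $V$ is faithful, irreducible and non-trivial, so $C_V(L)=0$), we obtain $G=AH$ with $A$ and $H$ maximal and $|G:A|$ odd. Passing to $\overline G=G/Z(L)$, we get a factorisation of an almost simple group with socle an exceptional group, and at least one factor of odd index.

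Next we invoke \cite[Theorem B, Table 5]{LiebeckPraegerSaxl}. There, the only factorisations of almost simple groups with exceptional socle are $G_2(q)$ with $q=3^a$ (factors $\SL_3(q).2$ and $\SU_3(q).2$), $G_2(4)$ (factors $J_2$ and $\SU_3(4).2$ or $\SL_3(4).2$), and the $F_4(q)$, $q=2^a$, case (factors $\Sp_8(q)$ and ${}^3D_4(q).3$ or $\Omega_8^-(q)$-type subgroups), together with some small cases that arise through isomorphisms such as $G_2(2)'\cong\PSU_3(3)$ and ${}^2G_2(3)'\cong\PSL_2(8)$. The latter are not exceptional in the sense intended, or are covered by the classical and alternating cases. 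For each remaining case we check whether either factor has odd index. In $G_2(3^a)$, the indices $|G_2(q):\SL_3^\epsilon(q).2|=q^3(q^3+\epsilon)/2$ are divisible by $3$ and odd or even as appropriate. However, $V$ is a $\GF(2)$-module while $G$ is defined in characteristic $3$, so we need $|G:C_G(v)|$ odd with $C_G(v)$ contained in $\SL_3^\epsilon(q).2$, which requires this index to be odd. Since $q^3\pm1$ is even and the index is $q^3(q^3\pm1)/2$, we must compute the parities directly. The one with odd index, if any, then forces a Sylow $2$-subgroup into $C_G(v)$, and we finish by a regular-orbit argument. For the characteristic $2$ cases ($G_2(4)$ and $F_4(2^a)$), an odd-index subgroup must contain a Sylow $2$-subgroup and hence be a parabolic. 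None of the listed factors is parabolic, so no factor has odd index, a contradiction.

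The main obstacle is the odd-characteristic $G_2(3^a)$ case, and possibly the $G_2(4)$ case if $J_2$ has odd index (it does not: $|G_2(4):J_2|=416$). For $G_2(q)$, $q=3^a$, I would first check parities. Since $q^3\equiv 3\pmod 8$ or $1\pmod 8$, one of $(q^3\pm1)/2$ is odd, so one factor $A$ may have odd index. In that case I would use \cref{lem:irred reduct} and \cref{lem:numberorbits}. Because $G$ has no regular orbit on $V$, the lists of \cite{FOS} (or their analogue for Lie type groups in cross characteristic) bound $\dim V$ and leave only finitely many small $q$, essentially $G_2(3)$. Those remaining cases we eliminate by a {\sc Magma} orbit count comparing the numbers of $G$- and $H$-orbits on $V$.
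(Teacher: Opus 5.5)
You handle characteristic $2$ exactly as the paper does: a factor of odd index contains a Sylow $2$-subgroup, so it is parabolic, and \cite[Table 5]{LiebeckPraegerSaxl} has no parabolic factors. Your route differs for $\G_2(3^d)$.

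As you observe, parity cannot help there. A Sylow $2$-subgroup of $\G_2(3^d)$ lies in $\SL_3^\epsilon(3^d).2$ for the appropriate $\epsilon$, so the odd-index condition excludes nothing. Your list of factors also omits ${}^2\G_2(3^d)$, but that does no harm on your route.

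The paper instead applies \cref{lem:prank} with $r=3$. A hyperplane of an elementary abelian $3$-subgroup of rank $m_3(\G_2(3^d))=4d$ centralizes a non-zero vector $v$. So the maximal subgroup containing $C_G(v)$, which must partner $H$ in a factorisation, has $3$-rank at least $4d-1$. But $\SL_3(3^d)$, $\SU_3(3^d)$ and ${}^2\G_2(3^d)$ all have $3$-rank $2d$. This disposes of every $d$ at once, with no representation theory and no computer.

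Your alternative (no regular orbit, hence a short list, then a machine check) also works, but two points need fixing:
\begin{enumerate}
\item The right input is \cite[Corollary 1.2]{Lee1}, not \cite{FOS}, which treats alternating groups. In \cref{sec:odd} exactly $\G_2(3)$ with $\dim V=14$ survives that list, and \cref{lem:not abs red} adds nothing further. Alternatively, \cref{lem:dimbound} together with lower bounds on cross-characteristic degrees forces $d=1$.
\item You still need a {\sc Magma} comparison of the $\G_2(3)$-orbits with the orbits of each maximal subgroup on that $14$-dimensional module.
\end{enumerate}

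So your argument is correct in outline. It leans on a deep regular-orbit classification and a computation, where the paper's $3$-rank count is elementary and uniform in $d$.
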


\begin{proof} Suppose that the claim is false. Then, by \cref{lem:basic1},  $G= C_G(v)H$ is a factorisation of $G$ for all $v \in V^\#$. If $G$ is defined in characteristic $2$, then we can choose $v$ to be centralized by Sylow $2$-subgroup of $G$.  Hence there must exist a maximal factorisation with one of the factors a parabolic subgroup. However,  \cite[Theorem B, Table 5]{LiebeckPraegerSaxl} shows that no such factorisation exists. Hence $G$ is defined in odd characteristic. Now \cite[Theorem B, Table 5]{LiebeckPraegerSaxl}
 yields that $ G  \cong \G_2(3^d)$ and $G= HL$ where $O^2(L) $ has to be one of $ \SL_3(3^d)$, $\SU_3(3^d)$, or ${}^2\G_2(3^d)$. \cref{lem:prank} implies that we can choose $L$ so that the $3$-rank of $L$ is $m_3( G )-1$.  By \cite[Theorem 3.3.3]{GLS3}, $m_3(G)=4d$ whereas $m_3(\SL_3(3^d))= m_3(\SU_3(3^d))= m_3({}^2\G_2(3^d))=2d$. This shows that \cref{hyp} cannot hold when $G$ is an exceptional group.
\end{proof}

\section{Classical groups in odd characteristic}\label{sec:odd}

In this section we   assume that \cref{hyp} holds with $ G /Z(G)$   a  simple Lie type group defined in odd characteristic. Since $V$ is irreducible, we may as well assume that $Z(G)$ has odd order.

Recalling that $G$ has no regular orbits on $V$ by \cref{lem:reg orb compendium}, we use \cite[Corollary 1.2]{Lee1} to obtain the following  possibilities for the pair $(G,V)$ (we have kept to the same ordering as in \cite{Lee1} to make checking the list more elementary). We first list the cases that are absolutely irreducible over $\GF(2)$. This is elementary to do from  \cite[Table 1.2]{Lee1} and so we do not require a proof.

\begin{enumerate}
\item $G=\PSL_2(4)=\Alt(5)$ and $\dim V=4$ (*).
\item $G=\PSL_2(8) $ and $\dim V = 8$(***).
\item $G=\PSL_2(11)$ and $\dim V=10$.
\item $G=\PSL_2(17)$ and $\dim V= 8$.
\item $G= \PSL_2(23)$ and $\dim V= 11$.
\item $G=\PSL_2(25)$ and $\dim V= 12$.
\item $G=\PSL_2(31)$ and $\dim V= 15$.
\item $G=\PSL_3(2)$ and $\dim V=8$ (***).
\item $G=\PSL_3(3)$ and $\dim V= 12$.
\item $G=\PSL_4(3)$ and $\dim V = 26$.
\item $G=\PSU_3(3) $ and $\dim V= 14$.
\item $G=\PSU_3(5)$ and $\dim V= 20$.
\item $G=\PSU_4(2)$ and $\dim V= 14$(***).
\item $G=\PSU_4(3)$ and $\dim V= 20$.
\item $G= \PSp_4(7)$ and $\dim V= 24$.
\item $G=\G_2(3) $ and $\dim V=14$ (**).
\item $G=\PSL_3(2)$ and $\dim V=3$ (***).
\item $G=\PSL_2(9)$ and $\dim V= 4$ (*).
\item $G=\PSU_3(3)$ and $\dim V=6$.
\item $G=\PSU_4(2) $ and $\dim V= 6$ (***).
\end{enumerate}

From the list above, we can eliminate those groups with socle isomorphic to an alternating group because of the results in \cref{sec:alternating groups}. These are labeled by a (*). Because of \cref{thm: Exceps}, we may also remove the exceptional group possibility  $\G_2(3)$ indicated by (**).
Finally, we remove groups which are defined in characteristic $2$ indicated by (***), as these groups are the subject of the next section.

This leaves 12 cases. These are easily handled with a {\sc Magma} calculation which compares the number of orbits of $G$ on $V$ with those of any maximal subgroup. The exception to this is  possibility (x), where we just make some random orbits and see that there are no examples of immutable triples.

Now we pick up those $\GF(2)G$-representations which are not absolutely irreducible. Here some small arguments are needed. Hence  we state this as a lemma.
\begin{lemma}\label{lem:not abs red}
Suppose that $ G $ is quasisimple and  $V$ is a  $\GF(2)G$-module which is irreducible but not absolutely irreducible.  Assume that $G$ has no regular orbit on  $V$. Then one of the following holds:

\begin{enumerate}\item   $G=\PSL_2(4)=\PSL_2(5) $ and $\dim V=4$.

\item $G= \PSL_2(8)$ and     $\dim V= 6$.
\item   $G= 3^{.} \PSL_2(9)\cong 3^. \Alt(6)$  and $\dim V=6$.
\item $G=\PSL_2(11)$ and $\dim V=10$.
\item  $G=\PSp_6(3)$ and $\dim V= 26$.
\item  $G=\PSU_4(2)$ and $\dim V=8$.
\item  $G=3_1.\PSU_4(3) $ and $\dim V= 12$.
\end{enumerate}
\end{lemma}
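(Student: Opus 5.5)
We reduce to the absolutely irreducible case over a larger field and then quote the no-regular-orbit classification of \cite{Lee1}. Let $\mathbb K=\mathrm{End}_G(V)$. Since $V$ is irreducible but not absolutely irreducible, $\mathbb K\cong \GF(2^e)$ with $e\ge 2$. Regarding $V$ as a $\mathbb KG$-module, it is absolutely irreducible, with $\dim_{\mathbb K}V=\dim V/e$, and $V\otimes_{\GF(2)}\overline{\mathbb L}$ is the direct sum of the $e$ Galois conjugates of $V$ (as $\overline{\mathbb L}G$-module). Group orbits on $V$ do not depend on which field we regard it over. So the hypothesis that $G$ has no regular orbit on $V$ is exactly the hypothesis that $G$ has no regular orbit on the absolutely irreducible $\mathbb KG$-module $V$. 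Thus we are in the situation covered by \cite[Corollary 1.2]{Lee1}, where the field is $\mathbb K$ rather than $\GF(2)$.

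Next we read off from \cite[Corollary 1.2 and Tables 1.1, 1.2]{Lee1} every absolutely irreducible module for a quasisimple group with $G/Z(G)$ of Lie type in odd characteristic that has no regular orbits, restricting to characteristic $2$ and field size $|\mathbb K|=2^e$ with $e\ge 2$. For each entry we keep it only if the module's field of realisation (its character field) is a proper extension of $\GF(2)$. This is what makes $V$ not absolutely irreducible over $\GF(2)$, and then $\mathbb K$ is that field. The dimension over $\GF(2)$ is $e\cdot\dim_{\mathbb K}V$. For example:
\begin{enumerate}
\item the $2$-dimensional $\GF(4)$-modules for $\SL_2(4)\cong\PSL_2(5)$ give $\dim V=4$;
\item the $3$-dimensional $\GF(4)$-modules of $3^.\Alt(6)$ give $\dim V=6$;
\item the $5$-dimensional modules of $\PSL_2(11)$ over $\GF(4)$ give $\dim V=10$;
\item the $13$-dimensional modules of $\PSp_6(3)$ over $\GF(4)$ give $\dim V=26$;
\item the $4$-dimensional modules of $\PSU_4(2)$ over $\GF(4)$ give $\dim V=8$;
\item the $6$-dimensional modules of $3_1.\PSU_4(3)$ over $\GF(4)$ give $\dim V=12$.
\end{enumerate}
The $\PSL_2(8)$ case of dimension $6$ comes from its $2$-dimensional $\GF(8)$-module, i.e.\ the natural $\SL_2(8)$-module, which is included since the list is over all quasisimple groups appearing in Lee's tables.

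Finally, we must discard the larger-field entries, that is, the cases where $|\mathbb K|$ would be bigger than the minimal field. For this we use that when $\mathbb K$ is enlarged, $|V|$ grows while $|G|$ stays fixed. Lee's classification already records the maximal fields admitting no regular orbit, so any module whose character field exceeds the bound listed there is eliminated. Where Lee's bound is stated only for specific $q$, we fall back on the crude bound of \cref{lem:dimbound}: if $|\mathbb K|^{\dim_{\mathbb K}V}>|G|^{\alpha(G)}$, the case is eliminated.

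The main obstacle is the bookkeeping in the middle step: matching each entry of Lee's tables in characteristic $2$ with its exact field of realisation. This means checking Brauer character values, for instance via the modular ATLAS, to decide whether the module is already defined over $\GF(2)$ (absolutely irreducible, hence already handled in the preceding list) or only over $\GF(4)$ or $\GF(8)$. We must also make sure that no Galois-conjugate pair is double counted.
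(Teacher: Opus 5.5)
Your proposal is correct and is essentially the paper's argument: pass to $\mathbb K=\mathrm{End}_G(V)$, read off from Lee's Table 1.2 the entries over fields $\GF(2^e)$ with $e\ge 2$, and keep exactly those whose minimal field of realisation is that field, so that the module remains irreducible over $\GF(2)$. The paper does this last check by enumerating irreducible $\GF(2)G$-modules in {\sc Magma} rather than by Brauer character fields. Your closing appeal to \cref{lem:dimbound} is unnecessary, since Lee's classification is already complete and your field-of-realisation filter has already removed the larger-field entries.
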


\begin{proof}
Again we invoke \cite[Table 1.2]{Lee1} this time looking for $2$-powers greater than $2$ in column $4$ and the examples where $G$ is quasisimple. In particular, in \cite[Table 1.2]{Lee1},   the parameter $c$ must be allowed to be $1$. We also recall that $V$ regarded as a $\GF(2)G$-module   must be irreducible. We use {\sc Magma} to enumerate the irreducible $\GF(2)G$-modules for small groups with $ G $ quasisimple and include the program in the additional materials. This results in the presented list.
\end{proof}

As usual, {\sc Magma} calculations eliminate all the cases listed in \cref{lem:not abs red}.

\begin{proposition}\label{prop:no Lie Odd} If \cref{hyp} holds, then $G/Z(G)$ a not a simple group of Lie type   defined in odd characteristic unless it is isomorphic to an alternating group or a group of Lie type defined in characteristic $2$.
\end{proposition}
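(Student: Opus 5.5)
The plan is simply to assemble the reductions already carried out in this section. Assume \cref{hyp} holds with $G/Z(G)$ a simple group of Lie type in odd characteristic. Because $V$ is irreducible and defined over $\GF(2)$, we may pass to $G/O_2(Z(G))$ and assume $Z(G)$ has odd order. By \cref{lem:reg orb compendium}(i), $G$ has no regular orbit on $V$. This lets us apply \cite[Corollary 1.2]{Lee1}, which confines $(G,V)$ to a finite list. We split that list according to whether $V$ is absolutely irreducible or not.

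In the absolutely irreducible case we use the twenty-item list displayed above. We discard three groups of entries:
\begin{enumerate}
\item those marked (*), whose socle is alternating, since they are covered by \cref{sec:alternating groups} and \cref{prop:altgrps};
\item $\G_2(3)$, marked (**), by \cref{thm: Exceps};
\item those marked (***), which are isomorphic to groups of Lie type in characteristic $2$ and are therefore excluded by the conclusion of the proposition.
\end{enumerate}
For each of the remaining twelve pairs $(G,V)$ and each maximal subgroup $H$ (taken from \cite{ATLAS} or \cite{BHRoney}), we compare the number of $G$-orbits on $V$ with the number of $H$-orbits, computed by Burnside counting in {\sc Magma}. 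An immutable triple would force these numbers to be equal. Before doing so, we can prune the list of $H$ cheaply: \cref{lem:numberorbits} requires $|H|\ge |V|/(\text{number of }G\text{-orbits})$, and \cref{lem:basic1} requires a factorisation $G=C_G(v)H$ coming from \cite{LiebeckPraegerSaxl}.

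In the case where $V$ is irreducible but not absolutely irreducible, \cref{lem:not abs red} gives seven candidates. Entries (i) and (iii) are alternating, (ii) and (vi) are in characteristic $2$, and entries (iv), (v) and (vii) are again settled by orbit-count comparisons in {\sc Magma}. \cref{same point orbit} can help here, since it lets us compare orbits on $\mathbb K$-one-spaces, which is a smaller computation.

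The main obstacle is computational: $\PSL_4(3)$ on a $26$-dimensional module and $\PSp_6(3)$ on a $26$-dimensional module have around $2^{26}$ vectors, so exact orbit counting may be costly. There, instead of counting, I would pick random vectors $v$ and show that $vH\ne vG$ for every candidate $H$. A small stabiliser $C_G(v)$ makes $|vG|$ too large to be an $H$-orbit, or equivalently fails the factorisation of \cref{lem:basic1} because $|G|\neq |C_G(v)||H|/|C_H(v)|$. Exhibiting a single such vector for each $H$ suffices to show the triple is not immutable.
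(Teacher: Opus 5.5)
Your proposal is correct and follows essentially the same route as the paper. Both arguments reduce to $Z(G)$ of odd order and invoke Lee's classification via the absence of regular orbits. Both then discard the alternating, $\G_2(3)$ and characteristic-$2$ entries and eliminate the remaining absolutely and non-absolutely irreducible cases by {\sc Magma} orbit comparisons. For the large $26$-dimensional $\PSL_4(3)$ case, both resort to random orbit sampling.
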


\section{Classical groups in characteristic $2$}\label{class char 2}

This section is devoted to finding the immutable triples $(G,H,V)$ with $G$ a quasisimple   classical group defined in  characteristic $2$, $H$ maximal subgroup of $G$ and $V$ an irreducible $\GF(2)G$-module. We follow \cite[Section 2]{GLS3} for our notation related to groups of Lie type. Throughout we will consider universal groups defined over a finite field of characteristic $2$ and order $q=2^a$.  This means that the non-trivial representations that we consider  may have a non-trivial kernel. We formalize our assumptions  in the following hypothesis.

\begin{hypothesis}\label{hypss7} \cref{hyp} holds and for $q=2^a$,   $G$ is a central quotient of one of
\begin{enumerate}
 \item $\SL_m(q)$, with $m \ge 2$ and $(m,q) \ne (2,2)$;
  \item $\SU_m(q)$ with $m \ge 3$ and $(m,q) \ne (3,2)$;
  \item $\Sp_{m}(q)$, $m \ge 4$ even and $(m,q) \ne (4,2)$; or
  \item $\Omega_{m}^+(q)$  or $\Omega_{m}^-(q)$ with $m \ge 8$ even.
   \end{enumerate}
\end{hypothesis}

We assume that \cref{hypss7}   holds throughout this section. Define $\mathbb L=\mathrm{End}_G(V)$ and let $\overline{\mathbb L}$  be the algebraic closure of $\mathbb L$. For information about the characteristic $2$ representation theory of $G$ we refer to \cite[Section 2.8]{GLS3} and the brief discussion in  \cref{sec:Immutability} of this paper. Every irreducible module for a classical group of Lie type can be written over $\GF(q^2)$ \cite[Proposition 5.4.4]{KleidmanLiebeck}.

We start with the following observation.

\begin{lemma}\label{lem:parabolic factorisation}  We have $G= PH$ for some maximal parabolic subgroup $P$ of $G$.
\end{lemma}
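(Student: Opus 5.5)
We aim to produce a vector $v\in V^\#$ whose centraliser lies in a maximal parabolic subgroup, and then combine \cref{lem:basic1} with the maximality of $H$ and \cref{lem:unique para}.

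\textbf{Step 1: a vector fixed by a Sylow $2$-subgroup.} Fix $S\in\Syl_2(G)$ and the Borel subgroup $B=N_G(S)$. Since $V$ is a $\GF(2)G$-module and $S$ is a $2$-group, $C_V(S)\neq 0$. Choose $v\in C_V(S)^\#$. By \cref{lem:basic1}, $G=C_G(v)H$. Since $S\le C_G(v)$, Tits' theorem on overgroups of $S$ (as in \cite[Theorem 8.3.2]{Carter}) gives two possibilities: either $C_G(v)$ lies in a proper parabolic subgroup containing $B$, or $C_G(v)$ generates with $B$ the whole group.

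\textbf{Step 2: a cleaner choice of parabolic.} It is better to take $Q=N_G(C_V(S))\ge B$, the group $P$ of \cref{high weights}. Apply \cref{high weights} to an irreducible $\overline{\mathbb L}G$-constituent of $V\otimes\overline{\mathbb L}$; $V$ is not trivial because $G$ acts faithfully modulo its centre. This shows that $Q$ is a proper parabolic subgroup of $G$. The subgroup $O^{2'}(Q)$ is generated by its $2$-elements, and we need it to centralise some non-zero vector. Under the hypotheses of \cref{high weights}, $C_V(S)$ is a $1$-dimensional space over $\mathbb L$. On this space the Levi factor acts through a character of the torus, so the derived/$2$-generated part $O^{2'}(Q)$ acts trivially. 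Hence $O^{2'}(Q)\le C_G(v)$ for a suitable $v\in C_V(S)^\#$, even after passing from $\overline{\mathbb L}$ back down to $\GF(2)$. Descending to $\GF(2)$ requires some care. Following \cref{lem:good vectors}, we take an $\mathbb L$-high weight vector and project it onto a $\GF(2)$-irreducible summand.

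\textbf{Step 3: obtaining the factorisation.} With such a $v$ we have $G=C_G(v)H$. Also $C_G(v)\le N_G(\langle v\rangle)$, and this normaliser is a proper subgroup containing $O^{2'}(Q)\ge S$. So $C_G(v)$ lies in some proper parabolic $Q'$, using that overgroups of $S$ which are proper lie in parabolics. Then $G=Q'H$. Enlarging $Q'$ to any maximal parabolic $P\ge Q'$ gives $G=PH$, which proves the lemma. We do not need \cref{lem:unique para} for the stated result itself; it becomes relevant later for identifying $P$.

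\textbf{Main obstacle.} The delicate point is showing that $C_G(v)$ is a \emph{proper} subgroup lying in a parabolic, rather than a non-parabolic overgroup of $S$. This holds because the normaliser of $\langle v\rangle$ is proper, since $V$ is irreducible and non-trivial, and it contains $S$. Tits' result then says that every proper overgroup of $S$ normalising a non-trivial $2$-subgroup lies in a parabolic. Here $C_G(v)\ge S$, and we use the Borel–Tits theorem applied to $O_2(C_G(v))$. If that $2$-core is trivial we instead argue directly that $C_G(v)$ contains $B$. This uses that $v$ can be chosen as a $T$-eigenvector for the Cartan subgroup $T$, so $T$ stabilises $\langle v\rangle$. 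Over $\GF(2)$ one can then adjust so that $C_G(v)$ has index prime to $2$ and contains $S$. Finally, any overgroup of $S$ is contained in an overgroup of $B$ after replacing it with $\langle C_G(v),T\rangle$, which still centralises the $\GF(2)$-line.
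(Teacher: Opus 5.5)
Your Steps 1 and 3 are the paper's proof. Take $v\in C_V(S)^\#$, get $G=C_G(v)H$ from \cref{lem:basic1}, observe that $C_G(v)$ is a proper subgroup containing $S$, place it in a maximal parabolic $P$, and conclude $G=PH$. The paper simply cites \cite[Theorem 2.6.7]{GLS3} for the fact that a proper overgroup of a Sylow $2$-subgroup lies in a maximal parabolic. You are right that \cref{lem:unique para} is not needed. The ``descending to $\GF(2)$'' discussion is also unnecessary: $v\in C_V(S)$ is already a vector of $V$, and $C_V(S)$ is an $\mathbb L$-subspace because $\mathbb L$ commutes with $G$.

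The justification you give under ``Main obstacle'' does not work and should be dropped. The claim that $B\le C_G(v)$ when $O_2(C_G(v))=1$ is false in general. $T$ stabilises the $\mathbb L$-line $\langle v\rangle_{\mathbb L}$, but it acts on that line through a character that is usually non-trivial. For example, on the natural module of $\SL_3(4)$ viewed over $\GF(2)$, $\mathrm{diag}(a,b,c)$ sends $e_1$ to $ae_1$. So neither $T$ nor $\langle C_G(v),T\rangle$ centralises $v$; over $\GF(2)$, stabilising the line $\{0,v\}$ is the same as fixing $v$.

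If you do not want to quote the overgroup theorem, your own Step 2 already gives a clean proof. Since $C_V(S)=\langle v\rangle_{\mathbb L}$ and $\mathbb L$ commutes with $G$, every $g\in C_G(v)$ satisfies $C_V(S)g=\langle vg\rangle_{\mathbb L}=C_V(S)$. Hence $C_G(v)\le Q=N_G(C_V(S))$. Now $Q\ge B$ is proper, since $V$ is non-trivial and irreducible, so $Q$ is a parabolic by \cite[Theorem 8.3.2]{Carter}. Then $G=C_G(v)H\subseteq QH\subseteq PH$ for any maximal parabolic $P\ge Q$.

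This route uses only Tits' theorem on overgroups of $B$, which is already used in \cref{high weights}, rather than the stronger result on overgroups of $S$. It is also the viewpoint the paper adopts later in \cref{unique para}. It further shows that $C_G(v)$ is the kernel of the character by which $Q$ acts on $C_V(S)$, so $O_2(Q)\le C_G(v)$ and the ``trivial $2$-core'' case you worried about never occurs.
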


\begin{proof} Let    $S \in \Syl_2(G)$.  Then the vectors $v \in C_V(S)^\#$ are high weight vectors. As $(G,H,V)$ is immutable, $vG= vH$ and $G= C_G(v)H$. Since $S \le C_G(v)$, \cite[Theorem 2.6.7]{GLS3} implies there is a maximal parabolic subgroup $P \ge S$ such that $C_G(v) \le P$. But then  $G=C_G(v)H=PH$ and this proves the claim. \end{proof}

We now examine the smallest possibility. We provide a somewhat elementary argument.

\begin{lemma}\label{lem:no factSL2}
We have  $G \not\cong \SL_2(q)$ with $q\ge 4$. \end{lemma}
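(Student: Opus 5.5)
We argue by contradiction: assume \cref{hypss7} holds with $G\cong\SL_2(q)$, $q=2^a\ge 4$. By \cref{lem:parabolic factorisation} we have $G=BH$, where $B$ is the Borel subgroup. This is the unique maximal parabolic, of order $q(q-1)$ and index $q+1$. So $H$ must be transitive on the $q+1$ points of the projective line.

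\textbf{Step 1: identify $H$.} By Dickson's list, the maximal subgroups of $\SL_2(q)$, $q$ even, are $B$, $\D_{2(q-1)}$, $\D_{2(q+1)}$, and $\SL_2(q_0)$ for $q=q_0^r$ with $r$ prime. We check which of these have order divisible by $q+1$ and are transitive.
\begin{itemize}
\item $B$ fixes a point, so it is not transitive.
\item $\D_{2(q-1)}$ has order $2(q-1)$, which is coprime to $q+1$ because $\gcd(q-1,q+1)=1$ and $q+1$ is odd.
\item For $\SL_2(q_0)$, a Zsigmondy prime of $2^{2a}-1$ divides $q+1$ but not $|\SL_2(q_0)|$. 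This needs the exception $q=8$ to be handled separately: there $q+1=9$ and $|\SL_2(2)|=6$, so it fails anyway.
\end{itemize}
Hence $H\cong \D_{2(q+1)}$, which is indeed transitive on the projective line.

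\textbf{Step 2: bound $\dim V$.} By \cref{high weight app}, $V\otimes\overline{\mathbb L}$ is a sum of Galois twists of $\bigotimes_{\theta\in\Theta}W^\theta$, where $W$ is the natural $2$-dimensional module. So $V$ over $\GF(2)$ has dimension at least $a\cdot 2^{|\Theta|}$ up to field-of-definition effects. In particular $|V|\ge q^2$.

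\textbf{Step 3: contradiction via orbit counting.} By \cref{lem:numberorbits}, $G$ has at least $|V|/|H|\ge q^2/(2(q+1))$ orbits on $V$. We want to compare this with the actual orbit structure of $\SL_2(q)$ on $V$. The cleanest route I see is through \cref{lem:irred reduct}: $V|_H$ must be irreducible. But $H=\D_{2(q+1)}$ has $O_2(H)=1$, and its irreducible $\GF(2)$-modules have dimension at most $2\cdot\mathrm{ord}_{r}(2)$, where $r$ divides $q+1$; this gives $\dim V\le 2a\cdot 2=4a$. More decisively, an irreducible $\GF(2)H$-module is induced from a character of the cyclic subgroup $C_{q+1}$, so it has dimension $2d$, where $d$ is the order of $2$ modulo some divisor $m\mid q+1$.

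In the natural case ($|\Theta|=1$, $\dim V=2a$, realised over $\GF(q)$) the contradiction comes from orbits. Here $G$ is transitive on $V^\#$, so $H$ would have to be transitive on $q^2-1$ vectors. That is impossible, since $q^2-1>2(q+1)$ for $q\ge4$.

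In the remaining cases, $|\Theta|\ge2$ gives $\dim V\ge 4a$. Consider an element $x$ of order $3$ (or of order dividing $q-1$) in $G$. Since $C_H(\cdot)$ is tiny, every $G$-orbit has length at most $2(q+1)$, while $|V|\ge q^4$. This forces at least $q^4/(2(q+1))$ orbits. On the other hand, the Burnside count for $G$ gives roughly $|V|/|G|+\text{small}\approx q^4/q^3$ orbits, which is far fewer. This comparison is the main obstacle: the fixed-point estimate $\sum_{g\ne1}|C_V(g)|$ requires bounding $\dim C_V(g)\le\frac12\dim V$ for all nonidentity $g$ in tensor-twisted modules. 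I would first try \cref{SU2nhelp} instead, applied with $r$ a Zsigmondy prime of $q-1$ (which does not divide $|H|$). It gives $\dim C_V(S)\ge\log_2(|V|/|H|)>\frac12\dim V$. But $G$ is generated by two conjugates of a cyclic subgroup of order $r$ and $C_V(G)=0$, so this is a contradiction. The small exceptions $q=4$ (no Zsigmondy prime of $3$ other than $3$ itself, with $3\nmid|\D_{10}|$) fit the same argument, leaving only a few small cases to verify by {\sc Magma}.
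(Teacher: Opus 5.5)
The gap is in Step 3, in the case $|\Theta|\ge 2$. Everything there rests on your claim that $|\Theta|\ge 2$ forces $\dim_{\GF(2)}V\ge 4a$. That claim is exactly what makes \cref{SU2nhelp} work, since $\log_2(|V|/|H|)>\frac12\dim V$ is equivalent to $|V|>|H|^2=4(q+1)^2$. But the claim is false once the field of definition is taken into account, which you flagged in Step 2 and then dropped.

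If $a$ is even and $q_0=2^{a/2}$, then $\Theta=\{1,\sigma^{a/2}\}$ gives $\overline W\otimes\overline W^{\sigma^{a/2}}$. This module is defined over $\GF(q_0)$: it is the natural module of $\SL_2(q)\cong\Omega_4^-(q_0)$. So $\dim_{\GF(2)}V=2a$ and $|V|=q^2<4(q+1)^2$. Similarly, the Steinberg module of $\SL_2(8)$ has $|V|=2^8<4\cdot 9^2$. For these modules \cref{SU2nhelp} gives nothing, and $G$ is not transitive on $V^\#$, so neither of your two arguments applies. Since the $\Omega_4^-(q_0)$-modules form an infinite family, they cannot be relegated to a few {\sc Magma} checks.

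Your bound from \cref{lem:irred reduct} is also misstated. Because $\zeta^{-1}=\zeta^{q}$ for every $(q+1)$th root of unity $\zeta$, the reflections of $\D_{2(q+1)}$ act as a Frobenius twist. So the irreducible $\GF(2)C_{q+1}$-modules extend rather than induce, and the irreducible $\GF(2)\D_{2(q+1)}$-modules have dimension at most $2a$ (not $2d$, and not up to $4a$).

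What is missing is the observation you wrote down and then did not exploit. Every orbit length $|wG|=|wH|$ divides $|H|=2(q+1)$, so each stabiliser $C_G(w)$ with $w\neq 0$ has index dividing $2(q+1)$. The paper uses Galois's theorem to conclude that all these stabilisers are Borel subgroups, whence $|V|=(|C_V(B)|-1)(q+1)+1$. Two distinct Borel subgroups $B,B_1$ generate $G$, so $C_V(B)\cap C_V(B_1)=0$. Combining $|C_V(B)|^2\le |V|$ with $|V|\ge q^2$ gives $|C_V(B)|=q$, $|V|=q^2$ and $V=C_V(B)\oplus C_V(B_1)$. Then the torus $B\cap B_1$ centralises $V$, a contradiction. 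This handles all modules at once, with no information on $V$ beyond $|V|\ge q^2$. For $q=4$ a separate check is needed, since $\Sym(3)<\Alt(5)\cong\SL_2(4)$ has index $10=2(q+1)$.

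Alternatively, your own route can be repaired. With the correct bound, \cref{lem:irred reduct} gives $\dim V\le 2a$, hence $|V|=q^2$. This leaves only the natural module, which you dispose of, and the $\Omega_4^-(q_0)$-module. The $G$-orbits of the latter on $V^\#$ have lengths $(q+1)(q_0-1)$ and $q_0(q+1)$, which do not divide $2(q+1)$ unless $q=4$. For $q=4$ one checks directly that $\D_{10}$ is not transitive on the ten non-singular vectors.
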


\begin{proof}  Since $G$ has elements of order $q+1$, $|V| \ge q^2$ by Zsigmondy's Theorem.
 As $(G,H,V)$ is immutable, we have $G= C_G(w)H$ for all $w \in V^\#$.  In particular, by \cref{lem:parabolic factorisation}, $G=HB$ where $B$ is a Borel subgroup of $G$ and so $H$  has order divisible by $q+1$. The subgroup structure \cite[Satz II.8.27]{Huppert} of $\SL_2(q)$ shows that $H$ is contained in the normalizer of the cyclic subgroup of order $q+1$ and so $H$ is  dihedral of order $2(q+1)$. Hence, for $w\in V^\#$, $C_G(w)$ has index  at most $2(q+1)$ in $G$.    Since $G$ has no subgroup of index $2(q+1)$, we have that $|w G|=q+1$ for all $w \in V^\#$ by a theorem of Galois \cite[Satz II.8.28]{Huppert}. In particular,   $C_G(w)$ is conjugate to $B$ for every $w \in V^\#$. Hence $$|C_V(B)|= \frac{(|V|-1)}{(q+1)} +1$$ and so $$|V|= (|C_V(B)|-1)(q+1)+1.$$ As $|V|\ge q^2$, it follows that $|C_V(B)|\ge q$.
Let $B_1\ne B$ be a conjugate of $B$.
Then $G=\langle B,B_1\rangle $ and so
$C_V(B_1) \cap C_V(B)=0$. Hence $$|V|\ge |C_V(B)|^2$$
and therefore \begin{eqnarray*}
|C_V(B)|^2&\le (|C_V(B)|-1)(q+1)+1\le |C_V(B)|(q+1).
\end{eqnarray*}
Hence $|C_V(B)|\le q+1$ and as this order is a power of $2$,  we obtain $q\le |C_V(B)|\le q$ and so $|C_V(B)|=q$.  Thus, $$|V| = (q-1)(q+1)+1=q^2.$$    We conclude that $V= C_V(B)+C_V(B_1)$.  But then $T=B\cap B_1$ centralizes $V$ and this is impossible as $|T|=q-1>1$ and $V$ is non-trivial. This shows that $(G,H,V)$ is not immutable for any proper subgroup $H$ of $V$.
\end{proof}

We adopt the  notation for parabolic subgroups  in classical groups from \cite[Page 5]{LiebeckPraegerSaxl}. Thus $P_i$ denotes the stabilizer of an $i$-dimensional totally singular subspace of the natural module associated with $G$ except when $G=\Omega_{m}^+(q)$ with $m=2n$ even. In this case, $P_i$, $1\le i\le n-2$, is the stabilizer of a totally singular $i$-space and $P_{n-1}$ and $P_n$ are the stabilizers of totally  singular $n$-spaces from different orbits.

 By $N_i$ we denote the stabilizer in $G$ of a non-degenerate $i$-dimensional subspace of $V$ and in the event that $G$ is an orthogonal group and the subspace has type $\epsilon\in \{\pm\}$, we indicate this with a superscript $N_i^\epsilon$. With this notation, using \cref{lem:parabolic factorisation} with \cite[Theorem A, Tables 1, 2 3\& 4]{LiebeckPraegerSaxl} (and including \cite[Table 2]{GGS}) yields the following result.

\begin{proposition}\label{prop:char2 list}
Assume \cref{hypss7} and that $m \ge 3$ when $G \cong \SL_m(q)$. Then there exists a maximal parabolic subgroup $P$ of $G$ such that $G=PH$. Furthermore, one of the following holds.
\begin{enumerate}
\item  $G=\SL_m(q)$, $m \ge 3$, $H$ normalizes a superfield subgroup $\SL_{m/b}(q^{b})$ where $b$ is a prime dividing $m$ and $P\in \{P_1,P_{m-1}\}$.
\item $G= \SL_m(q)$, $m\ge 4$ even, $H$ normalizes $\Sp_{m}(q)$ and $P\in \{P_1,P_{m-1}\}$.
\item $G= \Sp_{m}(q)$, $m=2n$, $n \ge 2$,  $H$ normalizes the superfield subgroup $\Sp_{2(n/b)}(q^{b})$, $b$ is a prime  dividing $n$ and $P=P_1$.
\item $G= \Sp_{m}(q)$, $m=2n$, $n \ge 2$, $H$ normalizes a subgroup $\Omega_{m}^-(q)$ and $P= P_n$.
\item $G=\SU_{m}(q)$, $m=2n$ $n \ge 2$, $H=N_1$ and  $P= P_{n}$.
\item $G=\Omega_{m}^-(q)$, $m=2n$,  $n\ge 5$ with $n$ odd, $H$ normalizes a subgroup $ \SU_m(q)$ and $P=P_1$.
\item $G=\Omega_{m}^+(q)$, $m=2n$, $n \ge 5$,  $H$ is conjugate to $ N_1$  and $P\in\{P_{n-1},P_n\}$.
\item $G=\Omega_{m}^+(q)$, $m=2n$, $n \ge 5$,  $H=N_{2}^-$ and $P\in\{P_{n-1},P_n\}$.
\item $G=\Omega_{m}^+(q)$, $m=2n$, $n \ge 5$ with $n$ even, $H$ normalizes a subgroup $ \SU_m(q)$  and  $P=P_1$.
\item $G=\Sp_6(q)$, $H$ normalizes $\G_2(q)$ and $P=P_1$.
\item $G=\Omega_8^+(q)$, $H $ normalizes $\Sp_6(q)$ and $P\in \{P_1,P_3,P_4\}$.
    \item $G=\Omega_8^+(q)$, $H=N_2^-$ and $P\in \{P_1,P_3,P_4\}$.
\item $G=\SL_5(2)$, $H\cong  \GL_1(2^5).5=31{:}5$ and $P\in \{P_2,P_3\}$.
\item $G=\SU_4(2)$, $H\cong  3^3.\Sym(4)$ and  $P=P_2=2^{ 4}{:}\Alt(5)$.
\item $G= \SU_9(2)$, $H/Z(H) \cong \mathrm J_3$ and $P=P_1$.
\item $G=\Omega_{10}^-(2)$, $H\cong \Alt(12)$, and $P=P_1$.
  \item   $G=\Omega_8^+(2)$, $H \cong \Alt(9)$ and $P\in \{P_1,P_3,P_4\}$.
\end{enumerate}
\end{proposition}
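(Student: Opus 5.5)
The first assertion, that $G=PH$ for some maximal parabolic $P$, is exactly \cref{lem:parabolic factorisation}: a high weight vector $v\in C_V(S)^\#$ has $C_G(v)$ contained in a maximal parabolic $P\ge S$, and immutability gives $G=C_G(v)H\le PH$. The remaining work is to enumerate the factorisations $G=PH$ with $P$ a maximal parabolic and $H$ maximal, not containing $F^*(G)$. Replacing $H$ by a maximal overgroup in the almost simple group $G/Z(G)$ (or in the full automorphism group when novelty factorisations occur) gives a maximal factorisation in the sense of \cite{LiebeckPraegerSaxl}. We then read off \cite[Theorem A, Tables 1, 2, 3 \& 4]{LiebeckPraegerSaxl} all rows in which one factor is a maximal parabolic $P_i$. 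We supplement this with \cite[Table 2]{GGS} for the small and exceptional cases that \cite{LiebeckPraegerSaxl} treats as non-maximal or misses.

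The plan proceeds type by type. For $\SL_m(q)$ with $m\ge 3$, the parabolic factors are $P_1$ or $P_{m-1}$, which act on points or hyperplanes. The transitive subgroups on these are normalisers of field-extension subgroups $\SL_{m/b}(q^b)$ and $\Sp_m(q)$ for $m$ even, giving (i) and (ii). The sporadic row $\SL_5(2)$ with $31{:}5$ against $P_2$ or $P_3$ gives (xiii). For $\Sp_{2n}(q)$ with $q$ even, the relevant rows are: $P_1$ with field-extension subgroups, giving (iii); $P_n$ with $\mathrm O^-_{2n}(q)$, giving (iv); and $P_1$ with $\G_2(q)$ for $n=3$, giving (x). For $\SU_{2n}(q)$ we get $N_1$ against $P_n$, giving (v), together with the exceptional $\SU_4(2)$ and $\SU_9(2)$ rows, giving (xiv) and (xv). For orthogonal groups, the rows are: $P_1$ against the unitary subgroups in the appropriate parity of $n$, giving (vi) and (ix); $N_1$ or $N_2^-$ against $P_{n-1},P_n$, giving (vii) and (viii); and the triality cases in $\Omega_8^+(q)$, where $P_1,P_3,P_4$ are permuted, giving (xi), (xii) and (xvii). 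The special row $\Omega_{10}^-(2)$ with $\Alt(12)$ gives (xvi).

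At each step we must check that $H$ itself, not merely $H\cap F^*(G)$, still factorises with $P$ inside the quasisimple group. This works because $P$ contains a Borel subgroup, so factorisation of the overgroup passes to $G$ via $|G:P|=|H:H\cap P|$ computations.

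The main obstacle is bookkeeping rather than ideas. The hard part is making sure that every parabolic row of the LPS tables is captured, including the small-$q$ exceptions and the novelty maximal subgroups in \cite{GGS}. It is also essential to pass correctly between the almost simple group in LPS and the universal quasisimple $G$ of \cref{hypss7}. Extra care is needed with $\Omega_8^+(q)$, where triality permutes both the parabolics and the classes of $\Sp_6(q)$ and $N_2^-$. I would verify each candidate by the order identity $|G|\,|H\cap P|=|H|\,|P|$.
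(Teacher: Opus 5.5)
Your route is the paper's: the existence of $P$ is \cref{lem:parabolic factorisation}, and the list is read off from the parabolic rows of \cite[Theorem A, Tables 1--4]{LiebeckPraegerSaxl} supplemented by \cite[Table 2]{GGS}. The paper's proof consists of exactly this citation. Your detours through maximal overgroups, novelties and order checks are unnecessary. $G$ is quasisimple and perfect, so $Z(G)$ lies in the maximal subgroups $H$ and $P$. Hence $G/Z(G)=(P/Z(G))(H/Z(G))$ is already a maximal factorisation of a simple group. Moreover, only the necessity direction is asserted.

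There is, however, a genuine hole, which you inherit from the statement as written (the paper's one-line proof has it too): the case $G=\SL_4(2)\cong\Alt(8)$. \cref{hypss7} allows this group. But \cite[Theorem A]{LiebeckPraegerSaxl} excludes classical socles isomorphic to alternating groups, so its factorisations are not in the tables you read off. In particular, your assertion that the only maximal subgroups of $\SL_m(q)$ transitive on points or hyperplanes are the normalisers of $\SL_{m/b}(q^b)$ and $\Sp_m(q)$ fails here.

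Take $H=\Alt(7)$, which is maximal in $G$. It is $2$-transitive on the $15$ points of $\mathrm{PG}(3,2)$. It is also transitive on the $35$ lines: these correspond to the bisections of $\{1,\dots,8\}$ into two $4$-sets, each determined by the $3$-subset of $\{1,\dots,7\}$ lying with $8$. Hence $G=\Alt(7)P_1=\Alt(7)P_2=\Alt(7)P_3$. With $V$ the natural module, $(G,H,V)$ is immutable, so \cref{hypss7} holds, yet none of (i)--(xvii) does.

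A correct proof must do one of two things. Either exclude $\SL_4(2)$, which is handled in \cref{sec:alternating groups} (see \cref{lem:alt cases}(ii) and \cref{prop:trans cases}(iv)). Or add the row $G=\SL_4(2)$, $H\cong\Alt(7)$, $P\in\{P_1,P_2,P_3\}$, taken from the alternating-group part of \cite{LiebeckPraegerSaxl}.
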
\qed

We first consider all the exceptional examples  in \cref{prop:char2 list}.

\begin{lemma}\label{odd balls}
Suppose that  \cref{prop:char2 list} (xiii)-(xvii) holds. Then  (xvii) holds. Specifically, $G \cong \Omega_8^+ (2)$, $H \cong \Alt(9)$, and $V$ is either the natural module or one of the two half-spin modules. In each case, there are two candidates for $H$ and $G$ and $H$ have three orbits on the vectors of $V$.  Furthermore, $V|_H$ is a  spin module for $H$.
\end{lemma}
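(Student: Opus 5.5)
We treat the five exceptional configurations (xiii)--(xvii) of \cref{prop:char2 list} one at a time. In each the pair $(G,H)$ is explicit and small, and the strategy is the same. First we pin down the candidates for $V$ using \cref{lem:parabolic factorisation} together with \cref{high weight app}. The relevant parabolic $P$ is listed in \cref{prop:char2 list}, and $P=N_G(C_V(S))$ must be one of the listed maximal parabolics, so $V$ is a tensor product of Galois twists of $V(\omega_j)$ for the corresponding node $j$. Since $q=2$ in every case except that $a$ is fixed, $\Sigma$ is trivial. Hence $V=V(\omega_j)$, regarded as a $\GF(2)G$-module, with $j$ ranging over the nodes giving the listed parabolics.

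The candidates for $V$ in each case are then as follows.
\begin{enumerate}
\item For (xiii) with $G=\SL_5(2)$ and $P\in\{P_2,P_3\}$, $V$ is $\wedge^2$ of the natural module or its dual, of dimension $10$.
\item For (xiv) with $G=\SU_4(2)$ and $P=P_2$, $V$ is the $6$-dimensional orthogonal module of $\SU_4(2)\cong\Omega_6^-(2)$.
\item For (xv) with $G=\SU_9(2)$ and $P=P_1$, $V$ is the natural module, of dimension $18$ over $\GF(2)$.
\item For (xvi) with $G=\Omega_{10}^-(2)$ and $P=P_1$, $V$ is the natural $10$-dimensional module.
\item For (xvii) with $G=\Omega_8^+(2)$ and $P\in\{P_1,P_3,P_4\}$, $V$ is the natural module or one of the two half-spin modules, each of dimension $8$.
\end{enumerate}
In each case we may also pass to duals by \cref{lem:Dual same number orbits}.

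For (xiii)--(xvi) we eliminate the configuration, ideally by a cheap counting argument and otherwise by a direct {\sc Magma} orbit count.
\begin{enumerate}
\item In (xiii), $|H|=155$ while $|V|=2^{10}$. By \cref{lem:numberorbits}, $G$ would need at least $1024/155>6$ orbits on $V$, but $\SL_5(2)$ has only $3$ orbits on $\wedge^2$ (rank $0,2,4$ forms).
\item In (xv), $G$ is transitive on vectors of each norm, so it has few orbits, whereas $|V|/|H|$ for $H=3\times \mathrm J_3$ is about $2^{18}/(3\cdot 5\cdot 10^7)$, which is small. The counting bound therefore fails to kill this case, and we instead check directly that $\mathrm J_3$ is not transitive on isotropic vectors. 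Factorisation counting suffices: the number of nonzero isotropic vectors is $(2^9+1)(2^8-1)\cdot\ldots$, and it should have a prime divisor, namely $19$ or $17$ from $2^{8}-1$, that does not divide $|\mathrm J_3|$. This prime is what we look for.
\item In (xvi), the $\Alt(12)$ orbit sizes on the $10$-dimensional module are computed and compared with the two $G$-orbits of sizes $495$ and $528$. The permutation description of $V$ as the fully deleted module for $\Alt(12)$, given by \cref{lem:Orbs del mod}(iii), shows that $\Alt(12)$ has $3$ nonzero orbits. Hence the triple is not immutable.
\item In (xiv), the singular vectors number $27$, and $H=3^3{:}\Sym(4)$ has order $648$. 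We check by {\sc Magma}, or by hand via $\SU_4(2)\cong \mathrm W(E_6)'$, whether $H$ fuses the $G$-orbits, and expect that it does not.
\end{enumerate}

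For (xvii), the three candidate modules are permuted by triality, so it suffices to treat the natural module; this also explains why there are two classes of $H$ in each case. On the natural $8$-space, $G=\Omega_8^+(2)$ has two nonzero orbits: $135$ singular vectors with stabiliser $2^6{:}\Omega_6^+(2)$, and $120$ nonsingular vectors with stabiliser $\Sp_6(2)$. Here $H=\Alt(9)$ embeds via the spin representation, since $V|_H$ must be irreducible by \cref{lem:irred reduct} and the $8$-dimensional irreducibles of $\Alt(9)$ are the deleted permutation module and the spin modules. By \cref{lem:basic1}, immutability is equivalent to $G=\Alt(9)\,(2^6{:}\Omega_6^+(2))$ and $G=\Alt(9)\Sp_6(2)$. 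These follow from the orders, via $|\Alt(9)\cap \Sp_6(2)|=|\Alt(9)|/120=1512$ (giving $\PSL_2(8){:}3$) and $|\Alt(9)|/135=1344=2^3{:}\SL_3(2)$, once these intersections are confirmed by {\sc Magma}. Thus $G$ and $H$ both have three orbits on $V$, namely $\{0\}$ and the two orbits above.

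The main obstacle is (xv), $\SU_9(2)$ with $\mathrm J_3$. The module is large enough that naive orbit computation is expensive, so the argument must instead come from an arithmetic obstruction: a prime dividing the singular orbit length but not $|\mathrm J_3|$, or failing that a stabiliser-order argument.
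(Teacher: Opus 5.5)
\paragraph{Gap 1: the module list for (xv) is incomplete.} Your reduction to $V=V(\omega_j)$ for a single node relies on \cref{high weight app}, which is a statement about untwisted groups. $\SU_9(2)$ is twisted. Its parabolic $P_1$ stabilises an isotropic point and its $8$-dimensional perp, so it lies in $P_1\cap P_8$ of $\SL_9(4)$ and corresponds to the graph-orbit $\{1,8\}$ of nodes of $A_8$. Hence the $2$-restricted high weight can be $\omega_1$, $\omega_8$ or $\omega_1+\omega_8$; the paper gets this by passing to $\SL_9(4)$, as in the proof of \cref{the new list}. The third weight gives the $80$-dimensional constituent $A$ of $U\otimes U^*$, where $U$ is the natural module, and your plan never treats it. The paper kills it as follows. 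Take $L=C_G(n)\cong\SU_8(2)$ for a non-isotropic $n\in U$. Then $U|_L=1\oplus W$, so $A|_L$ contains $W\otimes W^*$, which has a non-zero $L$-fixed vector. Since $L$ lies only in $N_1$ and $G\neq N_1H$, \cref{lem:good vectors} gives the contradiction.

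\paragraph{Gap 2: the obstruction for the natural module of (xv) does not exist.} We have $(2^9+1)(2^8-1)=3^4\cdot 5\cdot 17\cdot 19$ and $|\mathrm J_3|=2^7\cdot 3^5\cdot 5\cdot 17\cdot 19$. So $19\nmid 2^8-1$, and $17$ and $19$ both divide $|\mathrm J_3|$. In fact $H$ \emph{is} transitive on the non-zero isotropic vectors. This is exactly the factorisation $G=P_1H$ that put (xv) on the list, combined with $Z(G)\le H$ acting as the scalars $\GF(4)^\times$. The obstruction lives on the other orbit. The non-isotropic vectors form one $G$-orbit of length $|G:\SU_8(2)|=2^8(2^9+1)$, whose $2$-part $2^8$ exceeds $|H|_2=2^7$. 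Equivalently $G\neq N_1H$, which is the paper's argument.

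\paragraph{Smaller points.}
\begin{itemize}
\item In (xvii), irreducibility of $V|_H$ does not force a spin module, because the fully deleted permutation module is also irreducible of dimension $8$. You must exclude it explicitly: by \cref{lem:Orbs del mod}, $\Alt(9)$ then has four non-zero orbits, of lengths $9,36,84,126$. This is also what singles out exactly two of the three triality-conjugate classes of $\Alt(9)$.
\item Your assertion that $N_G(C_V(S))$ is one of the listed maximal parabolics needs justification. In (xvii) the paper supplies it by noting that $2^{1+8}{:}(\Sym(3)\times\Sym(3)\times\Sym(3))$ and the $2^{6+3}{:}\SL_3(2)$ parabolics have index not dividing $|\Alt(9)|$.
\item In (xiv) you only say you ``expect'' failure, and the question is whether $H$ splits a $G$-orbit, not whether it fuses them. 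This is easy by hand: $3^3{:}\Sym(4)$ stabilises a decomposition into three $-$-type $2$-spaces, so it has orbits of lengths $9$ and $27$ on the $36$ nonsingular vectors.
\item Your counting argument for (xiii) and your deleted-module argument for (xvi) are correct. They are nice by-hand replacements for the paper's irreducibility argument and its {\sc Magma} computation.
\end{itemize}
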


\begin{proof}
We begin with case (xiii). By \cref{lem:irred reduct}, $H$ acts irreducibly on $V$. Since the irreducible $\GF(2)H$-modules have dimension at most $5$, this yields $\dim V=5$. But then $P\in \{P_1,P_4\}$ as $P_2$ and $P_3$ operate with no fixed points on $V$, a contradiction.

For case (xiv) we adopt the same approach, but this time {\sc Magma} shows that the largest irreducible $\GF(2)H$-module has dimension $16$.  On the other hand,   the non-trivial irreducible $\GF(2)G$-modules of dimension at most $16$   which can restrict to $H$ irreducibly  have dimensions $6$ and $8$. An orbit calculation with {\sc Magma} then shows that these cases lead to no examples.

In case (xv) we have $G= \SU_9(2)$. This case is already too large for {\sc Magma} to calculate all the irreducible $\GF(2)G$-modules. Let $S \in \syl_2(G)$ and consider $V= V(\lambda)$ an irreducible $\overline{\mathbb L}G$-module. We know $C_V(S)$ is normalized by $P_1$ and $H$ does not factorise with any other conjugacy class of parabolic subgroups. The weights for $V$ are $2$-restricted and so $V$ is the restriction of a $2$-restricted irreducible module of $\SL_9(4)$. Since $P_1$ fixes $C_V(S)$ and $C_V(S)= C_V(S_1)$ where $S_1$ is a Sylow $2$-subgroup of $\SL_9(4)$ containing $S$, we have $P_1 \le Q$ where $Q$ is a parabolic subgroup of $\SL_9(4)$.  Because $P_1$ fixes a $1$-space and an $8$-space when it acts on the natural $\SU_9(2)$-module over $\GF(4)$, we see that $P_1$ is in a parabolic subgroup of $\SL_9(4)$ which fixes both these subspaces.  From this, as $\lambda$ is $2$-restricted, \cref{high weights} implies that  $\lambda\in \{ \omega_1, \omega_2,\omega_1+\omega_2\}$. Hence $V$ is either the natural $\SU_9(2)$-module $U$, its dual, or is the irreducible  constituent $A$ of $U\otimes U$ of dimension $80$.  These modules are all realized over $\GF(4)$.

 Let $n \in U$ be a non-isotropic vector, then $L=C_G(n) \cong \SU_8(2)$ is uniquely contained in its normalizer $N_1$.  Since there are no factorisations $N_1H$ by  \cite[Theorem A, Tables 1, 2 \& 3]{LiebeckPraegerSaxl}, we see that $V= U$ is impossible.  Of course, we now have $(G,H,U^*)$ is not immutable by  \cref{lem:Dual same number orbits}.

 Suppose that $V= A$. Then on restriction to $L=\SU_8(2)$, $U$ becomes $1\oplus W$ with $W$ of dimension $8$. Hence, as an $L$-module, we calculate $$V\otimes V^*= (1\oplus W)\otimes (1\oplus W^*) = 1\oplus W\oplus W^* \oplus   ( W\otimes W^*).$$  As $W\otimes W^*$  has a $1$-dimensional $L$-submodule, we conclude that $L$ has a fixed vector on $A$.
 Now we apply \cref{lem:good vectors} with \cite[Theorem A, Tables 1, 2 \& 3]{LiebeckPraegerSaxl} to obtain a contradiction.

For case (xvi), we have  $V$ is the natural module again by \cref{high weights}. Now we calculate with {\sc Magma} to obtain a contradiction.

Finally, we consider  case (xvii). Since the maximal parabolic subgroup of shape $2^{1+8}{:}(\Sym(3)\times \Sym(3) \times \Sym(3))$ and the parabolic subgroups of shape $2^{6+3}{:}\SL_3(2)$ have index which does not divide $|\Alt(9)|$, the stabiliser of a high-weight vector in $V$ is one of $P_1$, $P_3$ or $P_4$. Hence one final application of \cref{high weights} yields $V$ is either the natural module or a half-spin module. Using the triality automorphism we may assume that $V$ is the natural module. This time, using {\sc Magma}  we obtain the examples as detailed in the lemma.
\end{proof}

  Because of \cref{odd balls} from now on we assume that one of \cref{prop:char2 list} (i)--(xii) holds.
We first provide a uniqueness result aligned with \cref{lem:parabolic factorisation}.

\begin{lemma}\label{unique para}
Suppose that $P=N_G(C_V(S))$. Then $P$ is a maximal parabolic subgroup of  $G$.
\end{lemma}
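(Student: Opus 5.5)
Let $S\in\Syl_2(G)$ and $B=N_G(S)$. Since $C_V(S)\neq 0$ is $B$-invariant, $P=N_G(C_V(S))$ contains $B$, so $P$ is a parabolic subgroup of $G$ (as in the proof of \cref{high weights}). We must show $P$ is maximal.

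First we produce a factorisation. Choose $v\in C_V(S)^\#$. By \cref{lem:basic1}, $G=C_G(v)H$. We then need $C_G(v)\le P$. Since $v$ is a high weight vector, $C_G(v)$ contains $S$. For groups of Lie type in defining characteristic (\cite[Theorem 2.6.7]{GLS3}, as used in \cref{lem:parabolic factorisation}), any overgroup of $S$ is contained in a unique minimal parabolic overgroup $N_G(O_2(C_G(v)))$-type subgroup. More concretely, $C_G(v)$ stabilises the $1$-space $\langle v\rangle$. The stabiliser of a high weight line is exactly the parabolic $P_\lambda$ attached to the weight $\lambda$, and this equals $N_G(C_V(S))$, since $C_V(S)$ is the high weight space over $\mathbb L$. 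So $C_G(v)\le P$, and hence $G=PH$.

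Now suppose $P$ is not maximal. Then there are two distinct maximal parabolic subgroups $X,Y\ge P\ge B$. Since $P\le X\cap Y$, we get $G=PH\le (X\cap Y)H$, so $G=(X\cap Y)H$. \cref{lem:unique para} then forces $X=Y$, a contradiction. Hence $P$ is contained in a unique maximal parabolic subgroup. Because $P\ge B$ is parabolic, $P$ is the intersection of the maximal parabolics containing it, so $P$ itself is maximal.

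The main obstacle is the claim $C_G(v)\le N_G(C_V(S))$. The centraliser of $v$ fixes $v$, but it need not obviously normalise the whole space $C_V(S)$ when $\dim_{\GF(2)}C_V(S)>1$, for instance over $\mathbb L$ a larger field. My first attempt would use that $C_V(S)$ is $1$-dimensional over $\mathrm{End}_G(V)$ after extending scalars. Any $g\in C_G(v)$ lies in a parabolic $Q\ge S^{x}$ for some conjugate. Using the Bruhat decomposition $g\in BwB$, I would check that $g$ fixing a vector of the high weight space forces $w$ to lie in the Weyl group stabiliser of $\lambda$, so $g\in BN_JB=P$. If that is awkward, I would instead avoid the issue: apply \cref{lem:basic1} with the full stabiliser $N_G(\langle v\rangle_{\mathbb L})$ via \cref{same point orbit}, which gives $G=N_G(\langle v\rangle_{\mathbb L})H$ directly, and this stabiliser is $P$.
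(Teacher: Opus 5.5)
Your argument is correct and is the paper's argument: you obtain $G=PH$ and then apply \cref{lem:unique para} to two distinct maximal parabolic subgroups containing $P\ge B$. The paper uses the step you flag, $C_G(v)\le P$, without comment. It follows at once from the standard fact that $C_V(S)$ is $1$-dimensional over $\mathbb L=\mathrm{End}_G(V)$, which holds because $V$ is absolutely irreducible over $\mathbb L$. Then $C_V(S)=\mathbb L v$, and since $\mathbb L$ commutes with $G$, $C_G(v)$ centralizes $C_V(S)$, so no Bruhat-decomposition argument is needed.
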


\begin{proof}
We know that $P$ is a parabolic subgroup of $G$ by \cref{high weights}. Assume, aiming for a contradiction,  that $P$ is not maximal.  Then $P$ is contained in two different maximal parabolic subgroups say $X$ and $Y$ and $G=HP=HX=HY$. Now \cref{lem:unique para} delivers $X=Y$, a contradiction.
\end{proof}

We are now in a position to describe the modules which we must consider.  Recall that $\mathbb L=\mathrm{End}_G(V)$ , $\overline {\mathbb L}$ is the algebraic closure of $\mathbb L$ and we may regard $V$ as an absolutely irreducible $\mathbb L G$-module. We make $V$ into an $\overline{\mathbb L}G$-module by forming $\overline V= V\otimes_{\mathbb L}\overline{\mathbb L}$. We adhere to this notational convention for the remainder of this section.

Together with \cref{prop:char2 list,unique para} and the Steinberg tensor product theorem \cite[Corolary 2.8.6]{GLS3}, \cref{high weights} leads to the characterisation of the modules that we will need to understand. When dealing with high weights and the corresponding modules, we follow the notation established in \cref{sec:Immutability} just before \cref{high weights}.Our general strategy is to write down subgroups which fix non-trivial vectors on $V$.

\begin{proposition}\label{the new list} Assume \cref{hypss7} with $m \ge 3$.    Then
there is a subset $\Theta \subseteq \Sigma$ and a basic irreducible $\overline{\mathbb L}$-module $\overline{W}$ such that
$$\overline{V}= \bigotimes_{\theta \in \Theta} \overline{W}^{\theta}$$
where the possibilities for $G$ and $\overline{W}$ are described as follows.
\begin{enumerate}
\item \cref{prop:char2 list} (i) or (ii) holds, $G/Z(G)=\PSL_m(q)$ with $m \ge 3$  and either\begin{enumerate} \item $\overline{W}=V(\omega_1)$ is the natural $m$-dimensional $\overline{\mathbb L}G$-module; or\item $\overline{W}=V(\omega_{m-1})$ is the dual natural $m$-dimensional $\overline{\mathbb L}G$-module in (i)(a).\end{enumerate}
\item \cref{prop:char2 list} (iii), (iv) or (x) holds, $G = \Sp_{2n}(q)$ with $n \ge 2$,    and  either\begin{enumerate} \item $\overline{W}=V(\omega_1)$ is the natural $2n$-dimensional $\overline{\mathbb L}G$-module; or\item $\overline{W}=V(\omega_{n})$ is the $2^n$-dimensional spin  $\overline{\mathbb L}G$-module.\end{enumerate}
\item \cref{prop:char2 list} (v) holds, $G/Z(G)= \PSU_{2n}(q)$ with $n \ge 2$,  and $\overline{W}= V(\omega_n)$ is the  $n$th  exterior square of the natural $\overline{\mathbb L}\SU_{2n}(q)$-module.
\item   \cref{prop:char2 list} (vi) or (ix) holds, $G = \Omega_{2n}^\epsilon(q)$ with $n \ge 5$, $\epsilon 1= (-1)^n$,   and $\overline{W}= V(\omega_1)$ is the natural $\overline{\mathbb L}G$-module.
\item   \cref{prop:char2 list} (vii) or (viii) holds, $G = \Omega_{2n}^+(q)$ with $n \ge 5$,   and $\overline{W}= V(\omega_{n-1})$ or $\overline{W}=V(\omega_{n}) $ is a half-spin $\overline{\mathbb L}G$-module.
\item  \cref{prop:char2 list} (xi) or (xii) holds, $G = \Omega_{8}^+(q)$, and up to relabeling the fundamental weights by triality, and $\overline{W}=V(\omega_1)$ is a natural $\overline{\mathbb L}G$-module.
\end{enumerate}
\end{proposition}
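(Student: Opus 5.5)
By \cref{unique para}, $P=N_G(C_V(S))$ is a maximal parabolic subgroup of $G$, and since $G=C_G(v)H\le PH$ for a high weight vector $v\in C_V(S)^\#$, $P$ must be one of the maximal parabolics that factorise with $H$. These are listed in \cref{prop:char2 list}. The plan is therefore to apply \cref{high weight app} to $\overline V$ (which is still irreducible with the same high weight line, and has $N_G(C_{\overline V}(S))=P$). This gives a unique $j$ with $P=P_j$ omitting the $j$th fundamental reflection, and a subset $\Theta\subseteq\Sigma$ such that $\overline V=\bigotimes_{\theta\in\Theta}V(\omega_j)^\theta$. The remaining work is to read off $j$ from the allowed parabolics in each case of \cref{prop:char2 list}, which identifies $\overline W=V(\omega_j)$. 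For the twisted groups ($\SU_m(q)$, $\Omega^-_{2n}(q)$) we pass, as in the proof of \cref{odd balls}, to the ambient untwisted group ($\SL_m(q^2)$, $\Omega_{2n}^+(q^2)$). We use that the $2$-restricted (or $q$-restricted) weights of the twisted group are restrictions of those of the untwisted one, with $C_V(S)=C_V(S_1)$ for $S_1$ a Sylow $2$-subgroup of the ambient group.

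The case-by-case identification runs as follows.
\begin{enumerate}
\item In cases (i),(ii), $P\in\{P_1,P_{m-1}\}$, giving $\omega_1$ or $\omega_{m-1}$. The assertion in (i)(b) that the dual occurs only in (i)(a) needs a brief check; otherwise we simply record both possibilities.
\item For $\Sp_{2n}(q)$, case (iii) has $P=P_1$, giving $\omega_1$; case (iv) has $P=P_n$, giving the spin module $V(\omega_n)$ of dimension $2^n$; and case (x) has $P=P_1$.
\item In case (v) ($\SU_{2n}(q)$, $P=P_n$), $P_n$ is the stabiliser of a maximal totally isotropic $n$-space. It lies in the parabolic of $\SL_{2n}(q^2)$ omitting node $n$, which forces $\omega_n$, i.e.\ $\Lambda^n$ of the natural module.
\item In cases (vi),(ix), $P=P_1$ gives the natural module $\omega_1$. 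In case (vi) we check that for $\Omega^-_{2n}(q)$ the node $1$ is fixed by the twisting graph automorphism, so the weight is $\omega_1$.
\item In cases (vii),(viii), $P\in\{P_{n-1},P_n\}$ gives the two half-spin modules.
\item In cases (xi),(xii), $P\in\{P_1,P_3,P_4\}$, and triality permutes $\omega_1,\omega_3,\omega_4$, so up to relabelling $\overline W=V(\omega_1)$.
\end{enumerate}

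The main obstacle is making the twisted cases rigorous. We must show that the parabolic $P_j$ of $\SU_m(q)$ or $\Omega^-_{2n}(q)$ determines a single fundamental weight of the ambient algebraic group, and that $P$ omits exactly one node of the twisted diagram. A node of the twisted diagram can correspond to a $\tau$-orbit $\{j,m-j\}$, and a priori the weight could then involve both $\omega_j$ and $\omega_{m-j}$. This is resolved by the specific parabolics arising: $P_n$ in $\SU_{2n}$ and $P_1$ in $\Omega^-_{2n}$ both correspond to $\tau$-fixed nodes. For these the high weight must be a combination of the single weight $\omega_n$ (respectively $\omega_1$) and its Frobenius twists, and \cref{high weight app} then applies after twisting by $\Sigma$. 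One should also confirm that $\Theta\ne\emptyset$, which holds because $V$ is non-trivial.
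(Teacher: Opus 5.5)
Your proposal is correct and follows essentially the same route as the paper. \cref{unique para} and \cref{prop:char2 list} pin down the maximal parabolic $P$, \cref{high weight app} gives the decomposition into Frobenius twists of a single fundamental module, and the twisted groups $\SU_{2n}(q)$ and $\Omega_{2n}^-(q)$ are handled by passing to $\SL_{2n}(q^2)$ and $\Omega_{2n}^+(q^2)$. Your observation that $P_n$ and $P_1$ correspond to $\tau$-fixed nodes, so the support of $\lambda$ is a single node, is a more explicit version of the paper's remark that $P_n$ lies in a unique proper parabolic $P_n^*$ of $\SL_{2n}(q^2)$.
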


\begin{proof} For the untwisted groups, this is a direct application of \cref{unique para,high weight app,prop:char2 list}.

Suppose that $G=\SU_{2n}(q)$ and set $R=\SL_{2n}(q^{2})$.
Then $\overline V$ is the restriction of an $\overline{\mathbb L}R$-module,   $V(\lambda)$ with $q$-restricted weight $\lambda$. Since $C_V(S)$ is normalized by $P_n$ in $G$, $P_n$ is uniquely contained in the parabolic subgroup $P_n^*$ of $R$. Now \cref{high weight app} yields $\Theta \subseteq \Sigma$ and $\overline{W}= V(\omega_n)$ such that
$$\overline V= \bigotimes_{\theta \in \Theta} \overline{W}^\theta.$$
The identification of $V(\omega_n)$ with the exterior $n$th power, is well-known.

The situation for $G=\Omega_{2n}^-(q)$ is similar this time taking the embedding of $G$ into $\Omega_{2n}^+(q^{2})$.
\end{proof}

We now use \cref{the new list} as the list of core cases to be considered, examining them each in turn.

\begin{lemma}\label{lem:more factors impossible}
Assume that \cref{the new list} (i) holds. Then    $\overline V= \overline W^{\theta}$ for some $\theta\in \Sigma$.  In particular,  $V$ is a natural $\GF(2)G$-module and $H$ acts transitively on $V^\#$.
\end{lemma}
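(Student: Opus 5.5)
We aim to show that $|\Theta|=1$; the rest is then standard. Suppose, for a contradiction, that $|\Theta|=\ell\ge 2$. Write $\overline W=V(\omega_1)$, the natural module, or its dual; the dual case follows from \cref{lem:Dual same number orbits}, or by the same argument. Choose a basis $v_1,\dots,v_m$ of the natural module adapted to a Borel subgroup. Consider the vectors
$$w_1=v_1\otimes\cdots\otimes v_1,\qquad w_2=v_1\otimes\cdots\otimes v_1\otimes v_2$$
in $\overline V=\bigotimes_{\theta\in\Theta}\overline W^\theta$. By \cref{lem:no tensors}, $C_G(w_2)$ stabilises both $\langle v_1\rangle$ and $\langle v_2\rangle$. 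Hence $C_G(w_2)$ lies in the stabiliser of a flag-type configuration, namely in $P_1\cap P_1^{g}$ for suitable $g$. Equivalently, it lies in the stabiliser of two distinct $1$-spaces of the natural module, which is a subgroup of $P_1\cap P_{1}^g$ containing a Levi-type subgroup $\GL_{m-2}(q)$.

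The next step is to pass from $\overline V$ to $V$. Every irreducible module here can be written over $\GF(q^2)$, or even over $\GF(q)$, since the tensor factors are Galois twists. So $w_2$ can be taken in the $\mathbb L$-form of $\overline V$, and then \cref{lem:good vectors} applies with $L$ the (unipotent-by-torus) subgroup fixing $w_2$. It reduces the question to a group-theoretic one: show that no overgroup $M\ge L$ gives a factorisation $G=MH$. The subgroups $M$ containing the stabiliser of two points $\langle v_1\rangle,\langle v_2\rangle$ are, up to conjugacy, contained in $P_1$, in $P_{m-1}$ (stabiliser of the hyperplane containing the relevant line, since $L$ also fixes suitable hyperplanes), in $P_2$, or in $P_1\cap P_{m-1}$-type subgroups. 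Which of these can occur will be read off from the maximal factorisations.

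By \cref{prop:char2 list}(i),(ii), $H$ normalises a superfield subgroup $\SL_{m/b}(q^b)$ or $\Sp_m(q)$, and factorises only with $P_1$ or $P_{m-1}$. The crucial check is that $H$ cannot factorise with the smaller subgroup $C_G(w_2)$. Comparing orders, $|G:C_G(w_2)|$ is divisible by $|G:P_1|\cdot(q^{m-1}-1)/(q-1)$, the number of ordered pairs of points modulo scalars. For $H$ normalising $\SL_{m/b}(q^b)$, $H$ is transitive on points, but the $H$-orbits on pairs of points split according to whether the $\GF(q)$-span is a $\GF(q^b)$-line. The same holds for $\Sp_m(q)$, where pairs split into perpendicular and non-perpendicular. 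Hence $G\ne C_G(w_2)H$, or one exhibits directly two $G$-conjugate vectors $w_2,w_2'$ lying in different $H$-orbits, which contradicts immutability via \cref{lem:basic1}. The main obstacle is making this rigorous uniformly, including the small cases where $m/b$ or $q$ is small and the case $\Theta$ involves twists by several $\theta$. I would first try the direct pair-splitting argument, since it avoids a full list of overgroups. If needed, I would use \cref{lem:unique para}-style Zsigmondy arguments, falling back on {\sc Magma} for tiny $(m,q)$.

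Once $\ell=1$, $\overline V=\overline W^\theta$ is a Galois twist of the natural module or its dual. As a $\GF(2)G$-module, $V$ is therefore the natural module viewed over $\GF(2)$, on which $G$ is transitive on $V^\#$. Immutability then forces $vH=vG=V^\#$, so $H$ is transitive on $V^\#$, as claimed.
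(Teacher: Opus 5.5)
Your argument works only when $\mathbb L=\mathrm{End}_G(V)$ equals $\mathbb K=\GF(q)$, that is, when $V$ is $U=\bigotimes_{\theta\in\Theta}W^{\theta}$ regarded over $\GF(2)$. In that case $w_2\in V$, and \cref{lem:no tensors} puts $C_G(w_2)$ inside the stabiliser of the ordered pair $(\langle v_1\rangle,\langle v_2\rangle)$. Your observation that $H$ is not transitive on ordered pairs of points (via the $\GF(q^b)$-line or perpendicularity invariant) then does give $G\ne C_G(w_2)H$. The paper uses \cref{lem:no tensors} in the same way in its subcase $m=3$, $\ell=2$, $\mathbb K=\mathbb L$, finishing with a $2$-part count instead.

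However, $\mathbb L$ can be a proper subfield of $\GF(q)$. For example, $\Theta=\Sigma$ with $a\ge 2$ gives $\mathbb L=\GF(2)$, and $m=3$, $q=4$, $\Theta=\{1,\sigma\}$ gives a $9$-dimensional $\GF(2)$-module (Hermitian forms). Then $V$ is only a proper $\GF(2)G$-summand of $U$, and your sentence ``so $w_2$ can be taken in the $\mathbb L$-form of $\overline V$'' is false. The $\mathbb L$-form is the fixed space of a semilinear map that permutes the tensor factors without fixed points, so $v_1\otimes\cdots\otimes v_1\otimes v_2$ is not fixed by it. The vector that \cref{lem:good vectors} actually produces in $V$ is the image of $w_2$ modulo a maximal $\GF(2)G$-submodule. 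Its stabiliser may be strictly larger than $C_G(w_2)$.

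Nor can you apply \cref{lem:good vectors} with $L=C_G(w_2)$. Its hypothesis is that \emph{no} overgroup of $L$ factorises with $H$, yet $C_G(w_2)\le P_1$ and $G=P_1H$ by \cref{prop:char2 list}. You list $P_1$ and $P_{m-1}$ among the overgroups, then switch to checking the single subgroup $C_G(w_2)$; that is legitimate only when $w_2\in V$.

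The missing idea is a subgroup that fixes a nonzero vector of $U$ for every choice of $\Theta$ and lies in no conjugate of $P_1$ or $P_{m-1}$. To survive all the twists, it should be centralised by the Frobenius. The paper proceeds as follows:
\begin{enumerate}
\item For $m\ge4$ it takes $L\cong\SL_2(2)\times\SL_{m-2}(q)$ with $\sigma$ centralising the first factor. Then $U|_L$ contains $N^{\otimes\ell}$, and $N\otimes N\cong T\oplus N$ yields a fixed vector.
\item For $m=3$ and $\ell\ge3$ it uses the Frobenius group of order $21$.
\item For $m=3$, $\ell=2$ and $|\mathbb K:\mathbb L|=2$ it uses the unitary subgroup $X$, on which $U\cong W^{\sigma_1}\otimes(W^{\sigma_1})^*$ has fixed vectors.
\end{enumerate}
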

\begin{proof}  In this case $G/Z(G)=\PSL_m(q)$ with $m \ge 3$  and either\begin{enumerate} \item $\overline{W}=V(\omega_1)$ is the natural $m$-dimensional $\overline{\mathbb L}G$-module; or\item $\overline{W}=V(\omega_{m-1})$ is the dual natural $m$-dimensional $\overline{\mathbb L}G$-module.\end{enumerate}

 We know that $\overline{V}= \bigotimes_{\theta \in \Theta} \overline W^{\theta}$ as an $\overline {\mathbb {L}} G$-module and we may  suppose that $\overline W= V(\omega_1)$ by  \cref{lem:Dual same number orbits}. Hence $\overline W$ is a natural $\overline {\mathbb L} G$-module. Let  $\mathbb K=\GF(q)$. Then
 $\overline{W} = W\otimes_{\mathbb K} \overline{\mathbb L}$ where $W$ is the natural $\mathbb K G$-module of $\mathbb K$-dimension $m$.  We write   $U= \bigotimes_{\theta \in \Theta} W^{\theta}$  and note that $U\otimes _{\mathbb K} \overline{\mathbb L} = \overline V$. Thus $U= V \otimes_{\mathbb L}\mathbb K$,
and, when $U$ is regarded as a $\GF(2)G$-module, we can identify $U$ as a direct sum of some number of modules isomorphic to our original module $V$.

Write $\Theta=\{\sigma_1, \dots, \sigma_\ell\} \subseteq \Sigma$.  Assume that $|\Theta|=\ell\ge 2$.

We begin by considering the cases with $m \ge 4$.  Set $L \cong \SL_2(2)\times \SL_{m-2}(q)$ and regard $L$ as a subgroup of $G$ embedded in $\SL_2(q)\times \SL_{m-2}(q)$ with the first factor centralized by $\sigma$.

Then as a $\mathbb KL$-module   $W= N\oplus M$ where $\dim_{\mathbb K} N=2$ and $\dim_{\mathbb K} M= m-2$.  Further we note that $N^{\sigma_i}=N$ as $\sigma$ centralizes the first factor of $L$. As $m\ge 4$, $L$ is not contained in a parabolic subgroup conjugate to either $P_1$ or $P_{n-1}$. We have \begin{eqnarray*} U|_L&=&
 (N\oplus M) ^{\sigma_1}\otimes
 \dots\otimes (N\oplus M) ^{\sigma_{\ell}}\\&=&
 (N\oplus M ^{\sigma_1})\otimes
 \dots\otimes (N\oplus M ^{\sigma_{\ell}})\\
 &\ge& \underbrace{N\otimes \dots \otimes N}_{\ell \text{ copies}}.
\end{eqnarray*}
  Since $N \otimes N \cong T\oplus N$ where $T$ is indecomposable with   socle the trivial module,  by induction we obtain $\underbrace{N\otimes \dots \otimes N}_{\ell \text{ copies}}$ contains a non-zero vector fixed by $L$ and so, therefore, does $U$. Since $L$ is not   conjugate to a subgroup  of $P_1$ or $P_{n-1}$,
 we can apply \cref{lem:good vectors} and \cref{prop:char2 list} together with \cite[Theorem A, Tables 1, 2 \& 3]{LiebeckPraegerSaxl}  to  provide a contradiction as $a=|\Sigma| \ge \ell$. This shows that $m= 3$.

 So assume $m=3$. Consider the subgroup $J $ of $G$ isomorphic to the Frobenius group of order $21$ as a subgroup of the fixed points of $\sigma$ (which is isomorphic to $\SL_3(2)$).  Then $W$ is   irreducible as an $\mathbb K J$-module. If necessary, extend $\mathbb K$ to a field $\mathbb G$ such that $O_7(J)$ acts on $W$ as a diagonal matrix group and fix a generator $x$ of $O_7(J)$. Thus $x$ has eigenvalues $\theta$, $\theta ^2$ and $\theta^4$ where $\theta$ is a non-trivial $7$th root of unity. Hence, as $J$ is centralized by $\sigma$, we know that $$W^{\sigma_k}\otimes_{\mathbb K}\mathbb G = W \otimes_{\mathbb K}\mathbb G$$ as  $\mathbb G J$-modules.  As a $\mathbb G J$-module, $U$ is an $\ell$-fold tensor product of $ W \otimes_{\mathbb K}\mathbb G$.

On $$(W\otimes_{\mathbb K}\mathbb G) \otimes (W\otimes_{\mathbb K}\mathbb G),$$ $x$ has eigenvalues $\theta$, $\theta^2$ and $\theta^4$ all with multiplicity $1$, $\theta^3$, $\theta^5$ and $\theta^6$ all with multiplicity $2$. Writing $Y= W\otimes_{\mathbb K}\mathbb G$ we further deduce that $$Y\otimes Y\cong Y^*\oplus Y \oplus Y^*$$ as an $\mathbb GJ$-module. Suppose that $\ell=|\Theta| \ge 3$. Then $U$ contains a $\mathbb GJ$-submodule isomorphic to $Y \otimes Y^*$ and hence $J$ has a non-zero fixed point $w $ on $Y$. We now apply \cite[Theorem A, Tables 1, 2 \& 3]{LiebeckPraegerSaxl} and \cref{lem:good vectors,prop:char2 list} to obtain a contradiction.

Suppose then that $\ell=2$.  Then $U= W^{\sigma_1}\otimes W^{\sigma_2}$. In particular, $\dim_{\mathbb K} U=9$.
(In this case,   $J$ has no non-zero fixed points on $U$.)

Recall that $\mathbb L=\mathrm{End}_{G}(V)$ and  $\mathbb K \ge \mathbb L$.
  Let  $\langle \sigma_0\rangle=C_\Sigma(\mathbb L) =\mathrm{Gal}(\mathbb K/\mathbb L )$. Then $\sigma_0$ has order $|\mathbb L :\mathbb K |$. If $\mathbb K = \mathbb L$, then $V=U$ and \cref{lem:no tensors} applies to reveal a vector $w\in V$ such that $C_G(w)$ leaves invariant  two distinct $1$-spaces $\langle w_1\rangle$ and $\langle w_2\rangle$ in $W$ where $w_1$ and $w_2$ are $\mathbb K$-linearly independent. Since $H$ normalizes a subgroup of order $q^{2}+q+1$ by \cref{prop:char2 list} (i), and has order coprime to $2$, and $C_G(w)$ does not contain a Sylow $2$-subgroup of $G$ (as it fixes two $\mathbb K$-linearly independent $1$-spaces), we see that $G\ne HC_G(w)$, a contradiction. Therefore $\mathbb K > \mathbb L$.
By \cite[(26.3)(3)]{AschbacherFG}, we have $U \cong U^{\sigma_0}$.

As $\ell=2$, we   have $\sigma_1=\sigma^{a_1}$ and $\sigma_2=\sigma^{a_2}$ so that $$U= W^{\sigma^{a_1}} \otimes W^{\sigma^{a_2}}.$$
Then, as $U \cong U^{\sigma_0}$, Steinberg's tensor product theorem yields $W^{\sigma^{a_1}} \cong W^{\sigma^{a_2}\sigma_0} $ and
$W^{\sigma^{a_2}} \cong W^{\sigma^{a_1}\sigma_0} $. It follows that $\sigma_0$ has order $2$. Hence $|\mathbb L :\mathbb K |=2$. Let $X$ be the unitary subgroup of $G$ as in \cref{unitary sbgrp}. Then $$(W^{\sigma^{a_1}})^*\cong  W^{\sigma^{a_1}\sigma_0} \cong W^{\sigma^{a_2}}$$ as $ \mathbb K X$-modules. Thus $U \cong W^{\sigma^{a_1}}\otimes(W^{\sigma^{a_1}})^*$. It follows that $X$ has fixed vectors on $U$, and as $X$ is not contained in any parabolic subgroups of $G$, we argue  that $(G,H,V)$ is not immutable using \cref{lem:good vectors} and this is our contradiction.

 Now we are left with $|\Theta|=\ell =1$. Hence  $U=W^{\sigma_1}$,  $V= U$ and, as $G$ is transitive on $V^\#$, so is $H$.  This is our claim.
\end{proof}

\begin{lemma}\label{case (ii)(a)}
Assume that \cref{the new list} (ii)(a) holds. Then  $\overline V= W^{\theta}$ for some $\theta\in \Sigma$.  In particular, $V$ is a natural $\GF(2)G$-module and $H$ acts transitively on $V^\#$.
\end{lemma}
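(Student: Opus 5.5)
We follow the pattern of \cref{lem:more factors impossible}. Here $G=\Sp_{2n}(q)$ with $n\ge 2$, $(n,q)\ne(2,2)$, and $\overline W=V(\omega_1)$ is the natural module. By \cref{prop:char2 list}(iii), (iv) and (x), $H$ normalizes one of $\Sp_{2n/b}(q^b)$, $\Omega_{2n}^-(q)$ or $\G_2(q)$ (the last only when $2n=6$). Let $W$ be the natural $\mathbb K G$-module with $\mathbb K=\GF(q)$, let $\Theta=\{\sigma_1,\dots,\sigma_\ell\}$, and set $U=\bigotimes_{i}W^{\sigma_i}$. Then $U$ is a direct sum of copies of $V$ as a $\GF(2)G$-module. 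Assume $\ell\ge 2$; the aim is to produce a subgroup $L$ fixing a nonzero vector of $U$ such that no overgroup of $L$ factorises with $H$, and then apply \cref{lem:good vectors}.

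\emph{Construction of $L$.} The natural choice is $L=\Sp_2(2)\times\Sp_{2n-2}(q)$, where the first factor lies in the fixed points of $\sigma$. Then $W|_L=N\oplus M$ with $N^{\sigma_i}=N$, and $U$ contains $N^{\otimes \ell}$. Since $N\otimes N$ has a trivial submodule, $L$ fixes a nonzero vector of $U$. Now $L\le N_2\cong \Sp_2(q)\times\Sp_{2n-2}(q)$, and $L$ is irreducible on $N$ and on $M$. So the only maximal overgroups of $L$ are $N_2$, or possibly subgroups containing $\Sp_{2n-2}(q)$, namely $N_2$ or $\SO^\pm_{2n}(q)$-type subgroups. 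These are not parabolic, and the same holds for any overgroup of $L$. By \cref{prop:char2 list} (equivalently \cite[Theorem A, Tables 1, 2 \& 3]{LiebeckPraegerSaxl}), however, a factorisation $G=MH$ with $H$ as above needs $M$ to be a parabolic subgroup $P_1$ or $P_n$, or else an orthogonal subgroup $\SO_{2n}^\pm(q)$. The last option is genuine, since $G=\SO_{2n}^\pm(q)H$ for field-extension and $\G_2$ subgroups. This is the expected obstacle.

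\emph{Refining the choice.} To rule out orthogonal overgroups, I would take instead $L=\Sp_2(q_0)\times \Sp_{2n-2}(q)$ with $q_0=2$. The fixed vector sits in $N^{\otimes\ell}$, and the stabiliser of that vector in $N^{\otimes \ell}$ cannot preserve a quadratic form on $W$ that polarises to the symplectic form. If this fails, I would use a Zsigmondy/Singer-type subgroup $J$ (as with the Frobenius group of order $21$ in \cref{lem:more factors impossible}), fixed by $\sigma$, so that $U\supseteq Y\otimes Y^*$ for $Y=W\otimes\mathbb G$ and $J$ has fixed points. The subgroup $J$ would be chosen so that it is contained in no $N_2$, $P_i$ or orthogonal subgroup compatible with the factorisations in \cref{prop:char2 list}. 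The small case $2n=4$ with $q$ small I would check with {\sc Magma}.

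\emph{Conclusion.} If $\ell\ge 2$ is excluded (handling the case $\ell=2$ with $\mathbb L<\mathbb K$ via $U\cong U^{\sigma_0}$ as in the earlier lemma, and using \cref{lem:no tensors} when $\mathbb K=\mathbb L$ to get a centraliser fixing two independent $1$-spaces, which is incompatible with $G=C_G(w)H$), then $\ell=1$. Hence $\overline V=\overline W^\theta$, so $V$ is the natural module regarded over $\GF(2)$. Since $\Sp_{2n}(q)$ is transitive on nonzero vectors of its natural module, $H$ is transitive on $V^\#$ by immutability.
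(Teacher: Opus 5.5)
Your opening construction is exactly the paper's proof. The paper takes $L\cong\Sp_2(2)\times\Sp_{2n-2}(q)$ with the first factor centralised by $\sigma$, gets an $L$-fixed vector in $N^{\otimes\ell}\le U$ as in \cref{lem:more factors impossible}, notes that $L$ lies in no conjugate of $P_1$, and concludes via \cite{LiebeckPraegerSaxl}, \cref{lem:good vectors} and \cref{prop:char2 list}. The rest of your proposal adds nothing that works.

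\emph{The obstacle you name does not exist.} $L$ lies in no $\mathrm O^{\pm}_{2n}(q)$. An $L$-invariant quadratic form polarising to $f$ would restrict to an $\Sp(M)$-invariant quadratic form on the non-degenerate space $M=N^\perp$. Since $\Sp(M)$ is transitive on $M^\#$, that form would be constant on $M^\#$, which is impossible once $\dim M\ge 4$ or $q\ge 4$. The $\Sp_{2n-2}(q)\cong\Omega_{2n-1}(q)$ inside $\mathrm O^{\pm}_{2n}(q)$ is a different class: it fixes a vector of $W$. Your ``refined'' $L=\Sp_2(q_0)\times\Sp_{2n-2}(q)$ with $q_0=2$ is the same group. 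The sentence about the stabiliser in $N^{\otimes\ell}$ is not an argument, and the Singer-type subgroup $J$ is never specified.

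\emph{The real difficulty is one you rule out by assertion.} It is false that every factorisation partner of $H$ is parabolic or orthogonal; \cref{prop:char2 list} records only the parabolic partner. Take $n=3$ and $H\cong\G_2(q)$, case (x).
\begin{enumerate}
\item Restricted to $N_2=\Sp(N)\times\Sp(M)$, the spin module of $G$ is $N\otimes S_4$, where $S_4$ is the spin module of $\Sp(M)\cong\Sp_4(q)$.
\item For a hyperbolic pair $n_1,n_2$ of $N$, the invariant quadratic form is, up to a scalar, $Q(n_1\otimes s_1+n_2\otimes s_2)=f_S(s_1,s_2)$.
\item By Witt's theorem, $\Sp(M)$ alone is transitive on each level set $\{Q=c\}$ with $c\neq 0$. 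By \cref{spin symplectic}, each such level set is a single $G$-orbit with stabiliser $\G_2(q)$.
\end{enumerate}
Hence $G=\Sp(M)\,\G_2(q)$, so every overgroup of your $L$ factorises with $H$ and \cref{lem:good vectors} gives nothing. For the same reason your \cref{lem:no tensors} step fails when the two lines span a non-degenerate plane. It can be rescued, when $\mathbb K=\mathbb L$, by taking a totally isotropic pair: then $|C_G(w)|$ is prime to $q^2+1$, while $q^2+1$ divides $|G:H|$. The paper's one-line appeal to \cite{LiebeckPraegerSaxl} also passes over this subcase.

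For the extension-field subgroups of case (iii), the construction is fine as it stands. The only maximal subgroup containing $L$ is $N_2$, or $\Sp_2(q)\wr 2$ when $n=2$, and neither factorises with $\Sp_{2n/b}(q^b){:}b$ when $(n,q)\neq(2,2)$. Case (iv) cannot arise at all: the high weight is a multiple of $\omega_1$, which forces $G=P_1H$, and $\mathrm O^-_{2n}(q)$ is not transitive on points.

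So what is missing is a separate argument for $H\cong\G_2(q)$. It needs a vector-fixing subgroup lying outside $P_1$, $N_2$ and $\mathrm O^{\pm}_6(q)$. For even $\ell$, the $\Sigma$-centralised subfield subgroup $\Sp_6(2)$ does this, fixing powers of the form in $W^{\otimes\ell}$, as at the end of \cref{spin symplectic}. Odd $\ell$ still needs its own treatment.
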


\begin{proof} In this case, we have $ G=\Sp_{2n}(q)$, $n \ge 2$ ,$(n,q) \ne (2,2)$ and  $\overline W= V(\omega_1)$. Set $\mathbb K= \GF(q)$.  We know    $$\overline{V}= \bigotimes_{\theta \in \Theta}\overline  W^{\theta}$$ where $\Theta \subseteq \Sigma$.
Note that $\overline W= W\otimes_{ \mathbb K} \overline{\mathbb L}$ where $W$ is the natural $\mathbb KG$-module.
So   $U=\bigotimes_{\theta \in \Theta} W^{\theta}$ is a $\mathbb KG$-module and $\overline V= U\otimes_{\mathbb K}\overline {\mathbb L}$ as in \cref{lem:more factors impossible}.

 If $|\Theta|=1$, then $H$ acts transitively on the non-zero vectors of $V$ and this is our conclusion. So suppose that $|\Theta|\ge 2$.

 Set $L \cong \Sp_2(2)\times \Sp_{2n-2}(q)$ and regard $L$ as a subgroup of $G$ with the first factor centralized by $\sigma$. Notice that
$L$ is not contained in a  conjugate of $P_1$. However, as in the proof of \cref{lem:more factors impossible}, $L$ fixes a non-zero vector of $w \in U$. Again we obtain a contradiction via \cite[Theorem A, Tables 1, 2 \& 3]{LiebeckPraegerSaxl},  \cref{lem:good vectors,prop:char2 list}.
\end{proof}

\begin{lemma}\label{spin symplectic}
Assume that \cref{the new list} (ii)(b) holds.  Then either $n=2$ and $G$ acts transitively on $V^\#$ or $n = 3$, $\overline V=\overline W^{\sigma_1}$, $G \cong \Sp_6(q)$ and $ H \cong \Omega_6^-(q){:} 2$.
 Furthermore,   $G$ has $q$ orbits on the non-zero vectors of $V$. These orbits consist of \begin{enumerate}\item one orbit  of length $(q^{3}+1)(q^2+1)(q^2-1)$, with  point stabiliser  $O^{2'}(P_3)$ of shape $ q^{3+3}{:}\SL_3(q)$; and\item $q-1$   orbits of length $q^{3}(q^{4}-1)$ each  with point
 stabiliser $\mathrm G_2(q)$.\end{enumerate}
\end{lemma}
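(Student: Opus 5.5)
We are in case (ii)(b) of \cref{the new list}: $G=\Sp_{2n}(q)$ with $\overline W=V(\omega_n)$ the $2^n$-dimensional spin module, $\overline V=\bigotimes_{\theta\in\Theta}\overline W^\theta$, and $G=PH$ with $P=P_n$ the stabiliser of a high weight line. By \cref{prop:char2 list}, only case (iv) allows $P=P_n$, so $H$ normalizes $\Omega_{2n}^-(q)$. The plan has four steps: bound $n$, show $|\Theta|=1$, treat $n=2$ directly, and settle $n=3$ by explicit identification.

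\textbf{Bounding $n$.} First I bound $n$ by orbit counting (\cref{lem:numberorbits}). We have $|V|\ge q^{2^n}$, and $|H|\le 2a|\SO_{2n}^-(q)|<q^{2n^2-n+1}$ roughly. So $G$ has at least $q^{2^n-2n^2+n-1}$ orbits on $V$. On the other hand, $G$ has at most $|G|^{\alpha(G)}$-type constraints: by \cref{lem:reg orb compendium}(i) $G$ has no regular orbit, so \cref{lem:dimbound} gives $q^{2^n|\Theta|}\le |G|^{\alpha(G)}$. Together with the fact that the spin module for $n\ge 7$ has regular orbits (the same style of estimate as \cref{lem:unitary dim bound}), this reduces us to $n\le 6$. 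For $4\le n\le 6$, I would argue separately. The point is that $H\cong \Omega^-_{2n}(q).2$ acts on the spin module as the restriction from $\Omega_{2n+2}^+$-type half-spin modules. Hence there is a subgroup such as $\SL_2(q)\times \Sp_{2n-2}(q)$, or a Levi of $P_1$, fixing a non-zero vector that lies in no conjugate of $P_n$. \cref{lem:good vectors}, combined with \cite{LiebeckPraegerSaxl}, then gives a contradiction. Concretely, for $n\ge 4$ the restriction of the spin module to $\Sp_2(q)\times\Sp_{2n-2}(q)$ is $N\otimes(\text{spin}_{2n-2})$ over $\overline{\mathbb L}$. Its tensor square structure yields fixed points for $\Sp_2(q)\times \Sp_{2n-2}(q)$ only after tensoring, so instead I would use the stabiliser of a non-degenerate $2n-2$ space in $G$ inside $\Sp_{2n-2}$'s spin.

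\textbf{Reducing to $|\Theta|=1$.} Next I show $|\Theta|=1$ by the tensor argument of \cref{lem:more factors impossible}. Take $L=\Sp_2(2)\times\Sp_{2n-2}(q)$ with the first factor centralized by $\sigma$. The spin module restricted to $L$ contains $N\otimes(\ast)$, and a tensor power of two or more copies produces an $L$-fixed vector. Since $L$ is not in a conjugate of $P_n$, \cref{lem:good vectors} gives the contradiction.

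\textbf{The cases $n=2$ and $n=3$.} For $n=2$, the spin module is the natural module twisted by the graph automorphism, so $G$ is transitive on $V^\#$. For $n=3$, I identify $V$ with the natural $8$-dimensional $\Omega_8^+(q)$-module restricted to $\Sp_6(q)$ via triality, as in \cref{Main theorem}(iii). The orbits of $G$ on vectors are then the $G$-orbits on singular vectors (a single orbit, stabiliser $O^{2'}(P_3)$) and on the non-singular vectors of each given value of the quadratic form ($q-1$ orbits, stabiliser $\G_2(q)$). Counting gives $(q^4-1)(q^3+1)+(q-1)q^3(q^4-1)=q^8-1$. Immutability for $H=\Omega_6^-(q).2$ then follows from the factorisations $G=P_3H$ (from \cite{LiebeckPraegerSaxl}) and $G=\G_2(q)\Omega_6^-(q)$ (\cref{lem:G2q factorisations}) via \cref{lem:basic1}.

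\textbf{Main obstacle.} The hard step is ruling out $4\le n\le 6$ cleanly. There I expect to need an explicit vector whose stabiliser, for instance a $\G_2$- or $\Sp_{2n-2}$-type subgroup, has no factorisation with $\Omega^-_{2n}(q)$, checked against the tables of \cite{LiebeckPraegerSaxl}.
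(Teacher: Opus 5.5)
There is a genuine gap. The case $n\ge 4$ is never actually eliminated, and your reduction to $|\Theta|=1$ rests on a false fixed-point claim.

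\textbf{The case $n\ge 4$.} Take $|\Theta|=1$ first.
\begin{itemize}
\item \cref{lem:numberorbits} only gives a lower bound on the number of $G$-orbits, which contradicts nothing.
\item \cref{lem:dimbound}, with $|V|=q^{2^n}$, $|G|<q^{2n^2+n}$ and $\alpha(G)\le 2n+1$, only yields $2^n\le n(2n+1)^2$, i.e.\ $n\le 13$, not $n\le 6$.
\item Regular orbits on the spin module for $n\ge 7$ are simply asserted.
\end{itemize}
For the remaining $n$ you have no argument. The subgroups you float ($\Sp_2(q)\times\Sp_{2n-2}(q)$, a Levi of $P_1$) fix no non-zero vector, since the spin module restricts to them as $N\otimes S_{2n-2}$ or $S_{2n-2}\oplus S_{2n-2}$.

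The missing idea, which the paper uses uniformly for all $n\ge 4$, is to work inside the maximal parabolic $P_{n-3}$ with Levi factor $\GL_{n-3}(q)\times\Sp_6(q)$. The span of $C_W(S)^{P_{n-3}}$ is the $8$-dimensional spin module for $\Sp_6(q)$. So it contains a vector $w$ with $K=C_{P_{n-3}}(w)$ satisfying $K/U_{n-3}J\cong\mathrm G_2(q)$, where $U_{n-3}=O_2(P_{n-3})$ and $J\cong\GL_{n-3}(q)$. Then $|K|_2=q^{n^2-3}$, and the Liebeck--Praeger--Saxl overgroups $N$ with $G=NH$, $H=\Omega_{2n}^-(q).2$, are excluded as follows.
\begin{itemize}
\item $\Sp_{2c}(q^b).b$ and $\Sp_n(q)\wr 2$ have too small a $2$-part to contain $C_G(w)$.
\item $\mathrm O_{2n}^+(2)$ survives the $2$-part test only for $(n,q)=(4,2)$, which is settled by computer.
\item $C_G(w)\le P_n$ would force $P_{n-3}$ to normalise $O_2(P_n)$, hence $P_n=P_{n-3}$.
\end{itemize}
Thus $G\ne C_G(w)H$.

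\textbf{The reduction to $|\Theta|=1$.} The argument of \cref{lem:more factors impossible} does not transfer. There the natural module restricts to $L$ as a direct sum $N\oplus M$. Here the spin module restricts to $\Sp_2(2)\times\Sp_{2n-2}(q)$ as the tensor product $N\otimes S_{2n-2}$. Hence $\overline V|_L\cong N^{\otimes\ell}\otimes\bigotimes_{\theta\in\Theta}S_{2n-2}^{\theta}$. By Steinberg's theorem the second factor is a non-trivial irreducible $\Sp_{2n-2}(q)$-module, so $C_{\overline V}(L)=0$.

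The paper handles $|\Theta|=\ell\ge 2$ case by case:
\begin{itemize}
\item For $n\ge 4$ it uses $w\otimes\cdots\otimes w$, which is fixed by the $\Sigma$-invariant group $K$.
\item For $n=3$ it excludes $\ell\ge 3$ via $q^{2^{3\ell}/\ell}\le |G|^{7}$. It excludes $\ell=2$ because $\Sp_6(2)$, centralised by $\Sigma$, fixes a non-zero vector of $W\otimes W\cong W\otimes W^*$.
\item For $n=2$ it reduces to \cref{case (ii)(a)} via quasi-equivalence with $V(\omega_1)$, which also covers $|\Theta|\ge 2$.
\end{itemize}

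\textbf{The case $n=3$, $\ell=1$.} Your triality identification is a reasonable alternative to the paper's count. The paper takes the $P_3$-orbit of high weight vectors plus a $\mathrm G_2(q)$-fixed line; that line exists because $W\cong W^*$ and the only non-trivial irreducible $\mathrm G_2(q)$-modules of dimension at most $8$ are $6$-dimensional. As written, however, you assume without proof that the spin-embedded $\Sp_6(q)<\Omega_8^+(q)$ is transitive on non-zero singular vectors and on each level set of the quadratic form. That needs the factorisations $\Omega_8^+(q)=N_1^\tau P_1=N_1^\tau N_1$, with the intersections identified.
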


\begin{proof} In this case, we have $ G=\Sp_{2n}(q)$, $n \ge 2$ ,$(n,q) \ne (2,2)$ and  $\overline W= V(\omega_n)$. When $n=2$,  $\overline W$ is quasiequivalent to $V(\omega_1)$ and so we may apply \cref{case (ii)(a)} to obtain the claim.  So assume that $n \ge 3$. Set  $\mathbb K=\GF(q)$. Let $S \in \syl_2(P_n)$, $B=N_{P_n}(S)$ be a Borel subgroup in $P_n$ and assume that these subgroups are $\Sigma$-invariant. Furthermore recall that $C_V(S)$ is $P_n$-invariant.

Assume that $\ell=1$. Then $\overline V= \overline W^{\sigma_1}$ and $\overline W$ can be realized over $\mathbb K$ and we write this module as $W$. Then $W$ has dimension $2^n$ over $\mathbb K=\mathbb L$ and we have $V=W$. This module  has $\GF(2)$-dimension $2^na$. We first show that in this case we obtain our claimed immutable example.

We start by considering $\Sp_6(q)$. Then $W$ is   $8$-dimensional  over $\mathbb K$. The high weight space is fixed by the parabolic subgroup $P_3$, $P_3$ has shape $q^{3+3}{:}\SL_3(q).(q-1)$ and acts transitively on the non-zero vectors in $C_W(S)$. (To see this, notice that  $C_W(S) \le C_{W}(O_2(P_1))$. Then by \cite[Theorem 6.1]{PR03} this subspace is a spin-module for $O^{2'}(P_1/O_2(P_1)) \cong \Sp_4(q)$. Thus $P_1 $ acts transitively on the non-zero vectors in $C_{W}(O_2(P_1))$ and, in particular,  all the non-zero vectors in $C_W(S)$ are in the same $G$-orbit.) This accounts for $(q^3+1)(q^2+1)(q^2-1)=q^7+q^4-q^3-1$ vectors in $W$.

Consider the   subgroup $M=\G_2(q)$. This is maximal in $\Sp_6(q)$ by \cite[Table 8.29]{BHRoney} and we can assume that it is normalized by $\Sigma$.

Since (by \cite[Theorem 4.4 and Appendix A.49]{Lubeck} for example) the only non-trivial irreducible $\GF(q)M$-modules of dimension at most $8$ is one of the $6$-dimensional modules and since $W \cong W^*$, $M$ centralizes a $1$-dimensional subspace of $W$. Since $M$ is a maximal subgroup of $G$, we see that $M$ is the stabilizer of the $q-1$ non-zero vectors in this $1$-space. This then contributes  $(q-1)q^3(q^4-1)=q^8-q^7-q^4+q^3$ vectors to our count of vectors. Together we now have  all the vectors, we know that the vector stabilizers for the action of $\Sp_6(q)$ on $W$ are conjugates of $P_3$ and $M$. Finally, using \cite[Theorem A, Tables 1 \& 2  ]{LiebeckPraegerSaxl} shows that taking $H \cong \mathrm \Omega_6^-(q){:}2$, $(G,H,W)$ is immutable. This is the example in our conclusion.

Now suppose that $G = \Sp_{2n}(q)$ with $n \ge 4$. Then $W$ is   a $2^n$-dimensional spin module over $\mathbb K$.  Let $P_{n-3}$ be the $\Sigma$-invariant parabolic subgroup  containing $B$ with $\Sigma$-invariant Levi complement isomorphic to $L_{n-3}=\GL_{n-3}(q) \times \Sp_6(q)$ and let $ U_{n-3}$ be the unipotent radical of $P_{n-3}$.  Then $P_{n-3}$ is a maximal subgroup of $G$.   Set $W_{n-3}= \langle C_W(S)^{P_{n-3}}\rangle$.  Then $W_{n-3}$ is centralized by $U_{n-3}$ and the left-hand factor $J$ of $L_{n-3}$ which is isomorphic to $\GL_{n-3}(q)$. Hence $W_{n-3}$ is a $P_{n-3}/C_{P_{n-3}}(W_{n-3})\cong \Sp_6(q)$ spin module of $\mathbb K$-dimension $8$ (see \cite[Theorem 6.1]{PR03}). As in (ii), select $w$ in $W_{n-3}$ so that $C_{P_{n-3}}(w)/U_{n-3}J \cong \G_2(q)$ is $\Sigma$-invariant and set $K = C_{P_{n-3}}(w)$.

As $(G,H,W)$ is immutable,  \cref{prop:char2 list} (iv) gives $H \cong \Omega_{2n}^-(q){:}2$ and $G= HC_G(w)$. Let $N$ be a maximal subgroup containing $C_G(w)$ and hence $K$.  Then $G= HN$.  The candidates for $N$ are included in \cite[Theorem A, Tables 1 \& 2  ]{LiebeckPraegerSaxl}. This yields $N$ is isomorphic to one of

\begin{enumerate}
\item [(a)]$P_n$,
\item [(b)]$\Sp_{2c}(q^b).b$ where $bc=n$ and $b$ is a prime,
\item [(c)]$\Sp_n(q)\wr 2$ when $n$ is even,
\item [(d)]$\Omega_{2n}^+(2):2$ when $q=2$.
\end{enumerate}

Now notice that the Sylow $2$-subgroup of $K$ has index $q^3$ in a Sylow $2$-subgroup of $P_{n-3}$ and so the $2$-part of $|K|$ is $q^{n^2-3}$. Since the $2$-part of the orders of $\Sp_{2c}(q^b).b$ and  $\Sp_n(q)\wr 2$ are at most $2(q^b)^{c^2}$ and $2q^{n^2/4+ n^2/4}$ respectively, cases (b) and (c) do not give overgroups of $C_G(w)$.

Suppose that $K$ is contained in a parabolic subgroup $P=P_n$ of $G$. Then $K$ normalizes $O_2(P)$ and therefore $KO_2(P)$ is a group and $K \cap O_2(P)\le  U_{n-3}$. Then $U_{n-3}O_2(P)$ is a $2$-group and so $O_2(P)$ normalizes   $U_{n-3}$ by \cite[Lemma 4.2]{Grodal}. In particular, $O_2(P)\le P_{n-3}=N_G(U_{n-3})$.  Since $K$ is maximal in $P_{n-3}$, and $O_2(P)K$ is a group, $O_2(P) \le K$. Since $K$ normalizes $O_2(P)$, $O_2(P)\le U_{n-3}$.  Applying  \cite[Lemma 4.2]{Grodal} again yields $P_{n-3}$ normalizes $O_2(P)$ and therefore the maximality of $P_{n-3}$ yields $P_{n-3}= P$.  Since $P  \ne P_{n-3}$, we have a contradiction. Hence (a) does not hold.

Finally consider the possibility that $q=2$ and that $K$ is contained in $\mathrm O_{2n}^+(2)$. Using $2$-parts of the orders of both groups we obtain   $2^{n^2-n+1} \ge 2^{n^2-3}$ and so $n=4$. In this case a {\sc Magma} calculation yields $G$ has $4$ orbits on $W$ whereas $H$ has $5$. We have demonstrated that if $\ell=1$ and $(G,H,V)$ is immutable, then $G\cong \Sp_6(q)$ and $V$ is $8$-dimensional over $\mathbb K$.

Assume now that $\ell \ge 2$. Hence $V$ is isomorphic to a submodule of $U=\bigotimes_{\theta \in \Theta} W^{\theta}$ where $U= V \otimes_{\mathbb L} \mathbb K$.

Suppose that $n \ge 4$ and consider the vector $w$ with centralizer $K$ as above. Then, as $K$ is $\Sigma$-invariant, $w\otimes w\otimes \dots\otimes w\in U$ is fixed by $K$. \cref{lem:good vectors} now shows that $(G,H,V)$ is not immutable. Hence $n=3$.

  Assume $\ell \ge 3$. If $ U$ is realizable over a proper subfield of $\mathbb K$, then the field of definition of $U$, lies somewhere between $\mathbb K$ and $\GF(q^{1/\ell})=\GF(2^{a/\ell})$.  Hence $$|V| \ge |U|^{\frac 1 \ell}= \left( q^{(2^3)^\ell}\right)^{\frac 1 \ell}= q^{\frac{2^{3\ell}}{\ell}}.$$
Using the notation from \cref{lem:dimbound}, \cite[Theorem 4.2]{GuralnickSaxl} gives $\alpha(G)=7$ and \cite[Corollary 16]{ParkerWilson} yields $|G|\le q^{21}$. Since $G$ has no regular orbits on $V$ by \cref{lem:reg orb compendium}, we now use  \cref{lem:dimbound} and the fact that the function $\frac{2^{3\ell}}{\ell}$ increases with $\ell$ to obtain the impossible
$$170\le \frac{512}{3}\le {\frac{2^{3\ell}}{\ell}} \le 21.\alpha(G)=
3.7^2= 147.$$
This contradiction shows that  $\ell=2$. Now let $L \cong \Sp_6(2)$ be centralized by $\Sigma$. Then $W^{\sigma_1} $ and $W^{\sigma_2}$ are isomorphic and self-dual as $\GF(2)L$-modules.  It follows that $L$ fixes a vector $U|_L= W\otimes W\ge \mathrm{End}_G(U)$. By \cite[Theorem A, Tables 1 \& 2  ]{LiebeckPraegerSaxl} there is no factorisation $G=HM$ with $M \ge L$ (as $G$ is perfect) and so \cref{lem:good vectors} yields the result.  This completes the proof of the lemma.
\end{proof}

\begin{remark}\label{rmk G2} Suppose that $G = \Sp_6(q)$, and $V$ is the spin module for $G$. Regard $V$ as a $\GF(q)$-module. Let $G>M \cong \G_2(q)$. Then $M$ centralizes a $1$-space in $V$. Notice that $M$ does not centralize a $2$-space. Indeed, if $M$ centralizes a subspace $U$ of dimension $2$, then $U \cap Ux=0$ for $x \in G\setminus M$ as $M$ is maximal in $G$. Thus $$|\{Ug\mid g \in G\}|= q^{3}(q^{4}-1)(q^{2}-1)>q^{8}=|V|$$ which is impossible. Hence $\dim C_V(M)=1$. Consequently, $V/C_V(M)$ is indecomposable and, as the module is self-dual, we conclude that, as an $M$-module, $V$ is indecomposable with factors of $\GF(q)$-dimensions $1$, $6$ and $1$.
\end{remark}

\begin{lemma}\label{unitary gone} Assume that
 \cref{the new list} (iii) occurs. Then $n=2$, $G \cong \SU_4(q)\cong \Omega_6^-(q)$,  $\overline V=\overline W^{\sigma}$ for some $\sigma \in \Sigma$, $V$ is the natural $\GF(2)\Omega_6^-(q)$-module and the orbits of $G$ on $V$ are described in  \cref{Stabilizers of orbit reps}.
\end{lemma}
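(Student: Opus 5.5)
We are in case \cref{prop:char2 list} (v), so $G/Z(G)\cong \PSU_{2n}(q)$, $H=N_1$ is the stabiliser of a non-isotropic vector of the natural module, $P=P_n$, and $\overline V=\bigotimes_{\theta\in\Theta}\overline W^\theta$ with $\overline W=V(\omega_n)=\Lambda^n$ of the natural module. The plan is to bound $n$ first, then $|\Theta|$, and finally identify the $n=2$ example.

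\textbf{Bounding $n$.} By \cref{lem:unitary dim bound}, if $n\ge 7$ then $G$ has a regular orbit on $\overline W$, and hence on a suitable $\GF(2)$-form of $V$ containing a tensor factor realised over $\GF(q)$. This contradicts \cref{lem:reg orb compendium}(i). For $3\le n\le 6$, I would look for a subgroup fixing a non-zero vector in $\Lambda^n$ and not lying in any overgroup $M$ with $G=MH$. By \cite[Theorem A]{LiebeckPraegerSaxl}, the only such $M$ are conjugates of $P_n$, since $H=N_1$. Two natural choices are $\SU_n(q)\times\SU_n(q)$ (the stabiliser of an orthogonal decomposition into non-degenerate $n$-spaces) or $\Sp_{2n}(q)$. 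The symplectic form gives an $\Sp_{2n}(q)$-fixed vector in $\Lambda^n$ when $n$ is even; for general $n$, the decomposition $U=A\perp B$ gives the fixed vector $\Lambda^nA\otimes\Lambda^0B$. Neither subgroup lies in a parabolic subgroup, so \cref{lem:good vectors} yields a contradiction. One must check that the candidate overgroups of $\SU_n(q)\times\SU_n(q)$, namely $N_n$ and its wreath overgroup, do not factorise with $N_1$; this should follow from \cite{LiebeckPraegerSaxl}. The same vector $w\otimes\cdots\otimes w$, taken with the subgroup chosen $\Sigma$-invariant, handles $|\Theta|\ge 2$ for these $n$.

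\textbf{The case $n=2$.} Here $\overline W=\Lambda^2$ is the natural $6$-dimensional module for $\SU_4(q)\cong\Omega_6^-(q)$, realised over $\GF(q)$. If $|\Theta|\ge 2$, I would mimic the argument of \cref{lem:more factors impossible}. Take a $\Sigma$-centralised subgroup $L$, such as $\Omega_6^-(2)$ or $\Omega_5(q)\cong\Sp_4(q)$ fixing a non-singular vector, with $W$ self-dual on $L$. Then $W^{\sigma_1}\otimes W^{\sigma_2}\supseteq W\otimes W^*$ has an $L$-fixed vector. Alternatively, use the count argument of \cref{spin symplectic} with \cref{lem:dimbound}, using $\alpha(G)$ from \cite{GuralnickSaxl} and the order bound from \cite{ParkerWilson}. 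Then \cref{lem:good vectors} rules this out, because $L$ lies in no parabolic factorising with $H$.

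\textbf{Identifying the example.} With $\ell=1$, $V$ is the natural orthogonal module of dimension $6$ over $\GF(q)$, and $H=N_1\cong\GU_3(q)$, which is $\Omega_6^-(q)\cap$ the $\GU_3(q)$ of \cref{Main theorem}(i) with $n=3$. The orbits of $\Omega_6^-(q)$ on non-zero vectors are the singular vectors, with stabiliser $O^{2'}(P_1)$, and the $q-1$ level sets $Q(v)=c\ne0$, with stabiliser $\Sp_4(q)$. Immutability reduces via \cref{lem:basic1} to the factorisations $G=HP_1$ and $G=H\Sp_4(q)$, which I would check by order computations or read off \cite{LiebeckPraegerSaxl}. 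This yields the data in \cref{Stabilizers of orbit reps}.

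The main obstacle is the middle range $3\le n\le 6$, where one must make sure the chosen fixing subgroup has no overgroup factorising with $N_1$. In particular, $\Sp_{2n}(q)$ and $N_n$ need careful reading of the factorisation tables, and for odd $n$ there is no symplectic form on $\Lambda^n$.
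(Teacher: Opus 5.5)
Your proposal has genuine gaps in the overgroup analysis and in the case $n=2$, $|\Theta|\ge 2$.

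\textbf{The overgroup step rests on a false claim.} It is not true that \cite{LiebeckPraegerSaxl} leaves only conjugates of $P_n$ as maximal subgroups $M$ with $G=MN_1$.
\begin{itemize}
\item It gives $\SU_{2n}(q)=N_1\cdot N_G(\Sp_{2n}(q))$ for every $n\ge 2$ and every $q$. This is exactly the factorisation $G=H\Sp_4(q)$ that you use yourself for the $n=2$ example.
\item For $q=2$ there are further partners, for instance $\mathrm M_{22}$ and $\mathrm U_4(3).2$ in $\SU_6(2)$, and $\mathrm{Suz}$ in $\SU_{12}(2)$.
\end{itemize}
So the $\Sp_{2n}(q)$-fixed vector in $\Lambda^n$ gives no contradiction at all.

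Your other choice, $X=\SU_n(q)\times\SU_n(q)$ fixing $\Lambda^nA$, can be made to work for every $n\ge 3$. But this only happens once you exclude exactly these competitors:
\begin{itemize}
\item $X\not\le P_n$, because its invariant subspaces $0,A,B,N$ are all non-degenerate.
\item $X$ is not in $N_G(\Sp_{2n}(q))$. That normaliser has abelian quotient over $\Sp_{2n}(q)$, so it would force $X'\le\Sp_{2n}(q)$. But the natural module restricted to $X'$ is $A\oplus B$ with $A^{(q)}\cong A^*\not\cong A$, so it is not realisable over $\GF(q)$.
\item The $q=2$ sporadic partners are excluded by orders.
\end{itemize}
If you take $A\perp B$ from an orthonormal basis, $X$ is $\Sigma$-stable. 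Then $w\otimes\dots\otimes w$ handles every $\Theta$, and the regular-orbit step for $n\ge 7$ becomes unnecessary.

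That would be a genuinely different and somewhat cleaner route than the paper's. The paper instead:
\begin{itemize}
\item uses $K\ge\SL_n(q^2)$ normalising a Levi complement of $P_n$;
\item removes the symplectic overgroup via Huppert--Singer cyclic subgroup orders unless $n=2$;
\item needs {\sc Magma} together with \cref{SU2nhelp} for $q=2$, $3\le n\le 6$.
\end{itemize}
As written, however, your decisive verification is missing and partly wrong.

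\textbf{The same misreading breaks $n=2$ with $|\Theta|\ge 2$.} \cref{lem:good vectors} requires that no overgroup of $L$ whatsoever factorise with $H$, not merely that no parabolic does.
\begin{itemize}
\item $\Omega_5(q)\cong\Sp_4(q)$ is the stabiliser of a non-singular vector and $G=H\,\Omega_5(q)$, so it yields nothing.
\item The subfield subgroup $\Omega_6^-(2)$ exists in $\Omega_6^-(q)$ only when $a$ is odd: a minus-type form over $\GF(2)$ becomes plus type over $\GF(4)$. Even then you only treat $\ell=2$.
\item The counting fallback via \cref{lem:dimbound} cannot exclude $\ell=2$: there $|V|\le q^{36}$, which is far below $|G|^{\alpha(G)}$.
\end{itemize}
What you need is a $\Sigma$-compatible subgroup that fixes a vector of $U$ but lies in neither $P_1$ nor $N_1$ of $\Omega_6^-(q)$, which are the only partners of $\GU_3(q)$.

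The paper takes $L\cong\SU_2(2)\times\Omega_2^-(q)\le\Omega_4^+(2)\times\Omega_2^-(q)$. Its first factor is centralised by $\Sigma$ and has two $2$-dimensional composition factors on the $4$-space, so $L$ fixes a vector of $U$ as in \cref{lem:more factors impossible}. At the same time $L$ leaves no $1$-space of $W$ invariant, so it lies in no $P_1$ or $N_1$.
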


\begin{proof} Set $\mathbb K = \GF(q^2)$.  Here we have $G/Z(G)\cong \PSU_{2n}(q)$ with $n \ge 2$,  and $\overline W= V(\omega_n)$ is the  $n$th  exterior power of the natural $\overline{\mathbb L}\SU_{2n}(q)$-module. By \cref{lem:unitary dim bound}, we may realize $\overline W$ over $\GF(q  )$  and we write $W$ for this module.

  We begin   by assuming that $|\Theta|=1$.
Then $W= V$ regarded as an $\mathbb LG$-module where now $\mathbb L=\GF(q)$.

For $S \in \Syl_2(P_n)$, we know that $C_W(S)$ in normalized by $P_n$. We also know that $P_n$ stabilizes a totally isotropic $n$-space $I$ on the natural module $N$ for $G$. Remember that $N$ is defined over $\mathbb K$.  Furthermore, $L_n$ a Levi complement in $P_n$ acts on $N$ such that $N=I\oplus J$ with $J$ an isotropic $n$-space opposite  $I$.  Now, using
\cite[(B.1) in Section B.2, page 473]{FultonHarris} as a $\mathbb K L_n$-module we have
  $$W\otimes _{\mathbb L}\mathbb K= \Lambda^n(I\oplus J)= \bigoplus_{a=0}^n (\Lambda^a (I) \otimes \Lambda^{n-a}(J)).$$  Taking $a=0$ and $a=n$ reveals that $W$ has a $\mathbb KL_n$-submodule
  $$T=(\Lambda^0 (I) \otimes \Lambda^{n}(J))\oplus (\Lambda^n (I) \otimes \Lambda^{0}(J))$$ which is a direct sum of two $1$-spaces and we deduce that $L=O^{2'}(L_n)\cong \SL_n(q^{2})$ centralizes $T$.  Now $N_G(L_n)$ permutes $\{I,J\}$ and so $|N_G(L_n):L_n|=2$ and $N_G(L_n)$ acts irreducibly on $ N$.
Now choose $K\le N_G(L_n)$ such that $K \ge L$ and $K/L \in \syl_2(N_G(L_n)/L)$. Then $K$ acts irreducibly on $N$ and so $K$ is not conjugate to a subgroup of $P_n$.  Furthermore, in its action on $W$, $K $ normalizes $T$ and as $K/L$ is a $2$-group, $C_T(K) \ne 0$.
Let $M \ge K$ be a maximal subgroup chosen so that $M$ contains the stabiliser of a non-zero  vector of $W\otimes _{\mathbb L}\mathbb K$ fixed by $K$. Since $W\otimes _{\mathbb L}\mathbb K$ regarded as a $\mathbb L$-module is a direct sum of modules isomorphic to $W$, we see that $C_W(K) \ne 0$. Therefore, as $(G,H,V)$ is immutable, $G=MH$ and we know $H=N_1$.  Using \cite[Theorem A, Tables 1, 2 \& 3]{LiebeckPraegerSaxl} yields either
\begin{enumerate}
\item [(a)] $M \cong \PSp_{2n}(q)$; or
\item[(b)] that $a=1$ and $n \ge 3$.
\end{enumerate}
In case (a), we know that $M \ge K \ge L \cong \SL_n(q^2)$.
We know by \cite[Satz 1 \& 3]{HuppertSinger} that $L$ has a cyclic subgroup of order $(q^{2n}-1)/(q^2-1)$ while the every irreducible  cyclic subgroup of $M$ has order dividing $q^n+1$. Hence in case (a) we must have $n=2$. In this case $L \cong \Omega_4^-(q)$ and so is a candidate subgroup of $\Sp_4(q)$ as $q$ is even.  Now we identify $G$ with $\Omega_6^-(q)$ and $W$ with that natural $\mathbb K\Omega_6^-(q)$-module and obtain the example as claimed.

Thus case (b) holds with $G/Z(G) \cong \PSU_{2n}(2)$, $n \ge 3$. Combining \cref{lem:dimbound,lem:unitary dim bound} shows that $n \le 6$.  Thus $G/Z(G)=\PSU_6(2)$, $\PSU_8(2)$, $\PSU_{10}(2)$ or $\PSU_{12}(2)$. In the first case, we can calculate directly using {\sc Magma} that $G$ has $6$ orbits on $W$, whereas $H$ has $9$ orbits. In the remaining cases, we can choose a prime $p$ such $p$ does not divide $|H|$ (the pairs $(2n,p)$ are $(8,17), (10,31), (12,13)$). Let $R \in \syl_p(G)$. Then a random check with {\sc Magma} shows that $G$ is generated by two conjugates of $R$.

Employing \cref{SU2nhelp} implies that $\dim C_W(R) \ge \log_2(|W|/|H|)$. Set $\ell_{2n}= \log_2(|W|/|H|)$. Then $\ell_{8}=19$, $\ell_{10}\ge 169$ and $\ell_{12}\ge 801$.
 Now the second clause of \cref{SU2nhelp}, shows that $2n=8$.  Finally, we check that $\dim C_W(R)=6$ when $2n=8$.  We have demonstrated that $(G,H,W)$ is not immutable.

So suppose that $|\Theta|\ge 2$.  Then $a> 1$ and so $q>2$.
When $n \ge 3$, taking $w\in T^\# \subset W^\#$ fixed by $K$ as in the previous paragraph and for all   maximal subgroups $M$ containing $K$, we have that $G\ne HM$.
Writing  $U= \bigotimes_{\theta \in \Theta} W^{\theta}$, we have  $w\otimes \dots \otimes w\in U$ is also fixed by $K$
as $K$ can be chosen to be stabilised by $\Sigma$. But $G\ne HM$, and we obtain a contradiction from \cref{lem:good vectors}. Thus $n=2$. In this case,  we again regard $G$ as $\Omega_6^-(q)$ and $W$ as that natural $\mathbb K\Omega_6^-(q)$-module. Now let $L \le \Omega_4^+(2)\times \Omega_2^-(q)$  with the first factor centralized by $\Sigma$ be such that $L \cong \SU_2(2)\times \Omega_2^-(q)$ where $\SU_2(2)\cong \SL_2(2)$ acts with two non-trivial $2$-dimensional composition factors on the natural $\mathbb K \Omega_4^+(2)$-module. Then as in \cref{lem:more factors impossible}, we see that $L$ fixes a non-zero vector on  $U=\bigotimes_{\theta \in \Theta} W^{\theta}$. Keeping in mind that $q>2$ and $G$ is simple, we obtain a contradiction using \cite[Theorem A, Tables 1, 2 \& 3]{LiebeckPraegerSaxl},   with \cref{lem:good vectors,prop:char2 list}.
\end{proof}

\begin{lemma}\label{Orthogonals and natural} If \cref{the new list} (iv) holds, then  $n \ge 5$, $\epsilon 1= (-1)^n$, $G\cong \Omega_{2n}^\epsilon(q)$, $H$ normalizes  $ \SU_n(q)$, $ \overline V=\overline W^{\sigma}$ for some $\sigma\in \Sigma$ and $V$ is the natural $\GF(2)G$-module. The orbits are presented in \cref{Stabilizers of orbit reps}.
\end{lemma}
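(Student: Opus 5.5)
Under \cref{the new list} (iv), $G=\Omega_{2n}^\epsilon(q)$ with $n\ge5$ and $\epsilon1=(-1)^n$. By \cref{prop:char2 list} (vi) or (ix), $H$ normalizes $\SU_n(q)$ and $P=P_1$. Moreover $\overline V=\bigotimes_{\theta\in\Theta}\overline W^\theta$ with $\overline W=V(\omega_1)$, and $\overline W$ is realized over $\mathbb K=\GF(q)$ as the natural module $W$. As in \cref{lem:more factors impossible}, I set $U=\bigotimes_{\theta\in\Theta}W^\theta$, so that $U|_{\GF(2)G}$ is a direct sum of copies of $V$. The plan is first to show $|\Theta|=1$, and then to verify the immutability and the orbit data when $V=W^\sigma$.

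For $|\Theta|=\ell\ge2$, I would mimic the earlier lemmas and exhibit a $\Sigma$-invariant subgroup $L$ fixing a non-zero vector of $U$ that lies in no maximal subgroup $M$ with $G=MH$. Take $L\cong \Omega_4^+(2)\times\Omega_{2n-4}^{\epsilon}(q)$, or more simply $\Omega_3(2)\times\Omega_{2n-3}$-type, with the small factor centralized by $\Sigma$. On $W$ we then have $W|_L=N\oplus M_0$ with $N^{\sigma_i}=N$. The tensor power $N^{\otimes\ell}$ contains a fixed vector: since $N\cong N^*$ via the form, $N\otimes N\ge\mathrm{End}(N)$ contains the identity. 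Hence $L$ fixes a non-zero vector of $U$, and therefore of some copy of $V$.

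Next I need that no maximal overgroup $M$ of $L$ factorizes with $H$. By \cite[Theorem A, Tables 1--4]{LiebeckPraegerSaxl} together with \cref{prop:char2 list}, the only maximal $M$ with $G=MH$ are $P_1$ and $N_1$ (and in the $+$ case possibly $P_{n-1},P_n$ or $N_2^-$, but these occur only with $H=N_1$ or $N_2^-$, not with $H$ unitary). A non-degenerate $4$-space has no invariant singular point, so $L$ lies in no $P_1$. For $N_1$, the subgroup $L$ fixes no non-singular vector of $W$, because $\Omega_{2n-4}$ acts irreducibly and fixed-point-freely on the complement. Then \cref{lem:good vectors} gives the contradiction. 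The main obstacle is this step: making sure that the list of factorizations $G=MH$ with $H\le N_G(\SU_n(q))$ is exactly $\{P_1,N_1\}$, including the $\Omega_{2n}^+$ extra families and small $q$. If the argument for $N_1$ fails, I would instead choose $L$ acting irreducibly on $W$, such as $\Omega_2^-(q)\wr$-type or an $\SU$-subgroup centralized by $\Sigma$, and use $W\otimes W\ge \mathrm{End}(W)$ when $\ell=2$. A dimension bound via \cref{lem:dimbound} would handle $\ell\ge3$, as in \cref{spin symplectic}.

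Finally, with $\ell=1$ we have $V=W^\sigma$, the natural module. The $G$-orbits on $V^\#$ are the non-zero singular vectors, with stabilizer $O^{2'}(P_1)$, and for each $c\in\GF(q)^\#$ the vectors with $Q(v)=c$, with stabilizer $\Sp_{2n-2}(q)$; this gives the orbit lengths in \cref{Stabilizers of orbit reps}. I would check that $H\ge\SU_n(q)$ is transitive on each orbit. Viewing $V$ as a $\GF(q^2)$-hermitian space with $Q(v)=h(v,v)$, it suffices that $\SU_n(q)$ is transitive on the non-zero isotropic vectors and on the vectors of each fixed non-zero norm. This is standard (Witt), or follows by the counts $|\SU_n(q):\SU_{n-1}(q)|$ matching the orbit lengths. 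Hence $(G,H,V)$ is immutable.
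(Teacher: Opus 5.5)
Your proposal is correct and is essentially the paper's proof: the paper takes the smaller subgroup $L\cong \SU_2(2)\times\Omega_{2n-4}^\epsilon(q)$, with $\SU_2(2)\cong\Sym(3)$ centralized by $\Sigma$. It then uses \cite{LiebeckPraegerSaxl} to reduce the possible overgroups to $P_1$ and $N_1$, and excludes both because $L$ fixes no $1$-space of $W$; this is the accurate way to phrase your $P_1$/$N_1$ exclusion, since $\Omega_4^+(2)$ acts absolutely irreducibly on $N$ and $\Omega_{2n-4}^\epsilon(q)$ fixes nothing off $N$. For the $\ell=1$ case the paper just appeals to equal orbit counts, whereas you check directly, via Witt's theorem, that $\SU_n(q)$ is transitive on each non-zero level set of $Q$.

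Two small repairs are needed. The identity in $\mathrm{End}(N)$ gives an $L$-fixed vector in $N^{\otimes\ell}$ only for even $\ell$, so for odd $\ell$ you also need that $N$ is a direct summand of $N\otimes N$ (the paper's $N\otimes N\cong T\oplus N$). And you should discard the $\Omega_3(2)\times\Omega_{2n-3}$ alternative: it does not exist in characteristic $2$, because orthogonal summands of $W$ are even-dimensional.
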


\begin{proof}  \cref{the new list} (iv) states that $G \cong \Omega_{2n}^\epsilon(q)$ with $n \ge 5$, $\epsilon 1= (-1)^n$,   and $\overline{W}= V(\omega_1)$ is the natural $\overline{\mathbb L}G$-module.  Let $\mathbb K= \GF(q)$, $\mathbb L=\mathrm{End}_G(V)$. We have $G \cong \Omega_{2n}^\epsilon(q)$ with $n \ge 5$, $\epsilon 1=(-1)^n$,   and  $\overline{W}=V(\omega_1)$.
We have $$\overline{V}= \bigotimes_{\theta \in \Theta} \overline W^{\theta}$$
and note that $\overline W$ can be realized over $\mathbb K$ as $W$. Thus we set
  $$U=\bigotimes_{\theta \in \Theta} W^{\theta}$$ as a $\mathbb KG$-module.
By \cref{prop:char2 list} (vi) and (ix), we have $G= P_1H$  with $H$ normalizing a subgroup $\SU_n(q)$.

Suppose that $|\Theta|\ge 2$ and consider the subgroup $$L \cong \SU_2(2) \times \Omega_{2n-4}^\epsilon(q) \le \Omega_{4}^+(2)\times \Omega_{2n-4}^\epsilon(q) .$$ Then the first factor $\SU_2(2)\cong \Sym(3)$ and is centralized by the standard Frobenius automorphisms of $G$ and  so, as $|\Theta|\ge 2$, our familiar argument from \cref{lem:more factors impossible} shows that $L$ has a fixed point on $U$. Since $L$ is not contained in $P_1$, we must have some maximal factorisation $HM$ with $M \ge L$. Using \cite[Theorem A, Tables 1 \& 2  ]{LiebeckPraegerSaxl} shows that $M \cong N_1$.  Since $L$ does not fix any $1$-spaces  in $W$,  we have a contradiction.  Hence $|\Theta|=1$.  Now $V=U=W$. Since  $G$ and $H$ have the same number of orbits on $V$, we have that $(G,H,V)$ is immutable in this case and the details of the orbits are given in \cref{Stabilizers of orbit reps}.
\end{proof}

\begin{lemma}\label{casev}    \cref{the new list} (v) cannot hold.
\end{lemma}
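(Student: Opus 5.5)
We argue by contradiction. Assume \cref{the new list} (v) holds, so $G\cong\Omega_{2n}^+(q)$ with $n\ge 5$. By \cref{prop:char2 list} (vii) or (viii), $H$ is $N_1$ or $N_2^-$, and $G=P_nH$ (after applying the graph automorphism to swap $P_{n-1}$ and $P_n$ we may assume $P=P_n$). Also $\overline V=\bigotimes_{\theta\in\Theta}\overline W^\theta$ with $\overline W=V(\omega_n)$ a half-spin module. In both cases $H\le N_1$ or $H=N_2^-$. The plan is to exhibit a non-zero vector of $V$ whose stabiliser lies in no maximal subgroup $M$ with $G=MH$. Then \cref{lem:basic1}, or \cref{lem:good vectors} when $|\Theta|\ge 2$ or $\mathbb L\neq\GF(q)$, gives the contradiction. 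From \cite[Theorem A, Tables 1--4]{LiebeckPraegerSaxl}, the maximal $M$ factorising with $N_1$ or $N_2^-$ are essentially $P_{n-1}$, $P_n$, and for $n$ even $\SU_n(q)$-normalisers or $N_1$ (triality-free since $n\ge5$). So it suffices to find a vector fixed by a subgroup not contained in any such $M$.

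\textbf{Case $|\Theta|=1$.} Here $V=W$ is the $2^{n-1}$-dimensional half-spin module over $\mathbb K=\GF(q)$. I would take the Levi-type subgroup $P_{n-4}$ whose Levi factor is $\GL_{n-4}(q)\times\Omega_8^+(q)$. As in the proof of \cref{spin symplectic} (using \cite[Theorem 6.1]{PR03}), $W_{n-4}=\langle C_W(S)^{P_{n-4}}\rangle$ is a half-spin module for the $\Omega_8^+(q)$ factor. By triality this module is the natural $8$-dimensional module. It contains a non-singular vector $w$, and $C_{P_{n-4}}(w)$ contains $U_{n-4}J$ together with a copy of $\Omega_7(q)\cong\Sp_6(q)$. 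Let $K=C_{P_{n-4}}(w)$. Comparing $2$-parts of orders excludes $M$ being an $\SU_n(q)$-normaliser, $N_1$ or $N_2^-$: the $2$-part of $|K|$ exceeds that of $|M|$, just as in \cref{spin symplectic}. The case $M=P_{n-1}$ or $P_n$ is excluded by the same \cite[Lemma 4.2]{Grodal} argument used there: $K$ would normalise $O_2(M)$, forcing $M=P_{n-4}$. Any small $q=2$ or small $n$ leftovers, for example $n=5,6$, I would check with {\sc Magma} by orbit counting, as done in \cref{spin symplectic}.

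\textbf{Case $|\Theta|\ge2$.} Choose $K$ above to be $\Sigma$-invariant. Then $w\otimes\cdots\otimes w$ is fixed by $K$ in $U=\bigotimes W^\theta$, and \cref{lem:good vectors} finishes exactly as in \cref{spin symplectic}. Alternatively, I could use a dimension bound via \cref{lem:dimbound}: $|V|\ge q^{2^{n-1}}$, $\alpha(G)\le 2n$ and $|G|<q^{2n^2}$. This kills large $n$ outright, since $2^{n-1}>4n^3$ for $n\ge 13$, and leaves only finitely many $n$ to treat with the stabiliser argument.

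\textbf{Main obstacle.} The hardest step is the precise identification of $K$ and the verification that no maximal $M\ge K$ with $G=MH$ exists. This is especially delicate for $P_{n-1}$ versus $P_n$, because the high-weight stabiliser is itself a parabolic of this type. I would first try the $2$-order comparison and fall back on the Grodal normaliser argument.
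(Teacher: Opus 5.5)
Your proposal is correct and follows essentially the same route as the paper. For $|\Theta|=1$, both take the $\Sp_6(q)$-type vector stabiliser $K$ inside $P_{n-4}$ acting on the half-spin section $W_{n-4}$, use the bound $|K|_2=q^{n(n-1)-3}$ to exclude non-parabolic overgroups, and use the \cite{Grodal} normaliser argument to exclude $P_{n-1}$ and $P_n$; for $|\Theta|\ge 2$, both use the $w\otimes\cdots\otimes w$ trick with \cref{lem:good vectors}. Your list of maximal $M$ with $G=MH$ omits some entries from \cite{LiebeckPraegerSaxl}: $\GL_n(q).2$, $\Sp_2(q)\otimes\Sp_n(q)$, $\Omega_n^+(4).2^2$ and $\mathrm{Co}_1$ paired with $N_1$, and $\GL_n(2).2$ paired with $N_2^-$. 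The same $2$-part comparison disposes of all of these, so neither the {\sc Magma} fallback for small $n$ nor the alternative dimension bound is needed.
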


\begin{proof} Set  $\mathbb K=\GF(q)$. We have $G \cong \Omega_{2n}^+(q)$, $n \ge 5$ and we may as well assume that $\overline W=V(\omega_{n})$ with high-weight space invariant under $P_n$. Thus $\overline W$ is a half-spin module for $G$ and this module can be defined over $\mathbb K$.

We start by recalling that the orbits of $\Omega_8^+(q)$ on the non-zero vectors of the $8$-dimensional half-spin module have stabilisers $O^{2'}(P_3)$ (or $O^{2'}(P_4)$) and $ \Sp_6(q)\cong \Omega_7(q)$. (These are the same as on the natural module, by application of the triality automorphism.)

We intend to show that $\overline V=\overline W$, leads to $(G,H,V)$ not being immutable. The argument is reminiscent of that in \cref{spin symplectic}. Suppose that $\overline V=\overline W$. Then as $W$ can be regarded as $\mathbb KG$-module and $\mathbb K=\mathbb L$, we can assume that $V= W$ where $V$ is now regarded as a $\mathbb LG$-module.

Let $P_{n-4}$ be  from the conjugacy class of parabolic subgroups which stabilize an $(n-4)$-dimensional totally singular subspace on the natural module. Then $P_{n-4}$ is a maximal subgroup of $G$ and has  Levi complement isomorphic to $L_{n-4}=\GL_{n-4}(q) \times \Omega_8^+(q)$.  Let $U= U_{n-4}$ be the unipotent radical of $P_{n-4}$.   We may assume that $P_{n-4}$ contains the same Borel subgroup as $P_n$ and that $P_n$ fixes the high weight space $W_n$ of  dimension $1$.  Set $W_{n-4}= \langle W_n^{P_{n-4}}\rangle$.  Then $W_{n-4}$ is centralized by $U_{n-4}$ and the left hand factor $J$ of $L_{n-4}$ which is isomorphic to $\GL_{n-4}(q)$. Hence $W_{n-4}$ is a $P_{n-4}/C_{P_{n-4}}(W_{n-4})\cong \Omega_8^+(q)$ half-spin module of $\mathbb K$-dimension $8$ (see \cite[Theorem 6.1]{PR03}). Select $w$ in $W_{n-4}$ so that $C_{P_{n-4}}(w)/U_{n-4}J \cong \Sp_6(q)$ and set $K= C_{G}(w)$.

Using  \cref{prop:char2 list} (vii) and (viii) gives
$$H \cong \begin{cases} N_1\cong \Sp_{2n-2}(q)\\N_2^- \cong  \mathrm O_{2n-2}^-(q)\times \mathrm O_2^-(q) \cap G \end{cases}$$ and as $(G,H,V)$ is immutable, $G= HK$. Let $M$ be a maximal subgroup containing $K$.  Then $G= HM$.  The candidates for $M$ are included in \cite[Theorem A, Tables 1 \& 2  ]{LiebeckPraegerSaxl} noting that $G$ is perfect. Thus, not taking into account the structure of $K$, the initial options for the pair $(H,M)$ are as follows:
\begin{enumerate}
\item[(a)] $(N_1,P_j)$ with $j \in \{n,n-1\}$.
\item[(b)] $(N_1,{} \GU_n(q).2)$ with $n$ even.
\item[(c)] $(N_1,\Sp_2(q) \otimes \Sp_n(q))$ with $n$ even and $q> 2$.
\item[(d)] $(N_1, \GL_n(q).2)$  with $n$ even.
\item[(e)]  $(N_1,\Omega_n^+(4).2^2)$ with $n$ even and $q=2$.
    \item[(f)]  $(N_1,\mathrm{Co}_1)$ with $n=12$   and $q=2$.
 
 \item[(g)]  $(N_2^- ,P_j)$ with $j \in \{n,n-1\}$.
 \item[(h)] $(N_2^-, \GL_n(2).2)$, $n$ even and $q=2$.
      \end{enumerate}

      Now notice that the Sylow $2$-subgroup of $K\cap P_{n-4}$ has index $q^3$ in a Sylow $2$-subgroup of $P_{n-4}$ and so the $2$-part of $|K|$ is at least $q^{n(n-1)-3}$.  Considering the Sylow $2$-subgroups of $M$ in all but cases (a) and (g) yields a contradiction. Thus $K$ is contained in a parabolic subgroup $P$ of $G$.    Thus $K$ normalizes $O_2(P)$ and therefore $(K\cap P_{n-4})O_2(P)$ is a group. In particular, $X=U_{n-4}O_2(P) $ is normalized by $K\cap P_{n-4}$. If $O_2(P) \not \le U_{n-4}$, then $N_X(U_{n-4})>U_{n-4}$. As $P_{n-4}= N_G(U_{n-4})$ and $K\cap P_{n-4}$ is a maximal subgroup of $ P_{n-4}$, we obtain a  contradiction as $K \cap P_{n-4}$ normalizes $N_X(U_{n-4})$. Therefore $O_2(P) \le U_{n-4}$. Applying  \cite[Lemma 4.2]{Grodal}   yields $P_{n-4}$ normalizes $O_2(P)$ and therefore the maximality of $P_{n-4}$ yields $P_{n-4}= P$.  Finally noting that $G= HK= HP_{n-4}$  gives a contradiction. Hence $(G,H,V)$ is not immutable when $V=W$.

 Now assume that $\Theta=\{\sigma_1, \dots, \sigma_\ell\}\subseteq \Sigma$ with $\ell \ge 2$ and that $$U= W^{\sigma_1}\otimes\dots \otimes W^{\sigma_\ell}.$$
Now take $w\in W$ fixed by $K$. Then $w\otimes \dots \otimes w$ is also fixed by $K$ and we obtain a contradiction via \cref{lem:good vectors}.
\end{proof}

\begin{lemma}\label{OrthRed}
Assume that \cref{the new list} (vi) holds. Then $\overline V=\overline W^{\sigma}$ for some $\sigma\in \Sigma$.  In particular, $V$ is the natural   $\GF(2)G$-module.
\end{lemma}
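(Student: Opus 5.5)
We follow the template of \cref{lem:more factors impossible}, \cref{case (ii)(a)} and \cref{Orthogonals and natural}. Here $G\cong\Omega_8^+(q)$, and after relabelling by triality $\overline W=V(\omega_1)$ is the natural module. Set $\mathbb K=\GF(q)$ and let $W$ be the natural $\mathbb KG$-module, so that $\overline W=W\otimes_{\mathbb K}\overline{\mathbb L}$. Put $U=\bigotimes_{\theta\in\Theta}W^\theta$, so that $U$ is a direct sum of copies of $V$ as a $\GF(2)G$-module. By \cref{prop:char2 list} (xi), (xii), either $H$ normalizes a subgroup $\Sp_6(q)$ (and we must allow all three triality images of this class), or $H=N_2^-$. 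If $|\Theta|=1$ we are done, since then $V=W$ is the natural module. So assume $\ell=|\Theta|\ge 2$; the aim is a contradiction via \cref{lem:good vectors}.

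\textbf{Step 1: a fixed vector.} Take
\[L\cong \SU_2(2)\times\Omega_4^+(q)\le \Omega_4^+(2)\times\Omega_4^+(q)\le G,\]
with the first factor centralized by $\Sigma$ and acting on its natural $4$-space as $N\oplus N$, where $N$ is $2$-dimensional (as in \cref{Orthogonals and natural}). Then $W|_L\ge N$ with $N^{\theta}=N$ for all $\theta\in\Sigma$. Hence $U|_L$ contains $N^{\otimes\ell}$. Since $N\otimes N\cong T\oplus N$ with $T$ having trivial socle, induction on $\ell$ gives a non-zero $L$-fixed vector in $U$. Alternatively, and more cleanly, take $L=\Sp_2(2)\times\Omega_6^+(q)$ or a $\Sigma$-invariant $\Omega_4^-(2)\times\Omega_4^+(q)$ acting on a $2$-space, and use the same tensor-square argument.

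\textbf{Step 2: exclude the overgroups of $L$.} Check that $L$ fixes no singular or non-singular $1$-space of $W$, and no totally singular subspace. Then $L$ lies in no conjugate of $P_1,P_3,P_4$ (it also lies in none of their triality images, since $L$ acts on the half-spin modules without a fixed totally singular $1$-space). Next run through the maximal factorisations $G=HM$ with $H$ as above, using \cite[Theorem A, Tables 1, 2 \& 3]{LiebeckPraegerSaxl} and \cref{prop:char2 list}. The possible $M$ are parabolics $P_1,P_3,P_4$, triality images of $N_1\cong\Sp_6(q)$, $N_2^-$, and a few small groups when $q=2$ (for example $\Alt(9)$). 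We must rule out that $M\ge L$. For $M$ a conjugate of $N_1$ or one of its triality images, $L$ would have to fix a non-singular vector in $W$ or in a half-spin module. For $M=N_2^-$, $L$ would have to preserve a $2$-space of $-$ type. Both are excluded by the module structure of $L$ as a product with two $\GF(q)$-natural factors. For $q=2$ no Frobenius twists exist, so $\ell\ge2$ forces $q\ge4$ and the small sporadic overgroups never arise. Then \cref{lem:good vectors} shows that $(G,H,V)$ is not immutable, a contradiction.

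\textbf{Main obstacle.} Step 2 is the hardest part: triality means $N_1$-type overgroups occur in three classes, so $L$ must be chosen to fix no non-zero vector of any $V(\omega_1)$, $V(\omega_3)$ or $V(\omega_4)$. If the $\SU_2(2)\times\Omega_4^+(q)$ choice fails this (for instance, if it fixes a spin vector), the first fallback is to take $L$ inside a $\Sigma$-invariant $\Omega_2^-(q)\times\Omega_6^+(2)$-type product, chosen to be irreducible enough on all three $8$-dimensional modules. If that also fails, use the dimension bound of \cref{lem:dimbound}: it gives $|V|\ge q^{64/\ell}$ against $|G|^{\alpha(G)}$, which eliminates $\ell\ge3$, leaving only $\ell=2$ for the fixed-vector argument.
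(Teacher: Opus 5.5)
Your Step 1 is fine. Step 2 carries the whole lemma, and as written it is not established and is partly wrong.

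First, $L$ \emph{does} stabilise totally singular subspaces. Write the first $4$-space as $N\otimes X$ with $\SU_2(2)$ acting on $N$. Then every line $N\otimes x$ is totally singular and $L$-invariant, so $L\le P_2$. This is harmless only because $P_2$ occurs in no factorisation, but it shows the check was not actually done.

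Second, and more seriously, your list of partners $M$ is incomplete. After normalising to $W=V(\omega_1)$ you have $G=HP_1$. Neither $N_1$ nor $N_2^-$ is transitive on the singular points of $W$, so $H$ is $N_1^{\tau^{\pm1}}$ or $(N_2^-)^{\tau^{\pm1}}$. Hence every triality image of every partner in \cite[Table 4]{LiebeckPraegerSaxl} must be excluded. These include the triality images of $N_2^{\pm}$, namely the subgroups of type $\GL_4(q).2$ and $\GU_4(q).2$. Both are transitive on the vectors of $W$ of any given non-zero norm, so $G=N_1\cdot \GL_4(q).2=N_1\cdot \GU_4(q).2$. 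Applying $\tau$ then shows that $N_2^+$ and $N_2^-$ factorise with the classes $N_1^{\tau^{\pm1}}$, which are exactly the possible $H$.

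Your list has only $N_2^-$ acting on the natural module. The remark about ``two $\GF(q)$-natural factors'' does not address the other cases. As your closing paragraph admits, you had not even checked the half-spin classes of $\Sp_6(q)$.

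Your fallbacks also fail:
\begin{enumerate}
\item The correct bound is $|V|\ge q^{8^\ell/\ell}$. Against $|G|^{\alpha(G)}\le q^{232}$ this gives only $\ell\le 3$, since $8^3/3<232$. That is exactly why the paper needs a separate argument for $\ell=3$.
\item A subgroup of type $\Sp_2(2)\times\Omega_6^{\pm}(q)$ stabilises a non-degenerate $2$-space, so it lies in $N_2^{\pm}$. That subgroup does factorise with a triality image of $\Sp_6(q)$.
\end{enumerate}

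Your choice of $L$ is nevertheless sound, and the gap can be closed as follows. In $\SL_2(q)^4=\Omega_4^+(q)\times\Omega_4^+(q)$, the natural module and the two half-spin modules restrict as $(N_1\otimes N_2)\oplus(N_3\otimes N_4)$, $(N_1\otimes N_3)\oplus(N_2\otimes N_4)$ and $(N_1\otimes N_4)\oplus(N_2\otimes N_3)$. Your $L$ acts as $\SL_2(2)$ on $N_1$, trivially on $N_2$, and naturally on $N_3$ and $N_4$. So in each of the three $8$-dimensional modules, $L$ has:
\begin{enumerate}
\item no non-zero fixed vector;
\item no invariant $1$-space, since $L$ has no non-trivial homomorphism to $\GF(q)^\times$;
\item only totally singular invariant $2$-spaces;
\item no invariant totally singular $4$-space.
\end{enumerate}
This excludes all triality images of $P_1$, $N_1$ and $N_2^{\pm}$. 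Any remaining entries of Table 4 with $q\ge 4$ must still be checked one by one.

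Once that is done you have a genuinely different proof: a single subgroup handles every $\ell\ge 2$, and \cref{lem:dimbound} is not needed. The paper instead bounds $\ell\le 3$ using \cref{lem:dimbound}. For $\ell=3$ it uses ${}^3\mathrm D_4(q^{1/3})$, which fixes a vector of $W\otimes W^\kappa\otimes W^{\kappa^2}$ because it preserves a Hermitian symmetric $3$-sesquilinear form. For $\ell=2$ it uses the subfield group $\Omega_8^+(2)$, which fixes the bilinear form in $W\otimes W$. Those subgroups are chosen so that the overgroup check, the crux of your route, is immediate. Your approach trades that structural input for a triality-aware case analysis against \cite[Table 4]{LiebeckPraegerSaxl}.
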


\begin{proof}
We have that $G =\Omega_8^+(q)$ and  $W$ is the natural   $\mathbb KG$-module where $\mathbb K=\GF(q)$.  Assume $|\Theta|=\ell\ge 2$ and set
$U= W^{\sigma_1} \otimes \dots\otimes W^{\sigma_\ell}$. Then $V$ is a summand of $U$ when $U$ is considered as a $\GF(2)$-module and so, as in the last paragraph of \cref{spin symplectic}, $|V|\ge  q^{\frac{8^\ell}{\ell}}$. Using \cref{lem:dimbound} we have $|V|\le |G|^{\alpha(G)}$ and by \cite[Corollary 16]{ParkerWilson} (using  $2\le q$)
  and \cite[Theorem 4.2]{GuralnickSaxl} we have $|G| \le q^{ 29.8}= q^{232}.$
Hence $$8^\ell \le 232\ell$$ which indicates that $\ell \le 3$. Furthermore, if $\ell=3$, then $V$ has field of definition $\mathbb L=\GF(2^{a/3})$.

Suppose that $\ell=3$.  Take $L={}^3\mathrm D_4(q)$. Then $L= C_G(\kappa \tau)$ where $\kappa \in \Sigma$ has order $3$ and $\tau$ is the triality automorphism which commutes with $\Sigma$.
We know that $$U= W^{\sigma_1}\otimes W^{\sigma_2}\otimes W^{\sigma_3}$$ and since $U$ has   field of definition $\mathbb L=\GF(2^{a/3})$, we have $U^\kappa=U$. It follows that $\sigma_2=\sigma_1\kappa$ and $\sigma_3=\sigma_1\kappa^2$ and so $$U= (W\otimes W^\kappa \otimes W^{\kappa^2})^{\sigma_1}.$$ Hence we may just consider $V=W\otimes W^\kappa \otimes W^{\kappa^2}$. By   \cite[(6.1)]{AschbacherMultLin}, $L$ preserves a Hermitian symmetric $3$-sesquilinear form $\mathcal H$ on $W$. Since the vector space of $3$-sesquilinear forms on $V$ is isomorphic to the dual of the vector  space $W\otimes W^\kappa \otimes W^{\kappa^2}$ and $W$ is self-dual, $L$ fixes a vector in $W\otimes W^\kappa \otimes W^{\kappa^2}$ \cite[Proposition 1.3', page 14]{Yokonuma} and so we must have $G=HL$. However, by \cite[Table 8.50]{BHRoney}, $L$ is maximal in $G$ and by \cite[Theorem A, Table 4]{LiebeckPraegerSaxl} we have a contradiction as no factorisations involve $L$.

Thus $\ell=2$ and we argue that $\Omega_{2n}^+(2)$ fixes a non-zero vector on $W^{\sigma_1}\otimes W^{\sigma_2}$ for a contradiction to \cite[Theorem A, Table 4  ]{LiebeckPraegerSaxl}. We have proved $\ell=1$ and the result follows.
\end{proof}

We now address the question of whether the natural $\Omega_8^+(2^a)$-module can be part of an immutable triple. Notice that in the next lemma $L$ is not a maximal subgroup.

\begin{lemma}\label{lem:casevi} Suppose that  \cref{the new list} (vi) holds with $V$ the natural module. Assume that $\tau$ is the triality automorphism of $G$. Then taking $L= (N_2^-)^\infty$. Then $(G,L^\tau,V)$ and $(G,L^{\tau^2},V)$ are immutable.
\end{lemma}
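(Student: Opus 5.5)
The plan is to verify the factorisation criterion of \cref{lem:basic1} for one representative of each $G$-orbit on $V^\#$, after transporting everything by $\tau^{-1}$. The argument for $L^{\tau^2}$ is the same with $\tau^2$ in place of $\tau$, so I treat only $L^\tau$. Here $L=(N_2^-)^\infty\cong\Omega_6^-(q)\cong\SU_4(q)$ is the derived group of the stabiliser of a non-degenerate minus-type $2$-space $E$. It centralises $E$ and acts naturally on $E^\perp$.

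The $G$-orbits on $V^\#$ are known from \cref{Stabilizers of orbit reps}(iii). There is one orbit of non-zero singular vectors, with stabiliser $O^{2'}(P_1)$. For each $c\in\GF(q)^\#$ there is one orbit of vectors $v$ with $Q(v)=c$, with stabiliser $C_G(v)\cong\Sp_6(q)$. By \cref{lem:basic1}, $(G,L^\tau,V)$ is immutable if and only if $G=C_G(v)L^\tau$ for each such representative $v$. Applying $\tau^{-1}$, this becomes $G=C_G(v)^{\tau^{-1}}L$. So I must prove two factorisations:
\[
G=O^{2'}(P_j)\,L \quad\text{and}\quad G=M\,L,
\]
where $P_j\in\{P_3,P_4\}$ is the image of $P_1$, and $M\cong\Sp_6(q)$ lies in one of the two non-natural classes. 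Thus $M$ acts on $V$ as the spin module and irreducibly, with the vector stabilisers $O^{2'}(P_3)$ and $\mathrm G_2(q)$ described in \cref{spin symplectic}.

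For $G=ML$ I argue by orbits of $M$ rather than by orders in $\Omega_6^-(q)$. Let $E=\langle e,f\rangle$. Since $Q|_E$ is anisotropic, every vector of $E^\#$ is non-singular. By \cref{spin symplectic}, $M$ is transitive on the non-singular vectors of a given norm, with stabiliser $M_e\cong\mathrm G_2(q)$. The group $M_e$ acts on $e^\perp/\langle e\rangle$ as its $6$-dimensional module, as in \cref{rmk G2}, and is transitive on the appropriate non-singular vectors there. The stabiliser of a second vector is then $\SU_3(q)$ or $\SU_3(q).2$. Hence $M$ is transitive on minus-type $2$-spaces, with $M_E\ge\SU_3(q)$. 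The standard Liebeck--Praeger--Saxl factorisation $G=M N_2^-$ from \cite[Table 4]{LiebeckPraegerSaxl} confirms the index count.

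To descend from $N_2^-$ to $L$, observe that $N_2^-/L$ acts faithfully on $E$ as a subgroup of $\mathrm O_2^-(q)$, the dihedral group of order $2(q+1)$. So $G=ML$ is equivalent to $M_E$ inducing on $E$ the full group induced by $N_2^-$. Because $M$ is transitive on pairs $(e,E)$ with $e\in E$ of fixed norm, $M_E$ is transitive on the $q+1$ vectors of $E$ of norm $c$. This yields the cyclic part of order $q+1$. The reflection part I would obtain from the element of order $2$ in $\SU_3(q).2$ inside $M_e$, or failing that from an order count $|M:M_E|=|G:N_2^-|$.

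For the singular vectors I use the same reduction. The factorisation $G=P_jN_2^-$ comes from $G=P_1N_2^-$ by triality, again via \cite[Table 4]{LiebeckPraegerSaxl}. It remains to show that $O^{2'}(P_j)\cap N_2^-$ covers $N_2^-/L$, and that the passage from $P_j$ to $O^{2'}(P_j)$ costs nothing. The latter is a $(q-1)$-torus quotient, which I expect to be absorbed because $L$ already contains elements inducing the needed scalars. As a safeguard I would compare orbit lengths. The singular orbit has length $(q^4-1)(q^3+1)$, so $L^\tau$ must have an orbit of that length on singular vectors. This is equivalent to $L$ being transitive on one class of maximal totally singular subspaces, which for $\SU_4(q)$ acting on $E^\perp\oplus E$ can be checked by counting those subspaces meeting $E^\perp$ in a totally singular $3$-space.

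The main obstacle is the descent from $N_2^-$ to $L=(N_2^-)^\infty$ in both factorisations, that is, showing the intersections cover the dihedral quotient acting on $E$. I would attack it first by the transitivity-on-pairs argument above. If that stalls, I would fall back on an explicit order computation of $M\cap L$, aiming to show it is exactly $\SU_3(q)$, which gives $|L:M\cap L|=q^3(q^4-1)=|G:M|$ directly.
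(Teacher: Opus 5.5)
There is a genuine gap. Your reduction is sound: by \cref{lem:basic1} and conjugation by $\tau^{-1}$, the lemma is equivalent to the two factorisations $G=ML$ and $G=O^{2'}(P_j)L$, with $M\cong\Sp_6(q)$ in a non-natural class and $j\in\{3,4\}$. However, the only facts you actually establish are the factorisations with $N_2^-$ and with $P_j$. The descent to $L=(N_2^-)^\infty=C_G(E)$, and to $O^{2'}(P_j)$, is precisely the content of the lemma, and you leave it as an expectation.

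For $G=ML$ you need $M\cap L\cong\SU_3(q)$, equivalently that $M_E$ induces all of $\mathrm{O}_2^-(q)\cong\mathrm{D}_{2(q+1)}$ on $E$. Your fallback count $|M:M_E|=|G:N_2^-|$ is just $G=MN_2^-$ again. It gives the same $|M_E|$ whether $M_E$ induces $C_{q+1}$ or the full dihedral group on $E$, so it cannot decide this. You also give no reason why the involution of $\SU_3(q).2\le M_e$ moves $E$ rather than lying in $C_G(E)$.

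The step that produces $\SU_3(q).2$ is itself confused. In characteristic $2$, a non-degenerate $\langle e,f\rangle$ forces $B(e,f)\neq 0$, so $f\notin e^\perp$. Moreover $Q$ does not descend to $e^\perp/\langle e\rangle$, and $\G_2(q)$ is transitive on the non-zero vectors of that module (\cref{prop:trans cases}(vi)), so no $\SU_3(q)$ stabiliser arises there. A correct version runs as follows.
\begin{enumerate}
\item Minus-type $E\ni e$ correspond to minus-type complements $E^\perp$ of $\langle e\rangle$ in $e^\perp$, that is, to minus-type quadratic forms on the symplectic $6$-space.
\item The factorisation $\Sp_6(q)=\G_2(q)\SO_6^-(q)$ then gives transitivity of $M_e$ on these, with $M_{e,E}\cong\G_2(q)\cap\SO_6^-(q)\cong\SU_3(q){:}2$.
\item By \cref{lem:G2q factorisations}, this group is not contained in $\Omega(E^\perp)$.
\item The Dickson invariant is additive on $V=E\perp E^\perp$ and $M_{e,E}\le\Omega_8^+(q)$. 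Hence an element of $M_{e,E}$ outside $\Omega(E^\perp)$ must act on $E$ as the reflection fixing $e$.
\end{enumerate}

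The singular orbit has the same defect. Transitivity of $L^\tau$ on the $(q^4-1)(q^3+1)$ non-zero singular vectors is the statement $G=O^{2'}(P_j)L$. That is not equivalent to $L$ being transitive on a class of maximal totally singular subspaces, which only gives $G=P_jL$. You also need $L\cap P_j$ to cover $P_j/O^{2'}(P_j)\cong\GF(q)^\times$, and you only say you expect this.

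Your proposed count cannot work as stated. $E^\perp$ is a minus-type $6$-space of Witt index $2$, so a maximal totally singular $W$ meets $E^\perp$ in a totally singular $2$-space $U$, never in a $3$-space. What does work:
\begin{enumerate}
\item $W/U$ is the graph of an isometry $E\to(U^\perp\cap E^\perp)/U$ taken from a fixed coset of $\Omega_2^-(q)$.
\item $L$ is transitive on the $(q^2+1)(q^3+1)$ such $U$.
\item $L_U$ induces $\Omega_2^-(q)$ regularly on the $q+1$ graphs.
\item The $\GL_2(q)$ Levi factor acting on $U$ realises every determinant on $W$.
\end{enumerate}

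The paper avoids all of this bookkeeping. $L^\tau\cong\SU_4(q)$ acts on $V$ as its natural $4$-dimensional $\GF(q^2)$-module, a half-spin module of $\Omega_6^-(q)$. This has exactly $q+1$ orbits: zero, the non-zero isotropic vectors, and one orbit for each non-zero Hermitian norm in $\GF(q)$. That is the same number of orbits as $G$ has, so the orbits coincide.
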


\begin{proof} Since $V$ is a natural module for $\Omega_8^+(q)$, we know that $G$ has $2$ orbits on the $1$-spaces of $W$ and $q+1$ orbits on vectors. The stabilizers are $O^{2'}(P_1)$ and $N_1\cong \Sp_6(q)$.

We have $L\cong \Omega_6^-(q)\cong \SU_4(q)$ and $L^\tau$ and $L^{\tau^2}$ act irreducibly on $V$.  We regard $V$ as the natural $\SU_4(q)$-module defined over $\GF(q^2)$ (so the spin modules for $\Omega_6^-(q)$). Hence $L^\tau$ and $L^{\tau^2}$ also have $q+1$ orbits on vectors, namely the isotropic vectors, and the vectors of a non-singular subspace which take different values in $\GF(q)$ with respect to the Hermitian form. Since the number of orbits is the same, we have $(G,L^\tau,V)$ and $(G,L^{\tau^2},V)$ are immutable.
\end{proof}

\begin{corollary}\label{cor:vi1} Let $G=\Omega_8^+(q)$ and $V$ be the natural module for $G$. Assume that $\tau$ is the triality automorphism of $G$. Set $H= N_1\cong \Sp_6(q)$.  Then $(G,H^\tau,V)$ and $(G,H^{\tau^2},V)$ are immutable.
\end{corollary}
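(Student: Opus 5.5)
Write $H=N_1\cong\Sp_6(q)$ and $L=(N_2^-)^\infty\cong\Omega_6^-(q)$ as in \cref{lem:casevi}. The plan is to deduce the corollary from \cref{lem:casevi} by a sandwich argument using \cref{lem:reg orb compendium}(iii). The point is that $L$ can be chosen inside $H$. Indeed, $N_2^-$ is the stabiliser of a non-degenerate $2$-space $E$ of minus type, and $L$ acts on the $6$-dimensional minus-type space $E^\perp$. Choose a non-singular vector $e\in E$. Then $L$ fixes $e$, so $L\le C_G(e)\cong\Sp_6(q)$, and $C_G(e)$ is a conjugate of $H$. After replacing $H$ by a $G$-conjugate, we may therefore assume $L\le H$, and so $L^\tau\le H^\tau$ and $L^{\tau^2}\le H^{\tau^2}$.

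Next, \cref{lem:casevi} tells us that $(G,L^\tau,V)$ is immutable. Since $L^\tau\le H^\tau<G$, every $v\in V$ satisfies
\[
vL^\tau\subseteq vH^\tau\subseteq vG=vL^\tau,
\]
so the three orbits coincide. (This is the easy direction of \cref{lem:reg orb compendium}(iii), and it needs no separate check on $(H^\tau,L^\tau,V|_{H^\tau})$.) It follows that $vH^\tau=vG$ for all $v$, that is, $(G,H^\tau,V)$ is immutable. Running the same argument with $\tau^2$ in place of $\tau$ gives the second claim. One should also check that $H^\tau$ and $L^\tau$ are proper subgroups; this is clear because $\tau$ is an automorphism.

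The only real point needing care is the containment $L\le N_1$ up to conjugacy. The subtlety is that in characteristic $2$ the stabiliser of a non-singular vector is $\Sp_6(q)$ acting on $e^\perp/\langle e\rangle$, not an orthogonal group on a complement. Even so, $L$ fixes $e$ pointwise and lies in $G$, so $L\le C_G(e)$ holds regardless. If one wants an independent sanity check, one can compare orbit structures: $H^\tau$ acts on $V$ as $\Sp_6(q)$ on its spin module, and it has the orbits listed in \cref{Stabilizers of orbit reps}(iii), namely $q+1$ orbits on vectors, matching $G$. This is consistent with the orbit count given by \cref{lem:numberorbits}.
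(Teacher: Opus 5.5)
Your proposal is correct and follows the paper's argument. The paper also combines \cref{lem:casevi} with the containment $L=(N_2^-)^\infty\le N_1$ and the sandwich from \cref{lem:reg orb compendium}(iii). Your justification of that containment spells out a step the paper only asserts: the perfect group $L$ centralises the anisotropic $2$-space $E$, so it lies in $C_G(e)\cong\Sp_6(q)$ for any non-zero (hence non-singular) $e\in E$.
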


\begin{proof}
Since $L=   (N_{2}^-)^\infty\le N_1=H$, and $(G,L^\tau,V)$ and $(G,L^{\tau^2},V)$ are immutable by \cref{lem:casevi}  so are $(G,H^\tau,V)$ and $(G,H^{\tau^2},V)$ by \cref{lem:reg orb compendium} (iii).
\end{proof}

\begin{corollary}\label{cor:vi2}
Let $G=\Omega_8^+(q)$ and $V$ be the natural module for $G$. Assume that $\tau$ is the triality automorphism of $G$. Set $H= N_2^-\cong \GU_4(q).2$.  Then $(G,H^\tau,V)$ and $(G,H^{\tau^2},V)$ are immutable.
\end{corollary}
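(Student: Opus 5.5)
The plan is to argue exactly as in \cref{cor:vi1}, using \cref{lem:casevi} and the transitivity statement \cref{lem:reg orb compendium}~(iii). Put $L=(N_2^-)^\infty\cong \Omega_6^-(q)\cong\SU_4(q)$. Then $L\le N_2^-=H$, because $N_2^-\cong (\mathrm O_6^-(q)\times \mathrm O_2^-(q))\cap G$ contains its own derived-series residue. Applying the automorphism $\tau$ gives $L^\tau\le H^\tau$, and applying $\tau^2$ gives $L^{\tau^2}\le H^{\tau^2}$.

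\cref{lem:casevi} tells us that $(G,L^\tau,V)$ and $(G,L^{\tau^2},V)$ are immutable. For every $v\in V$ we have the chain of inclusions $vL^\tau\subseteq vH^\tau\subseteq vG$. By immutability the two ends of this chain are equal, so $vH^\tau=vG$. Formally this is \cref{lem:reg orb compendium}~(iii) applied with the intermediate group $H^\tau$, when $L^\tau<H^\tau<G$. The same argument with $\tau^2$ in place of $\tau$ handles $H^{\tau^2}$.

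It then remains only to confirm that $H^\tau$ is a proper subgroup of $G$, so that the triple is genuinely immutable in the sense of \cref{def:imm}. This is clear, since $H^\tau$ is the image of a maximal subgroup under an automorphism of $G$. We should also confirm that $H^\tau$ is isomorphic to $\GU_4(q).2$, which is the subgroup appearing in \cref{Main theorem}~(ii) with $n=4$. By \cite[Table 8.50]{BHRoney}, triality maps the class of $N_2^-$ to the classes of $\GU_4(q).2$ that act irreducibly on $V$. This is a matter of identification only and is not needed for immutability.

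No step here is a genuine obstacle, because all of the substantive work is already done in \cref{lem:casevi}.
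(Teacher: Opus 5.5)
Your proposal is correct and is essentially the paper's own argument. The paper just says the proof is the same as for \cref{cor:vi1}: $L=(N_2^-)^\infty\le N_2^-=H$, the triples $(G,L^\tau,V)$ and $(G,L^{\tau^2},V)$ are immutable by \cref{lem:casevi}, and \cref{lem:reg orb compendium}~(iii) (your sandwich $vL^\tau\subseteq vH^\tau\subseteq vG$) finishes it, with your extra remarks on properness and on identifying $H^\tau$ as $\GU_4(q).2$ being harmless but unnecessary.
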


\begin{proof} This is almost the same as for \cref{cor:vi1}.
\end{proof}

 Bringing the results of this section together we obtain the following result.

\begin{proposition}\label{prop:classical} Suppose that \cref{hypss7} holds. Then one of the following holds.
\begin{enumerate}
\item $G$ acts transitively on the non-zero vectors of $V$.
\item $G \cong \Sp_{6}(q)$, $H \cong \SO_6^-(q)$ and $  V$ is a spin module for $G$.
\item $G \cong \Omega_{2n}^\epsilon(q)$, $n \ge 3$ with $\epsilon 1= (-1)^n$, $H$ normalizes $\SU_{n}(q)$ and $V$ is a natural module for $G$.
\item $G \cong \Omega_8^+(q)$, $H \cong \Sp_6(q)$ and $V$ is a natural module for $G$.
\item $G \cong \Omega_8^+(2)$, $H \cong \Alt(9)$ and $V$ is a natural module for $G$.
\end{enumerate}
\end{proposition}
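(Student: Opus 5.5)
The proof will be an assembly of the case analysis already carried out in this section. Assume \cref{hypss7}. First dispose of $G\cong\SL_2(q)$: by \cref{lem:no factSL2} this case does not occur (when $q=2$ it is excluded by hypothesis). So $m\ge 3$ in the linear case, and \cref{prop:char2 list} applies. It gives a maximal parabolic $P$ with $G=PH$ and places $(G,H,P)$ in one of the cases (i)--(xvii). If one of (xiii)--(xvii) holds, \cref{odd balls} shows that only (xvii) survives: $G\cong\Omega_8^+(2)$, $H\cong\Alt(9)$, and $V$ is the natural module or a half-spin module, which the triality automorphism identifies with the natural module. This is conclusion (v).

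Next assume one of (i)--(xii) holds. By \cref{unique para}, $N_G(C_V(S))$ is a maximal parabolic subgroup, so \cref{the new list} applies and writes $\overline V$ as a twisted tensor product of one basic module $\overline W$. We then go through the items of \cref{the new list}:
\begin{enumerate}
\item Item (i) and item (ii)(a): by \cref{lem:more factors impossible} and \cref{case (ii)(a)}, $V$ is a natural module and $G$ (indeed $H$) is transitive on $V^\#$. This gives conclusion (i).
\item Item (ii)(b): by \cref{spin symplectic}, either $n=2$ and $G$ is transitive on $V^\#$, giving (i), or $G\cong\Sp_6(q)$, $H\cong\Omega_6^-(q){:}2=\SO_6^-(q)$ and $V$ is the spin module, giving (ii).
\item Item (iii): by \cref{unitary gone}, $G\cong\SU_4(q)\cong\Omega_6^-(q)$ and $V$ is its natural orthogonal module. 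Here $H=N_1$ normalizes $\SU_3(q)$, which is the $n=3$ instance of (iii). This is why (iii) is stated for $n\ge 3$ rather than $n\ge 5$.
\item Item (iv): by \cref{Orthogonals and natural}, this gives (iii) with $n\ge 5$.
\item Item (v): by \cref{casev}, this case is impossible.
\item Item (vi): by \cref{OrthRed}, $V$ is the natural $\Omega_8^+(q)$-module. Then $H$ arises from \cref{prop:char2 list} (xi) or (xii), that is, $H$ normalizes $\Sp_6(q)$ or $H=N_2^-$.
\end{enumerate}

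The only delicate step is the last one. For $H$ normalizing $\Sp_6(q)$ we are in (iv) (compare \cref{cor:vi1}). For $H=N_2^-\cong\GU_4(q).2$ acting on the natural module, the claim is that $H$ normalizes $\SU_4(q)$ with $n=4=(-1)^4$-type $+$, so this falls under (iii); \cref{cor:vi2} shows the triality images are genuine examples. I must check that every $H$ that \cref{prop:char2 list} allows in (xi) and (xii) is covered, up to triality, by one of these two descriptions. I would do this by matching the maximal subgroup classes in \cite{BHRoney}.

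Finally, the transitive subcases from \cref{odd balls}, \cref{spin symplectic} and \cref{unitary gone} all lie inside (i). Collecting the outcomes above proves the proposition.
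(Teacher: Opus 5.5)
Your proposal is correct and follows the paper's proof essentially step for step. You dispose of $\SL_2(q)$ by \cref{lem:no factSL2}, treat the exceptional cases of \cref{prop:char2 list} by \cref{odd balls} together with triality, and then run through the six items of \cref{the new list} with exactly the lemmas the paper cites. The extra check you flag for item (vi) is immediate from \cref{prop:char2 list} (xi)--(xii): a maximal $H$ normalizing $\Sp_6(q)$ is $\Sp_6(q)$, and $N_2^-\cong\GU_4(q).2$ normalizes $\SU_4(q)$. This is why the paper simply invokes \cref{cor:vi1,cor:vi2} at that point.
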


\begin{proof} Because of \cref{lem:no factSL2}, we may assume that $G \not \cong \SL_2(q)$ and so  $G$  is defined in dimension at least $3$. Then \cref{prop:char2 list} lists the pairs $(G,H)$ that we need to consider.  \cref{odd balls} shows that if any of the ``exceptional" cases arise then its due to $G\cong \Omega_8^+(2)$ acting on one of its half-spin modules and $H$ being an irreducible $\Alt(9)$-subgroup. Using triality, we may transform the module to the natural module and these examples are  listed in case (v). With these cases considered, \cref{the new list} determines the candidates for the module $V$ for an immutable triple $(G,H,V)$. This leads to six cases which are then considered in the subsequent lemmas.  In particular, \cref{lem:more factors impossible,case (ii)(a)} give rise to instances where $H$ is transitive on the non-zero vectors of $V$ and cover the possibilities given in \cref{the new list} (i) and (ii)(a).  \cref{the new list} (ii)(b) is examined in \cref{spin symplectic} and this results in the case listed in (ii). By \cref{unitary gone},
\cref{the new list} (iii) leads to $n=2$, and $G \cong \SU_4(q)\cong \Omega_6^-(q)$ acting on the natural orthogonal module. This is included as the $n=3$ case of part (iii).
For \cref{the new list} (iv), \cref{Orthogonals and natural} yields the triples in (iii) when $n \ge 5$.  Case (v) of \cref{the new list} gives no examples of immutable triples by \cref{casev}. Finally, considering  \cref{the new list} (vi), we have $G \cong \Omega_8^+(q)$. In this case, we first show that $V$ can be taken to be the natural module in \cref{OrthRed} and then \cref{cor:vi1,cor:vi2} provide the final example in part (iii) with $n=4$ and case (iv).
\end{proof}

We can now collect together the results needed for the proof of \cref{Main theorem,cor1,Cor:Main Result}.
\begin{proof}[Proof of Theorem~\ref{Main theorem}] Suppose that $G$ is a quasisimple group, $H$ is a maximal subgroup of $G$ and $V$ is a non-trivial irreducible $\GF(2)G$-module. Suppose that $H$ is not transitive on $V^\#$. If $G/Z(G)$ is an alternating group, then \cref{prop:altgrps} provides the example in case Theorem~\ref{Main theorem} (vi). For $G/Z(G)$ not a classical group defined in characteristic $2$,
\cref{cor:no sporadics,thm: Exceps,prop:no Lie Odd} show that no examples of immutable triples arise. Finally, when $G/Z(G)$ is a classical group defined in characteristic $2$, \cref{prop:classical} provides all the other examples in \cref{Main theorem}.
\end{proof}

\begin{proof}[Proof of \cref{cor1}]  In cases (i)-(v) of \cref{Main theorem}, $G$ has just $2$ orbits on the $\mathbb KG$ $1$-spaces of $V$. As $G$ has at least $3$-orbits, on the $\mathbb KG$ $1$-spaces of $V$, \cref{Main theorem} (vi) holds and an elementary {\sc Magma} calculation yields the result.
\end{proof}

\begin{proof}[Proof of \cref{Cor:Main Result}] Set $q=2^a$. We apply \cref{Main theorem}. If either case (v) or $G \cong \Omega_8^+(2)$ holds, then we simply check all subgroups of $G$ for immutability and obtain the stated result enumerated in (iii) or (iv).

Suppose that \cref{Main theorem} (i), (ii) or (iii) hold. Then, setting $\mathbb K=\mathrm{End}_G(V)$, we have $\mathbb K= \GF(q)$ and $V$ is the natural $\mathbb KG$-module for $G\cong \Omega^{\epsilon}_{2n}(q)$ with $\epsilon 1= (-1)^n$.
In particular, on the $1$-dimensional $\mathbb K$-spaces in $V$, $G$ has orbits of length
\begin{enumerate}
\item[(a)] $(q^n-1)(q^{(n-1)}+1)/(q-1)$ and $q^{(n-1)}(q^{n}-1)$ when $n$ is even.
\item[(b)]   $(q^{n}+1)(q^{(n-1)}-1)/(q-1)$ and $ q^{(n-1)}(q^{n}+1) $ when $n$ is odd.
\end{enumerate}
Since $(G,K,V)$ is immutable, \cref{same point orbit} implies that $G$ and $K$ have the same orbits on the $1$-dimensional $\mathbb K$-spaces in $V$. As the orbits \cite[Table 1 (2) and (3) with $q$ even]{Liebeck} are the same as (a) and (b) respectively, we may apply \cite[Theorem]{Liebeck} to obtain (i), (ii) and (iii). Notice that \cref{Main theorem} (iv) is subsumed in (ii) as $\Sp_6(q)$ action on a spin module extends to a natural module for $\Omega_8^+(q)$. Applying \cref{Main theorem} one final time, we see that all the candidates for $K$ arising from  \cite[Theorem]{Liebeck}  indeed yield immutable triples.
\end{proof}

\section{The proof of \cref{thm:Theorem B}: part 1}\label{sec:ThmB1}

In this section, we prove \cref{thm:Theorem B} for all the candidates for $G$ which are not symplectic groups and do not have centre of even order. Our first goal is to prove \cref{prop:trans cases}. Thus we intend to list the triples $(G,H,V)$ with $H$ maximal in $G$ and $H$ transitive on the non-zero vectors of $V$. First we require a lemma which may be well-known.

\begin{lemma}\label{Normal basis lemma}
Assume that $a$ and $n$ are natural numbers, $p$ a prime, $q=p^a$, $H=\Gamma\mathrm L_1(q^n)$ regarded as a subgroup of $\GL_n(q)$ and $V$   the natural $n$-dimensional $\GF(q)H$-module.  The subgroup $H_1$ of determinant $1$ elements of $H$ acts transitively on the non-zero elements of $V$ if and only if $q= 2$ or $n$ is even and $q=3$.
\end{lemma}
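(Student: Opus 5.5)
Identify $V$ with the field $\GF(q^n)$, so that $H=\Gamma\mathrm L_1(q^n)=\{x\mapsto \lambda x^{\phi}\mid \lambda\in\GF(q^n)^\times,\ \phi\in\mathrm{Gal}(\GF(q^n)/\GF(q))\}$, where $\mathrm{Gal}(\GF(q^n)/\GF(q))=\langle\phi_0\rangle$ with $\phi_0:x\mapsto x^q$. The plan is to compute the determinant of each element of $H$ as a $\GF(q)$-linear map, describe $H_1$ explicitly, and then bound the number of orbits of $H_1$ on $V^\#$ against $q^n-1$.

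\textbf{Determinants.} Multiplication by $\lambda$ has determinant $N(\lambda)=\lambda^{(q^n-1)/(q-1)}$, the norm to $\GF(q)$. For $\phi_0$, choose a normal basis $\{\beta,\beta^q,\dots,\beta^{q^{n-1}}\}$ of $\GF(q^n)$ over $\GF(q)$ (this is presumably the reason for the lemma's name). In this basis $\phi_0$ acts as an $n$-cycle on basis vectors, so $\det\phi_0=(-1)^{n-1}$. Hence $\det(\lambda\phi_0^i)=N(\lambda)(-1)^{i(n-1)}$. Since $N$ is surjective onto $\GF(q)^\times$, we get $|H:H_1|=q-1$, so $|H_1|=n(q^n-1)/(q-1)$.

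\textbf{Necessity.} If $H_1$ is transitive on $V^\#$, then $q^n-1$ divides $|H_1|$, so $q-1$ divides $n$. This is not sufficient on its own, so I would look at the structure more closely. We have $H_1\cap\GL_1(q^n)=\ker N$, of order $(q^n-1)/(q-1)$, and its orbits on $V^\#$ are the cosets of $\ker N$, labelled by the value of $N$; there are $q-1$ of them. An element $\lambda\phi_0^i$ of $H_1$ sends $x$ to $\lambda x^{q^i}$, and $N(\lambda x^{q^i})=N(\lambda)N(x)$ because $N(x^{q^i})=N(x)$. The condition $\lambda\phi_0^i\in H_1$ forces $N(\lambda)=(-1)^{i(n-1)}$. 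So $H_1$ can only multiply the norm class by $\pm1$, and it permutes the $q-1$ norm classes through the subgroup $\{\pm1\}\subseteq\GF(q)^\times$. Transitivity therefore requires $q-1\le 2$, and when $q-1=2$ it requires $-1\ne 1$ to actually occur.

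\textbf{Sufficiency and the remaining case.} If $q=2$, there is only one norm class and $\ker N$ is already transitive on $V^\#$. If $q=3$, we need some $i$ with $(-1)^{i(n-1)}=-1$, which holds exactly when $n$ is even (take $i=1$). In that case $\lambda\phi_0$ with $N(\lambda)=-1$ lies in $H_1$ and swaps the two norm classes, giving transitivity. When $n$ is odd, every element of $H_1$ preserves norm classes, so $H_1$ is not transitive.

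The only delicate step is computing $\det\phi_0$. The normal basis theorem handles it cleanly, since it makes $\phi_0$ a permutation matrix. I would also note that when $p=2$ we have $-1=1$, which is consistent with the answer.
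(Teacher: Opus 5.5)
Your proof is correct. Its set-up matches the paper's:
\begin{itemize}
\item identify $V$ with $\GF(q^n)$;
\item use a normal basis to see that the Frobenius is an $n$-cycle permutation matrix of determinant $(-1)^{n-1}$;
\item identify the determinant of multiplication by $\lambda$ with the norm $N(\lambda)$;
\item deduce $|H_1|=n(q^n-1)/(q-1)$.
\end{itemize}
You differ from the paper only in how you decide transitivity.

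The paper takes the Frobenius-fixed vector $v=\sum_i\beta^{\sigma^i}$, which is a non-zero element of $\GF(q)$. Its stabilizer in $H$ is $\langle\sigma\rangle$. Hence its stabilizer in $H_1$ is $\langle\sigma\rangle$ or $\langle\sigma^2\rangle$, according as $\det\sigma=1$ or $-1$. Orbit--stabilizer then gives $|vH_1|=(q^n-1)/(q-1)$ or $2(q^n-1)/(q-1)$. This equals $q^n-1$ exactly when $q=2$, respectively $q=3$.

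You instead use the norm as a semi-invariant. The kernel of $N$ is transitive on each fibre of $N$. Each element $\lambda\phi_0^i$ of $H_1$ multiplies $N$ by $N(\lambda)=(-1)^{i(n-1)}$.

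What each route buys:
\begin{itemize}
\item Your route gives the complete orbit structure: $q-1$ orbits, or $(q-1)/2$ orbits when $p$ is odd and $n$ is even. It needs no stabilizer computation.
\item The paper's route is a single orbit-length check. It relies, without comment, on the stabilizer of $v$ in $H$ being exactly $\langle\sigma\rangle$. This is true because $v\in\GF(q)^\times$, so $\lambda v^{q^i}=v$ forces $\lambda=1$.
\end{itemize}
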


\begin{proof} Let $\Sigma=\mathrm{Gal}(\GF(q^n)/\GF(q))$ be generated by $\sigma$ which maps $x\in \GF(q^n)$ to $x^q$.
Identify $V$ with $\GF(q^n)$ regarded as a $\GF(q)$ vector space.  Then by the well-known Normal Basis Theorem \cite[Lemma 2]{JacobsonIII}, there exists $\beta \in V$ such that $\{\beta^{\sigma^{i}}\mid 0\le i\le n-1\}$ is an ordered basis for $V$.  With respect to this ordered basis $\sigma$ is represented by the matrix $$S=\left(\begin{smallmatrix}
0&1&0&0&\dots&0\\
\vspace{-1mm}
0&0&1&0&\dots&0\\
\vspace{1mm}
\vdots&\vdots&\vdots&\vdots&\vdots&\vdots\\
0&0&0&0&\dots&1\\
1&0&0&0&\dots&0\end{smallmatrix}\right)$$
which has order $n$.

Recall that the embedding of $\GF(q^n)^\times$ into $\GL_n(q)$ arises from the assignment of $\alpha\in \GF(q^n)$ to the transformation $T_\alpha: x\mapsto x\alpha$.  Take $\alpha$ to be a primitive element of $\GF(q)$. Then $\gen{T_\alpha}$ has order $q^n-1$ in $\GL_n(q)$. Now $\det (T_\alpha)$ is just the norm of $\alpha$ (see \cite[Proposition VI.5.6]{Lang}).  Thus
 $$\det (T_\alpha)= \prod_{i=0}^{n-1} \alpha^{\sigma^i}= \prod_{i=0}^{n-1} \alpha^{q^i}= \alpha ^{(q^n-1)/(q-1)}$$ and so $\det(T_\alpha)$ has order $q-1$ in $\GF(q)$.
In particular, $H_1$ has order $\frac{q^n-1}{q-1}.n$

 Suppose first that $n$ is odd or that $n$ is even and that $p=2$. Then  $\det S=1$.  Hence  $\sigma \in H_1$. Since $\sigma$ fixes the non-zero vector $v=\sum_{i=0}^{n-1}\beta^{\sigma^{i}}$, we see that $|vH_1|= \frac{q^n-1}{q-1}$. Thus $|vH_1|=q^n-1$ if and only if $q=2$.

Now suppose that $n$ is even and $p$ is odd.  Then $\det S=-1$ and $\sigma^2 \in H_1$. Since $\gen{\sigma}$ is the stabilizer of $v$ in $H$, $\gen{\sigma^2}$ is the stabilizer of $v$ in $H_1$. Therefore, as $|H_1|= \frac{q^n-1}{q-1}.n$, we deduce that
  $$vH_1=|H_1:\gen{\sigma^2}|=  2\frac{q^n-1}{q-1}.$$ Thus in this case,  $|vH_1|=q^n-1$  if and only if  $q=3$. This completes the proof.
\end{proof}

\begin{proposition}\label{prop:trans cases} Suppose that $G$ is a quasisimple group and $V$ is a non-trivial $\GF(2)G$-module.  Let $H$ be a maximal subgroup of $G$ and assume that $(G,H,V)$ is immutable with $H$ transitive on $V^\#$. Then one of the following holds.
\begin{enumerate}
\item  $G=\SL_n(2^a)$, $H$ normalizes $K=\SL_{n/b}(2^{ab})$ where $b$ is a prime which divides $n$ with $b<n$;
    \item  $G=\SL_n(2 )$, $H =\Gamma \mathrm L_1(2^n)$ with $n$  an odd prime;
\item   $G= \SL_{2m}(2^a)$, $H$ normalizes $K=\Sp_{2m}(2^a)$;
\item  $G=\SL_{4}(2)$, $H=\Alt(7)$;
\item   $G= \Sp_{2n}(2^a)$, $H$ normalizes $K=\Sp_{2n/b}(2^{ab})$ where $b$ is a prime which divides $n$;
\item  $G =\Sp_6(2^a)$, $H$ normalizes $K=\G_2(2^a)$;
\item  $G=  \Alt(7)$, $\dim V=4$, $H\cong \Sp_4(2)'\cong \Alt(6)$;
    \item  $G=  \Alt(7)$, $\dim V=4$, $H \cong \SL_2(4){:}2 \cong \Sym(5)$; or
\item  $G=  \Alt(6)$, $\dim V=4$, $H \cong \SL_2(4)$ acting naturally on $V$.
\end{enumerate}
Furthermore, in cases (i) to (vi), $V$ is the natural module for $G$.
\end{proposition}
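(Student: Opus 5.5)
We will use the Hering--Huppert classification of transitive linear groups. Note first that the statement does not assume $V$ irreducible. Since $H$ is transitive on $V^\#$, so is $G$. Hence every nonzero vector generates $V$, and $V$ is irreducible. Also $C_V(G)=0$, since $V$ is non-trivial and $G$ is transitive on $V^\#$. Let $\mathbb K=\mathrm{End}_G(V)$. By Hering's theorem (\cite{Hering}, \cite[Appendix~1]{LiebeckAffine}), the quasisimple subgroups of $\GL(V)$ that are transitive on $V^\#$ are essentially:
\begin{enumerate}
\item $\SL_m(q)$ acting naturally;
\item $\Sp_{2m}(q)$ acting naturally;
\item $\G_2(q)$ acting on its $6$-dimensional module, $q$ even;
\item the sporadic cases $\Alt(6)$ and $\Alt(7)$ on $\GF(2)^4$, $\SL_2(13)$ on $\GF(3)^6$, and a few groups in odd characteristic.
\end{enumerate}
Since $p=2$, the odd-characteristic examples disappear. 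We also use $\Alt(6)\cong\Sp_4(2)'$ and $\Alt(8)\cong\SL_4(2)$, and note that $\SL_2(4)\cong\Alt(5)$ gives nothing new.

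\textbf{Step 1: $G$ in each family.} This gives $G\in\{\SL_n(2^a),\Sp_{2n}(2^a),\G_2(2^a),\Alt(6),\Alt(7)\}$, with $V$ the natural module over $\mathbb K$. In every case $G=O^{2'}(\GL(V))$ restricted appropriately.

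\textbf{Step 2: the possible $H$.} For each such $G$, we need the maximal subgroups $H$ that are still transitive on $V^\#$. We apply Hering's theorem again, now to $H\le \GL(V)$. Either $H$ is soluble and lies in $\Gamma\mathrm L_1(2^{an})$, or $H$ contains a normal quasisimple subgroup $K$ from the same list, acting on $V$ over some subfield-extension field $\GF(2^{ab})$. We intersect these with the maximal subgroups of $G$, and each case produces one of the listed items.
\begin{enumerate}
\item $K=\SL_{n/b}(2^{ab})$ gives (i).
\item $K=\Sp_{n/b}(2^{ab})\le\SL_n$ is contained in $N_G(\Sp_n(2^a))$ or in an extension-field subgroup. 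So maximality gives (i) or (iii).
\item $K=\G_2(2^{ab})$ in $\SL_6$ or in $\Sp_6$ lies in $\Sp_6$ or in an extension-field group, so it is not maximal except for $\G_2(2^a)<\Sp_6(2^a)$. This gives (vi).
\item Inside $\Sp_{2n}(2^a)$ we get (v) and (vi).
\item The soluble case $H=\Gamma\mathrm L_1(2^{an})\cap G$ is handled by \cref{Normal basis lemma}. The determinant-one part is transitive only when $q=2$, which forces $a=1$. Maximality of $\Gamma\mathrm L_1(2^n)$ in $\SL_n(2)$ requires $n$ prime; otherwise it lies in a larger extension-field group, already in (i). This gives (ii), with $n$ odd since $n=2$ is excluded.
\end{enumerate}

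\textbf{Step 3: small cases.} The remaining cases are checked against the \cite{ATLAS} lists of maximal subgroups.
\begin{enumerate}
\item $G=\G_2(2^a)$ on its $6$-dimensional module: no maximal subgroup has order divisible by $2^{6a}-1$ except those already containing $\SU_3$-type subgroups. We expect none of these to be transitive, so this case gives nothing; we will verify it from \cite{BHRoney}.
\item $G=\SL_4(2)=\Alt(8)$: we get $H=\Alt(7)$, which is (iv).
\item $G=\Alt(7)$: \cref{lem:alt cases}(i) gives (vii) and (viii).
\item $G=\Alt(6)$: \cref{small:alt}(ii) gives (ix).
\end{enumerate}
Transitivity of $H$ already forces $(G,H,V)$ to be immutable, so every case found is genuine.

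\textbf{Main obstacle.} The hardest part is Step 2: pinning down exactly which maximal overgroups of the Hering subgroups $K$ occur, in particular when $K$ sits inside several classes of maximal subgroups of $G$. For this we would use the \cite{LiebeckPraegerSaxl} factorisations together with $G=H\,C_G(v)$ from \cref{lem:basic1}, where $C_G(v)$ is a maximal parabolic $P_1$ by \cref{lem:parabolic factorisation}. The maximal $H$ with $G=HP_1$ are exactly those listed in \cref{prop:char2 list}(i)--(iii) and (x), which is a cleaner way to run Step 2. The claim that $V$ is the natural module in (i)--(vi) then follows from \cref{lem:more factors impossible,case (ii)(a)}.
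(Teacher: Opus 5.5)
Your overall strategy is the paper's. Hering's classification identifies $G$ and $V$. A second application to $H$, together with maximality, produces the list, and \cref{Normal basis lemma} settles the Singer-type subgroups.

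\textbf{The symplectic case.} The step where the paper actually has to argue is one you only assert: ``Inside $\Sp_{2n}(2^a)$ we get (v) and (vi)''. Hering's list a priori allows a maximal $H\le \Sp_{2n}(2^a)$ either to normalize some $\SL_m(2^c)$ with $m\ge 3$ and $mc=2na$ acting naturally on $V$, or to lie in $\Gamma\mathrm L_1(2^{2na})$. Both must be excluded.

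The paper does this with Huppert--Singer. An irreducible cyclic subgroup of $\Sp_{2n}(2^a)\le\Sp_{2na}(2)$ has order dividing $2^{na}+1$. A Singer cycle of $\SL_m(2^c)$, however, is irreducible of order $(2^{na}+1)(2^{na}-1)/(2^{2na/m}-1)>2^{na}+1$. In the soluble case the cyclic part has order at most $2^{na}+1$, so $|H|\le 2na(2^{na}+1)<2^{2na}-1$ once $na\ge 3$.

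Alternatively, for $m\ge 3$ the natural $\SL_m(2^c)$-module is not self-dual over $\GF(2)$, so it cannot preserve the trace of the symplectic form.

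Your fallback through $G=HP_1$ and \cref{prop:char2 list} would also handle the symplectic case, with two caveats. First, the claim that the maximal $H$ with $G=HP_1$ are ``exactly'' those listed there fails in small cases: $\SL_4(2)=\Alt(7)P_1$ is missing. Second, $G=HP_1$ only gives transitivity on $1$-spaces, so transitivity on vectors must still be checked afterwards.

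\textbf{The $\G_2$ case.} This is left as ``we expect'', and the stated reason is wrong. The order $|\SU_3(q).2|=2q^3(q^2-1)(q^3+1)$ is not divisible by $q^6-1$, because $\gcd(q^2+q+1,2q^3(q+1))=1$. Likewise $|\SL_3(q).2|$ is not divisible by $q^6-1$, because $\gcd(q^2-q+1,2q^3(q-1))=1$.

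The correct observation is that no maximal subgroup of $\G_2(q)'$ has order divisible by $q^6-1$. For the parabolic, subfield and the few exceptional maximal subgroups this follows directly from their orders.

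Step~1 should also list $\G_2(2)'\cong\PSU_3(3)$ rather than $\G_2(2)$, which is not quasisimple. The maximal subgroups of $\PSU_3(3)$ have orders $216$, $168$, $96$, $96$, none divisible by $63$.

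With these points supplied, your argument goes through along the same lines as the paper's.
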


\begin{proof}
This uses results of Huppert and Hering
 from \cite{Huppert, Hering} (see \cite[Section 7]{HBIII} or \cite[Appendix 1]{LiebeckAffine}).  Plainly, since $H$ is transitive on $V^\#$, so is $G$.  Hence $G$ has to be a quasisimple group listed in  \cite[Appendix 1]{LiebeckAffine} and $V$ is required to be a module defined in characteristic $2$.  Hence $G$ is one of $\SL_n(2^a)$, $\Sp_{2n}(2^a)$, or $\mathrm G_2(2^a)'$ for some $a$ and $n \ge 2$ or is an exceptional case $G\cong \Alt(7)$ or $\Alt(6)$ with $\dim V=4$.

 Suppose that $G \cong \SL_n(2^a)$. Then $\dim_{\GF(2)} V= na$. We apply \cite[Appendix 1]{LiebeckAffine} again to determine the candidates for $H$. If $H$ normalizes a linear group, then we must have $H$ normalizes $\SL_{n/b}(2^{ab})$ and the maximality of $H$ forces $b$ to be a prime dividing $n$. Now in the special case that $n=b$, then we notice that $H$ is in case A(1)  of \cite[Appendix 1]{LiebeckAffine} but it is also contained in $\SL_n(2^a)$. Hence \cref{Normal basis lemma} implies $q=2$. These two possibilities give cases (i) and (ii) of the theorem. If $H$ normalizes a symplectic subgroup, then surely $n$ is even and we have case (iii). Since the normalizer of
$\mathrm G_2(2^a)'$ in $\SL_6(2^a)$ is contained in $\Sp_6(2^a)$, there are no possibilities with $H$ normalizing such a subgroup.  If $H = \Alt(7)$, then $n=4$ and this is listed in (iv). Since $\Alt(6)$ is not maximal in $\SL_4(2)$, this completes the analysis of the linear case for $G$.

Suppose that $G\cong \Sp_{2n}(2^a)$. Assume that $H$ normalizes a linear group $\SL_m(2^c)$ with $m \ge 3$.   Then $H$ has an irreducible cyclic subgroup of order $  \alpha=(2^{mc}-1)/(2^c-1)$ whereas the maximal order of an irreducible  cyclic subgroup of $G$ is $\beta=2^{na}+1$ by \cite[Satz 5]{HuppertSinger}. The transitivity of $G$ and $H$ yields $2^{2na}-1= 2^{mc}-1$.
Hence $\alpha = (2^{2na}-1)/(2^c-1)= (2^{na}+1)(2^{na}-1)/(2^{2na/m}-1)$. As we require $\alpha \le \beta$ and as $m\ge 3$, we have a contradiction. Since $\SL_2(2^c)$ is a symplectic group, when this subgroup is maximal we have listed this in case (iv). If $H \le \Gamma\mathrm L_1(2^c)$ then  $c=2n$ and $H \le \Sp_{2n}(2^a) \le \SL_{2n}(2^a)$. Now \cref{Normal basis lemma} implies that $c=1$. But then $H$ contains a cyclic subgroup of order $2^{2n}-1$, again contradicting  \cite[Satz 5]{HuppertSinger}.

Continuing with $G\cong \Sp_{2n}(2^a)$, if $H$ normalizes a subfield  subgroup $\Sp_{n/b}(2^{ab})$, then $H$ is transitive on the non-zero vectors and for $H$ to be maximal we require $b$ to be a prime. These examples appear  in (v).

Finally, for the symplectic groups we have $\G_{2}(2^a)$ acts transitively on the non-zero vectors  of the $6a$-dimensional natural module.  As $\G_2(2^a)$ is maximal in $\Sp_{6}(2^a)$  this is (v).

Now suppose that $G=\G_{2}(2^a)$. Since $G< \Sp_{6}(2^a)$, the possible transitive subgroups have to normalize  one of $\SL_2(2^{3a})=\Sp_{2}(2^{3a})$, $\SL_3(2^{2a})$ but these groups are not subgroups of $G$.

Next, we consider $G=\Alt(7)$. Here $G$ has transitive subgroups $\Alt(6)$ and $\SL_2(4){:}2\cong \Sym(5)$. Finally, we could have $G \cong \Alt(6)$ and then we obtain $H\cong \SL_2(4)$ and that is the lot. These three examples are listed in parts (vii), (viii) and (ix).
 \end{proof}

We now commence to prove \cref{thm:Theorem B} for groups which are not symplectic.  We start with   ``oddities" which arise when $G$ can be identified with an alternating group. Throughout, we use \cref{lem: reduct to irred,lem:trivial factors,lem:min exactly 2} to determine the composition factors that we need to study as well as the minimal configurations to be examined.

\begin{lemma}\label{lem:alt(6) done} Suppose that $G \cong \Sp_4(2)'\cong \Alt(6)$. Assume that $H$ is a maximal subgroup of $G$ and that $V$ is a $\GF(2)G$-module with no centralized direct summands. Then $V$ is the natural $4$-dimensional $\Sp_4(2)'$- module, the natural $\Omega_5(2)$-module or the dual of the natural $\Omega_5(2)$-module. Furthermore, in all cases $H \cong \SL_2(4)\cong \Alt(5)$.
\end{lemma}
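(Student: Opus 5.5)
The plan is to first pin down the composition factors of $V$ and then determine which extensions among them can occur.

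\textbf{Step 1: the irreducible case.} Every composition factor $W$ of $V$ yields an immutable triple $(G,H,W)$ by \cref{lem: reduct to irred}. The non-trivial irreducible $\GF(2)\Alt(6)$-modules are:
\begin{itemize}
\item[] the two $4$-dimensional modules (the natural $\Sp_4(2)'$-module and its image under the exceptional graph automorphism, i.e.\ the two deleted permutation modules);
\item[] the $8$-dimensional Steinberg-type module, which is not absolutely irreducible, with $\mathrm{End}=\GF(4)$ coming from $4\otimes 4^\tau$ \textemdash{} I will check this list with {\sc Magma}.
\end{itemize}
By \cref{lem:alt cases} and \cref{small:alt}, for $n=6$ the only immutable irreducible case is the fully deleted module with $H\cong \SL_2(4)$. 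Here $G$ is transitive on $V^\#$ (15 vectors), so $H$ must have order divisible by $15$, forcing $H\cong\Alt(5)$, of either class. For each $4$-dimensional module, exactly one of the two classes of $\Alt(5)$ acts as $\SL_2(4)$ and is transitive; the other acts as $\Omega_4^-(2)$ and has orbits of sizes $5,10$. For the $8$-dimensional module, I will verify by a direct {\sc Magma} orbit count that no maximal $H$ gives immutability. Alternatively, \cref{lem:dimbound} and the regular-orbit results of \cite{FOS} would show $G$ has a regular orbit there, excluding it via \cref{lem:reg orb compendium}(i).

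\textbf{Step 2: the reducible case.} Since $G$ is perfect, \cref{lem:min exactly 2} says that if $V$ is reducible with no centralized direct summand, then some section has exactly two composition factors and no centralized summand. Both factors are then either a fixed $4$-dimensional module $N$ together with the trivial module, or two non-trivial modules. In the second case the two factors must be isomorphic $4$-dimensional modules, for otherwise no single $H$ is transitive on both: the $\SL_2(4)$-class for $N$ is the $\Omega_4^-(2)$-class for $N^\tau$. Because $\mathrm{Ext}^1_G(N,N)=0$, such a section is $N\oplus N$. But $H\cong\SL_2(4)$ has $5$ orbits of length $15\cdot 4$-ish on pairs, while $G$ has more orbits there. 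Rather than argue this by hand, I will simply count orbits on $N\oplus N$ with {\sc Magma}: $G$ and $H$ have different numbers of orbits (e.g.\ pairs $(u,v)$ with $v\in\GF(4)u$ versus independent pairs give different stabiliser patterns), so this section is not immutable.

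\textbf{Step 3: extensions of $N$ by the trivial module.} We have $\mathrm H^1(\Alt(6),N)\cong\GF(2)$, so the non-split extensions $1|N$ and $N|1$ are exactly the $5$-dimensional $\Omega_5(2)$-module and its dual, restricted to $G$. Both give immutable triples with $H\cong\SL_2(4)$, checked by orbit counts: $G$ and $H$ both have orbits of sizes $1,15,16$ on the $5$-dimensional module. Finally, longer uniserial or glued modules must be excluded, such as $1|N|1$ and modules with two copies of $N$ glued through trivials. Every immutable $V$ has all its sections immutable, and the only possible two-factor sections are the ones above, so the remaining candidates are quotients of projective-type modules of small dimension. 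I will handle these by a {\sc Magma} computation on the $6$-dimensional module $1|N|1$ (a section of the permutation module on $6$ points, i.e.\ the full permutation module). On this module, $\Alt(6)$ and $\Alt(5)$ have different orbit counts: $\Alt(6)$ acting on subsets of $6$ points has $7$ orbits, whereas transitive $\Alt(5)$ has more, for example two orbits on $3$-subsets. Modules with a trivial factor both above and below $N$ contain this as a section, so they are excluded. This leaves exactly the three modules in the statement.

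\textbf{Main obstacle.} The hardest step will be organising Step 3 cleanly: showing that no larger module built from one copy of $N$ and trivial modules avoids containing either $1|N|1$ or a centralized direct summand. Here I will use the $\mathrm H^1$ computation, and the fact that the radical series of such a module has a single non-trivial layer, to force $V$ to be $N$, $1|N$ or $N|1$.
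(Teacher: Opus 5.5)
You have a genuine gap: the case where $V$ has two or more composition factors isomorphic to $N$ with trivial factors between them. You list these as needing exclusion, but you give no argument that bounds them.

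\cref{lem:min exactly 2} only produces \emph{some} two-factor section with no centralized summand. Here that section can be the $\Omega_5(2)$-module or its dual, which are immutable. So knowing that every two-factor section is one of these or split bounds neither the length nor the dimension of $V$. Your sentence ``the remaining candidates are quotients of projective-type modules of small dimension'' is asserted, not proved, and your closing paragraph only treats modules with a single copy of $N$.

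What is missing is a reduction to a finite check. Let $X$ be a section of minimal length containing two non-trivial composition factors; both are $\cong N$, as you observe.
\begin{enumerate}
\item A trivial submodule or trivial quotient of $X$ could be factored off, contradicting minimality.
\item $N\oplus N$ is not immutable, so it cannot occur in $\mathrm{soc}(X)$.
\item Hence $\mathrm{soc}(X)\cong N\cong X/\mathrm{rad}(X)$.
\item The heart $\mathrm{rad}(X)/\mathrm{soc}(X)$ has only trivial factors by minimality, so it is centralized because $G$ is perfect.
\item Since $\mathrm{rad}(X)$ has simple socle $N$, the heart has dimension at most $\dim\mathrm H^1(G,N)=1$.
\item The heart is non-zero because $\mathrm{Ext}^1_G(N,N)=0$.
\end{enumerate}
So $X$ is a $9$-dimensional module with Loewy layers $N,1,N$, and you must still dispose of it. You can check it with {\sc Magma}, or note that it does not exist: the projective cover of $N$ is uniserial with layers $N,1,N^\tau,1,N,1,N^\tau,1,N$.

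This is exactly why the paper's proof works. It identifies the faithful composition factors via \cref{Main theorem} and \cref{prop:trans cases}, then enumerates all $\GF(2)\Alt(6)$-modules of dimension at most $9$. Your $\mathrm{Ext}$-based route is a finer alternative once this bound is supplied. The same socle/head argument also makes your final paragraph rigorous: it gives $C_V(G)\le[V,G]$ and at most one trivial factor on each side of $N$. So $V$ is $N$, the $\Omega_5(2)$-module, its dual, or the $6$-point permutation module.

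Some of your stated facts are wrong, though this is harmless because you defer to {\sc Magma}:
\begin{itemize}
\item The non-absolutely-irreducible faithful module is $16$-dimensional over $\GF(2)$, not $8$, since the degree-$8$ Brauer characters are conjugate over $\GF(4)$.
\item The orbit lengths on the $\Omega_5(2)$-module and its dual are $1,1,15,15$ and $1,15,6,10$ respectively, not $1,15,16$; $\Alt(6)$ has no subgroup of index $16$.
\item For $N\oplus N$ there is a clean argument to replace your garbled one. $G$ has orbits of lengths $90$ and $120$ on ordered pairs of distinct non-zero vectors, split by the value of the symplectic form. These cannot be orbits of a group of order $60$.
\end{itemize}
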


\begin{proof} Using \cref{Main theorem} and \cref{prop:trans cases} we see that the faithful composition factors of $V$ are isomorphic to either $U_1$ or $U_2$ where $U_1$ and $U_2$ are the $4$-dimensional (quasiequivalent) irreducible modules for $G$.
 We check using {\sc Magma} that the stated examples are indeed examples. We then construct all the $\GF(2)\Alt(6)$-modules up to dimension 9, remove those with a centralized direct summand and calculate the orbits of maximal subgroups on the remaining modules. This just yields two irreducible modules and four modules of dimension $5$. This is sufficient to  demonstrate our statement.
\end{proof}

\begin{lemma}\label{Alt(7) gone}
Suppose that $G \cong \Alt(7)$. Assume that $H$ is a maximal subgroup of $G$ and that $V$ is a $\GF(2)G$-module with no centralized direct summand. Then $V$  is irreducible of dimension $4$.
\end{lemma}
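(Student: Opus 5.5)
We determine the possible composition factors first, then show that no non-split extension survives. Assume $(G,H,V)$ is immutable with $G\cong\Alt(7)$, $H$ maximal, and $V$ without centralized direct summand.

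\textbf{Step 1: the non-trivial composition factors.} By \cref{lem: reduct to irred}, every composition factor $W$ of $V$ gives an immutable triple $(G,H,W)$. By \cref{Main theorem} together with \cref{prop:altgrps} and \cref{prop:trans cases}, the only non-trivial irreducible $\GF(2)\Alt(7)$-modules that occur are the two $4$-dimensional modules (dual to one another). The subgroup $H$ must be $\Alt(6)$ or $\Sym(5)$, and $H$ acts transitively on $W^\#$. In particular, any $H$ for which $V$ could be immutable lies in one of these two classes, and we may treat each class separately.

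\textbf{Step 2: reduction to length two.} Since $G=O^2(G)$, if $V$ is reducible then \cref{lem:min exactly 2} yields a section with exactly two composition factors and no centralized direct summand. This section is still immutable by \cref{lem: reduct to irred}. So it suffices to show that no $\GF(2)G$-module $X$ with two composition factors and no centralized direct summand gives an immutable triple. The two factors are either two $4$-dimensional modules, or one $4$-dimensional module and one trivial module.

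\textbf{Step 3: the candidate extensions.} If both factors have dimension $4$, then $\mathrm{Ext}$ between them is needed for non-splitness. A split sum of two $4$-dimensional modules has no centralized summand, so it must also be checked. For instance, $G$ should have a regular orbit on $4\oplus 4$ or $4\oplus 4^*$, since $|G|=2520<255\cdot 255$ in size terms. Alternatively, $H$ cannot be transitive on pairs in the way $G$ is. If one factor is trivial, the module is a non-split extension of $\GF(2)$ by a $4$-dimensional module, and its existence is governed by $\mathrm H^1(\Alt(7),4)$ and its dual.

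The main obstacle is deciding these cases cleanly. I would do it as in \cref{lem:alt(6) done}: use {\sc Magma} to construct every $\GF(2)\Alt(7)$-module of dimension at most $8$ built from these factors, discard those with a centralized direct summand, and compare the numbers of orbits of $G$ and of $\Alt(6)$ and $\Sym(5)$. If $\mathrm H^1(\Alt(7),4)=0$, the trivial-factor case disappears immediately. For the $8$-dimensional modules, a theoretical fallback is counting: by \cref{lem:numberorbits}, $G$ must have at least $256/|H|$ orbits, and $|\Sym(5)|=120$ makes this weak. So instead I would show directly that $G$ has a regular orbit, which is impossible by \cref{lem:reg orb compendium}(i), or that the orbit counts differ. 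Once every length-two configuration fails, $V$ must be irreducible, and by Step 1 it has dimension $4$.
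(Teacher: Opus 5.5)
Your main line is the paper's proof. The faithful composition factors are the $4$-dimensional modules, and what remains is a {\sc Magma} check of two things: that no non-split extension occurs (with a trivial module, or between two $4$-dimensional factors), and that a direct sum of two $4$-dimensional modules is not immutable. Your use of \cref{lem:min exactly 2} to reduce to length two only makes explicit what the paper leaves implicit.

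However, the regular-orbit remarks in Step 3 and in your last paragraph are false and should be deleted.
\begin{itemize}
\item An $8$-dimensional $\GF(2)$-module has only $2^8=256$ vectors, while $|\Alt(7)|=2520$. So $G$ cannot have a regular orbit on any such module, whether $W\oplus W$, $W\oplus W^*$ or a non-split extension.
\item The comparison $2520<255\cdot 255$ does not measure anything relevant.
\item Your ``fallback'' of producing a regular orbit to contradict \cref{lem:reg orb compendium}(i) can therefore never succeed.
\item The remark that $H$ ``cannot be transitive on pairs in the way $G$ is'' is not an argument as it stands.
\end{itemize}
The $8$-dimensional cases must be killed by comparing orbits. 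If you want a computer-free argument for the split cases, use that $\Alt(7)$ is $2$-transitive on the $15$ non-zero vectors of a $4$-dimensional module $W$. The point stabiliser is $\PSL_2(7)$, acting irreducibly on $W/\langle v\rangle$. Then $G$ has an orbit of length $15\cdot 14=210$ on $W\oplus W$. On $W\oplus W^*$, the pairs $(v,f)$ with $v,f\neq 0$ and $f(v)=0$ form an orbit of length $15\cdot 7=105$. Both lengths are divisible by $7$, whereas $7$ divides neither $|\Alt(6)|$ nor $|\Sym(5)|$. The existence of non-split extensions (the relevant $\mathrm H^1$ and $\mathrm{Ext}^1$ groups) still has to be settled by computation, exactly as the paper does when it checks that $V$ is completely reducible.
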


\begin{proof} This time we have that the faithful composition factors for $G$ are $4$-dimensional. Using {\sc Magma} we check that $V$ is completely reducible and then that a direct sum of two irreducible $4$-dimensional modules gives no immutable examples. This proves the claim.
\end{proof}

\begin{lemma}\label{lem:A8-A7}
Suppose that $G=\SL_4(2)\cong \Omega_6^+(2)\cong \Alt(8)$, $H  $ is a maximal subgroup of $G$ and $V$ is a $\GF(2)G$-module with no centralized direct summand. Denote the natural $4$-dimensional $\GF(2)G$-modules  by  $F$ and $F^*$.    If $(G,H,V)$ is immutable, then $V$ is isomorphic to one of $F$, $ F\oplus F$, $ F\oplus F\oplus F$, $ F\oplus F^*$, $ F\oplus F\oplus F^*$, or the dual of such a module.  Furthermore, if $V$ is irreducible, then $H \cong \GL_2(4).2 \cong (3\times \Alt(5)){:}2$, $\Sp_4(2)\cong \Sym(6)$ or $\Alt(7)$, and otherwise $H\cong\Alt(7)$ and the number of orbits of $G$ on $V$ is $2$ when $\dim V=4$, $5$ when $\dim V= 8$ and $16$ when $\dim V=12$.
\end{lemma}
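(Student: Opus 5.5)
The plan is first to identify the possible composition factors of $V$, then to pin down how those factors can glue together, and finally to settle the remaining finite list of modules by direct orbit computation.

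\textbf{Step 1: composition factors.} By \cref{lem: reduct to irred}, every composition factor $W$ of $V$ gives an immutable triple $(G,H,W)$. By \cref{Main theorem}, \cref{prop:trans cases} and \cref{lem:alt cases}(ii), the non-trivial irreducible $\GF(2)G$-modules admitting an immutable triple with $H$ maximal are only $F$ and $F^*$. The relevant subgroups are $H\cong\Alt(7)$, $\Sym(6)\cong \Sp_4(2)$ and $(3\times\Alt(5)){:}2\cong\GL_2(4).2$. Each of these is transitive on $F^\#$ and on $F^{*\#}$. The other irreducibles of $\Alt(8)$ are the $6$-dimensional orthogonal module, the $14$-, $20$- and $64$-dimensional modules. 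These are excluded because there $G$ has a regular orbit or has no suitable factorisation. The fully deleted permutation module of $\Alt(8)$ is the $6$-dimensional one; it is handled by \cref{small:alt}, which yields nothing for $n=8$.

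So every composition factor of $V$ is $F$, $F^*$ or trivial.

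\textbf{Step 2: structure of $V$.} Next I would control the extensions. Using \cref{lem:min exactly 2}, with $G=O^2(G)$, it suffices to examine modules with exactly two composition factors and no centralised direct summand. The possible pairs are $F|F$, $F|F^*$, $F|1$ and $1|F$, together with their duals. I would use the known cohomology $\mathrm H^1(\SL_4(2),F)$ and $\mathrm{Ext}^1_G(F,F^*)$, computed in {\sc Magma}, to list the non-split such extensions. For each one, I would check that no maximal $H$ has the same orbit count as $G$. Burnside counting suffices here, together with \cref{lem:Dual same number orbits}.

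This excludes all non-split uniserial pieces. Consequently $V$ is semisimple, and since it has no centralised summand, $V\cong F^{\oplus r}\oplus (F^*)^{\oplus s}$.

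\textbf{Step 3: semisimple case.} Now $H$ must be the same for every irreducible summand. For $r+s\ge 2$, I would compare orbit numbers of $G$ and $H$ on $F\oplus F$ and on $F\oplus F^*$; only $H=\Alt(7)$ survives. Here $G$ acts on $F\oplus F$ as $\GL_4(2)$ on pairs of vectors, giving $5$ orbits: $0$, the two axis types $(v,0)$ and $(0,v)$, the diagonal $(v,v)$, and independent pairs. The transitivity of $\Alt(7)$ on ordered pairs of independent vectors ($|\Alt(7)|=2520=15\cdot 14\cdot 12$ exactly, so it acts regularly on them) makes the triple immutable.

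For large $r+s$, I would bound using \cref{lem:numberorbits}. $G$ has at least $|V|/|H|=2^{4(r+s)}/2520$ orbits. Meanwhile $H$ is regular on generic tuples, so $G$ has a regular orbit once $r+s\ge 4$, as a basis of $F$ has trivial stabiliser. For $r+s\ge 4$ this contradicts \cref{lem:reg orb compendium}(i). This leaves $r+s\le 3$, and a direct {\sc Magma} orbit count on $F^{3}$, $F^2\oplus F^*$ and the duals gives the listed cases and the count of $16$ orbits.

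\textbf{Main obstacle.} I expect the main difficulty to be Step 2: ensuring that no non-split extension, for example of $F$ by the trivial module via $\mathrm H^1(\SL_4(2),F)\neq 0$, survives. I would resolve this by explicit {\sc Magma} construction of all such modules up to dimension $12$ and orbit comparison, as in \cref{lem:alt(6) done}.
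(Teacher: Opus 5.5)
Your outline follows the paper's proof closely: composition factors $F$ and $F^*$; {\sc Magma} for $\mathrm H^1$ and $\mathrm{Ext}^1$; reduction to $F^{\oplus r}\oplus (F^*)^{\oplus s}$; orbit checks for two and three summands. However, the step excluding $r+s\ge 4$ fails as stated. A vector with trivial stabiliser exists when $r\ge 4$ or $s\ge 4$ (an ordered basis), but not for the mixed sums. Write $v=(u_1,\dots,u_r,\phi_1,\dots,\phi_s)$, $U=\langle u_1,\dots,u_r\rangle$ and $K=\bigcap_j\ker\phi_j$. If $0\ne\alpha\in F^*$ vanishes on $U$ and $0\ne y\in K\cap\ker\alpha$, then the transvection $x\mapsto x+\alpha(x)y$ fixes every $u_i$ and every $\phi_j$. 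For $r\le 3$, $s\le 2$ (or $r\le 2$, $s\le 3$), dimension counting always produces such $\alpha$ and $y$. So $G$ has no regular orbit on $F^{\oplus 3}\oplus F^*$, $F^{\oplus 2}\oplus (F^*)^{\oplus 2}$ or $F\oplus (F^*)^{\oplus 3}$, and \cref{lem:reg orb compendium}(i) says nothing about them. Your other tools do not help either. \cref{lem:numberorbits} only demands $2^{16}/2520<27$ orbits, while $G$ has $67$. Every submodule with three faithful summands is $F^{\oplus 3}$, $F^{\oplus 2}\oplus F^*$ or a dual of these, all immutable, so \cref{lem: reduct to irred} cannot reduce these cases either.

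The missing idea is Burnside's lemma, as in \cref{lem:Dual same number orbits}. For each $g\in G$ you have $|C_{F^*}(g)|=|C_F(g)|$, so $|C_V(g)|=|C_F(g)|^{r+s}$ for $V=F^{\oplus r}\oplus(F^*)^{\oplus s}$. Hence $G$ and $H$ each have the same number of orbits on $V$ as on $F^{\oplus(r+s)}$. Since $H$-orbits refine $G$-orbits, immutability depends only on $r+s$. The regular $G$-orbit of ordered bases on $F^{\oplus 4}$ then excludes every sum of four summands; the paper's counts, $67$ orbits for $G$ against $74$ for $H$, apply uniformly for exactly this reason. \cref{lem: reduct to irred} excludes larger sums, and your separate checks of $F\oplus F^*$ and $F^{\oplus 2}\oplus F^*$ become redundant.

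Two smaller corrections. First, $\mathrm H^1(G,F)=\mathrm H^1(G,F^*)=0$ and $\mathrm{Ext}^1(F,F)=0$, as the paper records. So trivial factors split off, and the only non-split extension to test is the one between $F$ and $F^*$. Second, in Step 3 there are only $210$ ordered independent pairs, on which $\Alt(7)$ has stabilisers of order $12$. The product $15\cdot14\cdot12$ counts ordered independent triples. Equal orders give regularity only once transitivity is known, which is exactly what the $16$-orbit computation on $F^{\oplus 3}$ verifies. For $F\oplus F$ you need only $2$-transitivity of $\Alt(7)$ on the $15$ points.
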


\begin{proof} We know that $G$ acts  transitively  on the faithful composition factors of $V$ by \cref{Main theorem} and \cref{prop:trans cases}. Moreover, \cref{prop:trans cases} (i), (ii), (iii) and (iv) yield $H \cong \GL_2(4){:}2$, $\Sp_4(2)$ or $\Alt(7)$.
Now assume   that $V$ is not irreducible and that is has no centralized direct summand.
Then the faithful composition factors of $V$ are either the $4$-dimensional module $F$ or its dual $F^*$. We know from {\sc Magma} that $\mathrm H^1(G,F)= \mathrm H^1(G,F^*)=0$ and so $V$ only has faithful composition factors. A {\sc Magma} calculation also shows that $\mathrm{Ext}^1(F,F ) =0$ and $\dim \mathrm{Ext}^1(F,F^*)=1$. The non-split extension of $F$ by $F^*$ produces no instances of an immutable triple. So we only need to consider direct sums of copies of $F$ and of $F^*$. Now we simply check that all possibilities for direct sums of two or three modules give immutable examples (the orbits are of lengths $5$ for a sum of two modules and $16$ for three). Furthermore, in all cases, we have $H \cong \Alt(7)$.
 Now  modules which are a sum of at least four faithful $4$-dimensional modules have $67$ orbits  for $G$ and $74$ for $H$. This proves the lemma.
\end{proof}

\begin{lemma}\label{lem:A9 done}
Suppose that $G\cong \Alt(9)$, $V$ is a $\GF(2)G$-module with no centralized direct summands. If $(G,H,V)$ is immutable, then $V$ is the irreducible fully deleted permutation module.
\end{lemma}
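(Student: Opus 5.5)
Following the pattern of \cref{lem:alt(6) done,Alt(7) gone,lem:A8-A7}, I will first identify which composition factors of $V$ can occur. By \cref{lem: reduct to irred}, every composition factor $U$ of $V$ gives an immutable triple $(G,H,U)$. Suppose $U$ is non-trivial. By \cref{Main theorem} and \cref{prop:trans cases}, either $H$ is transitive on $U^\#$ or $U$ is the fully deleted permutation module with $H\cong\mathrm P\Gamma\mathrm L_2(8)$. The list in \cref{prop:trans cases} contains no transitive examples for $\Alt(9)$. Hence $H\cong \mathrm P\Gamma\mathrm L_2(8)$ (either of the two classes), and every non-trivial composition factor of $V$ is isomorphic to the $8$-dimensional fully deleted permutation module $D$. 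This module is self-dual, so $D^*\cong D$.

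Next I will use \cref{lem:min exactly 2}, which applies because $G=O^2(G)$. If $V$ is not irreducible, then some section $W$ of $V$ has no centralized direct summand and has exactly two composition factors. By \cref{lem: reduct to irred}, $(G,H,W)$ is immutable. Such a $W$ is one of the following:
\begin{itemize}
\item a non-split extension of the trivial module by $D$ (in either order), which is controlled by $\mathrm H^1(G,D)$;
\item a module $D\oplus D$;
\item a non-split extension of $D$ by $D$, which is controlled by $\mathrm{Ext}^1_G(D,D)$.
\end{itemize}
A non-split self-extension of the trivial module cannot occur, since $G$ is perfect.

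For the trivial factor: $\mathrm H^1(\Alt(9),D)$ is non-zero. The non-split module is the $9$-dimensional permutation module $W_1$ (or its dual, the quotient of the permutation module by $W_0$), since $9$ is odd and $W=W_1\oplus W_0$ is split. So I would compute directly that the even-weight module $W_1$ with the trivial extension is not immutable. More simply, by \cref{lem:Orbs del mod}, the orbits of $\Alt(9)$ on the $9$-dimensional permutation module correspond to subset sizes $0,\dots,9$. Immutability would then force $\mathrm P\Gamma\mathrm L_2(8)$ to be $k$-homogeneous for all $k$, which fails for $k=4$ or $5$ by an order count: $\binom{9}{4}=126$ does divide $1512$, so instead I would use the count of $5$-sets together with a {\sc Magma} orbit count. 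Either way, a {\sc Magma} comparison of orbit numbers on the two $9$-dimensional uniserial modules will settle this. For $D\oplus D$, the $G$-orbits correspond to orbits on pairs of vectors. Comparing orbit counts of $G$ and $H$ (by Burnside as in \cref{lem:Dual same number orbits}, or by {\sc Magma}) should show they differ, since $|V|=2^{16}$ exceeds what $|H|=1512$ can support in few orbits; \cref{lem:numberorbits} gives at least $44$ orbits, which constrains but does not decide the case, so {\sc Magma} is needed. Any non-split extension in $\mathrm{Ext}^1_G(D,D)$ would be checked the same way.

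Finally, I will argue as in \cref{lem:min exactly 2}. Since no two-factor section with no centralized direct summand is immutable, $V$ must be irreducible, hence equal to $D$. The main obstacle is computational: the $\mathrm{Ext}$ and orbit computations for $16$-dimensional modules. I expect these to be routine in {\sc Magma}, since $G$-orbits on $2^{16}$ vectors are easily enumerated.
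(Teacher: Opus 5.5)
Your overall structure matches the paper's. You identify the composition factors via \cref{Main theorem} and \cref{prop:trans cases}, reduce via \cref{lem:min exactly 2} to a section with exactly two composition factors, and dispose of trivial factors. You then settle the $16$-dimensional case by computing $\mathrm{Ext}^1_G(D,D)$ and comparing orbit counts in {\sc Magma}; the paper finds $\mathrm{Ext}^1_G(D,D)=0$, so the section is $D\oplus D$.

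The genuine problem is your treatment of the trivial composition factors, where several of your claims are false. Since $9$ is odd, the all-ones vector has odd weight, so $W_0\cap W_1=0$. Hence the $9$-dimensional permutation module $W$ is $W_0\oplus W_1$ with $W_1\cong D$ of dimension $8$. This module is split, so it is not a non-split extension, and your sentence calling it ``the non-split module'' contradicts itself. Moreover, $\mathrm H^1(\Alt(9),D)=0$, not non-zero. By Shapiro's lemma, $\mathrm H^1(\Alt(9),W)\cong \mathrm H^1(\Alt(8),\GF(2))=0$ because $\Alt(8)$ is perfect, and $D$ is a direct summand of $W$. Since $D\cong D^*$, there is no non-split extension between $D$ and the trivial module in either order. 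So this case is vacuous, which is exactly what the paper uses (it checks $\mathrm H^1(G,D)=0$ in {\sc Magma}).

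Your proposed argument for this case would also fail on its own terms. $\mathrm P\Gamma\mathrm L_2(8)$ is $k$-homogeneous on $9$ points for every $k$, so the order count or orbit count you propose cannot give a contradiction. To see this, use the immutability of $(\Alt(9),\mathrm P\Gamma\mathrm L_2(8),D)$ from \cref{Main theorem}(vi). The $G$-orbits on $D=W_1$ are the even-weight classes, so $H$ is transitive on $k$-sets for each even $k$. Taking complements in a $9$-set handles odd $k$.

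Consequently $G$ and $H$ have the same orbits on the full permutation module $W$. That module is excluded by the hypothesis only because it has the centralized direct summand $W_0$, not because it fails immutability. If you replace this paragraph with the vanishing of $\mathrm H^1(\Alt(9),D)$, the rest of your plan goes through as in the paper.
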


\begin{proof} We know from \cref{Main theorem,prop:trans cases} that the composition factors of $V$ are trivial modules and   copies of the fully deleted permutation module $W$. We also know $H \cong \mathrm P\GammaL_2(8)$. A {\sc Magma} calculation shows that $\mathrm H^1(G,W)$ is trivial and so  we may assume that $V$ has no trivial composition factors. Thus,  a minimal counter example will have two composition factors. So we may assume that $V$ has dimension 16. We calculate using {\sc Magma} that $\mathrm {Ext}^1(W,W)=0$ and so $V \cong W\oplus W$. Now calculating with {\sc Magma}  we see that $(G,H,V)$ is not immutable.
\end{proof}

We continue with a final exceptional case.
\begin{lemma}\label{lem:Omega82-A9 done}
Suppose that $G\cong \Omega_8^+(2)$, $V$ is a $\GF(2)G$-module with no centralized direct summands. If $(G,H,V)$ is immutable, then (up to triality) $V$ is the natural module for $G$.
\end{lemma}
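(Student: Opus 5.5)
The plan is to follow the template of \cref{lem:A9 done,lem:A8-A7}. First we pin down the possible composition factors of $V$ and the possible $H$. By \cref{lem: reduct to irred}, every composition factor $X$ of $V$ gives an immutable triple $(G,H,X)$. \cref{Main theorem} together with \cref{prop:trans cases} then restricts the non-trivial composition factors. Note that $\Omega_8^+(2)$ does not act transitively on the non-zero vectors of any non-trivial irreducible module, since it does not appear in \cref{prop:trans cases}. So each non-trivial factor falls under \cref{Main theorem} (ii), (iii) or (v). That is, it is one of the three $8$-dimensional modules (the natural module and the two half-spin modules, permuted by triality). The corresponding $H$ is $\Sp_6(2)$, $\GU_4(2).2$ or $\Alt(9)$, and each such $H$ is compatible with only the triality-images of the natural module for which it is listed.

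Next we handle trivial factors and reduce to two composition factors. The natural $\Omega_8^+(2)$-module is self-dual. We would verify with {\sc Magma} (or cite the known values) that $\mathrm H^1(G,X)=0$ for each $8$-dimensional irreducible $X$. Then no non-split extension between a trivial and a non-trivial factor exists. Since $G=O^2(G)$ and $V$ has no centralized direct summand, we may assume $V$ has no trivial composition factors. If $V$ is reducible, \cref{lem:min exactly 2} gives a section $W$ with exactly two composition factors $X_1,X_2$, both $8$-dimensional, and $(G,H,W)$ is immutable by \cref{lem: reduct to irred}.

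The two factors must both admit the same $H$ producing immutability. Applying triality to fix $H$, there are two cases. Either $X_1\cong X_2$, or $X_1\not\cong X_2$ are two triality images sharing a common $H$ (for instance the natural module and a half-spin module for some $\Alt(9)$ or $\Sp_6(2)$ class). In each case we would compute $\mathrm{Ext}^1_G(X_1,X_2)$ with {\sc Magma}; I expect it to vanish, so $W\cong X_1\oplus X_2$. Then an orbit count on $W$ for $G$ and for each candidate maximal $H$ should show that $(G,H,W)$ is not immutable. Heuristically, $|W|=2^{16}$ while $|H|\le|\Sp_6(2)|\approx 1.45\cdot10^6$, so \cref{lem:numberorbits} demands at least $|W|/|H|$ orbits; this alone is not decisive, so we rely on the direct count. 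Hence $V$ is irreducible and, up to triality, is the natural module.

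The main obstacle is this last computational step. We must organise the finitely many pairs $(X_1,X_2,H)$ correctly modulo triality, including the non-maximal overgroup issues from \cref{cor:vi1,cor:vi2}, which can be ignored here since $H$ is maximal. We must also confirm that no non-split extension gives an example. I would first compute Ext groups to eliminate non-split cases, and then compare the number of orbits of $G$ and $H$ on each direct sum using Burnside counting, which is cheap even for $2^{16}$ vectors.
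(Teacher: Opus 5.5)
Your proposal is correct and follows the paper's own argument. The paper likewise identifies the non-trivial composition factors as the three triality-related $8$-dimensional modules and uses {\sc Magma} to check that there are no non-split extensions among them or with the trivial module. It then verifies by orbit computations that a direct sum of two $8$-dimensional modules gives no immutable triple.
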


\begin{proof} We know that every non-trivial composition factor of $V$ is quasiequivalent to the natural $\Omega_8^+(2)$-module.
The proof is a {\sc Magma} calculation. We determine all three $8$-dimensional modules and check that there are no non-split extensions between them. We also check that there are no extensions with trivial module. In the end we consider a direct sum of two $8$-dimensional modules and this delivers no immutable triples.
\end{proof}

In model, the  cases above identify what has to be done in general. However, though $\mathrm H^1(G,W)$ is known for all the candidates for faithful composition factors $W$ that we encounter, the dimensions of the spaces $\mathrm{Ext}^1(W,W)$ are not generally available and this is why we need more extended and case specific arguments. Having said this, we note that in the next case \cite{Bell} describes $\mathrm{Ext}^1(W,W)$ for $W$ the natural $\GF(2)\SL_n(2^a)$-module.

\begin{lemma}\label{lem:not linear}
Assume that $(G,H,V)$ is immutable, $G \cong \SL_{n}(2^a)$, $n \ge 3$. Assume that $V$ has no centralized direct summand. Then either $V$ is irreducible or $G \cong \SL_4(2)$.
\end{lemma}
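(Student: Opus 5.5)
Suppose $V$ is reducible and $G\not\cong \SL_4(2)$. By \cref{lem:min exactly 2} (note $G=O^2(G)$), we may pass to a section $V$ with no centralized direct summand and exactly two composition factors. This section is again immutable by \cref{lem: reduct to irred}, so we look for a contradiction in that situation.

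By \cref{lem: reduct to irred}, each non-trivial composition factor $W$ of $V$ gives an immutable triple $(G,H,W)$. By \cref{Main theorem} and \cref{prop:trans cases}, and since $G\not\cong\SL_4(2)$, $H$ normalizes either a superfield subgroup $\SL_{n/b}(2^{ab})$ or a subgroup $\Sp_n(2^a)$. In either case $W$ is a natural module $N^{\theta}$ or its dual for some field automorphism $\theta$; by \cref{lem:Dual same number orbits} we may dualize. This leaves three shapes to exclude: an extension of a trivial module by $N^\theta$ (in one order or the other), and an extension between two faithful factors $N^{\theta_1}$ and $N^{\theta_2}$ or their duals. In each case the extension must be non-split, since otherwise we either obtain a centralized direct summand or reduce to a direct sum.

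For a trivial factor, the obstruction is $\mathrm H^1(G,N)$. For $\SL_n(2^a)$ with $n\ge3$ this vanishes except for small exceptions such as $\SL_3(2)$ (where it is $\GF(2)$). I would treat those exceptions via a vector $v\notin N$ whose coset $v+N$ is $G$-invariant. In the module $N/\text{trivial}$ the coset stabilizer is $G$, so \cref{lem:vec stabiliser} gives $C_G(v+w)=C_P(w)$ with $P=G$ for the cosets. Immutability then forces the action on $v+N$ to be $H$-transitive on the $G$-orbits, and I would compare orbit counts with {\sc Magma} for the finitely many small cases.

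For two faithful factors, I would use \cite{Bell}, which computes $\mathrm{Ext}^1$ between twisted natural modules (and duals) for $\SL_n(2^a)$. Generically these vanish. The non-vanishing cases should be those with $\theta_2=\theta_1\sigma$ or similar twists, together with $N^*$ against $N$ in small rank. In a split direct sum $N^{\theta_1}\oplus N^{\theta_2}$, a vector $(x,y)$ with $x,y$ in different $1$-spaces has stabilizer fixing two independent vectors. I would argue, in the manner of \cref{lem:no tensors}, that $G\ne C_G((x,y))H$: $H$ must then be transitive on pairs of independent vectors, which is impossible by order, since $|H|$ is much smaller than $(q^n-1)(q^n-q)$ in both families. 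The same vector argument handles non-split extensions: take $v$ in the top factor, and note that its stabilizer lies in a point stabilizer $P_1$. Applying \cref{lem:vec stabiliser} with $P=P_1$ to the submodule, $C_{P_1}(w)$ must factor with $H$, which \cite{LiebeckPraegerSaxl} rules out.

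The main obstacle is the $\mathrm{Ext}^1$ bookkeeping from \cite{Bell} together with the small-$q$ exceptions, in particular $\SL_3(2)$, $\SL_3(4)$ and $\SL_4(2^a)$ with $\Sp_4$-normalizing $H$. I would push these small cases to {\sc Magma}, as in \cref{lem:A8-A7}.
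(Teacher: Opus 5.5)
Your reduction to a section with exactly two composition factors matches the paper. So does your treatment of a trivial factor: $\mathrm H^1(G,N)$ and $\mathrm H^1(G,N^*)$ vanish by \cite{JonesParshall} except for $\SL_3(2)$, which is a {\sc Magma} check.

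The gap is the case of two faithful factors joined by a non-split extension. You apply \cref{lem:vec stabiliser}, but that lemma assumes a lift $v$ of $\bar v=v+W$ can be chosen in $C_V(P)$, where $P$ is the full stabilizer of $\bar v$ (of shape $q^{n-1}{:}\SL_{n-1}(q)$, not $P_1$). For a non-split extension this is exactly what has to be proved. Pulling the extension back along $\langle\bar v\rangle\le\overline V|_P$ gives a class in $\mathrm H^1(P,W)$, and a $P$-fixed lift exists if and only if that class is zero. If it is not zero, then $C_G(v+w)=\{g\in P\mid [v,g]=[w,g]\}$ is only a twisted stabilizer. It need not fix $w$ or any further subspace, so you get no factorisation of $G$ involving a stabilizer of two independent subspaces, and \cite{LiebeckPraegerSaxl} yields nothing. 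The $\mathrm{Ext}^1$ information from \cite{Bell} does not close this, since it says which $G$-extensions exist, not how they restrict to $P$.

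Producing the $P$-fixed lift is the core of the paper's proof. Set $Q=O_2(P)$ and $U=\langle v,W\rangle$; $P$ is perfect since $G\not\cong\SL_3(2)$.
\begin{itemize}
\item If $\overline V\cong W$, a non-trivial action of $Q$ on $U/C_W(P)$ would force $Q\cong W/C_W(P)$ by \cref{lem:End bound}. This is impossible, since one is the dual natural and the other the natural $\SL_{n-1}(q)$-module.
\item If $\overline V\cong W^*$, \cref{lem:End bound} gives $C_U(Q)>[W,P]$.
\end{itemize}
In both cases, $\mathrm H^1$ of $\SL_{n-1}(q)$ on its natural and dual natural modules vanishes by \cite{JonesParshall}; for $n\ge4$ this needs exactly $G\not\cong\SL_4(2)$. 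Together with perfectness of $P$ this gives a $P$-fixed lift. Then choosing $w$ with $|wP|$ divisible by $q^{n-1}-1$ produces a $G$-orbit of length divisible by $(q^{n-1}-1)(q^n-1)$. This handles split and non-split extensions at once, so no $\mathrm{Ext}^1$ bookkeeping is needed.

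There are also some smaller problems.
\begin{itemize}
\item For $N\oplus N$ your two-vector stabilizer argument is right, but the stated reason is false. $|H|$ is not in general much smaller than $(q^n-1)(q^n-q)$: for $H\ge\Sp_n(q)$, or $H\ge\SL_{n/2}(q^2)$ with $n$ large, $|H|$ far exceeds $q^{2n}$. What fails is divisibility. $(q^{n-1}-1)(q^n-1)$ does not divide $|H|$ for the groups in \cref{prop:trans cases} (i)--(iii); use a prime divisor of $q^{n-1}-1$ coprime to $|H|$. This is how the paper finishes.
\item For $N\oplus N^*$ you need a pair $(x,\varphi)$ with $\varphi(x)=0$. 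When $\varphi(x)\ne0$ the stabilizer is $\SL_{n-1}(q)$, and the orbit length $q^{n-1}(q^n-1)$ is compatible with $H\ge\Sp_n(q)$.
\item $\SL_4(2^a)$ with $H$ normalizing $\Sp_4(2^a)$ is an infinite family and cannot be left to {\sc Magma}.
\item $\SL_3(4)$ needs no treatment. By \cref{prop:trans cases}, for $n=3$ only $\SL_3(2)$ has a maximal subgroup transitive on $N^\#$. This matters, because for $n=3$ the Levi factor is $\SL_2(q)$, where the $\mathrm H^1$-vanishing used above fails for $q\ge4$.
\end{itemize}
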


\begin{proof} Assume that $G \not \cong \SL_4(2)$. Noticing that $\Omega^+_6(2^a)$ is not listed in \cref{Main theorem} and using \cref{prop:trans cases}, we deduce that all the faithful composition factors of $V$ are natural, or dual natural modules for $G$. Furthermore, the candidates for $H$ are provided by \cref{prop:trans cases}.

A {\sc Magma} check   shows that $G \not \cong \SL_3(2)$.

We claim   that $V$ is irreducible. Suppose this is false. Then by \cref{lem: reduct to irred,lem:trivial factors,lem:min exactly 2}, we may assume that $V$ has exactly two  composition factors. By  \cite[Tables B and C]{JonesParshall}, as  $G \not \cong \SL_3(2)$, we deduce that   $V$ has no trivial composition factors. Thus we may assume that $\dim V= 2na$.

Let $W$ be a proper submodule of $V$. Thus $ W$ and $V/W$ are natural or dual natural $\GF(2)G$-modules.

Set $\overline{V}= V/W$ and let $v \in V \setminus W$. Then every vector of $V \setminus W$ is mapped into $\overline v=v+W$ by some element of $G$.  Let $P$ be the subgroup of $G$ which fixes $\overline{v}$ and define $Q=O_2(P)$. Then, as $G \not \cong \SL_3(2)$, $P$ is perfect, $P/Q \cong \SL_{n-1}(2^a)$, $Q$ regarded as a $\GF(2)P/Q$-module is dual to the module $\overline{V}/C_{\overline V}(P)$ and has order $2^{a(n-1)}$. Let $U=\langle v,W\rangle$. Then $\dim U=na+1$. We intend to show that we can choose $v$ to be centralized by $P$.

Suppose that $\overline V \cong W$.  Then $Q$ acts on $U/C_W(P)$ where $\dim C_W(P)=\dim C_{\overline V}(P)=1$. If this action is non-trivial, then  $[U/C_W(P),Q]=W/C_W(P) $ is isomorphic to $Q$ as a $\GF(2)P/Q$-module by \cref{lem:End bound}. However, these modules are dual to each other. Hence $[U,Q]\le C_W(P)$.  As $G \not\cong \SL_4(2)$, \cite[Tables B and C]{JonesParshall} shows that   $U/C_W(P)$ splits as a direct sum of a $1$-space $C/C_W(P)$ and $W/C_W(P)$. But then, as $P$ is perfect, $C$ is centralized by $P$ and consequently we may assume that $v$ is centralized by $P$.

Now consider   $\overline V \cong W^*$. Set $Y=[W,P]$ and note $\dim W=(n-1)a$. Then $Y\cong Q$ as a $\GF(2)P/Q$-module and $Q$ centralizes $Y$.  As $P$ is perfect, $P$ centralizes $U/Y$.  Since $\mathrm {End}_G(V)\cong \GF(2^a)$, and $|U/Y|=2^{a+1}$, we must have $C_U(Q)> Y$ by \cref{lem:End bound}.  By  \cite[Tables B and C]{JonesParshall} again,   $C_U(Q)$ splits as a direct sum with $Y$ and once again we may assume that $v$ is fixed by $P$.

If $W \cong \overline V^*$, then $W$ contains a submodule of dimension $a(n-1)$ on which $P$ induces the natural $ \SL_{n-1}(2^a)$-module whereas when $W \cong \overline V$, there is a quotient of $W$ on which  $P/Q$ acts naturally. In particular, in the first case we can choose $w \in W$ so that $|wP|= 2^{a(n-1)}-1$ and in the second case we can choose $w$ to have $|wP|$   a multiple of $ (2^{a(n-1)}-1) $. By \cref{lem:vec stabiliser}, $C_G(v+w)= C_P(w)$. It follows that $|G:C_P(v+w)|$ is divisible by $(2^{a(n-1)}-1)(2^{an}-1)$. Therefore, immutability implies that  $|H|$ is a multiple of $(2^{a(n-1)}-1)(2^{an}-1)$ and this is impossible as $H$ is known by  \cref{prop:trans cases} (i), (ii) and (iii).
\end{proof}

\begin{lemma}\label{lem:O8 all the same}
 Suppose that $G \cong \Omega_{8}^+(2^a)$.  Assume that $H$ is a maximal subgroup of $G$ and that $V$ is a faithful $\GF(2)G$-module which has no centralized direct summands. If $(G,H,V)$ is immutable, then all the faithful composition factors of $V$ are isomorphic.
\end{lemma}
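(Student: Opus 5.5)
Suppose, for a contradiction, that two faithful composition factors of $V$ are not isomorphic. By \cref{lem: reduct to irred}, every composition factor $X$ of $V$ gives an immutable triple $(G,H,X)$. Combining \cref{Main theorem} with \cref{prop:trans cases} (and \cref{odd balls} when $a=1$), each faithful factor is, up to twisting by field automorphisms, one of the three $8a$-dimensional modules permuted by triality: the natural module $N$ and the two half-spin modules $S_+,S_-$. Moreover the factor that occurs determines $H$ up to conjugacy. If the factor is $N$, then $H$ is an image under $\tau$ or $\tau^2$ of $N_1\cong\Sp_6(2^a)$ or of $N_2^-$ (\cref{cor:vi1,cor:vi2}), or $H\cong\Alt(9)$ when $a=1$. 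If the factor is a half-spin module, the corresponding triality image of these groups occurs instead.

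The plan is therefore to compare, for two non-isomorphic faithful factors $X\not\cong Y$, the lists of maximal subgroups $H$ for which $(G,H,X)$ and $(G,H,Y)$ are both immutable, and show that these lists have empty intersection.

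\textbf{Case 1: $X$ and $Y$ are distinct triality images.} Say $X=N$ and $Y=S_+$. The candidates for $N$ are $N_1^{\tau^{\pm1}}$ and $(N_2^-)^{\tau^{\pm1}}$, and these lie in the classes stabilising a non-degenerate $1$-space or a $2^-$-space of $S_+$ or $S_-$. For $S_+$ the candidates are the classes acting irreducibly on $S_+$, namely those coming from $N$ and $S_-$. Intersecting the two lists leaves only the class that is irreducible on $N$ and on $S_+$ and is the stabiliser in the $S_-$ picture. Then I would need a separate argument that this $H$ acts reducibly on one of $X$ or $Y$. Since $H$ is the stabiliser of a non-degenerate subspace of $S_-$, it is irreducible on both $N$ and $S_+$. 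To rule this class out I would instead use orbit structure. For this $H$, the restrictions $N|_H$ and $S_+|_H$ are the two distinct spin-type modules of $\Sp_6(2^a)$ or $\GU_4(2^a)$, and I expect the contradiction to come from a suitable vector in the extension.

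\textbf{Case 2: $X$ and $Y$ are Galois twists of one another.} Here $X^\sigma\cong Y$. The field automorphism does not preserve the $H$-orbits: $(G,H,X)$ immutable implies $(G,H^\sigma,X^\sigma)$ immutable, and $H^\sigma$ is conjugate to $H$. So in this case the lists coincide, and a different argument is needed.

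\textbf{Reduction to two factors.} Using \cref{lem:min exactly 2} and \cref{lem:trivial factors}, I reduce to a section $V$ with exactly two faithful composition factors $W\le V$ and $\overline V=V/W$, allowing trivial factors in between. I then argue as in \cref{lem:not linear}.
\begin{enumerate}
\item Choose $\overline v\in\overline V$ whose stabiliser is $P=O^{2'}(P_1)$ for the relevant parabolic.
\item Using \cref{lem:End bound} and vanishing $\mathrm H^1$ results, show that the preimage $v$ can be chosen in $C_V(P)$.
\item Apply \cref{lem:vec stabiliser}: $C_G(v+w)=C_P(w)$ for every $w\in W$.
\end{enumerate}
Because $W\not\cong\overline V$, the group $P$ (the stabiliser for $\overline V$) acts on $W$ as a parabolic of a different triality type. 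For instance, on $S_+$ the stabiliser of a natural singular point is $P_1$, which acts on $S_+$ with factors $4+4$ as the natural modules of $\Omega_6^+$. One can therefore pick $w$ with $|wP|$ divisible by a primitive prime divisor of $2^{3a}-1$ or $2^{4a}-1$. Then $|G:C_G(v+w)|$ is divisible by $|G:P|$ times this factor, and this should exceed what any candidate $H$ can supply: $|H|_{2'}$ must cover $(2^{4a}-1)(2^{3a}+1)$ together with the extra Zsigmondy factor.

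\textbf{Main obstacle.} The hardest step is choosing $v$ centralised by $P$ (step 2), that is, controlling extensions involving trivial factors and $\mathrm{Ext}^1$ between distinct triality modules, together with the Zsigmondy exceptions $a\le 2$. For those small cases, $a=1$ is handled by {\sc Magma} as in \cref{lem:Omega82-A9 done}, and $a=2$ would be checked directly.
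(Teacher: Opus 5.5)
Your operative argument, the part headed ``Reduction to two factors'', is the paper's proof. You take $\overline v$ singular in $V/W$ with stabiliser $P=O^{2'}(P_1)$, lift it to a $P$-fixed vector $v$ via \cref{lem:End bound} and vanishing $\mathrm H^1$, and then apply \cref{lem:vec stabiliser}. The list comparison is a dead end, as you found. Your Case 2 is vacuous: a Galois twist $X^\sigma$ of a $\GF(q)G$-module is isomorphic to $X$ as a $\GF(2)G$-module, so the only faithful factors are the three triality classes $N,S_+,S_-$.

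The step you flag as the main obstacle is settled in the paper with exactly the tools you name. By Jones--Parshall, $\mathrm H^1(G,X)=0$ for all three $8a$-dimensional modules. Since $V$ has no centralized direct summand, $V$ then has no trivial composition factors at all. After applying triality, one may take $\dim V=16a$ with $W$ half-spin and $V/W$ natural. On $U=\langle v,W\rangle$ the group $P$ has composition factors of dimensions $1,4a,4a$. Because $O_2(P)$ is the $6a$-dimensional $\Omega_6^+(q)$-module, it is isomorphic to neither $4a$-dimensional factor, so \cref{lem:End bound} forces $O_2(P)$ to act trivially on the relevant sections. Then $\mathrm H^1(\SL_4(q),\text{natural})=0$ splits off the trivial piece, giving $U=C_U(P)+W$.

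The only real divergence is the final contradiction. The paper takes $w\in C_W(O_2(P))^\#$, observes that $C_P(w)$ lies in a conjugate of $P_1\cap P_3$ or $P_1\cap P_4$, and invokes \cref{lem:unique para}. That argument needs no information about $H$.

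Your order count also works with the same $w$. Since $|wP|=q^4-1$, you get $|G:C_G(v+w)|=(q^4-1)^2(q^3+1)$. Its $r$-part, for a primitive prime divisor $r$ of $2^{4a}-1$, is $(q^2+1)_r^2=|G|_r$. By contrast, every candidate $H$ supplied by \cref{Main theorem} ($\Sp_6(q)$, $\GU_4(q).2$, and $\Alt(9)$ when $a=1$) has $r$-part only $(q^2+1)_r$. This route costs you the use of the list of $H$. In exchange, since $4a\neq 6$, such an $r$ always exists, so the Zsigmondy exceptions for small $a$ that you worry about do not arise.
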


\begin{proof} Suppose false. By \cref{Main theorem,lem: reduct to irred}, the non-trivial composition factors of $V$ are all $8a$-dimensional and are either natural modules or half-spin modules. By     \cite[Tables B and C]{JonesParshall}, we see that all the trivial modules split from such composition factors and as $V$ has no centralized direct summands, all the composition factors of $V$ are $8a$-dimensional. By \cref{lem:min exactly 2,lem: reduct to irred}, we may assume that $V$ has exactly two faithful composition factors. Hence $\dim V=16a$. Furthermore, as the claim is false, we may assume that the composition factors are not isomorphic. Thus there is $W< V$   such that $V/W$ is a natural module and $W$ is a half-spin module. Let $v \in V\setminus W$ be such that $v+W$ is a singular vector in $V/W$ and let $P$ be the centralizer of $v+W$.  Then $P$ has shape $2^{6a}{:}\Omega_{6}^+(2^a)\cong 2^{6a}{:}\SL_4(2^a)$. Set $U=\langle v,W\rangle$ of dimension $1+8a$. Then, as $U$ is a half-spin module, the composition factors for $P$ on $U$ have dimension $1$, $4a$ and $4a$. Applying \cref{lem:End bound} twice together with the fact from \cite[Tables B and C]{JonesParshall} that $\mathrm H^1(P/O_2(P),N)=0$ for $N$ the natural $\SL_4(2^a)$-module or its dual,  we  see that $C_U(P) \ne 0$ and $U= C_U(P)+W$.  Hence we may select $v$ to be centralized by $P$. Now picking an element $w$ of $C_W(O_2(P))^\#$ we see that $C_P( v+w)= C_P(w)$ by \cref{lem:vec stabiliser}. Hence, identifying $P$ as a subgroup of the parabolic subgroup $P_1$, we have that $C_P(w)$ is conjugate to a subgroup of  $P_1 \cap P_3$ or $P_1\cap P_4$. This   contradicts \cref{lem:unique para}.
\end{proof}

\begin{lemma}\label{lem:orthgone} Suppose that $G \cong \Omega_{2n}^\epsilon(2^a)$ with $\epsilon 1 =(-1)^n $.
 Assume that $H$ is a maximal subgroup of $G$ and that $V$ is a faithful $\GF(2)G$-module with no centralized direct summands. If $(G,H,V)$ is immutable, then $V$ is irreducible.
\end{lemma}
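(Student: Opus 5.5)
Suppose the claim is false and follow the pattern of \cref{lem:not linear,lem:O8 all the same}. By \cref{lem: reduct to irred}, every composition factor $X$ of $V$ gives an immutable triple $(G,H,X)$. Hence, by \cref{Main theorem} and \cref{prop:trans cases}, each non-trivial composition factor of $V$ is the natural module $N$ for $G\cong\Omega_{2n}^\epsilon(2^a)$, and $H$ normalizes $\SU_n(2^a)$. For $n=4$ we may also apply \cref{lem:O8 all the same}, so that all faithful factors are isomorphic to $N$. Since $G=O^2(G)$, \cref{lem:min exactly 2} lets us pass to a section with no centralized direct summand and exactly two composition factors. That section is again immutable by \cref{lem: reduct to irred}.

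The trivial factors are disposed of first. By \cite[Tables B and C]{JonesParshall}, $\mathrm H^1(G,N)=0$ except in a few small cases. In those cases we use instead the standard fact that $\dim \mathrm H^1(\Omega_{2n}^\epsilon(q),N)$ is at most $1$, with the extension given by $\mathcal W$-type modules over $\Sp$. Where an extension exists, we treat it directly: the non-split module $U$ with $U/[U,G]$ trivial has a vector $u$ outside $N$, and $C_G(u)$ is a proper subgroup. We aim to show that $C_G(u)$ lies in no subgroup $M$ with $G=MH$, using \cite{LiebeckPraegerSaxl}; the only such $M$ are $P_1$ and $N_1$. Then $G\neq C_G(u)H$, contradicting \cref{lem:basic1}. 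Otherwise $V$ has exactly two composition factors, both isomorphic to $N$, and $\dim V=4na$.

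Now let $W<V$ with $W\cong V/W\cong N$. Choose $v\in V\setminus W$ with $v+W$ a singular vector of $V/W$, and let $P$ be its stabilizer, so $P=O^{2'}(P_1)$ has shape $q^{2n-2}{:}\Omega_{2n-2}^\epsilon(q)$ and is perfect for $n\ge 3$. Put $U=\langle v,W\rangle$. The factors of $P$ on $W$ are $1$, $2n-2$ and $1$ (over $\GF(q)$), and $Q=O_2(P)$ is isomorphic to the natural $\Omega_{2n-2}^\epsilon(q)$-module. Applying \cref{lem:End bound} to $U/C_W(P)$ and using $\mathrm H^1(P/Q,\text{natural})=0$ from \cite{JonesParshall}, we aim to show that $v$ may be chosen in $C_U(P)$. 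Here $\mathrm{End}$ is $\GF(q)$, so the $a+1$-dimensional piece forces a fixed complement.

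Next pick $w\in W$ whose $P$-orbit is large; for example $w$ non-singular in $W$ with $|wP|$ divisible by a factor like $q^{n-1}\mp 1$ coming from $\Omega_{2n-2}^\epsilon(q)$. By \cref{lem:vec stabiliser}, $C_G(v+w)=C_P(w)$, so $|G:C_G(v+w)|$ is divisible by $|G:P|\cdot|wP|$. This is divisible by $(q^n-\epsilon)(q^{n-1}+\epsilon)(q^{n-1}-\epsilon)$ up to powers of $2$, or by the corresponding primitive prime divisors. Immutability forces $|H|$ to be divisible by this, but $|N_G(\SU_n(q))|$ is not divisible by a Zsigmondy prime of $q^{2(n-1)}-1$ when $n$ is odd, or of $q^{n-1}-1$ in the appropriate case. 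This gives the contradiction.

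The main obstacle is the parity and Zsigmondy bookkeeping in the last step, together with the small exceptions ($q=2$ with small $n$, and $n=3$ where $G\cong\SU_4(q)$ and $P$ may not be perfect). I expect these to need a Magma check, as in \cref{lem:A8-A7}. The splitting argument in the third paragraph also needs care if $\mathrm{Ext}^1(N,N)\ne0$, but it only uses the restriction to $P$.
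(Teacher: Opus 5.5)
Your reduction to two natural composition factors and the step placing $v$ in $C_U(P)$ (via \cref{lem:End bound} and $\mathrm H^1(P/Q,\cdot)=0$) follow the paper. The final contradiction, however, has a genuine gap at $n=4$. You assume that $H$ normalizes $\SU_n(q)$. But for $G=\Omega_8^+(q)$ with natural composition factors, \cref{Main theorem}(iii) also allows $H\cong\Sp_6(q)$ (a triality image of $N_1$), and (v) allows $H\cong\Alt(9)$ when $q=2$. For $H\cong\Sp_6(q)$ your choice of $w$ gives no contradiction. To see this, let $C_W(P)=\langle e\rangle_{\GF(q)}$ and pick $f\in W$ singular with $B(e,f)=1$. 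If $B(w,e)\neq 0$ then $|wP|=q^{2n-2}$, so your non-singular $w$ must be $w=\alpha e+x$ with $x\in\langle e,f\rangle^\perp$ non-singular. Then $|G:C_P(w)|=q^3(q^4-1)(q^6-1)$, which divides $|\Sp_6(q)|$. In fact $G=C_P(w)H$ holds here. Indeed, $C_P(w)\cong q^{1+4}{:}\Sp_4(q)$ is a vector stabilizer in $C_G(w)=N_1\cong\Sp_6(q)$. Also $N_1\cap H\cong\G_2(q)$ is transitive on the non-zero vectors of the $6$-space, and $G=N_1H$.

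The missing idea is the paper's choice of $w$. Take $w\in[W,P]$ with $w+C_W(P)$ a non-zero \emph{singular} vector. Then $C_P(w)$ fixes the totally singular $2$-space $\langle C_W(P),w\rangle$, so it lies in a conjugate of $P_2$. By \cite{LiebeckPraegerSaxl} there is no factorization $G=P_2H$ for any candidate $H$, so all cases are handled at once. In your counting language, this $w$ has $|wP|=q(q^{n-1}-\epsilon)(q^{n-2}+\epsilon)$. For $n=4$ the index $|G:C_P(w)|$ then contains $(q^2+1)^2$, which divides neither $|\Sp_6(q)|$ nor, for $q=2$, $|\Alt(9)|$. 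For $H=N_G(\SU_n(q))$ your Zsigmondy argument does work, provided primitive prime divisors are taken relative to $q$ (for $q=4$, $n=4$ use $7\mid 4^3-1$).

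Two smaller points. First, $\mathcal W$ restricted to $\Omega_{2n}^\epsilon(q)$ is split, so it yields no extensions. You only need $\mathrm H^1(G,N)=0$ from \cite{JonesParshall}, which holds because $\epsilon 1=(-1)^n$ excludes $\Omega_6^+(2)$. Second, the case $(2n,\epsilon,q)=(8,+,2)$ must be handled separately, since there $\mathrm H^1(\Omega_6^+(2),\text{natural})\neq0$; the paper uses \cref{lem:Omega82-A9 done} for it.
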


\begin{proof} By \cref{Main theorem,lem:O8 all the same}, we may assume  the faithful composition factors of $V$ are natural modules. Furthermore, by \cite[Tables B and C]{JonesParshall} we know that there are no trivial composition factors (notice that   $G \not \cong \Omega_6^+(2)$ as $\epsilon 1 = (-1)^n$).
Hence to prove the lemma, by \cref{lem: reduct to irred} we may as well assume that $V$ has a submodule $W$ with $W $ and $ V/W$ both natural modules.  Let $v +W\in V/W$ be a non-zero singular vector and let $P\le G$ be its stabilizer.  Then $P $ has shape $q^{2n-2}{:}\Omega_{2n-2}^\epsilon (2^a)$ (see \cite[Proposition 4.1.20]{KleidmanLiebeck}).

Set $Q= O_2(P)$ and note that $P$ is perfect since $G \not \cong \Omega_6^+(2)$.
Now set $U=\langle v,W\rangle$ of dimension $2na+1$.  Then, as $P$ is perfect, $[U,P]=[W,P]$ has dimension $2(n-1)a$ and $W/C_W(P) \cong O_2(P)$ as $\GF(2)P/Q$-modules have dimension $(2n-2)a$. Since $\mathrm{End}_{P/Q}(W) \cong \GF(2^a)$, using \cref{lem:End bound} we deduce that $C_U(Q)/C_W(P)\not=0$ and $U= C_U(Q) +W$. Since, by \cite[Tables B and C]{JonesParshall}, $\mathrm{H^1}(P/O_2(P),[W,P]/C_W(P))=0$ whenever $(2n,\epsilon,2^a) \ne (8,+,2)$, there exists a $P$-submodule $C/C_W(P)$ centralized  by $P$ and such that $C_U(Q)/C_W(P)= C/C_W(P)\oplus W/C_W(P)$. Since $P$ is perfect, $P$ centralizes $C$. Hence we may select $v \in C_U(P)$.

By \cref{lem:vec stabiliser} $C_G(v+w)= C_P(w)$ for all $w \in W$ such that $w+C_W(P)$ is singular.  Then $C_P(w)$ centralizes $\langle C_W(P),w\rangle$ and so is contained in the parabolic subgroup conjugate to $P_2$. But there are no factorisations $P_2H$ and so we have a contradiction unless $G\cong \Omega_{8}^+(2)$. However, the omitted case is handled in \cref{lem:Omega82-A9 done} and this completes the proof.
\end{proof}

\begin{lemma}\label{lem:spin yields irreducible}
Suppose that $G \cong \Sp_6(2^a)$, $V$ is a faithful $\GF(2)G$-module with no  centralized direct summands and $H$ is a maximal subgroup of $G$.
Assume that $V$ has a composition factor which is a spin module. If $(G,H,V)$ is immutable, then $V$ is irreducible.
\end{lemma}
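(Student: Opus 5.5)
Since $V$ has a spin composition factor $S$, \cref{lem: reduct to irred} shows that $(G,H,S)$ is immutable. So by \cref{Main theorem} and \cref{prop:trans cases}, $H \cong \SO_6^-(2^a)\cong \Omega_6^-(2^a){:}2$. Indeed $H$ cannot normalise a $\G_2(2^a)$ or a subfield subgroup, because those are transitive only on the natural module; they have no factorisation with $P_3$ and do not occur in \cref{Main theorem} for the spin module. For this $H$, the only immutable non-trivial irreducible is the spin module: by \cref{prop:char2 list}(iv) the relevant parabolic is $P_3$, and \cref{prop:char2 list}(iii),(x) are the ones attached to the natural module. Hence every faithful composition factor of $V$ is a spin module $S$, perhaps twisted by a field automorphism. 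The twisted spin modules $S^\sigma$ are quasiequivalent to $S$ over $\GF(2)$, so over $\GF(2)$ there is just one spin module.

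Next I remove trivial composition factors. By \cite[Tables B and C]{JonesParshall}, $\mathrm H^1(G,S)=0$ for the spin module of $\Sp_6(2^a)$ with $a>1$. For $a=1$ I would check directly with {\sc Magma}, or handle it by the same parabolic argument below, using that $\Sp_6(2)$ spin is the restriction of the natural $\Omega_8^+(2)$-module and then appealing to \cref{lem:Omega82-A9 done}. Since $V$ has no centralized direct summand, \cref{lem:min exactly 2} reduces us to $V$ with exactly two composition factors, both isomorphic to $S$. So there is a submodule $W\cong S$ with $V/W\cong S$, and $\dim V=16a$.

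Now I mimic \cref{lem:O8 all the same}. Choose $v\in V\setminus W$ such that $v+W$ is a high weight vector of $V/W$. Its stabiliser is $P=O^{2'}(P_3)$, of shape $q^{3+3}{:}\SL_3(q)$, which is perfect when $q>2$. Put $U=\langle v,W\rangle$. The $P$-composition factors of $W$ give a $1$-space $C_W(P)$ together with natural and dual natural $\SL_3(q)$-factors. Applying \cref{lem:End bound} to $Q=O_2(P)$ (or to the relevant layer), and using that $\mathrm H^1(\SL_3(q),N)=0$ for natural $N$ with $q>2$, I would show $U=C_U(P)+W$, so that $v$ may be chosen in $C_V(P)$. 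Then \cref{lem:vec stabiliser} gives $C_G(v+w)=C_P(w)$ for every $w\in W$.

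For the contradiction, take $w\in W$ in the $\G_2(q)$-orbit of \cref{spin symplectic}, or take $w\in C_W(O_2(P))$ lying outside the high weight line. Then $C_P(w)$ lies in $P_3\cap \G_2(q)$-type subgroups, or in a non-maximal parabolic such as $P_3\cap P_1$. Immutability forces $G=HC_P(w)$, and $|G:C_P(w)|$ is divisible by $|G:P_3|\cdot|P:C_P(w)|$. This fails either by \cref{lem:unique para} or by order considerations: $|H|$ is not divisible by the $2$-part and the Zsigmondy primes of $q^3-1$ that would be needed. Alternatively one can compare the number of $G$- and $H$-orbits directly.

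The main obstacle is the splitting step that puts $v$ in $C_V(P)$, especially for $q=2$, where $P$ is not perfect and the cohomology is non-zero. I expect to dispose of $q=2$ (and possibly $q=4$) by a {\sc Magma} computation of $\mathrm{Ext}^1(S,S)$ and the orbit counts, as in \cref{lem:A9 done}.
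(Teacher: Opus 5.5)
\textbf{There is a genuine gap: the splitting step is not established, and the closing step fails for the choices of $w$ you name.} Your reduction to the case of exactly two composition factors $W\cong V/W\cong S$ matches the paper. For $a=1$, use {\sc Magma}. The detour through \cref{lem:Omega82-A9 done} is not available, because a $\GF(2)\Sp_6(2)$-module need not extend to $\Omega_8^+(2)$.

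The splitting step is the heart of the lemma. With $v+W$ a high weight vector, $P=O^{2'}(P_3)$, and $W|_P$ has $\GF(q)$-composition factors $1,3,3^*,1$; you dropped the top trivial factor. Moreover, $C_W(O_2(P))$ is only the high weight line. The group $O_2(P)$, of shape $q^{3+3}$, is not minimal normal in $P$, and it acts non-trivially on $[W,P]/C_W(P)$, gluing the $3$ and the $3^*$. So \cref{lem:End bound} does not apply. Also, $\mathrm H^1(\SL_3(q),N)=0$ controls only the part of the cohomology inflated from $P/O_2(P)$. What you actually need is $\mathrm H^1(P,W|_P)=0$, equivalently $\dim_{\GF(2)}\mathrm H^1(P,[W,P])=a$, for the non-reductive group $P$. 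Nothing in the proposal establishes this.

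The contradiction step is also wrong as written. There is no $w\in C_W(O_2(P))$ off the high weight line. A $w$ with $C_G(w)\cong\G_2(q)$ can have $C_P(w)\ge\SL_3(q)$, since an $\SL_3(q)\le\G_2(q)$ stabilises a maximal totally isotropic $3$-space. In that case $|G:H|=q^3(q^3-1)/2$ divides $|C_P(w)|$, so orders give no contradiction. You would have to pick $w$ with $|wP|$ divisible by $q^3-1$.

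The missing idea is to take $v+W$ in one of the $\G_2(q)$-orbits of $V/W$ (see \cref{Stabilizers of orbit reps}(iv)), not in the high weight orbit. Then:
\begin{itemize}
\item $P\cong\G_2(q)$ is perfect for $q\ge 4$, $W|_P$ is uniserial with $\GF(q)$-factors $1,6,1$ by \cref{rmk G2}, and $\dim_{\GF(2)}\mathrm H^1(\G_2(q),6)=a$.
\item $U/C_W(P)$ is an extension of the $6$ by an $(a+1)$-dimensional trivial module. Its $a$-dimensional part $W/C_W(P)$ already injects into $\mathrm H^1$.
\item Hence some $u\in v+W$ has $[u,P]\le C_W(P)$, and $u\in C_V(P)$ because $P$ is perfect.
\item Choose $w$ with $w+C_W(P)$ in the orbit of length $q^6-1$. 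By \cref{lem:vec stabiliser}, $|C_G(u+w)|$ divides $q^6(q^2-1)$.
\item This order is not divisible by $q^3-1$, which divides $|G:H|$. Hence $G\ne C_G(u+w)H$.
\end{itemize}
This is a pure dimension count. It needs neither $\mathrm{Ext}^1_G(S,S)$ nor the cohomology of a parabolic subgroup.
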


\begin{proof}   Suppose that $(G,H,V)$ is immutable. Assume that $U_1$ and $U_2$ are faithful irreducible sections of $V$ with $U_1$ a spin module. Then \cref{lem: reduct to irred} implies that $(G,H,U_1)$ and $(G,H,U_2)$ are both immutable. Since $U_1$ is a spin module, \cref{Main theorem} (iv) implies that $H \cong \SO_6^-(2^a )$.  Assume that $U_2$ is not a spin module.  Then $U_2$ is the natural $\GF(2)G$-module and $G$ acts transitively on $U_2^\#$ again by \cref{Main theorem}. Since $H$ has two orbits on $U_2^\#$, this is impossible. We conclude that all faithful composition factors in $V$ are spin modules.
 By \cite[Table B]{JonesParshall}, as $V$ has no trivial direct summands, every composition factor of $V$ is a spin module.

 It now suffice to show that $V$ cannot have two composition factors. So suppose that $V$ has dimension $16a$ and that $W$ is a submodule of dimension $8a$.  If $2^a=2$, we perform a {\sc Magma} calculation to verify the claim. Hence we may assume that $2^a\ge 4$.

 Then $\overline V= V/W$ and $W$ are spin modules. Let $v \in V \setminus W$ be such that $ v+W$ is stabilized by $P\cong \G_2(2^a)$ (see \cref{Stabilizers of orbit reps} (iv)). Then $P$ is perfect as $2^a\ge 4$. Set $U= \langle v,W\rangle$ of dimension $8a+1$.  Then the $\GF(2)P$ composition factors of $V$ are trivial, and the natural $\mathrm G_2(q)$-module $S$ of dimension $6a$.  Since $\dim C_W(P)=a$, $W/C_W(P)$ is indecomposable of dimension $7a$ with natural module socle (see \cref{rmk G2}). Appealing to \cite[Table B]{JonesParshall} we have $\dim \mathrm H^1(P,S)=a$ and consequently we may assume that $[v,P] \in C_W(P)$.  As $P$ is perfect, we conclude that $v$ is centralized by $P$.
Let $w \in W$ be chosen so that $w+C_W(P)$ has an orbit of length $2^{6a}-1$ in $W/C_W(P)$. By \cref{lem:vec stabiliser}, $C_G(w+v)=C_P(w)$ which has order dividing $2^{6a}(2^{2a}-1)$. Now $H$ is $\SO_6^-(2^a)$ and $|G:H|$ is divisible by $2^{3a}-1$. Thus there is no factorisation $C_G(v+w)H$, a contradiction.
\end{proof}

\section{The proof of \cref{thm:Theorem B}: symplectic groups}\label{sec:ThmBSymp}

Throughout this section $G$ will be a symplectic group. Let $G=\Sp_{2n}(2^a)$ with $n \ge 2$  and $a\ge 1$ with $G \not\cong \Sp_4(2)$.  We know from \cref{Main theorem,prop:trans cases} that if $V$ is a faithful irreducible $\GF(2)G$-module, and $(G,H,V)$ is immutable then either  $G$ acts transitively on $V^\#$ or $n=3$, $H \cong \SO_6^-(2^a)$ and $V$ is an $8a$-dimensional spin module for $G$. We have dispatched that latter case in \cref{lem:spin yields irreducible}. Thus in this section we handle only non-trivial composition factors which are natural modules.

We use \cref{prop:trans cases} (v) to extract a subtle point regarding the representations of $G=\Sp_4(2^a)$. Let $N$ be the natural module for $G$.
Now $G$ has a graph automorphism $\gamma$ which then gives rise to another $4$-dimensional $\GF(2^a)$-representation $M$ which we regard as a $\GF(2)G$-module.  Then, taking $(G,H,N)$ immutable, we have $(G,H^\gamma,M)$ is also immutable. Now here is the subtle point, $(G,H^\gamma,N)$ is not immutable. Plainly $H \cong H^\gamma$, but in fact $H^\gamma$ acts as  $\SO_4^-(2^a)$ and   has three obits on $N$. Thus we obtain the following lemma, which will be useful in the final argument of this section.

\begin{lemma}\label{lem: sp4 all natural}
Suppose that $G\cong \Sp_4(2^a)$, $H$ is a maximal subgroup of $G$ and $V$ is a faithful $\GF(2)G$-module. If  $(G,H,V)$ is immutable, then all the faithful composition factors are isomorphic.  In particular, we can assume that they are all natural modules.
\end{lemma}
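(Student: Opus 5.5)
Let $U_1$ and $U_2$ be faithful composition factors of $V$. By \cref{lem: reduct to irred}, both $(G,H,U_1)$ and $(G,H,U_2)$ are immutable, so the plan is to pin down the possible irreducible modules and show that a single $H$ cannot serve two non-isomorphic ones.

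\textbf{Step 1: the possible factors.} Since $\Sp_6$ does not arise here, \cref{Main theorem} and \cref{prop:trans cases} show that each $U_i$ is one on which $H$ acts transitively on the non-zero vectors. For $G=\Sp_4(2^a)$ the spin module $V(\omega_2)$ is the natural module twisted by the graph automorphism $\gamma$. Applying \cref{high weight app} together with \cref{case (ii)(a)} and \cref{spin symplectic} for $n=2$, each $U_i$ is, as a $\GF(2)G$-module, isomorphic to either the natural module $N$ or the twisted module $M=N^{\gamma}$. Field twists by $\Sigma$ do not give new $\GF(2)G$-modules, since the Galois conjugates of $N$ are isomorphic as $\GF(2)G$-modules.

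\textbf{Step 2: the candidates for $H$.} By \cref{prop:trans cases}, immutability of $(G,H,N)$ forces $H$ to be transitive on $N^\#$, so $H$ normalises a field-extension subgroup $\Sp_2(2^{2a})\cong \SL_2(4^a)$, giving $H\cong \Sp_2(2^{2a}){:}2\cong\SO_4^-(2^a)$ in its natural embedding. For $M$ the only candidate is $H^{\gamma}$, because transitivity on $M^\#$ is exactly transitivity on $N^\#$ for the $\gamma$-image. Conjugacy class by conjugacy class, $\gamma$ interchanges the class of $\Sp_2(2^{2a}){:}2$ with that of $\mathrm O_4^-(2^a)$.

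\textbf{Step 3: the key point.} We must show that $H^{\gamma}$ is not transitive on $N^\#$, equivalently that $(G,H^{\gamma},N)$ is not immutable. Under the isomorphism $\Sp_4(2^a)\cong\Omega_5(2^a)$, the natural symplectic module $N$ for the $\Sp_4$ side corresponds to $W/\mathrm{rad}$, where $W$ is the orthogonal $5$-space. The subgroup $H^{\gamma}$ acts there as $\mathrm O_4^-(2^a)$, stabilising a non-degenerate hyperplane of minus type. That hyperplane splits the non-zero vectors into singular and non-singular ones, giving at least two orbits; the paper's discussion notes there are three. Hence $H^\gamma$ is not transitive on $N^\#$. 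Alternatively, a counting argument suffices: $|H^{\gamma}|=2(2^{4a}-1)2^{2a}$, while the orbits of $\SO_4^-$ on the $4$-space are visibly determined by the quadratic form value, which takes several values.

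\textbf{Conclusion and main obstacle.} Steps 1–3 show that $N$ and $M$ cannot both be faithful composition factors for the same $H$. Hence all faithful composition factors are isomorphic, and applying $\gamma$ if necessary we may take them to be natural modules. The main obstacle is making the identification in Step 3 rigorous, namely tracking exactly which conjugacy class of maximal subgroup $\gamma$ sends the field-extension subgroup to, so that its action on $N$ is recognised as that of $\mathrm O_4^-$. If that proves awkward, I would fall back on a {\sc Magma} check for small $a$ and use the orthogonal-geometry description for general $a$.
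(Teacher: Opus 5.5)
Your proposal is correct and is essentially the paper's argument. The paper likewise passes to the composition factors via \cref{lem: reduct to irred}, observes that the only candidates are the natural module $N$ and its graph twist $M=N^{\gamma}$, and uses that if $H\cong \Sp_2(2^{2a}){:}2$ is transitive on $N^\#$, then $H^{\gamma}$ acts on $N$ as $\mathrm O_4^-(2^a)$ and so is not transitive there; your identification of this action through $\Sp_4(2^a)\cong\Omega_5(2^a)$ and the preserved quadratic form is what the paper simply asserts. One small correction: $\mathrm O_4^-(2^a)$ actually has $2^a$ orbits on $N^\#$ (the singular vectors, and one orbit for each non-zero value of the form) rather than the ``three'' you quote from the paper's discussion, but your argument only needs more than one orbit, so it is unaffected.
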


\begin{proof} We know that $G$ acts transitively on each  faithful composition factor and that they have dimension $4a$. Assume that $U_1$ and $U_2$ are faithful composition factors.  Then $(G,H,U_1)$ and $(G,H,U_2)$ are both immutable by \cref{lem: reduct to irred}, now the above discussion shows that $U_1$ and $U_2$ are isomorphic.
\end{proof}

\begin{lemma}\label{lem:symplectic fact}
Suppose that $q=2^a$, $G \cong \Sp_{2n}(q)$ and $N$ is the natural module for $G$ regarded as a $\GF(2)G$-module. Assume that $X$ is a $\GF(2)G$-module with $[X,G]>C_X(G)$, such that $[X,G]/C_X(G)\cong N$.  Then $X$ is isomorphic to $A/B$  where $A$ and $B$ are submodules of the $\GF(2)\Omega_{2n+2}^+(q)$-module $W$ restricted to $\mathrm \Omega_{2n+1}(q)\cong G$.  Furthermore, the lengths of orbits of $G$ on $W$ and the stabilizer of a representative of each orbit are given in the following table.

\begin{table}[h]
\begin{tabular}{|ccc |}
\hline
 {\rm Orbit length}&{\rm Multiplicity}&{\rm Stabilizer}\\
 \hline
$1$&$q$&G\\
 $q^{2n}-1$&$q$&$O^{2'}(P_1)\sim q^{2n-1}{:}\Sp_{2n-2}(q)$\\
 $(q^n-1)q^n$&$q(q-1)/2$&$\Omega_{2n}^+(q)$\\
 $(q^n+1)q^n$&$q(q-1)/2$&$\Omega_{2n}^-(q)$\\
\hline
\end{tabular}\vspace{3mm}\caption{The orbits on $W$}\label{tab:orbs on W}
\end{table}
\end{lemma}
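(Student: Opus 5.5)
The plan has two parts: first realize the extension structure inside the $\Omega_{2n+2}^+(q)$-module, then count orbits directly in $W$.

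\textbf{Structure of $W$.} Let $W$ be the natural $\GF(q)\Omega_{2n+2}^+(q)$-module with quadratic form $Q$ and associated bilinear form $f$. Fix a nonsingular vector $e$, so that $G\cong\Omega_{2n+1}(q)$ is realized as the stabilizer of $e$ in $\Omega_{2n+2}^+(q)$. Then $e^\perp$ is $G$-invariant of codimension $1$, its radical is $\langle e\rangle$, and $e^\perp/\langle e\rangle\cong N$ via the standard isomorphism $\Omega_{2n+1}(q)\cong\Sp_{2n}(q)$. Consequently, as a $\GF(2)G$-module, $W$ has socle series $\langle e\rangle\le e^\perp\le W$ with factors $T,N,T$, where $T$ is the $a$-dimensional trivial $\GF(2)$-module $\GF(q)$. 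Moreover $C_W(G)=\langle e\rangle$ and $[W,G]=e^\perp$.

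\textbf{Universal property.} Let $X$ be as in the statement. Replacing $X$ by $[X,G]$ and using $C_X(G)\le [X,G]$ modulo centralized pieces, we first reduce to $X$ whose structure is trivial--$N$--trivial, with $X/[X,G]$ and $C_X(G)$ trivial. The key input is cohomology: we need $\dim_{\GF(2)}\mathrm H^1(G,N)=a$ and the dual statement $\dim\mathrm{Ext}^1_G(N,T_0)=a$ for the $1$-dimensional trivial module $T_0$, both available from \cite[Tables B and C]{JonesParshall}. The submodule $e^\perp$ of $W$ is the universal extension of $N$ by trivial modules in the bottom (no trivial quotient), and $W/\langle e\rangle$ is the universal one on top. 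Hence $[X,G]$ is a quotient of $e^\perp$ by a subspace $B\le\langle e\rangle$. Dually, $X/C_X(G)$ embeds in $W/\langle e\rangle$.

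\textbf{Gluing.} Combining these two facts to show $X\cong A/B$ requires an argument that the two-sided extension is determined, and this is the step I expect to be the main obstacle. I would handle it by a pullback/pushout argument, using the fact that $\mathrm{Ext}^1_G(T_0,T_0)=\mathrm{Hom}(G,\GF(2))=0$ because $G$ is perfect. Because of that vanishing, the top and bottom extension classes determine $X$ up to isomorphism, and both classes are realized inside $W$ by a suitable choice of $A$ and $B$. The excluded cases, such as $\Sp_4(2)$ and small $q$, where the cohomology dimensions differ, would need separate checking.

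\textbf{Orbit table.} Here I compute in $W$ directly. A vector $w$ lies in $\langle e\rangle$, in $e^\perp\setminus\langle e\rangle$, or outside $e^\perp$.
\begin{enumerate}
\item The $q$ vectors of $\langle e\rangle$ are fixed by $G$, giving $q$ orbits of length $1$.
\item For $w\in e^\perp\setminus\langle e\rangle$, the coset $w+\langle e\rangle$ together with the value $Q(w)$ determines $w$. Transitivity of $G$ on $N^\#$, together with the $q$ possible values of $Q$ on $w+\langle e\rangle$ (each attained once, since $Q(w+\lambda e)=Q(w)+\lambda^2Q(e)$), gives $q$ orbits of length $q^{2n}-1$. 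The stabilizer is the stabilizer of $w$, namely $O^{2'}(P_1)$.
\item For $w\notin e^\perp$, the plane $\langle e,w\rangle$ is nondegenerate and $C_G(w)$ is $\Omega(\langle e,w\rangle^\perp)=\Omega_{2n}^\pm(q)$, with the type determined by the type of $\langle e,w\rangle$. The pair $(f(e,w),Q(w))$ is $G$-invariant, and by Witt's theorem it is a complete invariant. Counting: there are $q-1$ values of $f(e,w)$ and $q$ values of $Q(w)$, giving $q(q-1)$ pairs in total, and exactly half of them yield each type. The orbit sizes are $|G|/|\Omega^\pm_{2n}(q)|=q^n(q^n\pm1)$.
\end{enumerate}
Finally, check that the counts sum to $q^{2n+2}$.
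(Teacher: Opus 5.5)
Your proposal is correct. For the first assertion it is more complete than the paper's argument, and for the orbit table it follows the same geometry by a more uniform route.

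\textbf{The section claim.} The paper uses $|\mathrm H^1(G,N)|=q$ only to bound $|X|\le q^{2n+2}$. It then notes that $W$ attains this bound and concludes directly that $X$ is a section of $W$. Attaining the maximal size does not by itself show that every admissible $X$ is a section; that step needs exactly the universality and gluing you supply.

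Your ingredients are the right ones:
\begin{itemize}
\item Perfectness of $G$ gives that $[X,G]$ has no trivial quotient, so $[X,G]\cong e^\perp/B$ with $B\le\langle e\rangle$.
\item Perfectness also gives that $X/C_X(G)$ has no trivial submodule.
\item $\mathrm H^1(G,C_X(G))=\Hom(G,C_X(G))=0$ makes $\mathrm{Ext}^1(X/[X,G],[X,G])\to\mathrm{Ext}^1(X/[X,G],N)$ injective.
\end{itemize}
The cleanest way to finish is inside $W/B$. The class map $W/e^\perp\to\mathrm H^1(G,e^\perp/B)$ is an isomorphism, because its composite with the injection into $\mathrm H^1(G,N)$ is the universal one. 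The class map of $X$ is injective because $C_X(G)\le[X,G]$. It is therefore realised by a unique $e^\perp\le A\le W$, giving $X\cong A/B$.

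Your caution about small cases is warranted. $\Sp_4(2)\cong\Sym(6)$ is not perfect. Let $U$ be the non-split $2$-dimensional module on which $G$ acts through the sign character. Then $N\oplus U$ satisfies the hypotheses, but it is not isomorphic to the $6$-dimensional $W$: its $[X,G]=N\oplus C_X(G)$ splits, while $[W,G]=e^\perp$ does not. This is consistent with the paper excluding $\Sp_4(2)$ in that section.

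\textbf{The orbit table.} The paper shows that the stabiliser of $u+Y_0$ fixes $u$ by appealing to its perfectness, after checking $\Sp_4(2)$ and $\Sp_6(2)$ by computer. It treats $W\setminus Y_1$ via non-degenerate planes $Z_\pm\ge Y_0$. Your identity $Q(w+\lambda e)=Q(w)+\lambda^2Q(e)$ and the invariant $(f(e,w),Q(w))$ need no case checks. Two small points should be written out:
\begin{itemize}
\item The Witt isometry lies in $\mathrm O^+_{2n+2}(q)$. If necessary, compose it with an element of $\mathrm O(\langle e,w'\rangle^\perp)\setminus\Omega(\langle e,w'\rangle^\perp)$ to land in $G$.
\item The half/half split is the trace criterion: $Q(e)x^2+cxy+\beta y^2$ is anisotropic if and only if $\mathrm{Tr}_{\GF(q)/\GF(2)}(Q(e)\beta/c^2)=1$.
\end{itemize}

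Your formula $|G:\Omega^\pm_{2n}(q)|=q^n(q^n\pm1)$ is the correct one, and it shows that the printed table interchanges the two lengths:
\begin{itemize}
\item orbits with stabiliser $\Omega^+_{2n}(q)$ have length $q^n(q^n+1)$;
\item orbits with stabiliser $\Omega^-_{2n}(q)$ have length $q^n(q^n-1)$.
\end{itemize}
The paper's proof has the same slip, writing $(q^n-\epsilon 1)q^n$. The multiplicities and the total $q^{2n+2}$ are unaffected.
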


\begin{proof}
We know that $|\mathrm H^1(G,N)|=q$ from \cite[Table B]{JonesParshall}. Hence $|X| \le  q^{2+2n}$. Identify $G$ with  $\Sp_{2n}(q) \cong \Omega_{2n+1}(q)$ and recall that the natural $\GF(q)\Omega_{2n+1}(q)$-module is uniserial with a $1$-dimensional socle and $2n$-dimensional quotient   as a module for $\GF(q)\Omega_{2n+1}(q)$.

Let $H= \Omega_{2n+2}^+(q)$  and $W$ be
 the natural $\GF(q)H$-module.  Let $v \in W$ be a non-singular vector which is stabilized by  $G$. Set $Y_0=\langle v \rangle_{\GF(q)}$ and $Y_1=Y_0^\perp$. Then $Y_1$ is stabilized by $G \cong \Omega_{2n+1}(q)$.  Since $Y_0$ is isotropic, $Y_0<Y_1$ and $Y_1$ is a $\GF(q)$-hyperplane of $W$. Furthermore, $Y_1$ is a  natural $\GF(q)G$-module.  Now $W/Y_0 \cong  Y_1^*$ as a $\GF(q)G$-module and so  $W/Y_0  $ is uniserial with socle of dimension $2n$.
 Thus $W$ realizes the maximum possible size  of a module satisfying the hypothesis of the lemma. This proves that $X$ is a section of $W$.

 We now consider the orbits of $G$ on $W$. We first check by computer that the statements are true for $\Sp_4(2)$ and $\Sp_6(2)$. This means that for non-trivial $v+Y_0\in Y_1/Y_0$, we know that $C_G(v+Y_0)$ is perfect.

 As $G$ fixes every vector in $Y_0$, $G$ has $q$ orbits of length $1$ on $Y_0$. The subgroup $G$ acts transitively on  $Y_1/Y_0$ . Let $u+Y_0$ be in $Y_1/Y_0$.  Then the stabiliser in $L$ of $u+Y_0$ is perfect and so centralize $\langle u,Y_0\rangle$. Thus on the elements of $Y_1\setminus Y_0$, $G$ has  $q$ orbits each of length $q^{2n}-1$.

  Let $Z_+$ and be an non-degenerate $2$-space of $+$-type and $Z_-$  be a  non-degenerate $2$-space of $-$-type both containing $Y_0$. Write $V= Z_+\perp  Z_+^\perp= Z_-\perp  Z_-^\perp$. Let $\epsilon =\pm$ and define $H_\epsilon$ be the subgroup  of $H$ which preserves the decomposition $V= Z_\epsilon\perp  Z_\epsilon^\perp$.  As $G$ is perfect, every element of $G$ has determinant $1$.  Set $G_\epsilon =G\cap H_\epsilon$   Then $G_+\cong \Omega_{2n}^\epsilon(q){:}2\cong \SO_{2n}^\epsilon(q)$. Note that    the involutions in the outer half of $G_\epsilon$ have determinant $1$ and so are not reflections on $V$. In particular, on $Z_\epsilon$   they act with $C_{Z_\epsilon}(G_\epsilon)= Y_0$ and with orbits of length $2$ on the vectors of $Z_\epsilon\setminus Y_0$.  Thus for $w \in Z_\epsilon\setminus Y_0$ we have $C_G(w) \cong \Omega_{2m}^\epsilon(q)$.  In this way $Z_\epsilon$ accounts for $\frac{1}{2}(q^2-q)$ orbits each of length $(q^{n}-\epsilon 1)q^n$.  Since $$\frac{(q^2-q)}{2}\left((q^{n}- 1)q^n+ (q^{n}+ 1)q^n\right)=
  \frac{(q^2-q)}{2}q^n(q^n-1+q^n+1)=
  q^{2n+2}-q^{2n+1}.$$
  We have shown that the orbits of $G$ on $W$ are as described in \cref{tab:orbs on W}. That is $q$ of length $1$, $q$ of length $q^{2n-1}$, $\frac{1}2(q^2-q)$ each of length $(q^n- 1)q^n$ and $(q^n+ 1)q^n$.
  \end{proof}

\begin{notation}\label{not:symp mod} Suppose that $q=2^a$, $G=\Sp_{2n}(q)$ and $W$ be the natural $\GF(q)\Omega_{2n+2}^+(q)$-module. Then $\mathcal W$ is the $\GF(2)G$-module $W|_G$ as in \cref{lem:symplectic fact}. \end{notation}

  \begin{lemma}\label{lem:sp2nq immutable struct}
  Suppose that $G\cong \Sp_{2n}(2^a)$, and let $\mathcal W$ be the $\GF(2)G$-module described in \cref{not:symp mod}. Assume that $H$ is a maximal subgroup of $G$. The triple $(G,H,\mathcal W)$ is immutable if and only if  one of the following holds:
  \begin{enumerate}
 \item  there exists an odd prime $b$ dividing $n$ and $H$ is conjugate to $\Sp_{2n/b}(2^{ab}){:b}$;
 \item $n=3$ and $H \cong \mathrm{G}_2(2^a)$.
 \end{enumerate}
 Furthermore, if $n$ is even, and  $H$ is conjugate to  $\Sp_{n}(2^{2a}){:}2$, then  for each $B \le C_{\mathcal W}(G)$, there exists  a uniquely determined $[{\mathcal W},G] \le B^\dagger \le {\mathcal W}$ with $\dim B^\dagger /B=a(2n+1)$ such that the faithful section $V= A/B$ satisfies $(G,H,V)$ is immutable if and only if $[{\mathcal W},G]\le A\le B^\dagger$.
  \end{lemma}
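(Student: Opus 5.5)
Since $\mathcal W$ is the restriction of the natural $\Omega_{2n+2}^+(q)$-module and \cref{lem:symplectic fact} gives the complete list of $G$-orbits on $\mathcal W$ with stabilizers, we apply \cref{lem:basic1} orbit by orbit: $(G,H,\mathcal W)$ is immutable if and only if $G=C_G(w)H$ for every representative $w$ in \cref{tab:orbs on W}. The stabilizers that occur are $G$, $O^{2'}(P_1)$, $\Omega_{2n}^+(q)$ and $\Omega_{2n}^-(q)$. So we need $H$ maximal with $G=O^{2'}(P_1)H=\Omega_{2n}^+(q)H=\Omega_{2n}^-(q)H$. The stabilizer $O^{2'}(P_1)$ has index prime to $2$ in $P_1$ and contains a full unipotent radical. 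Hence $G=O^{2'}(P_1)H$ amounts to $H$ being transitive on the nonzero vectors of the natural module $N$, because $O^{2'}(P_1)$ is exactly the vector stabilizer there. By \cref{prop:trans cases}(v),(vi) this forces $H$ to normalize $\Sp_{2n/b}(q^b)$ with $b$ a prime dividing $n$, or $n=3$ and $H=\G_2(q)$.

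For these candidates we test the two orthogonal factorisations. For $H=\Sp_{2n/b}(q^b){:}b$, \cref{lem:More Sp factorisations} gives $G=H\Omega_{2n}^-(q)$ always. By \cref{lem:Sp fatorisations1}, when $n$ is even, $G=H\Omega_{2n}^+(q)$ holds if and only if $b$ is odd. When $n$ is odd, $b$ is odd automatically; the same index computation as in \cref{lem:Sp fatorisations1}, namely $M\cap K\not\le K_0$ via \cref{lem: in omega}, should give $G=H\Omega_{2n}^+(q)$. I expect the $n$ odd case to need only a remark that the argument of \cref{lem:Sp fatorisations1} never used the parity of $n$ beyond the existence of the factorisation $G=MK$ from \cite{LiebeckPraegerSaxl}. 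For $H=\G_2(q)$, \cref{lem:G2q factorisations} gives both factorisations. This yields (i) and (ii), and shows that $b=2$ fails only through the $\Omega_{2n}^+(q)$-orbits.

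For the final clause, take $n$ even and $H=\Sp_n(q^2){:}2$. Then $H$ factorises with $G$, $O^{2'}(P_1)$ and $\Omega_{2n}^-(q)$ but not with $\Omega_{2n}^+(q)$. So a section $A/B$ is immutable precisely when none of its vectors has stabilizer conjugate to $\Omega_{2n}^+(q)$ alone. In $W$ these bad vectors are the vectors $w\notin Y_1$ that are not in a $-$-type line through $Y_0$; equivalently, they are the $w$ with $Q(w)$ in the "wrong" trace class relative to $Q(v)$. The plan is to take $B\le Y_0=C_{\mathcal W}(G)$ and to define $B^\dagger$ as the preimage in $\mathcal W$ of those $\GF(2)$-codimension $(a-1)$ subspaces over $Y_1$ for which every vector of $B^\dagger\setminus Y_1$, modulo $B$, lies in the same $G$-orbit as some $-$-type vector. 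Concretely, $B^\dagger/Y_1$ should be the unique $\GF(2)$-line in $\mathcal W/Y_1\cong\GF(q)$ selected by a trace condition. This gives $\dim B^\dagger/B=a(2n+1)$ when $B=0$, and the analogous count in general.

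We then check that for $[\mathcal W,G]=Y_1\le A\le B^\dagger$, stabilizers in $A/B$ are overgroups of $G$, $O^{2'}(P_1)$ or $\Omega_{2n}^-(q)$, all of which factorise with $H$. Conversely, any $A\not\le B^\dagger$ contains a vector whose stabilizer modulo $B$ is $\Omega_{2n}^+(q)$, since $\Omega_{2n}^+(q)$ is maximal in $\SO_{2n}^+(q)$ and passing to the quotient only enlarges stabilizers by elements fixing $w$ modulo $B\le Y_0$. That vector gives a contradiction via \cref{lem:Sp fatorisations1}. The main obstacle is this last step: controlling the stabilizers in the quotient $\mathcal W/B$, where $C_G(w+B)$ may exceed $C_G(w)$, and proving that $B^\dagger$ exists and is unique. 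I would first try to compute $C_G(w+B)$ as the stabilizer of the affine line $w+B$ inside $\SO_{2n}^\pm(q)$ and analyse the quadratic values $Q(w+y)$ for $y\in B$.
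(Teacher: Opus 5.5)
Your treatment of the first assertion is sound and matches the paper's argument. The stabilisers in \cref{lem:symplectic fact} reduce immutability of $(G,H,\mathcal W)$ to $G=O^{2'}(P_1)H=\Omega^+_{2n}(q)H=\Omega^-_{2n}(q)H$. Transitivity on $N^\#$ pins down $H$ via \cref{prop:trans cases}, and \cref{lem:Sp fatorisations1,lem:More Sp factorisations,lem:G2q factorisations} finish; your observation that \cref{lem: in omega} handles $n$ odd is what the paper uses tacitly.

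The final clause, however, fails as you set it up. You want $B^\dagger/Y_1$ to be a single $\GF(2)$-line of $\mathcal W/Y_1$, chosen by a trace condition so that vectors of $B^\dagger\setminus Y_1$ avoid the $+$-type orbits or fall into $-$-type orbits modulo $B$. No such line exists. For $w\notin Y_1$, $C_G(w)$ has the type of the $2$-space $Z=\langle v,w\rangle$, namely $+$ exactly when $\mathrm{Tr}\bigl(Q(v)Q(w)/(w,v)^2\bigr)=0$. On each non-zero coset of $Y_1$ the value $Q(w)$ runs through all of $\GF(q)$ (add vectors of $Z^\perp$), so every such coset contains $+$-type vectors. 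Moreover $\langle v,w+\lambda v\rangle=\langle v,w\rangle$, so the type is constant on cosets of $Y_0\ge B$, and no orbit merging happens in $\mathcal W/B$. The dimension count confirms the problem. For $B=0$ the statement forces $B^\dagger=Y_1$, whereas your $B^\dagger$ has dimension $a(2n+1)+1$. In fact $B^\dagger$ must depend on $B$, with $\dim B^\dagger/Y_1=\dim B$.

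The missing idea is that the bad vectors are not avoided; instead their stabilisers double in the quotient. Let $w$ be of $+$-type and $K=C_G(w)\cong\Omega^+_{2n}(q)$. Any $g$ fixing $w+B$ fixes $v$ and maps $w$ into $w+B\subseteq Z$, so $K\le C_G(w+B)\le G_Z=N_G(K)\cong\SO^+_{2n}(q)$. An element $t\in N_G(K)\setminus K$ fixes $v$, stabilises $Z$ and moves $w$. The only such isometry of $Z$ is the transvection $x\mapsto x+\frac{(x,v)}{Q(v)}v$. Hence $[w,t]=\frac{(w,v)}{Q(v)}v$ depends only on $w+Y_1$, and $C_G(w+B)=N_G(K)$ precisely when this lies in $B$. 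By \cref{lem:Sp fatorisations1}, $G=N_G(K)H$ but $G\ne KH$. One therefore obtains $B^\dagger=\{w\in\mathcal W\mid \frac{(w,v)}{Q(v)}v\in B\}$. This is the preimage of $B$ under a $\GF(2)$-linear surjection $\mathcal W\to Y_0$ with kernel $Y_1$, so $\dim B^\dagger/B=a(2n+1)$. The converse uses that every non-zero coset of $Y_1$ meets the $+$-type vectors. Accordingly, your verification for $Y_1\le A\le B^\dagger$ must allow stabilisers equal to $\SO^+_{2n}(q)$, not only overgroups of $G$, $O^{2'}(P_1)$ and $\Omega^-_{2n}(q)$.
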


  \begin{proof}  First of all we consider $G$ acting on ${\mathcal W}$. Then we note that by \cref{lem:symplectic fact,lem:Sp fatorisations1,lem:More Sp factorisations,lem:G2q factorisations},  $(G,H,{\mathcal W})$ is immutable if and only if  there exists an odd prime $b$ dividing $n$ such that $H$ is conjugate to $\Sp_{2n/b}(2^{ab}){:}b$; or
  $n=3$ and $H$ is conjugate to  $\mathrm{G}_2(2^a)$. It follows from \cref{lem: reduct to irred}  that every section of ${\mathcal W}$ is immutable in these cases.

So now assume that $(G,H,V)$ is immutable with $n$ even, and $H$ is conjugate to $\Sp_{n}(2^a){:}2$. Then by \cref{lem:symplectic fact,lem:Sp fatorisations1,lem:More Sp factorisations}, $(G,H,{\mathcal W})$ is not immutable and this failure of immutability is due to the fact that the orbits with stabiliser $\Omega_{2n}^+(2^a)$ split into two orbits for $H$.  Thus to achieve immutability, we need to double the size of the stabilizers.

 Assume that $B \le C_{\mathcal W}(G)$ and $V=A/B$ is a faithful section of ${\mathcal W}$ with $[{\mathcal W},G]\le A \le {\mathcal W}$.  The elements of $a \in A\setminus [{\mathcal W},G]$ with stabilizers $\Omega_{2n}^+(2^a)$ are the vectors whose stabiliser needs to grow in $A/B$.  Let $K$ be conjugate to $\Omega_{2n}^+(2^a)$ such that $a$ is centralized by $K$. Write ${\mathcal W}=Z \perp Z^\perp$ where $Z$ is $2a$-dimensional  centralized by $K$ and $Z^\perp=[\mathcal W,K]\le [\mathcal W,G]$. Then $a\in Z$ (as in the proof of \cref{lem:symplectic fact}).  Now $N_G(K) \cong \Omega_{2n}^+(2^a){:}2$ has an element $t$ of order $2$ which acts as a $\GF(2^a)$-transvection on both $Z$ and $Z^\perp$ and so we need to find sections where $t$ acts trivially. Note that $C_Z(t)= C_{\mathcal W}(G)$ and set $Z^\dagger$ to be the preimage of $C_{Z /B}(t)$. Then $Z^\dagger=\{z\in Z \mid[z,t]\le B\}$ and  $\dim Z /Z^\dagger=\dim C_{\mathcal W}(G)/B$.

  Now $a+B$ is centralized by $N_G(K)$ if and only if $[a,t] \in B$ which is if and only if $a \in Z^\dagger$.  Hence $(G,H,V)$ is immutable if and only if $[\mathcal W,G]\le A \le B^\dagger$ and this proves our claim.
  \end{proof}

  \begin{proposition}\label{prop:symplecticGroups} Suppose that $n \ge 2$, $G \cong \Sp_{2n}(2^a)$, $H$ is a maximal subgroup of $G$ and that $(n,2^a) \ne (2,2)$. Assume that $V$ is a $\GF(2)G$-module with no trivial direct summands. Then $(G,H,V)$ is immutable if and only if $V$ is a non-trivial section of $\mathcal W$  described in \cref{lem:symplectic fact}.
  \end{proposition}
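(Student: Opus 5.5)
The plan is to reduce to the composition factors and then embed the whole module into $\mathcal W$. First I would show that every faithful composition factor of $V$ is a natural module. By \cref{lem: reduct to irred}, each irreducible section of $V$ gives an immutable triple. So \cref{Main theorem} and \cref{prop:trans cases} show that each faithful composition factor is either the natural module or, when $n=3$, the spin module. The spin case is already handled by \cref{lem:spin yields irreducible}, which leaves $V$ irreducible. For $n=2$, \cref{lem: sp4 all natural} lets us assume all faithful factors are natural modules. In every case $H$ is then one of the subgroups from \cref{prop:trans cases} (v), (vi), or $\SO_{2n}^-(2^a)$, and in particular $H$ is transitive on the natural module.

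Next I would handle the trivial composition factors and the shape of $V$. Since $V$ has no trivial direct summand, $V=[V,G]$ modulo trivial pieces. By \cite[Table B]{JonesParshall}, the natural module $N$ has $|\mathrm H^1(G,N)|=q$, while $\mathrm H^1$ vanishes for trivial-by-trivial extensions because $G$ is perfect. I then want to show that $V$ has exactly one faithful composition factor. Suppose not. By \cref{lem:min exactly 2} and \cref{lem: reduct to irred}, we may pass to a section with exactly two composition factors and no centralized direct summand. If both factors are natural, the argument copies \cref{lem:orthgone} and \cref{lem:not linear}. Take $v+W$ in $V/W\cong N$ with stabiliser $P\sim q^{2n-1}{:}\Sp_{2n-2}(q)$. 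Use \cref{lem:End bound} together with the vanishing of $\mathrm H^1(P/O_2(P),\cdot)$ on the relevant modules to choose $v\in C_V(P)$. Then \cref{lem:vec stabiliser} gives $C_G(v+w)=C_P(w)$ for a suitable $w\in W$, and $C_P(w)$ lies in $P_1\cap P_2$ or in a small subgroup. This contradicts a factorisation with $H$, by \cite{LiebeckPraegerSaxl} or by the orders; compare \cref{lem:unique para}. The small cases $q=2$, where $P$ is not perfect or $\mathrm H^1$ is nonzero, I would check with {\sc Magma}. If instead there is a nonsplit extension of $N$ by a trivial module, it occurs only with a single natural factor. So $[V,G]/C_{[V,G]}(G)\cong N$, and \cref{lem:symplectic fact} shows that $V$ is a section of $\mathcal W$.

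Conversely, \cref{lem:sp2nq immutable struct} describes exactly which sections of $\mathcal W$ give immutable triples, for each $H$. That lemma then finishes the statement, read with the appropriate choice of $H$ (which the statement leaves implicit).

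The main obstacle is the step excluding two natural composition factors. There the possible non-split self-extensions $\mathrm{Ext}^1(N,N)$ are not controlled, and the choice of a $P$-fixed lift $v$ needs $\mathrm H^1(\Sp_{2n-2}(q),N_{2n-2})$, which is nonzero. I would try first to choose $v$ modulo $C_W(P)$ using \cref{lem:End bound}, exactly as in \cref{lem:spin yields irreducible}. I would then accept that $v$ is only fixed by a subgroup of index dividing $q$ in $P$, and check that the orbit length of $v+w$ still forces a factor $q^{2n-2}-1$ into $|H|$ that $H$ cannot supply.
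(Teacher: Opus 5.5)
\textbf{Overall structure.} Your skeleton matches the paper's. You reduce to natural composition factors, show there is at most one faithful factor, then use $C_V(G)\le[V,G]$ with \cref{lem:symplectic fact} and \cref{lem:sp2nq immutable struct}. The step excluding two natural factors, however, is not carried out, and it has two genuine gaps.

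\textbf{First gap: the reduction to two composition factors.} \cref{lem:min exactly 2} only produces \emph{some} section with two composition factors and no centralized summand. Since $\mathrm H^1(G,N)\neq 0$ and $N\cong N^*$, that section can be a non-split extension of $N$ by a trivial module. Such a section is a legitimate section of $\mathcal W$, so it gives no contradiction. Your sentence ``it occurs only with a single natural factor'' is a non sequitur.

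To exclude two natural factors, take a section of minimal dimension containing two faithful factors. It has the form $0<X<Y<V$ with $X\cong V/Y\cong N$ and $Y/X$ centralized by $G$ but possibly non-zero. These sandwiched trivial factors cannot be discarded, because extensions exist in both directions. The paper carries $Y/X$ through the whole argument.

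\textbf{Second gap: finding a $P$-fixed lift.} The core step is to produce a lift of $v+Y$ fixed by $P$, and this is left open. \cref{lem:End bound} only brings you to the obstruction $\mathrm H^1(\Sp_{2n-2}(q),N_{2n-2})\cong\GF(q)$.

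Your fallback, ``$v$ fixed by a subgroup of index dividing $q$'', is unsupported. In a non-split configuration the stabilisers of lifts in $P/O_2(P)$ are of type $\Omega^{\pm}_{2n-2}(q)$, whose index is divisible by $q^{n-1}\pm 1$.

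The proposed order contradiction also fails for $n=2$. There $q^{2n-2}-1=q^2-1$ divides $|H|=|\SL_2(q^2){:}2|$.

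\textbf{The missing idea.} You need to feed immutability back into the argument, as follows.
\begin{enumerate}
\item Take a primitive prime divisor $p$ of $q^{2(n-1)}-1$. For $n\ge 3$, $p$ does not divide $|H|$. For $n=2$, the $p$-elements of $H$ act fixed-point-freely on $X$, so $H$ contains no Sylow $p$-subgroup of $G$. In both cases every vector of $V\setminus Y$ is centralised by an element of order $p$.
\item Pick such a vector inside $C_C(T)$, where $C$ comes from the \cref{lem:End bound} reduction and $T$ is a torus of order $(q-1)^{n-1}$ acting fixed-point-freely on $K/L$. Here $K=[X,P]$ and $L=C_X(O_2(P))$.
\item The fixing $p$-element $s$ and $T$ generate $P$ modulo $O_2(P)$, by \cite{GuralnickPentillaPraegerSaxl}. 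This forces a lift $x$ of $v+Y$ centralised by all of $P$.
\item Now take $b\in K\setminus L$. Any $p$-element fixing $x+b$ lies in $P$, so it fixes $x$ and hence $b$. This contradicts the fact that it acts fixed-point-freely on $K/L$.
\end{enumerate}
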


  \begin{proof} Write $q=2^a$. By \cref{lem:spin yields irreducible,lem: sp4 all natural}, we may assume that every faithful composition factor of $V$ is a natural module for $G$.
  We first claim that $V$ has at most one faithful composition factor.  So, for a contradiction, assume that $V$ has at least two faithful composition factors.    Choose a section of minimal dimension that contains at least two faithful composition factors.  Then by \cref{lem: reduct to irred} and induction, we may as well suppose that this section is $V$.  Then $V=[V,G]$ and $C_V(G)=0$. Let $0<X<Y<V$ be such that $X$ is faithful and $V/Y$ is  faithful.  Then $X$ and $V/Y$ are natural $G$-modules and $Y/X$ is centralized by $G$.

   If $q=2$ and $n=3$, then we check using {\sc Magma}  that the only candidate for  $V$ is a direct sum of two natural modules.  Then we  just check that a direct sum of two natural modules does not produce immutable triples.  Thus $q> 2$ when $n=3$.
In particular, the stabilizer of a vector in the natural module has shape $q^{1+2(n-2)}{:}\Sp_{2n-2}(q)$ and is perfect.

   Since $V/Y$ and $X$ are  natural modules, $G$ acts transitively on the non-zero vectors of both $V/Y$ and $X$. Assume that $(G,H,V)$ is immutable. Thus $H\cong \Sp_{2n/b}(q^{ab}){:}b$ for some prime $b$ dividing $n$ or $2n=6$ and $H \cong \mathrm G_2(q)$. Let $\tau$ be an element of order $p$ where $p$ is a Zsigmondy prime divisor of $q^{2(n-1)}-1$. Then $p$ divides $q^{(n-1)}+1$. Notice that $p \ge 2a(n-1)+1>n$ and   so in particular, $p > b$ for all prime divisors $b$ of $n$.
   We know $X= C_X(\tau)\perp [X,\tau]$ where $C_X(\tau)$ has dimension $2a$ is non-degenerate. Suppose that $n \ge 3$. Then the Sylow $p$-subgroups of $G$ are cyclic.
    Suppose that $p$ divides $|H|$. Then  $\tau$ is conjugate to an element of $H$ and so to an element of $H'$. If $H'\cong \Sp_{2n/b}(q^b)$, then as $|C_X(\tau)|= 2^{2a}$ and $|C_X(\tau)|$ is a $\GF(q^b)$ subspace, we deduce that $b=2$. Then $C_X(\tau)$ is a $1$-space and so is isotropic with respect to the $\GF(q^b)$ form preserved by  $H'$. However, this means that $C_X(\tau)$ is isotropic as a $\GF(q)$-space; but it is not as it is non-degenerate. Hence $p$ does not divide $|H |$ in this case. If $H\cong \mathrm G_2(q)$, then $|H|=q^6(q^6-1)(q^2-1)$ and so $|H|$  and $p$ are coprime. Since $(G,H,V)$ is immutable, every vector of $V$ is centralized by some conjugate of $\tau$. We intend to prove something similar when $n=2$.  In this case we know that $H\cong \SL_2(q^2){:}2$ and $p$ divides $q+1$. Hence $p$ certainly divides $|H|$. However the elements of $H$ of order $p$ act fixed-point-freely on $X$ and so are not conjugate to $\tau$. It follows that $H$ does not contain a Sylow $p$-subgroup of $G$ and so if $v\in V$, then $C_G(v)$ must have order divisible by $p$. Consider $v \in V \setminus Y$ and let $P$ be the centralizer of $v+Y$. Then some element of $G$ of order $p$ centralizes $v$ and so centralizes $v+Y$. But then this element is conjugate into $P$ and we conclude that $v$ is centralized by a conjugate of $\tau$.  Thus, every element of $V\setminus Y$ is centralized by a conjugate of $\tau$.

 Let $v \in V \setminus Y$ and $P$ be the centralizer of $v+Y\in V/Y$ in $G$.  Set $J= \langle v,Y\rangle$ and $K=[X,P]$.  Then $\dim X/K = a$ and $\dim J/K=a+\dim Y/X+1$. Since $P$ centralizes $J/Y$, $Y/X$ and $X/K$, we have $[J,P,P,P]<K$  and so, as $P$ is perfect,  we deduce that $J/K$ is centralized by $P$.

 Now acting on  $J$, $P$ leaves invariant a chain of submodules $0<L<K<J$ with $\dim L=a$, and $K=L^\perp$ of dimension $(2n+1)a$.  Furthermore, $L$ and $J/K$ are centralized by $P$ and $Y/X$ is the natural $(2n-2)a$-dimensional module for $P/O_2(P)$. Set $Q= O_2(P)$.  Then $K\ge [J,Q]\ge [Y,Q]=K$, $[K,Q]=X$ and $C_Y(Q)=L$. Let $R= Z(P)$. Then $R$ has order $q$ with $[X,R]=L$ and $C_X(R)=K$.

 Let $r \in R$ be an involution. Set $\widetilde J=J/L$. Then $C_{\widetilde J}(r) \ge \widetilde X$ and $[\widetilde J,r] \le [\widetilde J,P] \le \widetilde K$. Since $\widetilde J/C_{\widetilde J}(r)\cong [\widetilde J, r]$ as $P$-modules, and $\widetilde J/C_{\widetilde J}(r)$ is centralized by $P$, we deduce from the fact that $\widetilde K$ is irreducible, that $[\widetilde J, r]=0$. Hence $[J,R]=[Y,R]=L$. Notice that $|J/K|\ge 2^{a+1}> \mathrm{End}_{P/O_2(P)}(\widetilde Y)$. Applying \cref{lem:End bound} to $P/R$ yields
 $C_{\widetilde J}(Q)$ has co-dimension $a$ in $\widetilde J$.  As $C_{\widetilde X}(Q)= \widetilde K$ has codimension $a$ in $\widetilde X$, we must have $\widetilde J= C_{\widetilde J}(Q)+\widetilde X$. Let $C$ be the preimage of $C_{\widetilde J}(Q)$ in $J$.  Then $$\dim C= \dim \widetilde K + \dim \widetilde J/\widetilde X +\dim L = (2n-2)a+\dim J/X=(2n-2)a+\dim Y/X +a+1.$$

  Let $T\le P$ have order $(q-1)^{n-1}$. Then $TQ/Q$ is contained in a subgroup of $P/Q$ isomorphic to a direct product of $n-1$ copies of  $\SL_2(q)$. In particular $T$ acts fixed-point-freely on $K/L$.  Since $C$ is $P$-invariant, $C$ is in particular $T$-invariant.  Set $D= C_C(T)$.  Then $$\dim D=\dim C-\dim \widetilde K= \dim Y/X+a +1$$ and $D>L$. Let $d \in D \setminus Y$. Then $d$ is centralized by an element $s$ of order $p$ (as every vector of $V\setminus Y$ is centralized by a conjugate of $\tau$). Since $s$ fixes $d+W=v+W$, $s\in P$.  Set $P^*= \langle s,T\rangle$.  Then, by \cite[Main Theorem]{GuralnickPentillaPraegerSaxl}, $P^* Q=P$. Since $P^*$ centralizes $L$ and $D/L$, $P^*$ centralizes $D$. As $[D,Q]\le L$, $Q$ also normalizes $D$ and so, as $P$ is perfect, $P$ centralizes $ D$.

It follows that there is an element $x \in V$ with $x+Y = v+Y$ such that $x$ is centralized  by $P$. Now pick an element $b \in K \setminus L$.  Consider $x+b$.  We know that $x+b$ is centralized by some element $\tau^*$ of order $p$ which is conjugate to $\tau$ (as every of $V\setminus Y$ is centralized by a conjugate of $\tau$).  Thus $x+b+Y= v+Y$ is centralized by $\tau^*$.  It follows as above  that $\tau^* \in P$.  Therefore $$ x+b = (x+b)\tau^*=x\tau^*+b\tau^*=x+b\tau^*$$ and so $b=b\tau^*$.  But $\tau^*$ acts fixed-point-freely $K/L$ and thus we have a contradiction. Hence $(G,H,V)$ is not immutable as asserted.
We have shown that $V$ has a unique non-trivial composition factor. Suppose that $[V,G]$ does not contain $C_V(G)$. Then for $v \in C_V(G)\setminus [V,G]$, we have $\gen{v} $ is a centralized direct summand of $V$. Indeed, let $U \ge C_{[V,G]}(G)$ be a complement to $\gen{v}$ in $C_V(G)$, then $U+[V,G]$ is a $G$-invariant complement to $\gen{v}$. Since there are no such centralized summands, we have $[V,G]\ge C_V(G)$.
Hence $[V,G]/C_V(G)$ is isomorphic to the natural module for $\Sp_{2n}(q)$. Now \cref{lem:symplectic fact} shows that $V$ is a section of $\mathcal W$ and   the possibilities for $(G,H,V)$ are then described in \cref{lem:sp2nq immutable struct}. This completes the proof.
  \end{proof}

\section{The proof of \cref{thm:Theorem B}: Schur covers}\label{sec:schur}

Finally, we consider those groups $G$ from the list of immutable triples in \cref{Main theorem,prop:trans cases} for which the quasisimple group $G$ has Schur multiplier of even order. In this situation, it is conceivable that there exists a faithful $\GF(2)G$-module $V$ on which a central involution $t \in Z(G)$ acts non-trivially, and for which $(G,H,V)$ is immutable for some maximal subgroup $H \le G$. Comparing the  groups listed in \cref{Main theorem,prop:trans cases}  with the groups with even order Schur multiplier, we are left to consider the groups $2^.\Alt(n)$ with $6\le n\le 9$, $2^.\SL_3(2)$, $2^.\Sp_6(2)$, $2^.\Omega_6^-(2)$, $2^.\Omega_8^+ (2)$ and $(2^2)^.\Omega_8^+ (2)$.

\begin{lemma}\label{lem:CVt} Suppose that $G$ is quasisimple, $Z(G)$ has order $2$ and $(G,H,V)$ is immutable. Then $Z(G)$ centralizes $V$.
\end{lemma}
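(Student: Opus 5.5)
The plan is to reduce to a module of length two on which $t$ acts non-trivially, and then use the finite list of candidate groups at the start of this section. Write $Z(G)=\langle t\rangle$ and suppose, for a contradiction, that $[V,t]\ne 0$.

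\textbf{Two preliminary observations.} First, since $t$ is central, the map $x\mapsto [x,t]$ is a $\GF(2)G$-homomorphism $V\to V$. Its kernel is $C_V(t)$ and its image is $[V,t]\le C_V(t)$, because $t^2=1$. Second, $t$ acts trivially on every irreducible $\GF(2)G$-module $U$: the subspace $C_U(t)$ is a non-zero submodule, since $t$ is a $2$-element. Hence every composition factor of $V$ is a $\GF(2)(G/Z(G))$-module.

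\textbf{Reduction to length two.} By \cref{lem: reduct to irred}, every section of $V$ again gives an immutable triple. I will choose a section $X=A/B$ of minimal dimension on which $t$ acts non-trivially. Minimality should force $X$ to have exactly two composition factors, $X\supset W\supset 0$. The homomorphism $x\mapsto [x,t]$ then induces a $G$-isomorphism $X/W\cong W$ with $W=[X,t]=C_X(t)$. By \cref{lem: reduct to irred}, $(G,H,W)$ is immutable with $W$ a faithful irreducible $G/Z(G)$-module. So $(G/Z(G),H/Z(G),W)$ appears in \cref{Main theorem} or \cref{prop:trans cases}. Here $Z(G)\le H$, because $HZ(G)=G$ is impossible for a maximal subgroup of a perfect group. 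Moreover $X$ realises a non-split central extension in the sense of \cref{lem:some cohomology}: $C_X(Z(G))=[X,Z(G)]\cong W$. This restricts $(G,W)$ to the short list of Schur covers named at the start of this section, namely $2^.\Alt(n)$ with $6\le n\le 9$, $2^.\SL_3(2)$, $2^.\Sp_6(2)$, $2^.\Omega_6^-(2)$ and $2^.\Omega_8^+(2)$, together with the listed modules $W$.

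\textbf{Attempted structural contradiction.} Let $v\in X\setminus W$, let $P$ be the stabiliser of $v+W$ in $X/W$, and note that $t\in P$. Every vector in the coset $v+W$ is moved by $t$, since $(v+w)t=v+w+[v,t]$ and $[v,t]\ne 0$. Hence $C_G(v+w)\cap Z(G)=1$ for all $w\in W$. I would then apply the argument of \cref{lem:vec stabiliser}: immutability gives $G=C_G(v+w)H$ by \cref{lem:basic1}. Since $Z(G)\le H$, the index $|vG|$ is divisible by the $2$-part forced by $t$ acting freely on the orbit $vG$. Comparing this with the orbit lengths and stabilisers in \cref{Stabilizers of orbit reps} and \cref{prop:trans cases}, I would try to show that $C_G(v+w)$ is too small to factorise with $H$.

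\textbf{Main obstacle and fallback.} I expect the uniform argument to be the hard step. Factorisations $G=KH$ with $K\cap Z(G)=1$ and $Z(G)\le H$ pass to $G/Z(G)$ without any obvious contradiction, and the index arithmetic above is consistent. My fallback is therefore to treat the finite list case by case. For each group in the list and each candidate $W$, I would use {\sc Magma} to compute $\mathrm H^1(G,\mathrm{End}(W)/\mathrm{End}_G(W))$ as in \cref{lem:some cohomology}, construct the modules $X$ with $C_X(t)=[X,t]\cong W$, and compare the numbers of $G$-orbits and $H$-orbits on $X$ for the maximal subgroups $H$ identified above. The conclusion $[V,t]=0$ then follows.
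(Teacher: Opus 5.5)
\textbf{Gap in the reduction step.} Minimality of the section $X$ does not force exactly two composition factors. If $t$ centralises every proper section of $X$, then every proper submodule lies in $C_X(t)$ and every non-zero submodule contains $[X,t]$. So all you learn is that $[X,t]$ is the unique minimal submodule, $C_X(t)$ is the unique maximal submodule, and $x\mapsto[x,t]$ gives $X/C_X(t)\cong[X,t]$.

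Nothing constrains the heart $C_X(t)/[X,t]$. Take a module with socle series $W/T/W$ on which $t$ maps the head onto the socle. Then $t$ acts trivially on every proper section, so this module is still minimal. This case genuinely occurs, which is why the paper splits into the cases $[V,t]=C_V(t)$ and $[V,t]<C_V(t)$.

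In the second case, $C_V(t)$ is an immutable $G/Z(G)$-module with simple socle $[V,t]$ and no centralised direct summand. The reducible analysis of \cref{sec:ThmB1,sec:ThmBSymp} then forces $G\cong 2^.\Alt(6)$ or $2^.\Sp_6(2)$, with $C_V(t)$ the $5$- or $7$-dimensional $\Omega_{2n+1}(2)$-module or its dual. These are the rows of \cref{tab:double covers} with $\dim[V,t]<\dim C_V(t)$. In particular $[V,t]$ may be the trivial module. So your claim that $W=[X,t]$ is a faithful irreducible $G/Z(G)$-module appearing in \cref{Main theorem} or \cref{prop:trans cases} also breaks down.

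\textbf{Why the fallback does not cover this.} The paper's computation shows that no faithful module with $C_V(t)=[V,t]$ exists for any group in the list, because the $\mathrm H^1$ in \cref{lem:some cohomology} vanishes. By contrast, $2^.\Alt(6)$ does have two faithful $9$-dimensional modules, with composition factors of dimensions $4,1,4$. Your fallback only constructs modules $X$ with $C_X(t)=[X,t]\cong W$. It would find nothing and never examine these $9$-dimensional modules, which are exactly the ones that must be shown not to be immutable.

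For the length-two case, your fallback coincides with the paper's argument. The ``structural contradiction'' paragraph can be dropped, as you suspected.

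\textbf{How to repair it.} Add the case $[X,t]<C_X(t)$:
\begin{enumerate}
\item Identify $C_X(t)$ as above.
\item Determine the faithful $2^.\Alt(6)$-modules of dimensions $6$ and $9$, and the faithful $2^.\Sp_6(2)$-modules of dimensions $8$ and $13$. The paper finds none for $2^.\Sp_6(2)$ below dimension $14$.
\item Check the two $9$-dimensional $2^.\Alt(6)$-modules for immutability directly.
\end{enumerate}
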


\begin{proof}  Let $t\in Z(G)$ be the involution. Assume that the result is false and that $(G,H,V)$ is an immutable triple with $t$ acting non-trivially on $V$ and $\dim V$  minimum with this property. In particular,  $V$ has no centralized direct summand. Since $t$ does not centralize $V$, $$V>C_V(t)\ge [V,t]>0$$ and \cref{lem: reduct to irred} implies that both $(G,H,V/[V,t])$ and $(G, H,C_V(t))$ are immutable.

If  $  V/C_V(t)$ is not irreducible, then choose a submodule $V>W>C_V(t)$. By \cref{lem: reduct to irred}, $(G, H,W)$ is immutable. Since $t$ acts non-trivially on $W$, this contradicts the minimal choice of $V$. Hence $V/C_V(t)$ is an irreducible $\GF(2)G$-module. As     $[V,t]\cong V/C_V(t)$, $[V,t]$ is also irreducible.  Assume that $U \le C_V(t)$ is a $G$-submodule such that $U \cap [V,t]=0$. Then $t$ acts non-trivially on $V/U$ and so the minimality of $V$ yields $U=0$. Hence $[V,t]$ is the socle of $C_V(t)$.

We intend to determine the candidates for $G$ and $V$ as listed in \cref{tab:double covers}.
  Assume that $[V,t] \ne C_V(t)$. Then, as $C_V(t)$ has socle $[V,t]$, the results of \cref{sec:ThmBSymp,sec:ThmB1} together yield the first two lines and final two lines of \cref{tab:double covers}. So assume that $C_V(t)=[V,t]$. Then the  potential candidates for $V$ are obtained from \cref{Main theorem,prop:trans cases} after remembering that $V/C_V(t)\cong [V,t]$ as $\GF(2)G$-module, we obtain the remaining lines of \cref{tab:double covers}.

\begin{table}[h]
\begin{tabular}{cccc}\hline
Group&$\dim V/C_V(t)=\dim [V,t]$&$\dim C_V(t)$&$\dim V$\\\hline
$2^.\Alt(6)$&$1$&$5$&$6$\\
$2^.\Sp_6(2)$&$1$&$7$&$8$\\
$2^.\Alt(6)$&$4$&$4$&$8$\\
$2^.\Alt(7)$&$4 $&$4$&$8$\\
$2^.\Alt(8)$&$4 $&$4$&$8$\\
$2^.\Alt(9)$&$8 $&$8$&$16$\\
$2^.\SL_3(2)$&$3$&$3$&$6$\\
$2^.\Sp_6(2)$&$6$&$6$&$12$\\
$2^.\Sp_6(2)$&8&8&16\\
 $2^.\Omega_6^-(2)$&$6$&$6$&$12$\\
$2^.\Omega_8^+ (2)$&$8$&$8$&$16$\\
$2^.\Alt(6)$&$4$&$5$&$9$\\
$2^.\Sp_6(2)$&$6$&$7$&$13$\\
\hline
\end{tabular}\vspace{3mm}\caption{Double covers and candidates for $V$.}\label{tab:double covers}
\end{table}

Suppose first that $C_V(t)\ne [V,t]$. Then $G$ is either $2^.\Alt(6)$ or $2^.\Sp_6(2)$. We employ the {\sc Magma} command {\sc LowDimensionalModules} and see that there are no faithful modules of dimension less then 14 for the $2^.\Sp_6(2)$ and that  $2^.\Alt(6)$ has   two  faithful $9$-dimensional modules as detailed in the second last row of \cref{tab:double covers}. A quick {\sc Magma} check shows that these examples do not result in immutable triples.

Thus we may assume that $C_V(t)=[V,t]$. Here we check the cohomology condition from \cref{lem:some cohomology}. Thus we set $X= V/C_V(t)$ and calculate using {\sc Magma} that $\dim \mathrm H^1(G,(X\otimes X^*)/C_{X\otimes X^*}(G))=0$,  and conclude that there are in fact no faithful modules $V$. This completes the proof.
\end{proof}

We remark that in \cref{lem:CVt}, \cref{lem:some cohomology} is only required in the case that $G/Z(G)\cong \Omega^+_8(2)$ as in this case the {\sc Magma} command {\sc LowDimensionalModules} fails to complete in a timely way.

\begin{lemma}\label{ZOdd}
Suppose that $(G,H,V)$ is immutable with $V$ a faithful $\GF(2)G$-module. Then $G \not \cong (2^2)^.\Omega_{8}^+(2)$.
\end{lemma}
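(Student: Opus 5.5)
We argue by contradiction, reducing to \cref{lem:CVt}. Suppose $G\cong (2^2)^.\Omega_8^+(2)$, $H$ is maximal in $G$, $V$ is a faithful $\GF(2)G$-module, and $(G,H,V)$ is immutable. Write $Z=Z(G)\cong 2^2$. Its three involutions $t_1,t_2,t_3$ give three quotients $G_i=G/\langle t_i\rangle$. Each $G_i$ is a double cover $2^.\Omega_8^+(2)$, and these three covers are permuted by the triality automorphism. Since $V$ is faithful and $Z$ is not cyclic, $Z$ acts faithfully on $V$, so some involution of $Z$ acts non-trivially on $V$.

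\textbf{Step 1: restrict to a $t_i$-fixed subquotient.} Fix $i$. The subspace $C_V(t_i)$ is a $G$-submodule, since $t_i$ is central. By \cref{lem: reduct to irred}, the triple $(G,H,C_V(t_i))$ is immutable. Now $\langle t_i\rangle$ acts trivially on $C_V(t_i)$, so $C_V(t_i)$ is a $G_i$-module. Moreover $H$ contains $Z$, because $H$ is maximal and $G$ is perfect, so $HZ=H$. Hence $H/\langle t_i\rangle$ is a maximal subgroup of $G_i$, and $(G_i,H/\langle t_i\rangle,C_V(t_i))$ is immutable.

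\textbf{Step 2: apply \cref{lem:CVt}.} Applying \cref{lem:CVt} to $G_i$ shows that $Z(G_i)=Z/\langle t_i\rangle$ centralizes $C_V(t_i)$. So every element of $Z$ centralizes $C_V(t_i)$, that is, $C_V(t_i)\le C_V(Z)$, and therefore $C_V(t_i)=C_V(Z)$ for each $i$. The same argument applies to the quotient $V/[V,t_i]$ (again by \cref{lem: reduct to irred}), giving $[V,Z]\le [V,t_i]$.

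\textbf{Step 3: contradiction from the Klein four-group.} Choose $i$ with $t_i$ acting non-trivially on $V$; this is possible by faithfulness. Set $A=C_V(Z)$. For $v\in V$ we have $v(1+t_1)\in C_V(t_1)$, because $t_1$ is an involution in characteristic $2$. Hence $[V,t_1]\le C_V(t_1)=A$, and likewise for $t_2$ and $t_3$.

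Now $t_3=t_1t_2$, which gives the identity $1+t_1t_2=(1+t_1)+(1+t_2)+(1+t_1)(1+t_2)$. The last term vanishes on $V$, because $V(1+t_1)\le A$ and $A$ is centralized by $t_2$. Therefore the map $\phi:V\to A$, $v\mapsto v(1+t)$, is additive in $t\in Z$.

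Step 2 also gives $[V,Z]\le[V,t_j]$ for every $j$, with $[V,t_j]\le A$. These constraints strongly restrict $V$. To rule out the remaining possibilities, I would pass to a minimal counterexample, as in \cref{lem:CVt}. In that situation $V/A$ and $[V,t]$ are irreducible and $A=[V,t]$ for every non-trivial $t$. Then $t_1$, $t_2$ and $t_3$ each induce $G$-isomorphisms $V/A\to A$, and the additivity of $\phi$ forces the sum of these three isomorphisms to be zero. This is consistent only if $\mathrm{End}_G$ is large enough, which again yields an extension of the kind considered in \cref{lem:some cohomology}.

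\textbf{Expected obstacle.} I expect the main difficulty to be excluding this last configuration. Here $A$ must be a natural or half-spin module, and it would have to be simultaneously the module for all three double covers. Triality permutes these three modules, while they are pairwise non-isomorphic. So $A$ cannot serve for two different $G_i$ at once, and this gives the contradiction. If this argument is not clean, I would fall back on a {\sc Magma} computation of $\mathrm H^1(G,(A\otimes A^*)/C(G))$, as was done in \cref{lem:CVt}.
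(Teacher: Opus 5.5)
Your setup matches the paper's proof. \cref{lem:CVt}, applied to the $G/\langle t\rangle$-modules $C_V(t)$ and $V/[V,t]$, gives $C_V(t)=C_V(Z)$ and $[V,t]=[V,Z]$. One then passes to a minimal counterexample with $A=C_V(t)=[V,t]$ irreducible, and commutation with $z\in Z$ gives $G$-maps $\phi_z\colon V/A\to A$ that are additive in $z$. The gap is that you stop one line short, and neither completion you suggest works.

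The missing observation is that $\mathrm{End}_G(A)\cong\GF(2)$. Since $G$ is perfect, $A$ is non-trivial. So by \cref{Main theorem} for $G/Z\cong\Omega_8^+(2)$, $A$ is the natural or a half-spin $8$-dimensional module, which is absolutely irreducible. Since $\phi_t$ is an isomorphism $V/A\cong A$, we get $\mathrm{Hom}_G(V/A,A)\cong\mathrm{End}_G(A)\cong\GF(2)$. The map $z\mapsto\phi_z$ is an injective homomorphism from $Z$ into this group: if $\phi_z=0$, then $z$ centralizes $V$, so $z=1$ by faithfulness. A group of order $4$ cannot embed in a group of order $2$. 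In your terms, $\phi_{t_1}$, $\phi_{t_2}$ and $\phi_{t_3}=\phi_{t_1}+\phi_{t_2}$ cannot all be non-zero in $\GF(2)$. This is exactly how the paper concludes, and no cohomology or {\sc Magma} is needed.

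Your triality argument should be discarded. The three $8$-dimensional modules are all modules for $G/Z$, and nothing attaches one of them to a particular double cover $G_i$. Moreover $V$ is not a $G_i$-module at all. So ``$A$ cannot serve for two different $G_i$ at once'' has no content as stated.

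You should also make the minimal-configuration claims precise, as the paper does:
\begin{enumerate}
\item A submodule $W$ with $A<W<V$ would be a smaller faithful counterexample, since every $z\in Z^\#$ still acts non-trivially on $W$. Hence $V/A\cong[V,t]$ is irreducible.
\item A non-zero submodule $U\le A$ with $U\cap[V,t]=0$ would make $V/U$ a smaller faithful counterexample. Hence $[V,t]$ is the socle of $A$.
\item Therefore $A$ has no centralized direct summand, so $A$ is irreducible by \cref{lem:Omega82-A9 done}, giving $A=[V,t]$.
\end{enumerate}
Finally, faithfulness forces all three involutions of $Z$ to act non-trivially, not just one of them. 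You need this for all three $\phi_{t_i}$ to be isomorphisms.
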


\begin{proof} Choose a counter example with $\dim V$ minimal.
Let $t \in Z(G)^\#$. Then $V/[V,t]$ is a module for $G/\gen{t}$.  Since $(G ,H ,V/[V,t])$ is immutable by \cref{lem: reduct to irred}, \cref{lem:CVt} yields $[V,t]=[V,Z(G)]$. Similarly $C_V(t)=C_V(Z(G))$. As in \cref{lem:CVt}, we may assume that $[V,t]$ is irreducible and $C_V(t)$ has socle $[V,t]$.  Since $C_V(t)$ has no centralized summand, we deduce that $C_V(t)$ is irreducible.  Hence $[V,t]=C_V(t)=C_V(Z(G))=[V,Z(G)]$. Since $G$ is perfect, it follows that $\dim [V,t]=8$ by \cref{Main theorem}. In particular, $\dim V=16$. Now commutation with the elements of $Z(G)$ determines a set of four linear maps in $\mathrm{Hom}_G(V/C_V(t),[V,t])\cong \mathrm{End}_G(V/C_V(t)) \cong \GF(2)$, a contradiction.
\end{proof}

We finally prove \cref{thm:Theorem B}.

\begin{proof}[The proof of \cref{thm:Theorem B}] Assume that $G$ is quasisimple, $H$ is a maximal subgroup of $G$, $V$ is a $\GF(2)G$-module with no centralized direct summand and that $V$ is not irreducible.  The possibilities for $(G/C_G(W),H/C_G(W),W)$ where $W$ is a faithful composition factor for $G$ are given by \cref{Main theorem,prop:trans cases}. \cref{lem:CVt,ZOdd} together with the opening discussion in \cref{sec:schur} show that $Z(G)$ has odd order.

Assume that $G$ is transitive on $W^\#$. If $G$ is as in \cref{prop:trans cases} (i), (ii), (iii) or (iv), then \cref{lem:A8-A7,lem:not linear} yield case (iii) of \cref{thm:Theorem B}. The symplectic possibilities listed in \cref{prop:trans cases} (v) and (vi) and \cref{Main theorem} (iv)  in \cref{prop:trans cases} are the subject of \cref{lem:spin yields irreducible,prop:symplecticGroups} and they give \cref{thm:Theorem B} (i) and (ii). The possibilities listed in \cref{prop:trans cases} (vii), (viii) and (ix) are investigated in \cref{lem:alt(6) done,Alt(7) gone} and yield the restrictions of the $\Omega_5(2)$-modules to $G \cong \Alt(6)$ and its dual as listed in \cref{thm:Theorem B} (iv).

\cref{lem:orthgone}, shows that  $G$ cannot come from   \cref{Main theorem} (i), (ii) or (iii). \cref{Main theorem} (iv) does not extend to an reducible example by \cref{lem:spin yields irreducible}.  Finally, \cref{Main theorem} (v) and (vi) give no reducible examples by \cref{lem:A9 done,lem:Omega82-A9 done}.
\end{proof}
\printbibliography

 \end{document}